\documentclass[oneside, 12pt]{amsart}
\usepackage{amscd, amssymb, amsmath, mathrsfs}
\usepackage[a4paper, top=3cm, left=2.8cm, right=2.8cm]{geometry}
\usepackage[english]{babel}
\usepackage{booktabs}
\usepackage{tikz-cd}
\usepackage{url}

\usepackage{lscape}
\usepackage{longtable}
\usepackage{makecell}
\usepackage{tabularx}
\usepackage{ltablex}
\newcolumntype{C}[1]{>{\centering\arraybackslash}p{#1}}

\usepackage[pdftex, colorlinks=true,  citecolor=blue, linkcolor=blue, linktocpage=true, backref]{hyperref}
\usepackage{mathdots}
\usepackage{enumitem}

\usepackage{etoolbox}

\numberwithin{equation}{section}

\newcommand\mylabel[1]{\label{#1}\marginpar{\vspace{-1ex}\medskip\medskip\footnotesize \tt #1}}
\renewcommand\mylabel[1]{\label{#1}}
\newcommand{\mydate}{
\number\day\space
\ifcase\month \or January\or February\or March\or April\or May\or June\or July\or August\or September\or October\or November\or December\fi 
\space\number\year}

\DeclareUrlCommand\arXiv{\urlstyle{tt}}

\newtheorem{theorem}{Theorem}[section]
\newtheorem{maintheorem}{Theorem}
\newtheorem{lemma}[theorem]{Lemma}
\newtheorem{proposition}[theorem]{Proposition}
\newtheorem{corollary}[theorem]{Corollary}

\theoremstyle{definition}
\newtheorem{maindefinition}[maintheorem]{Definition}
\newtheorem{definition}[theorem]{Definition}
\newtheorem{example}[theorem]{Example}

\newtheorem{remark}[theorem]{Remark}

\newtheorem*{acknowledgement}{Acknowledgement}

\theoremstyle{remark}

\newcommand{\NN}{\mathbb{N}}
\newcommand{\ZZ}{\mathbb{Z}}
\newcommand{\QQ}{\mathbb{Q}}
\newcommand{\RR}{\mathbb{R}}
\newcommand{\CC}{\mathbb{C}}
\newcommand{\FF}{\mathbb{F}}

\newcommand{\PP}{\mathbb{P}}
\renewcommand{\AA}{\mathbb{A}}
\newcommand{\GG}{\mathbb{G}}
\newcommand{\bbS}{\mathbb{S}}

\newcommand{\shE}{\mathscr{E}}
\newcommand{\shF}{\mathscr{F}}

\newcommand{\shI}{\mathscr{I}}

\newcommand{\shN}{\mathscr{N}}
\newcommand{\shL}{\mathscr{L}}

\newcommand{\catO}{\mathcal{O}}

\newcommand{\Aff}{\text{\rm Aff}}
\newcommand{\alg}{\text{\rm alg}}

\newcommand{\Alb}{\operatorname{Alb}}

\newcommand{\Aut}{\operatorname{Aut}}

\newcommand{\Br}{\operatorname{Br}}
\newcommand{\Bl}{\operatorname{Bl}}

\newcommand{\can}{\text{\rm  can}}

\newcommand{\Chev}{\text{\rm  Chev}}

\newcommand{\Der}{\operatorname{Der}}

\newcommand{\disc}{\operatorname{disc}}

\newcommand{\End}{\operatorname{End}}

\newcommand{\Ext}{\operatorname{Ext}}

\newcommand{\Flag}{\operatorname{Flag}}

\newcommand{\Frob}{\varphi}

\newcommand{\Gal}{\operatorname{Gal}}
\newcommand{\GL}{\operatorname{GL}}
\newcommand{\gl}{\operatorname{\mathfrak{gl}}}

\newcommand  {\Grass}{\operatorname{Grass}}

\newcommand{\Hom}{\operatorname{Hom}}
\newcommand{\Hilb}{\operatorname{Hilb}}

\newcommand{\id}{{\operatorname{id}}}

\newcommand{\inv}{\operatorname{inv}}

\newcommand{\I}{\text{\rm I}}
\newcommand{\II}{\text{\rm II}}

\newcommand{\IV}{\text{\rm IV}}

\newcommand{\Kernel}{\operatorname{Ker}}

\newcommand{\Lie}{\operatorname{Lie}}

\newcommand{\lra}{\longrightarrow}

\newcommand{\maxid}{\mathfrak{m}}

\newcommand{\MW}{\operatorname{MW}}

\newcommand{\Num}{\operatorname{Num}}
\newcommand{\NE}{\operatorname{\overline{NE}}}

\renewcommand{\O}{\mathscr{O}}

\newcommand{\ord}{\operatorname{ord}}

\newcommand{\Pic}{\operatorname{Pic}}
\newcommand{\PGL}{\operatorname{PGL}}

\newcommand{\pr}{\operatorname{pr}}
\newcommand{\Proj}{\operatorname{Proj}}

\newcommand{\quadand}{\quad\text{and}\quad}

\newcommand{\ra}{\rightarrow}

\newcommand{\Rad}{\operatorname{Rad}}

\newcommand{\Reg}{\operatorname{Reg}}

\newcommand{\Res}{\operatorname{Res}}

\newcommand{\sep}{{\operatorname{sep}}}

\newcommand{\Sing}{\operatorname{Sing}}
\newcommand{\SL}{\operatorname{SL}}

\newcommand{\Sp}{\operatorname{Sp}}
\newcommand{\Spec}{\operatorname{Spec}}

\newcommand{\SU}{\operatorname{SU}}

\newcommand{\Supp}{\operatorname{Supp}}
\newcommand{\Sym}{\operatorname{Sym}}

\newcommand{\U}{\operatorname{U}}

\newcommand{\val}{\operatorname{val}}

\newcommand\isomarrow{\stackrel{\cong}{\longrightarrow}}

\newcommand{\lieg}{\mathfrak{g}}
\newcommand{\bX}{{\bar{X}}}

\newcommand{\DLD}{\mathscr{D}}

\newcommand{\Sz}{\operatorname{Sz}}

\begin{document}

\title[Deligne--Lusztig varieties]
      {Deligne--Lusztig varieties whose canonical divisors have negativity}

\author[Ulrich G\"ortz]{Ulrich G\"ortz}
\address{Universität Duisburg-Essen, Fakultät für Mathematik, 45117 Essen,  Germany}
\curraddr{}
\email{ulrich.goertz@uni-due.de}
 
\author[Stefan Schr\"oer]{Stefan Schr\"oer}
\address{Heinrich Heine University D\"usseldorf, Faculty of Mathematics and Natural Sciences, Mathematical Institute, 40204 D\"usseldorf, Germany}
\curraddr{}
\email{schroeer@math.uni-duesseldorf.de}

\subjclass[2020]{14L15, 14M15, 14G17, 14J28, 14J26, 17B22}

\dedicatory{\mydate}

\begin{abstract} 
We investigate compactified Deligne--Lusztig varieties whose canonical divisor, when expressed as 
a linear combination of boundary divisors, has all coefficients strictly negative or zero. 
In dimension two we obtain explicit descriptions: The case $C_2$  gives the supersingular K3 surface with Artin 
invariant $\sigma=1$ in characteristic two. The arguments rely  on properties of the Tutte--Coxeter graph; in this connection we also gain some insight into the arithmetic of 
quasi-elliptic Weierstrass equations and rational double points. The twisted cases ${}^2A_2$ and ${}^2C_2$ 
and ${}^2G_2$ yield particular ruled surfaces, attached in a canonical way to supersingular elliptic curves in 
characteristic two, or the Ree curve  in characteristic three. The latter is a curve of genus fifteen with 
outstanding   symmetries. In the former cases, the surfaces can also be expressed as symmetric squares. 
To obtain such results, we  develop a new general framework for Deligne--Lusztig varieties that relies on the Isogeny Theorem, and works over   prime fields rather than their algebraic closures, and includes the notorious 
Suzuki--Ree cases without effort.
\end{abstract}

\maketitle
\tableofcontents

\section*{Introduction}
\mylabel{sec:introduction}

In their groundbreaking work \cite{Deligne; Lusztig 1976}, Deligne and Lusztig introduced certain smooth subschemes $X(w)$ inside the flag variety $G/B$
of a reductive group $G$ over the   field $k=\FF_p^\alg$ in characteristic $p>0$, depending on   elements $w$
from the Weyl group $W=N_G(T)/T$. Roughly speaking, the scheme  $X(w)$ comprises   flags $gB$ that reside in  fixed relative position $w$
to the flag   $\varphi(g)B$. Here $\varphi\colon G\ra G$ is the  isogeny stemming from 
the Frobenius power $F^s$  for a chosen reductive group $G_0$ over a finite field $k_0=\FF_{p^s}$ that  induces $G$.
Note that the isogeny is purely inseparable, but in general not the identity on the underlying topological space.
The fixed   set $G^\varphi$ is a \emph{finite group of Lie type}, and the main goal of Deligne and Lusztig was to understand its
representation theory in terms of the $\ell$-adic cohomology with compact support of $X(w)$.
Indeed, the so-called \emph{Deligne--Lusztig varieties} $X(w)$ quickly became an indispensable tool  
for representation theory in this area, and received an enormous
echo, compare    \cite{Lusztig 1976}, \cite{Lusztig 1977}, \cite{Lusztig 1984},  \cite{Digne; Michel 1991},  
\cite{Bonnafe; Rouquier 2003}, \cite{Bonnafe; Dat; Rouquier 2017} and many others.

All the schemes  $X(w)$ are affine. This was  already  established  by Deligne and Lusztig under some assumption (\cite{Deligne; Lusztig 1976}, Theorem 9.7),
and   recently   in full generality  by Brosnan, Hong and Lee (\cite{Brosnan; Hong; Lee 2025}, Corollary 3.9).
In any case, these schemes come with a naive compactification $\overline{X(w)}\subset G/B$ inside the flag variety, which usually 
has singularities. However,   each reduced expression $w=s_{i_1}\ldots s_{i_n}$ yields a   resolution of singularities, which by  abuse 
of notation is   often written as $\bX(w)$. The situation is parallel to Schubert cells, Schubert varieties and their resolutions à la Demazure and Bott--Samelson.
For example, the length $n=\ell(w)$ coincides with $\dim\bX(w)$. 

Summing up, the group-theoretical datum   $G$ and $\varphi:G\ra G$ and $w\in W$  yields a smooth proper scheme $X=\bX(w)$, which we also call \emph{Deligne--Lusztig variety}.
In marked contrast to  representation theory, to date the impact  of these scheme in algebraic geometry is comparatively small.
Noteworthy exceptions are J.\ Hansen's analysis of Deligne--Lusztig curves \cite{Hansen 1992},  Rodier's study  in dimension two \cite{Rodier 2000}, 
S.\ Hansen's   Canonical Bundle Formula  \cite{Hansen 1999}, and closely related work by Ekedahl \cite{Ekedahl 2003} and Langer \cite{Langer 2019}
on  non-liftable Calabi--Yau threefolds.

In this paper, we  view the smooth proper schemes $X=\bX(w)$ as natural objects of   algebraic   geometry waiting to be explored, 
and initiate a systematic study of their properties. In particular, we seek to understand  their place
in the classification of all algebraic varieties,  
and to obtain  for particular Weyl group elements explicit  descriptions    in various other  contexts.
The following theorem, which is one of our main results, illustrates what we have in mind:
 
\begin{maintheorem}
(Compare Thm.\ \ref{thm:structure c2})
For the       symplectic group $G=\Sp_4$  over the prime field $k=\FF_2$  with  isogeny $\varphi=F$, the Frobenius morphism,
the Deligne--Lusztig surface $X=\bX(s_2s_1)$   is a supersingular K3 surface with Artin invariant $\sigma=1$.
\end{maintheorem}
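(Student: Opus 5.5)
We need to show that $X=\bX(s_2s_1)$ is a K3 surface, that it is supersingular, and that its Artin invariant is $\sigma=1$. Since $w=s_2s_1$ has length two, $X$ is a smooth proper surface.

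\textbf{Step 1: canonical divisor.} Our first step is to compute $K_X$ via Hansen's Canonical Bundle Formula \cite{Hansen 1999}. This expresses $\omega_X$ as a combination of the boundary divisors $D_1=\bX(s_1)$ and $D_2=\bX(s_2)$; these are disjoint unions of Deligne--Lusztig curves, namely projective lines for the Levi factors. The coefficients are determined by pairings of roots with the weight $\rho$ twisted by $\varphi$, and they involve $q=2$. For $C_2$ and $q=2$ we expect all coefficients to vanish, so $K_X=0$. This is exactly the ``zero'' case mentioned in the abstract, and it should be a direct computation with the coroot pairings.

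\textbf{Step 2: $X$ is K3 and supersingular.} Next we show $h^1(\O_X)=0$. The $\ell$-adic cohomology of $X(w)$ and of its boundary strata is known from Deligne--Lusztig theory. The boundary is a configuration of $\PP^1$'s indexed by cosets of parabolic subgroups of $\Sp_4(\FF_2)\cong S_6$. Hence $b_1(X)=0$, so $X$ is K3 rather than abelian, bielliptic or quasi-hyperelliptic. We will use the Euler characteristic from the stratification:
\[
e(X)=e(X(w))+e(D_1^\circ)+e(D_2^\circ)+e(\text{points}).
\]
This should give $24$, which confirms $\chi(\O_X)=2$. Supersingularity follows from Frobenius eigenvalues. $H^2$ is spanned by classes of boundary curves together with cohomology of $X(w)$, whose Frobenius eigenvalues are $q$ times roots of unity, given by Lusztig's computations for the Coxeter element. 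So $H^2$ is algebraic after a finite extension, $\rho=22$, and $X$ is Shioda supersingular.

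\textbf{Step 3: Artin invariant.} To obtain $\sigma=1$ we compute $\disc(\Num(X))=-2^{2\sigma}$, which should be $-4$. The plan is to show that the boundary curves generate $\Num(X)$, or a sublattice of known index. There are 15 curves of each type, corresponding to the points and lines of the symplectic quadrangle $W(2)$. Their incidence graph is the Tutte--Coxeter graph, consistent with the abstract, and the curves are $(-2)$-curves. The intersection matrix of these 30 curves is $-2I$ plus the adjacency matrix. Its rank and discriminant follow from the spectrum of the Tutte--Coxeter graph, with eigenvalues $\pm3$, $\pm\sqrt2$ and $0$.

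\textbf{Main obstacle.} The rank of this matrix is only about 20, so the boundary does not generate all of $\Num(X)$, and its saturation is unclear. As an alternative we will produce an explicit model. We will look for a quasi-elliptic fibration, with a fiber built from a configuration of boundary curves, and compare with the known unique $\sigma=1$ surface in characteristic two, for instance via Weierstrass equations and Mordell--Weil data, or via Dolgachev--Kondo's description of it through the same $S_6$-symmetric Tutte--Coxeter configuration. Pinning down the full lattice, rather than merely a finite-index sublattice, is the hardest step. The uniqueness of the $\sigma=1$ surface reduces this to an identification.
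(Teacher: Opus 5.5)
\textbf{Gap: Step 3 (the Artin invariant) is not proved.} This step is the actual content of the theorem, and your fallback does not close it.

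\begin{itemize}
\item Uniqueness of the $\sigma=1$ surface can only be used after $\sigma=1$, or an explicit isomorphism to a known model, has been established. Appealing to it here is circular.
\item Dolgachev--Kondo characterize that surface by a $(21_5)$-configuration of $42$ curves attached to $\PP^2(\FF_4)$, not by the $30$-curve Tutte--Coxeter configuration. So there is nothing to quote.
\item ``Look for a quasi-elliptic fibration'' does not say which fibration, nor how its fiber types would be determined.
\end{itemize}

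The missing idea is to choose, inside the Tutte--Coxeter graph (bipartite of girth $8$), a $22$-vertex induced tree. It consists of two disjoint $\tilde{E}_8$-trees $C_0,\dots,C_8$ and $D_0,\dots,D_8$, whose multiplicity-one ends $C_0,D_0$ are both joined to one further curve $R$, together with a chain $E_0,E_1,E_2$ hanging off $R$. Then $F_0=2C_1+3C_2+4C_3+6C_4+5C_5+4C_6+3C_7+2C_8+C_0$ is a curve of canonical type $\II^*$. This gives a jacobian genus-one fibration with zero section $R$, a second $\II^*$ fiber, and a third reducible fiber containing $E_0+E_1+E_2$. The paper then argues as follows:
\begin{itemize}
\item Shioda--Tate shows this third fiber is $\I_4$, $\I_5$ or $\I_0^*$.
\item Gram-determinant computations against $\disc\Num(X)=-p^{2\sigma}$ exclude $\I_4$ and $\I_5$.
\item Ogg's formula then forces the fibration to be quasi-elliptic with fibers $\II^*+\II^*+\I_0^*$.
\item Hence $\Num(X)=U\oplus E_8(-1)^{\oplus 2}\oplus D_4(-1)$, of discriminant $-4$, so $\sigma=1$.
\end{itemize}

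\textbf{A shorter finish over $\FF_2$.} Once supersingularity is known, $F_0$, $R$ and the two $E_8(-1)$'s span the unimodular lattice $U\oplus E_8(-1)^{\oplus 2}$, which splits off $\Num(X\otimes k^\alg)$. Its complement $N$ is negative definite of rank $4$ with discriminant group $(\ZZ/2)^{2\sigma}$, so $\sigma\le 2$. Moreover $N$ contains the root lattice $A_3(-1)$ spanned by $E_0-F_0,E_1,E_2$. But $\sigma=2$ would force $N=M(2)$ with $M$ unimodular of rank $4$, i.e.\ $N\cong A_1(-1)^{\oplus 4}$, whose roots are mutually orthogonal and cannot span $A_3(-1)$. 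Hence $\sigma=1$.

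\textbf{Smaller corrections to Steps 1--2.}
\begin{itemize}
\item The Tutte--Coxeter spectrum is $\pm3$, $\pm2$ (multiplicity $9$ each) and $0$ (multiplicity $10$), not $\pm\sqrt2$. So $-2I+A$ has rank exactly $21$.
\item That rank already gives $b_2=22$ and $\rho\ge 21$. Artin's bound $\rho\le 22-2h$ then forces infinite formal Brauer height, and Artin's theorem for elliptic K3 surfaces gives $\rho=22$ (a quasi-elliptic K3 surface is supersingular anyway). Your eigenvalue argument instead tacitly invokes the Tate conjecture in the sentence ``$H^2$ is algebraic after a finite extension''.
\item Your claim $b_1=0$ is not justified by what you wrote, but it is not needed. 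Hansen's formula gives $K_X\equiv 0$, and $e(X)=9-15-15+45=24$ alone singles out K3 surfaces among numerically $K$-trivial ones. Here $e_c(X(s_2s_1))=|\Sp_4(\FF_2)|_{2'}/5=9$.
\end{itemize}
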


Recall that the \emph{K3 surfaces} play a central role in the Enriques Classification of algebraic surfaces, and can be characterized in terms of the numerical invariants 
by  $c_1=0$ and $b_2=22$. Supersingularity means that the non-degenerate lattice $\Pic(X\otimes k^\alg)$ has rank $\rho=22$,
and the  \emph{Artin invariant} $\sigma\geq 1$ measures how far the 
intersection form   deviates from unimodularity. For $\sigma=1$ the surfaces are unique up to twisted forms.
In our theorem,  $X$ is actually given by the minimal resolution of singularities for the family of   cubic curves over $\PP^1=\Spec k[t]\cup\Spec k[t^{-1}]$
given by a Weierstra\ss{} equation $y^2=x^3+ a_6$ with coefficient $a_6=t^5+t^7$.

The main idea for the above result is to exploit properties of the famous \emph{Tutte--Coxeter graph}, a highly symmetric configuration of  30 vertices and 45 edges,
which here appears as the \emph{spherical building} of parabolic subgroups $P\subset G$, and also
as  \emph{dual graph} for the  projective lines  sitting as irreducible components inside the curve $\bX(s_1)\cup\bX(s_2)\subset X$. See Example~\ref{ex:sp4 tutte-coxeter} and Figure~\ref{fig:tutte-coxeter} in Section~\ref{subsec:arithmetic of rational double points}.
This configuration of curves is so special
that only the indicated K3 surface in characteristic two is possible.
Confer  \cite{Nikulin 1975}, \cite{Shioda; Inose 1977}, \cite{Dolgachev; Kondo 2003}, \cite{Kondo; Schroeer 2021} for  statements of similar nature. Along the way, we also gain some results on the arithmetic of quasi-elliptic Weierstra\ss{} equation and rational double points, which are of independent interest.
 
In the above, it is crucial to    work over the prime field $k=\FF_p$ rather than its algebraic closure $\FF_p^\alg$.
As in \cite{Schroeer 2023}, this allows to combine  algebraic and arithmetic arguments, often to  striking effects.
\emph{In this paper we put a particular emphasis on the observation that every   Deligne--Lusztig variety $\bX(w)$  
originates from root theoretic data, and therefore in particular lives over the prime field.}
This builds on the principle  that each root system $\Phi$ gives rise to a family of reductive algebraic groups over the ring $R=\ZZ$
of integers, the   \emph{Chevalley groups}. In some sense, the situation is also analogous to \emph{toric varieties},
which stem from the datum  $(N,\Sigma)$, where the second entry is a collection of certain cones $\sigma\subset N_\RR$. The  key notion in this direction is: 

\begin{maindefinition}
(compare Def.\ \ref{def:deligne-lusztig datum}) A \emph{Deligne--Lusztig datum} is a tuple
$$
\DLD = (X^*,\Phi,X_*,\Phi^\vee, \Delta, p, \varphi_*)
$$
where the first four entries form a   root datum,   $\Delta $ is a system of simple roots, $p>0$ is a prime number,
and $\varphi_*:X_*\ra X_*$ is a $p$-endomorphism of root data such that  $\varphi_*^r=p^s\cdot\id_{X_*}$ for some exponents $r,s\geq 1$.
\end{maindefinition}

Note that $X^*$ is a free abelian group of finite rank, and $\Phi$ is a root system in the classical sense inside the vector space generated by $\Phi$ inside $X^*\otimes\RR$.
Now let $k$ be the prime field $\FF_p$, or any extension thereof.
By the  various forms of the \emph{Isogeny Theorem}, the quadruple 
$(X^*,\Phi,X_*,\Phi^\vee)$ determines a \emph{split reductive group} $G$, with maximal torus $T$, Borel group $B$, and Weyl group $W=N_G(T)/T$.
The additional information $\Delta$ provides a \emph{pinning}, in other words, a compatible choice of root groups
$U_\alpha\subset G$, $\alpha\in\Phi$, together with an identification with $\GG_a$.
In turn, $G$ becomes a \emph{pinned reductive group},
and $\varphi_*:X_*\ra X_*$ corresponds to an isogeny $\varphi:G\ra G$ satisfying the equation $\varphi^r=F^s$. For all this we refer to \cite{SGA3-3}, see also Section~\ref{subsec:isogeny theorem}.
Obviously,     this kind of  datum is exactly what is needed to define the \emph{Deligne--Lusztig variety} $X(w)\subset G/B$,
but now living over the chosen field $k$. 
An additional advantage of our setting is that it effortlessly incorporates the so-called \emph{Suzuki--Ree cases},
where the isogeny  does not respect  short and long roots, as discussed in \cite{Deligne; Lusztig 1976}, Section 11.

As mentioned above, an important tool for our analysis in the $C_{2}$ case is the description of the boundary divisor $\overline{X(w)}\setminus X(w)$ in terms of the $\varphi$-stable Tits building, whenever $w$ is a twisted Coxeter element, see Proposition~\ref{prop:stratification and building}. In the twisted cases, it is crucial to relate the surfaces $\bX(s_{i_1}s_{i_{2}})$ to Deligne--Lusztig curves via Deligne--Lusztig reduction, see Section~\ref{subsec:cyclic shift} and in particular Corollary~\ref{cor:structure of ruled surface}.

Recall that in light of the minimal model program,  the place of $X=\bX(w)$ in the classification of algebraic varieties is largely governed by the position of the
\emph{dualizing sheaf} $\omega_X=\O_X(K_X)$ with respect to the \emph{nef cone} $\operatorname{Nef}(X)$  
inside the real vector space $\Num(X)_\RR$.
Understanding   this starts with  the  \emph{Canonical Bundle Formula} established by Hansen~\cite{Hansen 1999}  
$$
mK_{\bX(w)}=   \sum_{j=1}^n (\langle \mu, \tilde{\alpha}_j^\vee\rangle -1) D_j.
$$
Here $D_j\subset X$ are the boundary divisors obtained by dropping the $j$-th factor from the simple
expression $w=s_{i_1}\ldots s_{i_n}$. The precise  definition of the integer $m\geq 1$, 
the character $\mu\in X^*$ and the coroots $\tilde{\alpha}_j^\vee\in\Phi^\vee$ is   cumbersome, and we refer to Section \ref{sec:dualizing sheaves} for details.
\emph{A key ingredient of this paper   is that all this is expressible via
vectors and matrices, and therefore can be determined via computer algebra.} 
We have developed a computer program that  allows to compute in each dimension $n=\ell(w)$  and for all Deligne--Lusztig data $\DLD$
the coefficients  $\lambda_j=\langle \mu, \tilde{\alpha}_j^\vee\rangle -1$. Not surprisingly, these have a strong tendency to be positive,
and one expects that most $\bX(w)$ are of general type. Here we concentrate
on the opposite, and particularly interesting cases where all the coefficients satisfy $\lambda_j\leq 0$; we then say that \emph{the canonical divisor $K_X$ has negativity} (including here the case that all the $\lambda_{j}$ vanish).

It turns out that in dimension $n=2$, the relevant cases with negativity are  the root systems of type
$A_2, {}^2A_2, C_2, {}^2C_2$ and $ {}^2G_2$, with only few possible words $w=s_{i_1}s_{i_2}$ and primes $p>0$.
As customary, the upper indices indicate the presence of a non-trivial action of $\varphi_*:X_*\ra X_*$  on the  coroots.
Note that $C_2$ corresponds to the symplectic group,  and we already discussed that the untwisted case is related to K3 surfaces.
In the twisted cases, the Deligne--Lusztig surfaces have the structure of a ruled surface over a Deligne--Lusztig curve. If the canonical divisor has negativity, a typical case is that this curve is an elliptic curve $E$, and the surface is isomorphic, or closely related to its \emph{symmetric square}:

\begin{maintheorem}
(Compare Thm.\ \ref{thm:c2twist})
For the   symplectic group $G=\Sp_4$  over the prime field $k=\FF_2$  and the isogeny $\varphi$ with  $\varphi^2=F$,
we have 
$$
\bX(s_1s_2)=\Sym^2_{E/k}\quadand \bX(s_2s_1)=\Sym^2_{E/k}\times_EE^{(p)},
$$
where $E:y^2+y=x^3+x$ is the supersingular elliptic curve over $k$ with five rational points.
\end{maintheorem}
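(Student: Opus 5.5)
The plan is to realize both surfaces as $\PP^1$-bundles over a Deligne--Lusztig curve, using Deligne--Lusztig reduction (Section~\ref{subsec:cyclic shift}, Corollary~\ref{cor:structure of ruled surface}), and then to match them with the symmetric square. For $w=s_1s_2$, consider the projection $G/B\to G/P_1$, where $P_1$ is the parabolic attached to $s_1$. The image of $\bX(s_1s_2)$ is the Deligne--Lusztig curve attached to the Coxeter element of the Levi quotient, twisted by $\varphi$. Since $\varphi^2=F$ interchanges the two simple roots, this is the Suzuki-type curve for ${}^2C_2$ with $q^2=2$. I first check that this curve is $E\colon y^2+y=x^3+x$, of genus one, with $|E(\FF_2)|=1+q_0^2+\sqrt2\,q_0\cdot\ldots$, which gives $5=1+2+2$. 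This is the Suzuki curve $y^{q}+y=x^{q_0}(x^q+x)$ degenerated at $q=2$, $q_0=1$. For the identification I would compare the equations given by the Bruhat cells and the orbit counts of $G^\varphi=\Sz(2)$, the Frobenius group of order $20$, acting on the $5$ rational points. The fibers of $\bX(s_1s_2)\to E$ are $\PP^1$'s coming from $P_1/B$, so $\bX(s_1s_2)$ is a ruled surface $\PP(\shE)$ with $\shE$ of rank two over $E$.

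The second step is to identify $\shE$. Recall that $\Sym^2_{E/k}\to\Pic^2_E\cong E$ (via $D\mapsto D-2\infty$ or similar) is the $\PP^1$-bundle $\PP(\shF)$. Here $\shF$ is the unique indecomposable rank-two bundle of odd degree (Atiyah), so its invariant is $e=-1$. I would therefore compute the invariant $e$ of $\bX(s_1s_2)$ via the Canonical Bundle Formula. Expressing $K_X$ as $\lambda_1D_1+\lambda_2D_2$ with the coefficients computed in Section~\ref{sec:dualizing sheaves}, with one boundary divisor a section and the other a fiber configuration, I would compute $D_i^2$ from the incidence structure: the boundary is $\bX(s_1)\cup\bX(s_2)$, and its components correspond to rational points of $E$ and to the $\varphi$-stable building. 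From $K_X\equiv -2C_0+(2g-2-e)f$ one then reads off $e$. Showing that the relevant section has odd self-intersection, and that no section of self-intersection $\le 0$ exists, pins down the indecomposable odd-degree case. It also requires checking that $\shE$ is not decomposable, for example because the ruling has no two disjoint sections, as $\Sz(2)$-equivariance forces. Atiyah's classification then gives $\bX(s_1s_2)\cong\Sym^2_{E/k}$, descended over $\FF_2$ by using that everything is defined over the prime field.

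For $w=s_2s_1$, the cyclic shift relates $\bX(s_2s_1)$ to $\bX(s_1s_2)$ through the map $gB\mapsto gs_1B$ type correspondence. Over the base, projecting to $G/P_2$ lands in the same curve but twisted by $\varphi$, which gives $E^{(p)}$ in the base-change description. The relation $\varphi^2=F$ shows that the two projections differ by a purely inseparable map of degree $2$, namely the relative Frobenius $E\to E^{(p)}$. So I expect $\bX(s_2s_1)$ to be the pullback of the bundle $\Sym^2_{E/k}\to E$ along $E^{(p)}\to E$, suitably interpreted, and I would prove this by exhibiting the morphism fiberwise compatible with the rulings and checking that it is an isomorphism on fibers. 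The main obstacle is the first identification: computing the self-intersections of the boundary curves precisely enough to obtain $e=-1$ rather than $e=0$ or $e=1$. I would try first to get these from the Tits-building incidence counts, namely $5$ rational points and the orders of the stabilizers.
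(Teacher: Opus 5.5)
Your overall architecture matches the paper's: the ruling comes from Deligne--Lusztig reduction (Corollary~\ref{cor:structure of ruled surface}), the base is identified with $E_5$ from genus one and the five $\FF_2$-rational points of $X(e)$, then the rank-two bundle is identified, and finally a fiber product is taken along the purely inseparable map $\varphi\colon\bX(s_1)\to\bX(s_2)$. The gap is in the decisive middle step. You leave it open yourself, and your picture of the boundary there is wrong. In the Suzuki case both $\bX(s_1)$ and $\bX(s_2)$ are irreducible genus-one curves, because their $\varphi$-support is all of $\bbS$. They meet transversally in the five points of $X(e)$. In $X=\bX(s_1s_2)$, the curve $\bX(s_2)$ is the section $g_1=g_2$. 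The curve $\bX(s_1)$, embedded by $gB\mapsto(gB,\varphi(g)B)$, is a purely inseparable bisection, not a fiber configuration. The Tits building plays no role here ($s_1s_2$ is not a $\varphi$-Coxeter element), so building incidences and stabilizer orders will not produce the invariant $e$.

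What is needed is Theorem~\ref{thm:characterization symmetric product}, which the paper proves for this purpose. The table gives $K_X\equiv-\bX(s_1)$ (coefficients $0$ and $-1$). Adjunction on the genus-one section then gives $\bX(s_2)^2=\bX(s_1)\cdot\bX(s_2)=|X(e)|=5$, which is odd. Moreover $\bX(s_1)$ is irreducible with $\bX(s_1)^2=K_X^2=0$, so it is nef and spans the extremal ray of $\NE(X)$ other than the fiber ray. A section $C$ with $C^2\le 0$ would have to lie on that ray, giving $\bX(s_1)\equiv 2C$ and making $\bX(s_1)\cdot\bX(s_2)$ even. Hence every section has positive square, so $e=-1$ and Atiyah's classification gives $\Sym^2_{E/k}$. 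Your appeal to $\Sz(2)$-equivariance to exclude disjoint sections is unsupported, and it is not needed.

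For $\bX(s_2s_1)$, there is no cyclic shift, since $s_2\varphi(s_1)=s_2s_2$ is not reduced; the comparison map is $\varphi$ itself. Your fiberwise argument does work: on fibers the map is $P_2/B\to P_1/B$, which is an isomorphism because $\varphi$ is an isomorphism on $U_{\pm\alpha_2}$ when $n=0$. The paper instead uses a finite birational map onto a regular scheme. However, $\varphi^2=F$ alone does not tell you which of the two maps between $\bX(s_1)$ and $\bX(s_2)$ has degree two. This follows from $\bX(s_1)$ having degree $-K_X\cdot(\text{fiber})=2$ over the base, or from the dual-number computation in Proposition~\ref{prop:degrees inseparable maps}.
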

  
It was already observed by Atiyah \cite{Atiyah 1957} that these  quotients  are \emph{ruled surfaces}, cf.~Section~\ref{subsec:ruled and albanese}. In the above
situation, the rulings stem from the so-called \emph{Deligne--Lusztig reduction}, and the same goes for
the purely inseparable projection  for the fiber product.  Also note that the ruling has an intrinsic meaning
as \emph{Albanese map} \cite{Laurent; Schroeer 2024}. Explicitly,  one ruling 
stems from the non-split extension $0\ra\O_E\ra\shE\ra\O_E(e)\ra 0$. As discussed in
\cite{Togashi; Uehara 2022}, the Frobenius pullback  $F^*\shE$ remains indecomposable, and  sits in another non-split extension $0\ra\O_E(e)\ra F^*\shE\ra\O_E(e)\ra 0$,
The resulting sections correspond to  $\bX(s_1)$ and $\bX(s_2)$, contained  as curves in the surfaces.
Here  the \emph{Suzuki group}  $G^\varphi(\FF_2)=\Sz(2)$ is not simple. Rather, it can be viewed as the semidirect product $C_5\rtimes\Aut(C_5)$,
which coincides with $E(k)\rtimes\Aut(E)$, and actually forms the full automorphism group of the surface $X=\bX(s_1s_2)$.

We also take the occasion to give a succinct derivation of the explicit description of 
the       isogeny $\varphi:\Sp_4\ra\Sp_4$ that leads to the Suzuki groups $\Sz(2^s)$ 
in terms of algebraic geometry. The key point is a detailed study of the Plücker embedding, see Section~\ref{subsec:matrix interpretation}.

The root system ${}^2G_2$ is the so-called \emph{Ree case}. Here negativity occurs only for $w=s_2s_1$ and  $p=3$.
Our main result in this direction is:

\begin{maintheorem}
(Compare Thm.\ \ref{thm:structure g2twist})
For the   reductive group $G$ with root system $G_2$  over the prime field $k=\FF_3$  and the isogeny $\varphi$ with  $\varphi^2=F$,
we have $\bX(s_2s_1)=\PP(\shE)$ for the non-split extension  $0\ra\O_C\ra\shE\ra\omega^{\otimes-1}_C\ra 0$ over the curve
$$
C:\quad y^3-y = x(x^3-x) \quadand z^3-z = x(y^3-y).
$$
of genus $g=15$.
\end{maintheorem}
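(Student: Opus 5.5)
Structurally, I would first identify $\bX(s_2s_1)$ as a $\PP^1$-bundle over a Deligne--Lusztig curve, then determine that curve explicitly, and finally pin down the extension class using the canonical bundle formula and the two sections coming from boundary curves.

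The first step is Deligne--Lusztig reduction as in Section~\ref{subsec:cyclic shift} and Corollary~\ref{cor:structure of ruled surface}. For the Ree datum with $\varphi^2=F$ and $p=3$, the isogeny swaps the two simple reflections, so $s_2s_1$ is a twisted Coxeter element. The projection $G/B\to G/P_{i}$ to the partial flag variety of the parabolic attached to one simple reflection then restricts to a morphism $\bX(s_2s_1)\to \bX(s_j)$, whose fibres are lines $P_i/B\cong\PP^1$. By Corollary~\ref{cor:structure of ruled surface} this exhibits $X=\PP(\shE)$ for a rank-two bundle $\shE$ on the curve $C=\bX(s_j)$.

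The second step is to identify $C$. For ${}^2G_2$ the Deligne--Lusztig curve is the Ree curve, which carries an action of $\mathrm{Ree}(3)=G^\varphi(\FF_3)\cong \mathrm{P\Gamma L}_2(8)$. I would compute it over $\FF_3$ in the big cell. The root groups of $G_2$, together with the explicit form of $\varphi$ on the pinning, turn the condition ``relative position $s_j$ to the $\varphi$-image'' into Artin--Schreier equations. These should be $y^3-y=x(x^3-x)$ and $z^3-z=x(y^3-y)$, matching the classical equations of the Ree curve with $q_0=3$, $q=27$. The genus $g=15$ then follows from Hurwitz/Artin--Schreier ramification at the single point at infinity of the tower: the first layer has genus $3$, and the second should give $15$. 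Alternatively it follows from the known formula $g=\tfrac32 q_0(q-1)(q+q_0+1)$ with $q_0=3$, $q=3$ for the smallest member, which I would double-check.

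The third step is to determine $\shE$. The boundary curves $\bX(s_1)$ and $\bX(s_2)$ inside $X$ give sections of the ruling. One of them, the fibre-wise point $B\in P_i/B$ coming from the $\varphi$-fixed-point structure, gives a surjection $\shE\to\mathscr{L}$, with kernel a line bundle. After normalising so that the kernel is $\O_C$, I would compute the normal bundle of that section. Using the canonical bundle formula $mK_X=\sum\lambda_jD_j$ with the computed coefficients $\lambda_j\le 0$, together with adjunction $K_X\cdot D+D^2=2g-2$ and $K_X=-2\sigma+\pi^*(K_C+\det\shE)$, this yields $\mathscr{L}=\omega_C^{\otimes-1}$, i.e.\ $D^2=-(2g-2)$ or its sign variant. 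The other section has a different self-intersection, which rules out splitting. If $\shE\cong\O_C\oplus\omega_C^{-1}$, the two sections with the given numerical classes would have to be disjoint or coincide, and this contradicts the computed intersection $D_1\cdot D_2$, which equals the number of $\mathrm{Ree}(3)$-rational points counted as the vertices where the curves meet. Hence the extension is non-split.

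The main obstacle is the explicit equation for $C$: correctly writing the Ree isogeny on the root groups of $G_2$ in characteristic $3$, and reducing the relative-position condition to the two Artin--Schreier equations over $\FF_3$. Here I would first work in coordinates on $U_{\alpha}\times U_{\beta}$ for the relevant parabolic and apply the Chevalley commutator relations. The non-splitness argument is a secondary subtlety, which I would also back up by invoking the automorphism group: $\mathrm{Ree}(3)$ acts on $X$ and fixes no section of a split bundle of this type.
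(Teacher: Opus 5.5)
Your first step agrees with the paper. The non-splitness argument in your third step fails, however, and non-splitness is the real content of the theorem.

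\textbf{Why the numerical argument fails.} Intersection numbers cannot detect the extension class. On the split surface $X_0=\PP(\O_C\oplus\omega_C^{\otimes-1})$, the section $R=\PP(\omega_C^{\otimes-1})$ (kernel $\O_C$) has $R^2=-28$ and $\omega_{X_0}\cong\O_{X_0}(-2R)$. Now take any surjection $(a,b)\colon\O_C\oplus\omega_C^{\otimes-1}\to\omega_C$ with $a\in H^0(C,\omega_C)$ having $28$ zeros and $b\in H^0(C,\omega_C^{\otimes 2})$ nonvanishing at them. It defines a section $R'$ with $R\cdot R'=\deg(a)=28$ and $R'^2=28-(-56)=84$. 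This is exactly the class that $K_X\equiv-2R$ and adjunction force on $\bX(s_1)$. In a split ruled surface two sections need not be disjoint or equal: splitting only gives \emph{some} section disjoint from $R$, of self-intersection $28$, and nothing makes $\bX(s_1)$ that section. So $D_1\cdot D_2=28$ and all self-intersection numbers are consistent with a split $\shE$. Your fallback, that $\operatorname{Ree}(3)$ ``fixes no section of a split bundle of this type'', is unproved and is not the statement needed.

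\textbf{The missing idea.} You need to use the symmetry arithmetically, as the paper does.
\begin{itemize}
\item $H=G^\varphi(\FF_3)$ acts on $X$ and stabilizes both sections. So $s\colon\O_C\to\shE\to\omega_C$ is an $H$-invariant differential vanishing exactly at the $28$ rational points, which $H$ permutes transitively.
\item If $\shE$ split, the restriction $b\in H^0(C,\omega_C^{\otimes2})$ of the quotient $\shE\to\omega_C$ defining $R'$ to a complement of $\O_C$ is well defined modulo $s\cdot H^0(C,\omega_C)$. Hence its class is $H$-invariant.
\item Its image in $\bigoplus_{i=1}^{28}k$ is therefore a constant vector $(c,\dots,c)$, with $c\neq0$ by surjectivity.
\item That image must lie in the kernel of the $H$-equivariant surjection $\bigoplus_{i=1}^{28}k\to H^1(C,\omega_C)=k$ coming from $0\to\omega_C\xrightarrow{s}\omega_C^{\otimes2}\to\bigoplus_i\kappa(c_i)\to0$. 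That surjection is a nonzero multiple of the augmentation, and $28\not\equiv0\pmod 3$, which gives the contradiction.
\end{itemize}

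\textbf{Two smaller points.}
\begin{itemize}
\item From $K_X\equiv-2R$ numerically you only get $\deg\shN=-28$. To get $\shN\cong\omega_C^{\otimes-1}$ you need $K_X\sim-2R$ on the nose. That does hold here: the coefficients $0,-2$ come from an integral $\mu$, so $m=1$ in Hansen's formula. Alternatively use the paper's linearization argument.
\item To identify $C$, the paper does not compute in root groups. It uses that $\operatorname{Ree}(3)$ acts faithfully on the genus-$15$ curve and invokes the Hirschfeld--Korchm\'aros--Torres characterization (Thm.~12.31). Your explicit computation is left undone, and your parameters are off. Here $q_0=1$ and $q^2=3$, giving $g=\tfrac32\cdot1\cdot2\cdot5=15$, whereas $q_0=3$ is the next Ree curve. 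Your Artin--Schreier count is right: the intermediate covers have genera $3,4,4,4$, which sum to $15$.
\end{itemize}
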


Note that $\Ext^1(\omega_C^{\otimes-1},\O_C)$ is indeed one-dimensional.
Again it is crucial to work over the prime field, and exploit the symmetries originating from the Ree group 
$\operatorname{Ree}(3)=G^\varphi(\FF_3)$, which is isomorphic to $\PGL_2(\FF_8)\rtimes\Aut(\FF_8)$. 
In light on the work on curves whose symmetries beat the Hurwitz bound
(\cite{Stichtenoth 1973}, \cite{Stichtenoth 1973b}, \cite{Henn 1978}, \cite{Hansen; Pedersen 1993}), 
the so-called \emph{Ree curve} $C$ is indeed the only possibility.

The paper is structured as follows:
In Section \ref{sec:deligne--lusztig varieties} we recall the notation of Deligne--Lusztig varieties,   develop our approach via the Isogeny Theorem, and collect some foundational facts on compactification, connected components,
projective bundles, and purely inseparable maps.
In Section \ref{sec:dualizing sheaves} we review Hansen's Canonical Bundle formula, discuss how to compute the canonical divisor via computer algebra, and introduce the notation of Deligne--Lusztig varieties $X=\bX(w)$ where the canonical divisor $K_X$ has negativity.
In Section \ref{sec:weil restriction} we briefly discuss Weil restrictions in the context of Deligne--Lusztig varieties, while Section \ref{sec:case a2} contains an examination of root datum of type $A_2$.
In Section \ref{sec:prerequisites} we collect some prerequisites concerning K3 surfaces, ruled surfaces and Albanese maps, and elliptic curves over the prime field $k=\FF_2$, which are freely used throughout.
Section \ref{sec:case c2} is devoted to the Deligne--Lusztig surface of type $C_2$, and the identification with the supersingular K3 surface with Artin invariant $\sigma=1$. In Section \ref{sec:case a2twist}
and \ref{sec:case c2twist} we examine the twisted cases ${}^2A_2$ and ${}^2C_2$, where the Deligne--Lusztig surfaces become symmetric powers of supersingular elliptic curves in characteristic $p=2$. In Section \ref{sec:case g2twist} we analyze the remaining case ${}^2G_2$, which yields the ruled surface canonically attached to the Ree curve in characteristic $p=3$, a curve of genus fifteen of exceptional symmetry.
The Appendix \ref{sec:appendix} contains further tables for the canonical divisor of Deligne--Lusztig surface.

\begin{acknowledgement}
This research was partially conducted in the framework of the   research training groups
\emph{GRK 2240: Algebro-geometric Methods in Algebra, Arithmetic and Topology} and 
\emph{GRK 2553: Symmetries and classifying spaces: analytic, arithmetic and derived}, which are funded
by the Deutsche Forschungsgemeinschaft.
\end{acknowledgement}

\section{Deligne--Lusztig varieties}
\mylabel{sec:deligne--lusztig varieties}

\subsection{Absolute and relative \texorpdfstring{$q$}{q}-Frobenius}
\mylabel{subsec:frobenius maps}
Let $k$ be a field of characteristic $p>0$. Given a $k$-scheme $X$, we 
write $F^s_X\colon X\ra X$ for the powers of the absolute Frobenius map. This is the identity
on the underlying topological space, and   the $\FF_p$-linear endomorphism $f\mapsto f^{p^s}$ on the structure sheaf.
We denote the corresponding $k$-morphisms
by the symbol $F^s_{X/k}\colon X\ra X^{(p^s)}$. Here $X^{(p^s)}=X\otimes_kk$ is the base-change with respect to   $F^s_k\colon k\ra k$. 
It is convenient to call $F^s_X$ the \emph{absolute $q$-Frobenius},
and $F^s_{X/k}$ the \emph{relative $q$-Frobenius}, for the prime power $q=p^s$.

Suppose now that $k$ contains the finite field $k_0=\FF_q$, for $q=p^s$,
and $X=X_0\otimes_{k_0}k$ for some $k_0$-scheme $X_0$.
Then the absolute $q$-Frobenius on $X$ is $k_0$-linear, and thus  decomposes as
$
F^s_X= (\id_{X_0}\otimes F^s_k)\circ (F^s_{X_0}\otimes \id_k).
$
The commutative diagram
$$
\begin{CD}
\Spec(k)	@>F^s>>	\Spec(k)\\
@VVV		@VVV\\
\Spec(k_0)	@>>\id>	\Spec(k_0)
\end{CD}
$$
gives an  identification $X=X_0\otimes_{k_0}k=X^{(p^s)}$, and we see that  the relative $q$-Frobenius 
$F^s_{X/k}$ on $X$ is identified with the base-change of the absolute $q$-Frobenius $F^s_{X_0}$ on $X_0$.

In the special case $k_0=\FF_q$ and $k=\FF_q^\alg$, the absolute $q$-Frobenius   $\sigma=F^s_k$
is a topological generator for the Galois group $\Gal(k/k_0)$, and we observe  that the relative $q$-Frobenius
$$
X_0(k)=X(k)\stackrel{F^s_{X/k}}{\lra} X(k)=X_0(k)
$$
coincides with the action of  $\sigma\in\Gal(k/k_0)$.
In turn, the set of rational points $X_0(k_0)$ can be seen as the closed points $x\in X$
that are fixed by the $q$-Frobenius endomorphism $F^s_{X/k}:X\ra X$.
If for some reason one can write $F^s_{X/k}=\varphi^r$  with another endomorphism $\varphi:X\ra X$ and some exponent $r\geq 0$,
one can also look at the closed points $x\in X$ that are  fixed by $\varphi$, and obtains a certain   subset
$$
X(k)^\varphi \subset X(k)^{F^s_{X/k}} = X_0(k_0)=X_0(\FF_{p^s}).
$$
If $X_0=G_0$ is a group scheme, and the endomorphism $\varphi$ respects the group law, we get certain groups
$G(k)^\varphi$. Indeed, the finite   groups of Lie type arise in this way.

\subsection{The Isogeny Theorem}
\mylabel{subsec:isogeny theorem}
In this section we summarize the theory of pinned reductive groups, on which our framework for Deligne--Lusztig varieties relies.
The main source is \cite{SGA3-3}, Exposés~XXIII and XXV. Further references are \cite{Conrad; Gabber; Prasad 2015} Appendix~A.4, \cite{Milne 2017} Ch.~23, \cite{Steinberg 1999}.
For our purposes it suffices to work over a ground field $k$ rather than a general base scheme, and we write $p\geq 1$ for its characteristic exponent.

Recall that a \emph{split reductive group} is a pair $(G,T)$ comprising a linear algebraic group $G$ that is smooth, connected and reductive,
together with a split maximal torus $T\subset G$.
The conjugation action by $T$ on the Lie algebra $\lieg=\Lie(G)$ corresponds to a weight decomposition
$\lieg=\bigoplus \lieg_\alpha$ indexed by the character lattice $X^*=\Hom(T,\GG_m)$
and defines a root system
$$
\Phi=\{\alpha\in X^*\setminus \left\{ 0 \right\} \mid \lieg_\alpha\neq 0\}
$$
inside the real vector space $\RR\Phi\subset X^*\otimes\RR$.
The Borel groups $B\subset G$ containing $T$ correspond to the systems of positive roots, 
via $\Phi_+=\{\alpha\in\Phi\mid \lieg_\alpha\subset\Lie(B)\}$, or equivalently to the systems of simple roots $\Delta$.

The group scheme $N_G(T)/T$ is a constant finite group scheme, and its action by conjugation on the character lattice identifies it
with the Weyl group $W\subset \GL(\RR\Phi)$ of the root system. 
For each root $\alpha\in \Phi$, the eigenspace $\lieg_\alpha$ is one-dimensional, and there is a unique
$U_\alpha\subset G$ that is  isomorphic to the additive group, is normalized by the torus, and has  $\Lie(U_\alpha)=\lieg_\alpha$.
These $U_\alpha$ are called \emph{root groups}. The choice of a Borel group $B$ containing the torus,
together with isomorphisms $\psi_\alpha\colon\GG_a\ra U_\alpha$, $\alpha\in\Delta$, is called a \emph{pinning}
for the split reductive group $(G,T)$. 

A \emph{pinned reductive group} is  a quadruple  $(G,T,B,\psi)$ where $(G,T)$ is a split reductive group, together with a pinning
provided by $B$ and $\psi=(\psi_\alpha)$ as above. The pinned reductive groups form a category:
The morphisms
$$
f\colon(G,T,B,\psi)\lra (G',T',B',\psi')
$$
are homomorphisms $f\colon G\ra G'$ of group schemes  with $f(T)\subset T'$, such that there exist a bijection 
$d\colon \Phi\ra \Phi'$   with $d(\Delta)=\Delta'$   and  a map $q\colon \Phi\ra  p^\NN$ such that
$$
f\circ\psi_\alpha=\psi'_{d(\alpha)}\quadand f^*(d(\alpha))=q(\alpha)\alpha\quadand f_*(\alpha^\vee)=q(\alpha)d(\alpha)^\vee,
$$
for all roots $\alpha\in\Phi$. Here $f^*\colon X'^*\ra X^*$ and $f_*\colon X_*\ra X'_*$ are the induced maps
on   characters and co-characters of the fixed tori. Note that both maps $d$ and $q$   are uniquely determined by $f$,
and that furthermore $f(B)\subset B'$.

It turns out that the category of pinned reductive group can be described entirely in  terms of lattices, root systems,
and characteristic exponents, and otherwise does not depend on the ground field $k$.
For this we recall that a \emph{root datum} is a tuple 
$(X^*,\Phi,X_*,\Phi^\vee)$
where $X^*$ is a finitely generated free $\ZZ$-module, $X_*=\Hom(X^*,\ZZ)$ is the dual lattice, 
$\Phi\subset X^*$ and $\Phi^\vee\subset X_*$ are finite sets  endowed with a bijection  
 $\alpha\mapsto \alpha^\vee$, such that with respect to the evaluation pairing we have $\langle \alpha,\alpha^\vee\rangle=2$,
and that the ensuing reflections
$s_\alpha(x)=x-\langle x, \alpha^\vee\rangle \alpha$ and $s_{\alpha^\vee}(y)=y-\langle y,\alpha\rangle \alpha^\vee$
stabilize $\Phi$ and $\Phi^\vee$, respectively. 
A root datum is called reduced, if in addition $\alpha\in\Phi$ implies $2\alpha\not\in\Phi$.
Note that the bijection $\alpha\mapsto \alpha^\vee$ is unique, and
$\Phi$  is a reduced root system in the vector subspace $\RR\Phi\subset X^*\otimes\RR$.
The latter  inclusion might be strict, and $\Phi$ actually could be empty. 

While the definition of morphism of pinned reductive groups may seem overly complicated and even a bit arbitrary, it is justified by the following observations.
    Morphisms of pinned groups with $q(\alpha)\ne 1$ and even with $q(\alpha)$ depending on $\alpha$ do exist in positive characteristic; see Theorem~\ref{thm:isogeny theorem} below. On the other hand, \cite{SGA3-3} Proposition~4.2.12 shows that every isogeny $(G, T)\to (G', T')$ of split reductive groups is of the above form, for suitable choices of pinnings.

A \emph{pinned root datum} is a tuple $(X^*,\Phi,X_*,\Phi^\vee,\Delta)$
consisting of a root datum and a  system $\Delta\subset\Phi$ of simple roots.
To obtain a category, we  additionally fix 
an integer $p\geq 1$ that is either   prime or is equal to $1$; it is convenient to call
such numbers \emph{characteristic exponents}. Then a  \emph{$p$-morphism}
\[
h:(X^*,\Phi,X_*,\Phi^\vee,\Delta)\lra (X'^*,\Phi',X'_*,\Phi'^\vee,\Delta')
\]
of pinned root data is a homomorphism $h_*:X_*\ra X'_*$ of abelian groups such that there exist a bijection $d:\Phi\ra\Phi'$
with $d(\Delta)=\Delta'$   and a  map $q:\Phi\ra p^\NN$ such that
\begin{equation}\label{condition isogeny}
h^*(d(\alpha))=q(\alpha)\alpha\quadand h_*(\alpha^\vee)=q(\alpha)d(\alpha)^\vee
\end{equation}
for all roots $\alpha\in\Phi$. Here $h^*\colon X'^*\ra X^*$ is the dual of the map  $h_*$. 
With this definition, with respect to a chosen characteristic exponent $p\geq 1$,  the pinned root data form a category.

The pinned reductive groups and the pinned root data are related by a functor: 
Given   $(G,T,B,\psi)$ we obtain a pinned root datum $(X^*,\Phi,X_*,\Phi^\vee,\Delta)$ as explained above. From the definition of Hom sets in  the two categories, it is clear that this extends to a functor.
Now \cite{SGA3-3}, Exposé XXV, Théorème 1.1 that is usually referred to as the \emph{isogeny theorem} takes the form:

\begin{theorem}\mylabel{thm:isogeny theorem}
Let $k$ be a field with characteristic exponent $p\geq 1$. Then the above functor is an equivalence between
the category of  pinned reductive groups over $k$, and the category of pinned reduced root data and $p$-morphisms.
\end{theorem}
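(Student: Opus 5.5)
The statement is essentially a reformulation of \cite{SGA3-3}, Exposé XXV, Théorème 1.1, so my plan is to reduce to that result and check that the two categorical set-ups agree, rather than reprove the Isogeny Theorem from scratch. First I verify that the functor is well defined. Given a morphism $f\colon(G,T,B,\psi)\to(G',T',B',\psi')$, the map $f_*\colon X_*\to X'_*$ on cocharacters, together with the bijection $d$ and the map $q$ attached to $f$, satisfies the relations \eqref{condition isogeny} by definition. Hence $f_*$ is a $p$-morphism of pinned root data. Functoriality follows because $d$ and $q$ compose multiplicatively, and $(g\circ f)_*=g_*\circ f_*$. Reducedness of the root datum of a reductive group over a field is standard.

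Next comes essential surjectivity. Given a pinned reduced root datum, the existence theorem of SGA3, Exposé XXV, produces a split reductive group $(G,T)$ over $\ZZ$, hence over $k$ by base change, realizing the root datum. The system $\Delta$ determines a Borel $B\supset T$. Since each $U_\alpha\simeq\GG_a$, choosing isomorphisms $\psi_\alpha$ for $\alpha\in\Delta$ gives a pinning. This uses only the Existence Theorem, which the paper already takes from \cite{SGA3-3}.

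The main step is full faithfulness. Faithfulness: a homomorphism $f$ compatible with pinnings is determined on $T$ by $f_*$, and on each $U_\alpha$, $\alpha\in\pm\Delta$, by $\psi$ and $d$. Here I use that $f\circ\psi_\alpha=\psi'_{d(\alpha)}$ for simple roots, and for $-\alpha$ one uses the $\SL_2$/$\PGL_2$ generated by $U_{\pm\alpha}$ together with the coroot compatibility. Since $G$ is generated by $T$ and the $U_{\pm\alpha}$ with $\alpha\in\Delta$ (big cell argument), $f$ is unique. Fullness is the content of the isogeny theorem proper. Given a $p$-morphism $h$, one must construct $f$. I would quote SGA3 XXV 1.1, and the work lies in matching its notion of ``morphisme épinglé'' and the map $q$ with powers of $p$ to the definition here. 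Concretely, one must check that the condition $h^*(d(\alpha))=q(\alpha)\alpha$ only for the relevant $\alpha$ determines and is implied by the conditions for all roots. This follows since $d$ is induced by $h$ via the Weyl group equivariance, and $q$ is constant on $W$-orbits in each simple factor.

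The obstacle I expect is precisely this bookkeeping. The definitions here require the relations for all $\alpha\in\Phi$ and $f\circ\psi_\alpha=\psi'_{d(\alpha)}$ for all roots, although pinnings are only given on $\Delta$. I would first interpret $\psi_\alpha$ for non-simple $\alpha$ via a Chevalley system extending the pinning, so that the statement makes sense. Then I would show that the data on $\Delta$ force the rest through the Weyl group action. In the Suzuki--Ree situation ($p=2,3$ with $q$ differing on long and short roots), I would double-check that SGA3 allows $q$ to vary, which it does when $p$ divides the relevant root length ratio.
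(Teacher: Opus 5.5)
Your route is the paper's own. The paper gives no argument beyond invoking \cite{SGA3-3}, and your checks of functoriality, essential surjectivity and faithfulness are fine. The trouble is in the step you yourself single out as the real work, namely matching the two notions of morphism. Set up as you propose, it fails in two places.

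First, consider the root-group condition $f\circ\psi_\alpha=\psi'_{d(\alpha)}$, which you take over verbatim and use for faithfulness. It contradicts $f^*(d(\alpha))=q(\alpha)\alpha$ whenever $q(\alpha)\neq 1$. Applying $f$ to $t\,\psi_\alpha(x)\,t^{-1}=\psi_\alpha(\alpha(t)x)$ gives $\psi'_{d(\alpha)}(\alpha(t)^{q(\alpha)}x)=\psi'_{d(\alpha)}(\alpha(t)x)$, which forces $q(\alpha)=1$. Read literally, no morphism of pinned groups induces $p\cdot\id_{X_*}$ as soon as $\Phi\neq\varnothing$. So fullness fails exactly for the Frobenius and Suzuki--Ree isogenies the paper is built on, and no bookkeeping about which roots carry the condition can repair this. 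The condition that matches SGA3's notion of morphism of pinned groups is $f(\psi_\alpha(x))=\psi'_{d(\alpha)}(x^{q(\alpha)})$. Given the torus condition, this is equivalent to $f(\psi_\alpha(1))=\psi'_{d(\alpha)}(1)$. Your faithfulness argument goes through unchanged with it.

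Second, this condition must be imposed for $\alpha\in\Delta$ only. You plan to extend the pinning by a Chevalley system $(x_\beta)_{\beta\in\Phi}$ and then deduce $f(x_\beta(t))=x'_{d(\beta)}(t^{q(\beta)})$ for all roots from the data on $\Delta$. This cannot work, because of signs. Take the pinned graph automorphism $\theta$ of $\SL_3$, so $\theta\circ x_{\alpha_1}=x_{\alpha_2}$ and $\theta\circ x_{\alpha_2}=x_{\alpha_1}$. Write $[x_{\alpha_1}(s),x_{\alpha_2}(t)]=x_{\alpha_1+\alpha_2}(cst)$ with $c=\pm1$. Then
$\theta(x_{\alpha_1+\alpha_2}(cst))=[x_{\alpha_2}(s),x_{\alpha_1}(t)]=[x_{\alpha_1}(t),x_{\alpha_2}(s)]^{-1}=x_{\alpha_1+\alpha_2}(-cst)$,
and this holds for every choice of Chevalley system. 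So outside characteristic two, neither $\theta$ nor the ${}^2A_2$ isogeny of the paper would be a morphism under your reading, and fullness fails again. In general one only gets $f(x_\beta(t))=x'_{d(\beta)}(\pm t^{q(\beta)})$.

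What your Weyl group argument does correctly show is that the lattice relations $h^*(d(\alpha))=q(\alpha)\alpha$ and $h_*(\alpha^\vee)=q(\alpha)d(\alpha)^\vee$ propagate from $\Delta$ to all of $\Phi$, with $q$ constant on $W$-orbits. Once the root-group condition is corrected and restricted to $\Delta$, the identification with SGA3 is complete.
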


An important consequence of this is that  for each field extension $k\subset k'$, the ensuing base change $G\mapsto G\otimes k'$ induces
an equivalence between the categories of pinned reductive groups over $k$ and $k'$. In particular, each pinned reductive group over $k$
descends to  the prime field of $k$.

Recall that up to isomorphism, the irreducible reduced root systems $\Phi$ correspond to the Dynkin diagrams $A_\ell$, $B_\ell$, \ldots, $F_4$, $G_2$, and this symbol is called the \emph{type} of   $G$.

If the inclusion $\RR\Phi\subset X^*\otimes\RR$ is an equality, then the reductive group $G$ is called \emph{semisimple}. If $\Phi\subset X^*$ or $\Phi^\vee\subset X_*$ form a set of generators, $G$ is called \emph{of adjoint type} or   \emph{simply-connected}, respectively. 

From now on, we assume that we are in characteristic $p>0$. Let $(X^*,X_*,\Phi,\Phi^\vee,\Delta)$ be a pinned reduced root datum,
and  $(G,T,B,\psi)$ be the corresponding pinned reductive group over the prime field $k=\FF_p$.
First note that the multiplication maps $p^s\colon X_*\ra X_*$ correspond to the  iterated Frobenius maps $F^s\colon G\ra G^{(p^s)}=G$.
Suppose now that $\varphi_*:X_*\ra X_*$ is an endomorphism of the pinned root datum with the property
$\varphi_*^r= p^s\cdot\id_{X^*}$ for some exponents $r,s\geq 1$.  The corresponding isogeny $\varphi: G\ra G$
of the pinned reductive group has the property $\varphi^r=F^s$.
Write $G^\varphi\subset G$ for the scheme of fixed points, which is a subgroup scheme.

\begin{proposition}
\mylabel{prop:structure Gphi}
The group scheme $G^\varphi$ is finite and \'etale, and its base-change to $\FF_{p^s}$ is constant.
\end{proposition}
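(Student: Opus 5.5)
We would prove that $G^\varphi$ is étale by showing its tangent space at the identity vanishes, after which finiteness and constancy follow by comparison with $G^{F^s}$. Concretely, $G^\varphi$ is the fiber product of the graph of $\varphi$ with the diagonal in $G\times G$. Equivalently, it is the preimage of $e$ under the Lang-type morphism $L\colon G\ra G$, $g\mapsto g^{-1}\varphi(g)$. Its Lie algebra is the kernel of $\id-d\varphi$ on $\lieg=\Lie(G)$.

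The first step is to show $d\varphi$ is nilpotent. Since $\varphi^r=F^s$ with $s\geq 1$, we get $(d\varphi)^r=d(F^s)=0$, because the differential of the relative Frobenius vanishes. Hence $\id-d\varphi$ is invertible, with inverse $\sum_{i<r}(d\varphi)^i$. So $\Lie(G^\varphi)=0$. The same computation applies at every point of $G^\varphi$ after translating: for $g\in G^\varphi(R)$, left translation by $g$ identifies the fixed-point scheme near $g$ with that near $e$, since $\varphi(gh)=g\varphi(h)$. Alternatively, the tangent map of $L$ at any point is $\id-d\varphi$ up to translation, so $L$ is étale. Either way, $G^\varphi$ is unramified over $k$, hence a disjoint union of spectra of finite separable field extensions, i.e.\ étale.

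The second step is finiteness and the constancy claim. Every fixed point of $\varphi$ is a fixed point of $\varphi^r=F^s$, so $G^\varphi\subset G^{F^s}$ is a closed subscheme. Over $k=\FF_p$, the relative $q$-Frobenius on $G=G_0$ coincides with the absolute $q$-Frobenius, as in Section~\ref{subsec:frobenius maps}. Thus $G^{F^s}$ is the closed subscheme of an affine $G\subset\AA^N$ cut out by the equations $x_i^{q}=x_i$ together with the equations of $G$. This makes it a closed subscheme of the finite étale scheme $\Spec\prod\FF_p[x]/(x^q-x)$, whose points are all $\FF_q$-rational. Hence $G^{F^s}$, and with it the closed subscheme $G^\varphi$, is finite étale with all residue fields contained in $\FF_q$. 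A finite étale $k$-scheme all of whose points have residue field inside $\FF_{p^s}$ becomes constant after base change to $\FF_{p^s}$, which is the claim.

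The main obstacle is purely formal: one must make sure that the isogeny $\varphi$ produced by Theorem~\ref{thm:isogeny theorem} genuinely satisfies $\varphi^r=F^s$ as morphisms of schemes over $\FF_p$. This is needed so that the differential argument and the embedding into $G^{F^s}$ are legitimate. It follows from the faithfulness of the equivalence, since $\varphi_*^r=p^s$ and $p^s$ corresponds to $F^s$. The remaining steps are routine.
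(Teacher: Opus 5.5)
Your proof is correct, and its decisive step is the paper's argument: $G^\varphi$ is a closed subscheme of $G^{F^s}$, which is finite \'etale with all residue fields inside $\FF_{p^s}$. The paper asserts this after base change to $\FF_{p^s}$ and concludes by descent, whereas you verify it directly over $\FF_p$ via the equations $x_i^q=x_i$ (the ambient algebra there is the tensor product $\bigotimes_i\FF_p[x_i]/(x_i^q-x_i)$, not a product), and your tangent-space step is valid but redundant, since a closed subscheme of a finite \'etale scheme over a field is automatically finite \'etale.
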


\proof
By descent theory, it is sufficient to check the final statement. But the fix point scheme of $F^s$ in $G_{\FF_{p^s}}$ is simply the finite closed subscheme of points with residue class field $\FF_{p^s}$, and $(G^\varphi)_{\FF_{p^s}}$ is its subscheme consisting of those points that are fixed by $\varphi$.
\qed

\medskip
The   groups $G^\varphi(\FF_{p^s})$ are called \emph{finite groups of Lie type}.
From a more traditional point of view, they can  also be seen as the group of closed points 
inside $G\otimes k^\alg$ that are fixed by the endomorphism $\varphi\otimes k^\alg$. All such points are fixed by 
$F^s\otimes k^\alg$, and are therefore defined over the subfield $\FF_{p^s}\subset k^\alg$.
For us, the finite \'etale group scheme $G^\varphi$ is the primordial object;
it would be interesting to study the significance of its scheme structure in more detail.

By definition, the morphism $\varphi_*$ induces a  permutation $d\colon \Delta\ra\Delta$.
Its order equals the smallest $r\geq 1$ with $\varphi_*^r=p^s$.
Since the permutation respects the edges in the Dynkin diagram (including their multiplicity) the only possibilities are   $1\leq r\leq 3$. 
As is common, we extend the Dynkin type notation to denote not only the group $G$, but the pair $(G, \varphi)$. Then the undecorated symbols $A_\ell$, $B_\ell$, etc.~denote the case $r=1$, i.e., that $\varphi$ is the relative $q$-Frobenius.
The \emph{twisted cases} $r=2$ and $r=3$ are  indicated by upper indices:
$$
{}^2A_\ell,\quad {}^2B_2={}^2C_2, \quad {}^2D_\ell,\quad {}^2E_6,\quad    {}^2F_4,\quad  {}^2G_2\quadand {}^3D_4.
$$
Note that in  the \emph{Suzuki--Ree cases}  ${}^2C_2$, ${}^2F_4$ and ${}^2G_2$, 
the permutation  does not preserve root lengths.
In any case, for a fixed permutation $d\colon \Delta\ra\Delta$, the possible endomorphisms $\varphi_*$ 
must satisfy \eqref{condition isogeny}. So with respect to the basis $\alpha_i^\vee\in X_*$, the map
is described by a monomial matrix of the form $M=(p^{s_i}\delta_{i,d(j)})_{i,j}$,
and its transpose has the property $\alpha_{d(j)}\mapsto p^{s_j}\alpha_j$.
Using that the coordinates of the simple roots with respect to the dual basis of the simple co-roots
form the rows of the Cartan matrix $C=(\langle \alpha_i,\alpha_j^\vee\rangle)$, one obtains
a classification of the possible $\varphi_*$.

\begin{example}
Consider, for example, the case ${}^2F_4$: The root system $\Phi$ lies in the
vector space $V=\RR^4$, with standard orthogonal basis $\epsilon_1,\ldots,\epsilon_4$ and   simple roots
$$
\alpha_1=\epsilon_2-\epsilon_3,\quad \alpha_2=\epsilon_3-\epsilon_4,\quad \alpha_3=\epsilon_4,\quad 
\alpha_4=\frac{1}{2}(\epsilon_1-\epsilon_2-\epsilon_3-\epsilon_4).
$$
The simple coroots are $\alpha_1^\vee=\alpha_1$, $\alpha_2^\vee=\alpha_2$, $\alpha_3^\vee=2\alpha_3$,  $\alpha_4^\vee=2\alpha_4$,
the permutation is $d= (14)(23)$, and the matrices $M$ and $C$ take the form
$$
M=\begin{pmatrix}
	&	&	& p^{s_4}\\
	&	& p^{s_3}\\
	& p^{s_2}\\
p^{s_1}\\
\end{pmatrix}
\quadand
C= \begin{pmatrix}
2	& -1\\
-1	& 2	& -2\\
	& -1	& 2	& -1\\
	&	& -1	& 2
\end{pmatrix}.
$$
The condition $\varphi^*(\alpha_2)=p^{s_3}\alpha_3$ gives $p=2$ and $s_4=s_3=s_2+1$,
while $\varphi^*(\alpha_4)=p^{s_1}\alpha_1$ means $s_1=s_2$.
Summing up, we have $p=2$ and $r=2$ and $s=2n+1$ for some $n\geq 0$, and the whole situation, with 
group scheme  $G^\varphi$ and finite group of Lie type $G^\varphi(\FF_{p^s})$,
is indicated by the symbol ${}^2F_4(q^2)$, with real number $q=2^{(2n+1)/2}$.
The situation for the other Suzuki--Ree cases is similar. We will look at the case ${^2}B_2$ in detail in Section~\ref{sec:case c2twist}, and at the case ${^2}G_{2}$ in Section~\ref{sec:case g2twist}.

We add the following table about the notation which unfortunately is not entirely consistent in the existing literature.

    \begin{center}
        {\renewcommand{\arraystretch}{1.3}
            \begin{tabular}{l|c|c}
                This paper & Hansen \cite{Hansen 1992} & Carter \cite{Carter 1972} \\\hline
                $n$ & $n$ & $n$ \\\hline
                $q = \sqrt{p^{2n+1}}$ & $Q$ & $q$ \\\hline
                $q_0 = p^n$ & $-$ & $-$  \\\hline
                $q^2$       & $q$ & $-$
            \end{tabular}
        }
    \end{center}
\end{example}

\subsection{The relative position map}
\mylabel{subsec:relative position}
Let us recall from~\cite{Deligne; Lusztig 1976} Section~1.2 the relative position map.
Let $k$ be a field, and let $(G, T)$ be a split reductive group over $k$ with a fixed Borel group $B \supset T$.

Denote by $W$ the Weyl group of $G$ with respect to $T$, i.e., $W=(N_G(T)/T)(k)$.
The choice of $B$ determines a system $\bbS = \{s_1,\dots, s_n\}$ of simple reflections which generate the Coxeter group $W$.
The choice of generators of $W$ determines the length function $\ell\colon W\to \ZZ_{\ge 0}$ and the Bruhat order.

For any fixed Weyl group element $w\in W$, the subscheme
\[
    \left\{ (g_1, g_2)\in G\times G;\ g_1^{-1} g_2 \in BwB \right\} 
\]
is invariant under the action of $B\times B$ by multiplication on the right. It is the inverse image of a subscheme $\catO(w) \subset G/B\times G/B$ under the natural projection. We call $\catO(w)$ the subscheme of pairs \emph{in relative position $w$}.

We may equivalently describe the scheme $\catO(w)$ as follows.
We have a natural morphism
\[
    \inv\colon G/B \times G/B \to [G\backslash (G/B\times G/B)] \xrightarrow{\cong} [B\backslash G/B],
\]
where $G$ acts diagonally on $G/B\times G/B$, and the isomorphism $[G\backslash (G/B\times G/B)] = [B\backslash G/B]$ is induced by $(g_1, g_2)\mapsto g_1^{-1}g_2$. The stack quotient $[B\backslash G/B]$ has corresponding topological space $W$ (with the topology induced from the Bruhat order). With this terminology, $\catO(w) \subset G/B \times G/B$ is the fiber of the morphism $\inv$ over the point $w\in W = [B\backslash G/B]$.

The closure $\overline{\catO(w)}$ is the union of all $\catO(v)$ for $v\le w$.

In particular, for every field $k$ we obtain the \emph{relative position map}
\[
    \inv\colon (G/B)(k) \times (G/B)(k) \to W.
\]
Explicitly, $\inv(g_1, g_2)=w$ is equivalent to $g_1^{-1}g_2\in BwB$. See~\cite{Deligne; Lusztig 1976} Section 1.2. There it is also explained that the above can be carried out ``without choosing a base point $B$'' of the variety of all Borel subgroups of $G$.

Furthermore (see loc.~cit.), for $w, w_1, w_2\in W$ with $w = w_1 w_2$ and $\ell(w_1) + \ell(w_2) = \ell(w)$, we have an isomorphism
\begin{equation}\label{eq: decomposition relative position}
    \catO(w_1)\times_{G/B}\catO(w_2)\isomarrow \catO(w),\qquad
    ((gB, g'B), (g'B, g''B))\mapsto (gB, g''B).
\end{equation}

For specific classical groups, the relative position can be expressed in terms of flags (or symplectic flags, etc.), see below.

Later we will need the following lemma on the closure $\overline{\catO(s)}$ of $\catO(s)$ in $G/B \times G/B$.

\begin{lemma}\mylabel{lem:structure barOs}
    Let $s\in \bbS$ be a simple reflection. The composition
    \[
        \overline{\catO(s)} \subset G/B\times G/B\xrightarrow{{\rm pr}_2} G/B
    \]
    is a Zariski-locally trivial $\PP^1$-bundle.
\end{lemma}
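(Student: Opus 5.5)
The plan is to identify $\overline{\catO(s)}$ with $\catO(1)\cup\catO(s)$ and then recognise it as the fiber product $G/B\times_{G/P_s}G/B$, where $P_s=B\cup BsB$ is the minimal parabolic subgroup attached to $s$. First, by the closure relation recalled above and since the only elements $v\le s$ are $v=1$ and $v=s$, we get $\overline{\catO(s)}=\catO(1)\cup\catO(s)$ as sets. On points, this set consists of the pairs $(g_1B,g_2B)$ with $g_1^{-1}g_2\in B\cup BsB=P_s$. This is exactly the condition $g_1P_s=g_2P_s$. Hence $\overline{\catO(s)}$ and $G/B\times_{G/P_s}G/B$ have the same underlying closed subset of $G/B\times G/B$.

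Next I would check that the two agree as schemes. The fiber product $G/B\times_{G/P_s}G/B$ is a closed subscheme of $G/B\times G/B$, because $G/P_s$ is separated. It is reduced and irreducible, since via $(g,p)\mapsto(gB,gpB)$ it is the quotient of $G\times^{B}(P_s/B)$, a $P_s/B$-bundle over $G/B$. Since $\overline{\catO(s)}$ carries the reduced structure, the two closed subschemes coincide. Under this identification, $\mathrm{pr}_2$ becomes the base change of the projection $\pi\colon G/B\to G/P_s$ along $\pi$ itself.

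It therefore suffices to show that $\pi\colon G/B\to G/P_s$ is a Zariski-locally trivial $\PP^1$-bundle, since this property is stable under base change. The fiber is $P_s/B$. Writing $P_s=L_s\ltimes U_{P_s}$ with Levi subgroup $L_s$ of semisimple rank one, this fiber is $L_s/(L_s\cap B)\cong\PP^1$. For local triviality I would use the big cell. The morphism
\[
U^-_{P}\times P_s/B\lra G/B,\qquad (u,x)\mapsto ux,
\]
where $U^-_{P}$ is the unipotent radical opposite to $P_s$, is an open immersion onto $\pi^{-1}(U^-_{P}P_s/P_s)$. Here $U^-_{P}\to G/P_s$ is an open immersion by the big-cell decomposition of \cite{SGA3-3}, and this trivialises $\pi$ over it. Translating by $G(k)$, or by the Weyl group representatives $\dot w\in N_G(T)(k)$, which exist because $G$ is split, covers $G/P_s$ by finitely many such opens, namely $\dot wU^-_{P}P_s/P_s$ for $w\in W$.

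The main obstacle is arguing that these Weyl translates of the big cell cover $G/P_s$ scheme-theoretically over a non-closed field such as $k=\FF_p$, rather than just on $k^\alg$-points. I would handle this via the Bruhat decomposition $G=\bigcup_w \dot wU^-B$, which holds for split groups over any field. Alternatively, one can check the covering after base change to $k^\alg$, since the opens are defined over $k$ and an open cover can be tested on geometric points.
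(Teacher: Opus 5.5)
Your proposal is correct, but it takes a different route from the paper. The paper never passes through $G/P_s$. It works directly with ${\rm pr}_2$: over the open cell $Uw_0B/B\cong U$ it uses the section $u\mapsto uw_0$ of $G\to G/B$ to translate each fiber back to the Schubert curve. This gives an isomorphism $\overline{\catO(s)}\cap(G/B\times Uw_0B/B)\cong\overline{BsB/B}\times Uw_0B/B$ over $Uw_0B/B$, and the paper then covers $G/B$ by translates of the open cell. In effect it shows $\overline{\catO(s)}\cong G\times^B(\overline{BsB}/B)$ and uses that $G\to G/B$ is Zariski-locally trivial.

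You instead identify $\overline{\catO(s)}$ with $G/B\times_{G/P_s}G/B$. You then reduce by base change to $\pi\colon G/B\to G/P_s$, which you trivialize over the parabolic big cell $U^-_PP_s/P_s$ and its Weyl translates. Your version gives more: the bundle is pulled back from $G/P_s$, so for instance ${\rm pr}_1$ is a $\PP^1$-bundle by symmetry. It also pins down the scheme structure of the closure, a point the paper passes over quickly. The price is extra input about the minimal parabolic, namely $P_s=B\cup BsB$ and the open immersion $U^-_P\times P_s\to G$ from SGA3. The paper gets by with the big cell of $G/B$ and local sections of $G\to G/B$.

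One point of order. The phrase ``it is the quotient of $G\times^B(P_s/B)$'' does not by itself give reducedness of the fiber product, since a scheme receiving a surjection from a reduced scheme need not be reduced. Either state that the map is an isomorphism, or first prove that $\pi$ is a locally trivial $\PP^1$-bundle. Then $G/B\times_{G/P_s}G/B$ is smooth over $G/B$, hence reduced, and only afterwards compare it with the reduced closed subscheme $\overline{\catO(s)}$.
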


\begin{proof}
    Denote by $U \subset B$ the unipotent radical and by $w_0\in W$ the longest element. Then
    \[
        U \xrightarrow{\cong} U w_0 B/B \subset G/B
    \]
    is an open immersion, and $G/B$ is covered by translates of the \emph{open cell} $U w_0 B/B$. Thus it is enough to give an isomorphism $\overline{\catO(s)} \cap (G/B\times Uw_0B/B)\cong  \overline{BsB/B} \times Uw_0B/B$ of schemes over $Uw_0B/B$.

    Given $(g_1, g_2)\in \overline{\catO(s)} \cap (G/B\times Uw_0B/B)$ we may write $g_2 = uw_0 B/B$ for a unique $u\in U$, and then $((uw_0)^{-1}g_{1}, g_{2}) \in \overline{BsB/B} \times Uw_0B/B$. Conversely, we map $(h_{1}, h_{2})\in \overline{BsB/B} \times Uw_0B/B$ to $(h_{2} h_{1}, h_{2}) \in \overline{\catO(s)} \cap (G/B\times Uw_0B/B)$.
\end{proof}

\subsection{Relative position for type \texorpdfstring{$A$}{A}}
\mylabel{subsec:relpos A}
Let $k$ be a field. Denote by $(e_\nu)_{\nu=1, \dots, n}$ the standard basis of $k^n$.
For $G=\GL_n$ with Borel subgroup $B$ the subgroup of upper triangular matrices, we may identify the Weyl group $W$ for the diagonal maximal torus with the subgroup of permutation matrices of $G$, and view $G/B$ as the variety of full flags in $k^n$. It follows from the Bruhat decomposition that for flags $(F_i)$, $(G_i)$, the relative position $\inv((F_i), (G_i))$ is the unique element $w\in W = S_n$ such that
\[
    \dim(F_i\cap G_j) = \dim\left( \langle e_1,\dots, e_i\rangle\cap \langle e_{w(1)},\dots, e_{w(j)}\rangle \right).
\]
In fact, the numbers $\dim(F_i\cap G_j)$ are clearly constant on each $\mathcal O(w)$, and $w$ is uniquely determined by all of them taken together.
Let us write
\[
    w[i, j] = \dim\left( \langle e_1,\dots, e_i\rangle\cap \langle e_{w(1)},\dots, e_{w(j)}\rangle \right) = \# \{ \nu;\ 1\le \nu\le j,\ w(\nu)\le i \}.
\]
One checks in combinatorial terms that in terms of these numbers the Bruhat order is given by componentwise comparison (\cite{Bjoerner; Brenti 2005} Theorem 2.1.5; note that their normalization is slightly different), i.e.,
\[
    \inv((F_i), (G_i)) \le w \quad\Longleftrightarrow\quad
    \dim(F_i\cap G_j) \ge w[i,j]\ \text{for all}\ i, j.
\]

For $n=3$, we have $W = S_3 = \langle s_1, s_2\rangle$ and the possible relative positions have the following explicit description.

\begin{lemma}\label{lem:relative position explicit}
    Let $(L\subset U)$, $(L'\subset U')$ be flags in $k^3$. The following table gives, for each $w$ of length $\le 2$, the condition that $\inv((L\subset U), (L'\subset U')) \le w$.

\[
\begin{array}{l|*{5}{l}}
\toprule
w\phantom{aa}	& e	& s_1	& s_2	& s_1s_2		& s_2s_1		\\[.1cm]
\toprule
& L=L',U=U'\phantom{aaa}	& U=U'\phantom{aaa}	& L=L'\phantom{aaa}	& L'\subset U\phantom{aaa}	& L\subset U'	\\
\bottomrule
\end{array}
\]
\medskip
\end{lemma}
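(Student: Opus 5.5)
The plan is to reduce everything to the combinatorial criterion established just above. For flags $(F_i)$, $(G_j)$ in $k^3$ we have $\inv((F_i),(G_j))\le w$ if and only if $\dim(F_i\cap G_j)\ge w[i,j]$ for all $i,j$. In our situation $F_1=L$, $F_2=U$, $G_1=L'$, $G_2=U'$, and $F_3=G_3=k^3$. So for each of the five elements $w\in\{e,s_1,s_2,s_1s_2,s_2s_1\}$ I will compute the numbers
\[
w[i,j]=\#\{\nu\le j;\ w(\nu)\le i\},\qquad 1\le i,j\le 2.
\]
The entries with $i=3$ or $j=3$ give only the conditions $\dim F_i\ge i$ and $\dim G_j\ge j$, which always hold. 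It therefore suffices to compute the $2\times 2$ matrix $(w[i,j])_{i,j\le 2}$.

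The steps are as follows. First I fix the convention $s_1=(1\,2)$ and $s_2=(2\,3)$ as permutations, and use that $w$ acts as a permutation matrix with $w e_\nu=e_{w(\nu)}$. Then:
\begin{itemize}
\item For $w=e$ the matrix is $\begin{pmatrix}1&1\\1&2\end{pmatrix}$. The conditions $\dim(L\cap L')\ge1$ and $\dim(U\cap U')\ge 2$ mean $L=L'$ and $U=U'$.
\item For $s_1$ we get $w(1)=2$, $w(2)=1$, and the matrix $\begin{pmatrix}0&1\\1&2\end{pmatrix}$. The nontrivial conditions are $U=U'$ (the $(2,2)$ entry) together with $\dim(L\cap U')\ge1$ and $\dim(U\cap L')\ge 1$, which follow from it.
\item For $s_2$ we get $w(1)=1$, $w(2)=3$, and the matrix $\begin{pmatrix}1&1\\1&1\end{pmatrix}$. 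The condition $L=L'$ then implies the others.
\item For the length-two words I must be careful about the composition convention. Taking $s_1s_2$ as the composite permutation $\nu\mapsto s_1(s_2(\nu))$ gives $w(1)=2$, $w(2)=3$, $w(3)=1$, so $w[1,1]=0$, $w[1,2]=0$, $w[2,1]=1$, $w[2,2]=1$. The only nonvacuous condition is $\dim(U\cap L')\ge1$, i.e.\ $L'\subset U$, since $\dim(U\cap U')\ge1$ is automatic in $k^3$.
\item Symmetrically, $s_2s_1$ gives $w(1)=3$, $w(2)=1$, and the matrix $\begin{pmatrix}0&1\\0&1\end{pmatrix}$. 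The only condition is $\dim(L\cap U')\ge1$, i.e.\ $L\subset U'$.
\end{itemize}

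A sanity check on the whole table uses the decomposition isomorphism \eqref{eq: decomposition relative position} and the fact that closures are unions of the $\catO(v)$ with $v\le w$. Both $\overline{\catO(s_1s_2)}$ and $\overline{\catO(s_2s_1)}$ are divisors in $G/B\times G/B$, which has dimension $6$, so each should be cut out by exactly one incidence condition, as found. Likewise $\overline{\catO(s_i)}$ has codimension $2$, matching a single equality of lines or of planes.

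The main obstacle is bookkeeping of conventions. Whether $s_1s_2$ corresponds to $L'\subset U$ or to $L\subset U'$ depends on how permutations compose and on the normalization $\langle e_{w(1)},\dots\rangle$. I would pin this down directly from the definition $g_1^{-1}g_2\in BwB$. Take $g_1=1$ and $g_2=\dot w$, so the flags are the standard flag and $(\langle e_{w(1)}\rangle\subset\langle e_{w(1)},e_{w(2)}\rangle)$. Then I check which incidence holds generically, and this confirms the assignment in the table.
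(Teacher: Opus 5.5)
Your proposal is correct and follows the paper's route: both apply the criterion $\inv\le w \Leftrightarrow \dim(F_i\cap G_j)\ge w[i,j]$ entrywise. The paper spells out only the case $s_1s_2$ (via $w[2,1]=1$), and your five $2\times 2$ computations, including the composition convention $w=s_1\circ s_2=(1\,2\,3)$, all check out.
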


\begin{proof}
    This follows from elementary linear algebra considerations. For example, for $w = s_1s_2$, we have $w[2, 1] = 1$, so $\dim L'\cap U \ge 1$, or equivalently $L' \subset U$; all the other dimension estimates are then automatic.
\end{proof}

\subsection{Relative position for type \texorpdfstring{$C$}{C}.}
\mylabel{subsec:relpos C}
Now we consider type $C_n$ and let $V = k^{2n}$ be the symplectic vector space with the alternating form given by the matrix 
\[
    \begin{pmatrix}
    & & & &   & -1 \\
    & &  & & \iddots \\
    & &  & -1 \\
    & & 1 & &  \\
     & \iddots &\\
    1 & &
\end{pmatrix},
\]
(entries $=0$ omitted).
Let $G = {\Sp}_{2n}$ be the symplectic group of $V$, $B$ its Borel subgroup of upper triangular matrices, $T$ the diagonal torus.

The group $G$ is a subgroup of $\GL_{2n}$, and the ``symplectic full flag variety'' $\mathscr F:=G/B$ can be described as a subvariety of the flag variety of $\GL_{2n}$, namely
\[
    \mathscr F = \{ (F_i)_{i=1, \dots, 2n-1};\ \dim(F_i) = i,\ F_{2n-i} = F_i^\perp \}
\]
consists of all symplectic flags. In particular $F_n = F_n^\perp$, i.e., $F_n$ is maximally isotropic, and $F_1, \dots, F_n$ determine all the subspaces of the flag.

The embedding $G \subset \GL_{2n}$ induces an embedding of the Weyl group $W$ of $G$ into the symmetric group $S_{2n}$. Its image consists of those permutations $w$ which satisfy $w(i) + w(2n-i+1) = 2n+1$ for all $i$. The system of simple reflections  determined by $B$ is given, in terms of elements of $S_{2n}$, by
\[
    s_1 = (12)(2n-1,2n),\quad
    \dots,\quad
    s_{n-1} = (n-1, n)(n+1, n+2),\quad
    s_n = (n, n+1).
\]
Similarly as for $\GL_{2n}$, the relative position of flags can be described in terms of the dimensions of intersections of the members of the flags. Furthermore, this description extends to descriptions of the closures $\overline{\mathcal O}(w)$ in the same fashion, i.e., we have the following result.

\begin{proposition}[\cite{Bjoerner; Brenti 2005} Theorem 8.1.8]\mylabel{prop:c2 rel pos}
    Let $(F_i)$, $(G_i)$ be symplectic flags, and $w\in W$. Define $w[i,j]$ for $i,j=1,\dots, 2n$ as before (viewing $w\in S_{2n}$). Then
    \[
        \inv((F_i), (G_i)) \le w \quad\Longleftrightarrow\quad
        \dim(F_i\cap G_j) \ge w[i,j]\ \text{for all}\ i=1, \dots, 2n,\ j=1, \dots, n.
    \]
\end{proposition}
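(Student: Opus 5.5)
The plan is to reduce the statement to the type $A_{2n-1}$ criterion from Section~\ref{subsec:relpos A}, applied to the ambient group $\GL_{2n}$, and then to remove the redundant conditions using symplectic duality.

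First, the compatibility of Bruhat decompositions. The inclusion $\Sp_{2n}\subset\GL_{2n}$ is compatible with the chosen tori and upper triangular Borel groups, and $W_{C_n}\subset S_{2n}$ is the centralizer of the involution $i\mapsto 2n+1-i$. For $g\in\Sp_{2n}$ lying in $B_{\Sp}\,w\,B_{\Sp}$, we therefore also have $g\in B_{\GL}\,w\,B_{\GL}$. Hence the symplectic relative position of two symplectic flags equals their $\GL_{2n}$-relative position, viewed in $S_{2n}$. This already shows that the numbers $\dim(F_i\cap G_j)$ equal $\inv[i,j]$.

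Second, the order. The key input, which I would take as known, is that the Bruhat order of the type $C_n$ group $W$ is the restriction of the Bruhat order of $S_{2n}$ to the fixed points of the diagram involution; this is a standard fact for quasi-split fixed-point subgroups of Coxeter groups. Combining it with the first step and the criterion of Section~\ref{subsec:relpos A}, we get $\inv\le w$ if and only if $\dim(F_i\cap G_j)\ge w[i,j]$ for all $1\le i,j\le 2n$.

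Third, the reduction to $j\le n$. Since $F_i^\perp=F_{2n-i}$ and $G_j^\perp=G_{2n-j}$, we have
\[
\dim(F_{2n-i}\cap G_{2n-j}) = \dim\bigl((F_i+G_j)^\perp\bigr) = 2n-i-j+\dim(F_i\cap G_j).
\]
A counting argument with $w(\nu)+w(2n+1-\nu)=2n+1$ gives the same identity $w[2n-i,2n-j]=2n-i-j+w[i,j]$. So the conditions with $j>n$ are equivalent to conditions with $j<n$, and only $j=1,\dots,n$ (with all $i$) are needed.

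The main obstacle is the second step, the compatibility of Bruhat orders. If this cannot simply be cited, I would prove the direction ``$\Leftarrow$'' geometrically instead. The closure $\overline{\catO(w)}$ in the symplectic flag variety is irreducible and closed. Its intersection with the $\GL$-orbit closure determined by the rank conditions is a union of symplectic orbits $\catO(v)$, and all these $v$ satisfy $v\le_{S_{2n}}w$. One then has to show $v\le_W w$. I would try to do this by induction on length via the subword property, using reduced expressions of $w$ in $W$ that map to reduced products in $S_{2n}$ built from the images of the $s_i$.
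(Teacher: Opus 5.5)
Your proposal is correct and follows essentially the paper's route: the paper simply cites \cite{Bjoerner; Brenti 2005}, Theorem~8.1.8, whose content is your second step (the Bruhat order on signed permutations is the one induced from $S_{2n}$, i.e.\ it is detected by the numbers $w[i,j]$). The paper combines this with the type $A$ criterion and with the duality $F_{2n-i}=F_i^\perp$ to restrict to $j\le n$. Your check of the identities $\dim(F_{2n-i}\cap G_{2n-j})=2n-i-j+\dim(F_i\cap G_j)$ and $w[2n-i,2n-j]=2n-i-j+w[i,j]$ correctly supplies the duality remark that the paper leaves to the reader, so the geometric fallback you sketch is not needed.
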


Note that because of the duality condition on symplectic flags here it is permissible to consider only $j=1, \dots, n$ in the condition on the right hand side.
This result implies in particular that $v \le w$ if and only if the images of $v$ and $w$ in $S_{2n}$ are related in the same way by the Bruhat order on $S_{2n}$.

The Weyl group $W$ is isomorphic to the semi-direct product $S_n \ltimes (\ZZ/2)^n$, where $S_n$ acts on $(\ZZ/2)^n$ by permutations in the natural way. To obtain this identification, note that the simple reflections $s_1, \dots, s_{n-1}$ generate a subgroup of $W$ isomorphic to $S_n$. The simple reflection $s_g$ corresponds to the vector $(0, \dots, 0, 1)\in (\ZZ/2)^n$, and more generally the vector $(0, \dots, 0, 1, 0, \dots, 0)$ with the $1$ in the $i$-th place identifies with the transposition $(i, 2n-i+1) \in W \subset S_{2n}$.

For computing with elements of $W$, it is often useful to view them as \emph{signed permutations}, cf.~\cite{Bjoerner; Brenti 2005} Section 8.1. This means that we write an element $(v, t)\in S_n \ltimes (\ZZ/2)^n$ as the tuple $(v(1), \dots, v(n))$ where in addition we put a bar over $v(i)$ if the vector $t$ has $i$-th entry equal to $1$. Said differently, every element of $W$ permutes the lines generated by the standard basis vectors of $V$. We label these standard basis vectors by the symbols $1, 2, \dots, n, \bar{n}, \dots, \bar{1}$. Then each $w$ is represented by its effect on the first $n$ basis vectors.

The case $n=2$ is particularly relevant for us later. In this case a symplectic flag
\[
    0 \subset L \subset U \subset L^\perp \subset V
\]
is determined by the line $L$ and the totally isotropic plane $U=U^\perp$, and we usually just write $L \subset U$ to denote the flag.

\begin{lemma} 
\mylabel{lem:position symplectic flags}
    Let $(L\subset U)$, $(L'\subset U')$ be symplectic flags in $k^4$. The following table gives, for the Weyl group elements $w\in W$ of length $\le 2$, the condition that $\inv((L\subset U), (L'\subset U')) \le w$.
$$
\begin{array}{l|*{5}{l}}
\toprule
w\phantom{aaa}	& e	& s_1	& s_2	& s_1s_2		& s_2s_1		\\[.1cm]
\toprule
& L=L',U=U'\phantom{aaa}	& U=U'\phantom{aaa}	& L=L'\phantom{aaa}	& L'\subset U\phantom{aaa}	& L\subset U'	\\
\bottomrule
\end{array}
$$ 
\end{lemma}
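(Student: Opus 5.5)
We derive the table from Proposition~\ref{prop:c2 rel pos}, following the proof of Lemma~\ref{lem:relative position explicit}. For $n=2$ we have $V=k^4$ with basis $e_1,e_2,e_{\bar 2},e_{\bar 1}$. A symplectic flag has $F_1=L$, $F_2=U$, $F_3=L^\perp$, and similarly $G_1=L'$, $G_2=U'$, $G_3=L'^\perp$. The proposition says that $\inv\le w$ is equivalent to the inequalities $\dim(F_i\cap G_j)\ge w[i,j]$ for $i=1,\dots,4$ and $j=1,2$. So the plan is, for each of the five $w$, to write $w$ as a permutation in $S_4$, compute the matrix $w[i,j]$ for $j=1,2$, and turn the nontrivial inequalities into incidence conditions.

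First the permutations. We have $s_1=(12)(34)$ and $s_2=(23)$. Reading off the values $w(1),w(2)$, which is all that matters for $j\le 2$:
\begin{itemize}
\item $e$ gives $(1,2)$;
\item $s_1$ gives $(2,1)$;
\item $s_2$ gives $(1,3)$;
\item $s_1s_2$ gives $(s_1(1),s_1(3))=(2,4)$;
\item $s_2s_1$ gives $(s_2(2),s_2(1))=(3,1)$.
\end{itemize}
I would double-check the composition convention here, since it decides which of the last two columns is which.

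Next compute $w[i,j]=\#\{\nu\le j : w(\nu)\le i\}$ and keep only the inequalities that are not automatic. The values $i=4$ and $j$ arbitrary are trivial. Also $\dim(F_i\cap G_j)\ge i+j-4$ holds automatically in $V$.
\begin{itemize}
\item For $e$: $w[1,1]=1$ gives $L=L'$ and $w[2,2]=2$ gives $U=U'$.
\item For $s_1$: $w[2,2]=2$ gives $U=U'$, and the rest is implied.
\item For $s_2$: $w[1,1]=1$ gives $L=L'$. The condition $w[2,2]=1$, i.e.\ $\dim(U\cap U')\ge1$, follows since $L=L'$.
\item For $s_1s_2$: the first coordinate is $2$, so $w[2,1]=1$ gives $L'\subset U$. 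The entry $w[3,2]=1$ is then automatic.
\item For $s_2s_1$: $w[1,2]=1$ means $\dim(L\cap U')\ge1$, i.e.\ $L\subset U'$. The entry $w[3,1]=1$, i.e.\ $L'\subset L^\perp$, follows from $L\subset U'=U'^\perp$, which gives $L'\subset U'\subset L^\perp$.
\end{itemize}

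The only real obstacle is checking that, in each column, all the remaining inequalities are implied by the single stated condition. This needs the isotropy relations $U=U^\perp$ and $F_3=L^\perp$. For instance, for $s_1s_2$ we need $\dim(L^\perp\cap U')\ge 1$; this follows because $L^\perp\cap U'$ is the intersection of a $3$-space and a $2$-space in $k^4$. Each such check is a one-line dimension count of the same kind, and I would run through them column by column.
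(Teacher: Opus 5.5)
Your proposal is correct and follows the same route as the paper. In both, one computes $w[i,j]$ from $s_1=(12)(34)$ and $s_2=(23)$, applies Proposition~\ref{prop:c2 rel pos}, and checks that the remaining inequalities follow from the single stated condition, using isotropy or a dimension count. The paper writes out only the case $w=s_1s_2=(1243)$, and your composition convention agrees with it, so your column-by-column check simply completes what the paper calls a ``simple computation''.
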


\proof
This is now a simple computation. To illustrate it, we consider the case $w = s_1 s_2$ which as a signed permutation is $(2, \bar{1})$ and as an element of $S_{2n}$ is the cycle $(1243)$. We see that $w[2, 1] = 1$, whence $L' \subset U$.
Conversely, if $L' \subset U$, then all the conditions from Proposition~\ref{prop:c2 rel pos} are satisfied, for example, $w[2,2] = 1$ says $\dim(U\cap U') \ge 1$ which follows from $L' \subset U$.
\qed

\subsection{Deligne--Lusztig data}
\mylabel{subsec:deligne-lusztig data}
In this section we introduce our set-up   for   Deligne--Lusztig varieties and their   compactifications.
We somewhat deviate   from the original definition in  \cite{Deligne; Lusztig 1976}, in order to obtain schemes defined over   prime fields,
and also to treat the Suzuki--Ree cases on exactly the same footing. 

\begin{definition}
\mylabel{def:deligne-lusztig datum}
A \emph{Deligne--Lusztig datum} is a tuple
$$
\DLD = (X^*,\Phi,X_*,\Phi^\vee, \Delta, p, \varphi_*)
$$
where $R=(X^*,\Phi,X_*,\Phi^\vee, \Delta)$ is a pinned root datum, $p>0$ is a prime number, and
$\varphi_*:R\ra R$ is an   endomorphism with respect to $p$, such that for some integers $r,s\geq 1$ we have   $\varphi_*^r=p^s\cdot\id_{X_*}$.
\end{definition}

The induced endomorphism of $X_*\otimes\QQ$ is bijective, and one 
easily checks that the pairs $(r,s)$ satisfying $\varphi_*^r=p^s\cdot\id_{X_*}$ form a rank-one subgroup  inside $\ZZ^2$.
The $r,s\geq 1$ whose pair generates this subgroup are called \emph{minimal exponents} of the Deligne--Lusztig datum.
Note that these are not necessarily coprime, and that the fraction $s/r\geq 1$ might be non-integral.

Let $\DLD = (X^*,\Phi,X_*,\Phi^\vee, \Delta, p, \varphi_*)$ be a Deligne--Lusztig datum. Fix a ground field $k$ of the given characteristic $p>0$,
and form the resulting
pinned reductive group $G$, with maximal torus $T$, Borel group $B$, Weyl group  $W=N_G(T)/T$, and isogeny $\varphi:G\ra G$, cf.~Section~\ref{subsec:isogeny theorem}.

\begin{definition}
\mylabel{def:deligne-lusztig variety}
Let $w\in W$.
The  subscheme $X(w)\subset G/B$ whose $R$-valued points are given by
$$
\{gB\mid g^{-1}\varphi(g)\in BwB\}\subset (G/B)(R)
$$
is called the \emph{Deligne--Lusztig variety}  over the ground field $k$ attached to the Deligne--Lusztig datum
$\DLD$ and the Weyl group element $w$.
\end{definition}

In other words, the cosets $gB$ and $\varphi(g)B$ are in relative position $w\in W$.
This  condition    is meant to hold for 
each faithfully flat ring extension $R\subset R'$ where  $gB\in (G/B)(R)$  stems from an element $g\in G(R')$. Equivalently, we can phrase this as $\inv(gB, \varphi(g)B)=w$, where $\inv$ denotes the relative position map, see Section~\ref{subsec:relative position}.
One easily sees that the subfunctor is   representable by a subscheme, which in particular defines a locally closed
set in $G/B$.
Note that we may use the prime field $k=\FF_p$, and that
the formation of $X(w)$ commutes with ground field extensions $k\subset k'$.
The following   basic facts are  well-known; we include a sketch of proof for the convenience of the reader.

\begin{lemma}
\mylabel{lem:properties deligne-lusztig variety}
The subscheme $X(w)\subset G/B$ is smooth and equidimensional, of dimension $\ell(w)\geq 0$.
Moreover, its  
schematic closure is normal and Cohen--Macaulay, with at most rational singularities,
and given by   $\overline{X(w)} =  \bigcup_{v\le w} X(v)$.
\end{lemma}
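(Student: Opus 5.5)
The plan is the standard Deligne--Lusztig graph argument: realise $X(w)$ as the preimage of the orbit $\catO(w)$ under the graph of $\varphi$ on $G/B$, and show this graph is transversal to all orbits. First I will work over $k^\alg$, which is harmless because the formation of $X(w)$ and of its closure commutes with field extension, and all the asserted properties descend along field extensions. Consider the morphism
$$
\Gamma\colon G/B\lra G/B\times G/B,\qquad gB\longmapsto (gB,\varphi(gB)).
$$
It is well defined because $\varphi(B)\subset B$, and by Definition~\ref{def:deligne-lusztig variety} we have $X(w)=\Gamma^{-1}(\catO(w))$.

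The key step is to show that $\Gamma(G/B)$ meets every $\catO(v)$, and every closure $\overline{\catO(v)}$, transversally. The differential of $\varphi$ on tangent spaces of $G/B$ vanishes, because $\varphi^r=F^s$ and the map $d\varphi$ is zero on each root space. For the Frobenius-type roots this is clear; in the Suzuki--Ree cases some $q(\alpha)$ may equal $1$, so I would instead argue that $d\varphi$ is nilpotent. For transversality only the $G$-equivariance of the orbits matters: the tangent space of $\catO(v)$ at $(x,y)$ surjects onto $T_x(G/B)$ via the first projection, since $G$ acts transitively on the first factor. The graph has tangent space $\{(\xi,d\varphi(\xi))\}$, and with $d\varphi$ nilpotent the sum of the two tangent spaces is the whole of $T(G/B\times G/B)$. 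Equivalently, one can use Lang's theorem: the map $g\mapsto g^{-1}\varphi(g)$ is smooth, being étale up to a translation argument, and $X(w)$ is the image under $G\to G/B$ of the preimage of $BwB$, modulo the free $B\cap\varphi^{-1}(B)$-type action. I would use this Lang-map formulation, since it handles all cases uniformly. The Lang map $L\colon G\to G$ is surjective and smooth because its differential is $-\id+d\varphi$ up to translation, which is invertible when $d\varphi$ is nilpotent.

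Granting smoothness of $L$, the pullback $L^{-1}(\overline{BwB})$ is smooth over $\overline{BwB}$. The Schubert-type variety $\overline{BwB}$ is normal and Cohen--Macaulay with rational singularities, of dimension $\ell(w)+\dim B$, by the standard Schubert variety results (Demazure, Ramanathan), and it is the union of the $BvB$ for $v\le w$ (Section~\ref{subsec:relative position}). Smooth pullback preserves all of these properties and the stratification. Dividing by the free right $B$-action, using $L(gb)=b^{-1}L(g)\varphi(b)$, which preserves $BvB$, passes these properties to $\overline{X(w)}$ and gives dimension $\ell(w)$ and smoothness of $X(w)$, since $BwB$ is smooth. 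Equidimensionality and the equality of the closure with $\bigcup_{v\le w}X(v)$ follow because smooth morphisms are open: the preimage of the closure is the closure of the preimage.

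The main obstacle is the smoothness of $L$ in the Suzuki--Ree cases, where $\varphi$ is not a Frobenius. I would handle it by noting $d\varphi^r=d(F^s)=0$, so that $d\varphi$ is nilpotent and $1-d\varphi$ is invertible.
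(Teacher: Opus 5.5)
Your proposal is correct and follows the paper's own argument. The Lang map $L(g)=g^{-1}\varphi(g)$ is étale: your justification via $(d\varphi)^r=d(F^s)=0$, so that $dL_e=-\id+d\varphi$ is invertible, correctly covers the Suzuki--Ree cases where $d\varphi\neq 0$. The correspondence $\overline{X(w)}\leftarrow L^{-1}(\overline{BwB})\to\overline{BwB}$, with a $B$-torsor on the left and an étale map on the right, then transports smoothness, dimension, normality, Cohen--Macaulayness, rational singularities and the closure relation from Schubert varieties.

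The opening graph-transversality sketch is superfluous. Its appeal to nilpotence of $d\varphi$ is not meaningful as stated, since $d\varphi$ maps between different tangent spaces of $G/B$. You rightly discard it in favour of the Lang map.
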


\proof
Recall that the Lang map $L\colon G\ra G$ $g\mapsto g^{-1}\varphi(g)$ is surjective  (\cite{Lang 1956}, Section 2 and   \cite{Steinberg 1968}, Theorem 10.1),
and actually \'etale (because it is injective on tangent vectors).
The preimage of the subscheme $BwB\subset G$  
is stable under multiplication by $B$ on the right, and we have $X(w)=L^{-1}(BwB)/B$. 
It comes with a correspondence
$$
X(w) \longleftarrow L^{-1}(BwB) \longrightarrow BwB / B,
$$
given by $g\mapsto gB$ and $g\mapsto L(g)B$, respectively. 
The arrow on the left  is a $B$-torsor, hence smooth of relative dimension $d=\dim(B)$.
The arrow on the right  is smooth of the same relative dimension, because $L$ is \'etale. Since the Schubert cell
$BwB/B$ is smooth of dimension $\ell(w)$, the same holds for the Deligne--Lusztig variety $X(w)$.
Taking schematic closures in $G/B$ and $G$,   we likewise obtain a correspondence 
$\overline{X(w)} \leftarrow\overline{L^{-1}(BwB)} \ra \overline{BwB / B}$
with smooth morphisms. Since the closure of the Schubert variety is normal, Cohen-Macaulay and has rational singularities, the same holds
for the closure of the Deligne--Lusztig variety.
The correspondence also shows that for $v, w\in W$ we have $X(v)\subseteq \overline{X(w)}$ if and only if 
$BvB/B\subseteq \overline{BwB/B}$, which holds if and only if $v\le w$.
\qed

\medskip
In turn, we have a stratification    $G/B=\bigcup_{w\in W} X(w)$ of the flag variety into locally closed 
sets. We call it the \emph{Deligne--Lusztig stratification}. In view of the lemma, the  
\emph{closure relation} $X(v)\subset \overline{X(w)}$ corresponds to the Bruhat order $v\leq w$ on the Weyl group.
The closed
stratum equals $X(e)=(G/B)^\varphi$, while the 
open stratum is given by $X(w_0)$, where $w_0\in W$ is the longest element.
The group scheme $G^\varphi$ acts by left translations on $G/B$, and respects the stratification.
Note that the center $Z(G)$ acts trivially on the flag variety.

\begin{proposition}
\mylabel{prop:kernel of action}
Let $w\in W$.
The kernel for $G^\varphi\ra \Aut_{X(w)/k}$ equals $G^\varphi\cap Z(G)$.
\end{proposition}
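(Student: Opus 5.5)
The inclusion $G^\varphi\cap Z(G)\subset\Kernel$ is immediate, because the center acts trivially on all of $G/B$. The real content is the reverse inclusion. Since $G^\varphi$ is finite étale (Proposition~\ref{prop:structure Gphi}), it suffices to work after base change to $k=\FF_p^\alg$ and to consider the finite group $\Gamma=G^\varphi(k)$. So fix $h\in\Gamma$ acting trivially on $X(w)$. We must show that $h$ is central in $G$.

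The plan is to pass from $X(w)$ to points of the closed stratum $X(e)=(G/B)^\varphi$ via the closure. Because $h$ acts trivially on $X(w)$, it acts trivially on the schematic closure $\overline{X(w)}=\bigcup_{v\le w}X(v)$, since the fixed locus of an automorphism of the separated scheme $G/B$ is closed. In particular $h$ fixes every point of $X(e)$, as $e\le w$. Now $X(e)(k)=\{gB : g^{-1}\varphi(g)\in B\}$ is exactly the set of $\varphi$-stable Borel subgroups. By Lang's theorem it equals $\Gamma\cdot B/B$, a single $\Gamma$-orbit, in bijection with $\Gamma/B^\varphi$. Hence $h$ normalizes, and therefore lies in, every $\Gamma$-conjugate $\gamma B\gamma^{-1}$. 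That is, $h\in\bigcap_{\gamma\in\Gamma}\gamma B^\varphi\gamma^{-1}$.

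The key step is to show that this intersection is contained in $Z(G)$. I would first choose $n_0\in N_G(T)$ representing $w_0$ with $n_0\in\Gamma$. This is possible because $T$ and $N_G(T)$ are $\varphi$-stable and $\varphi$ acts on $W$ fixing $w_0$, so Lang's theorem applied to the connected group $T$ produces a $\varphi$-fixed representative. Then $h\in B\cap n_0Bn_0^{-1}=T$, so $h\in T^\varphi$.

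It remains to show that $h$ acts trivially on each root group $U_\alpha$. Equivalently, $\alpha(h)=1$ for all $\alpha\in\Phi$, which says precisely that $h\in Z(G)$. I would use that $h$ also lies in $uBu^{-1}$ for every $u\in U^\varphi$, because such conjugates are again $\varphi$-stable Borels. Taking $u$ in the $\varphi$-fixed points of a minimal $\varphi$-stable product of root groups, for instance $U_{\alpha}U_{d(\alpha)}\cdots$ over a $d$-orbit of simple roots, the condition $u^{-1}hu\in B$ gives $h$ centralizing $u$ modulo the opposite side. This forces $\alpha(h)=1$ for the simple roots in that orbit, provided $U_\alpha^\varphi$-type elements exist with nonzero $\alpha$-component. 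I expect this to be the main obstacle, namely producing enough $\varphi$-fixed unipotent elements in the twisted and Suzuki--Ree cases. The cleaner alternative I would try first is the standard fact that $B^\varphi$ and $n_0B^\varphi n_0^{-1}$ together with $T^\varphi$ generate $\Gamma$, and that the "opposite" unipotent $U^{-,\varphi}$ is nontrivial in each root-orbit. In that approach $h\in T$ normalizes both $U^\varphi$ and $U^{-,\varphi}$, and the condition $hu h^{-1}=u$ follows from $h$ lying in $\gamma B\gamma^{-1}$ for $\gamma=u'\in U^{-,\varphi}$ via a rank-one computation in each $\varphi$-stable Levi of semisimple rank one (or a $d$-orbit). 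Since the simple roots span the root lattice, $\alpha(h)=1$ for all simple $\alpha$ gives $h\in Z(G)$, and hence $h\in G^\varphi\cap Z(G)$.
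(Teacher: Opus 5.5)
Up to $h\in\bigcap_{\gamma\in\Gamma}\gamma B^\varphi\gamma^{-1}$ your argument is the paper's: pass to the closure, restrict to the stratum $X(e)$, and apply Lang's theorem. Your additional step $h\in B\cap n_0Bn_0^{-1}=T$, using a $\varphi$-fixed representative $n_0$ of $w_0$, is also correct. From there the paper quotes BN-pair theory for $G^\varphi$ (Malle--Testerman, Lemma~24.12 and Corollary~24.13): a normal subgroup of $G^\varphi$ inside $B^\varphi$ is central, and $Z(G^\varphi)=G^\varphi\cap Z(G)$.

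You instead try to show directly that $\alpha(h)=1$ for all $\alpha\in\Delta$. That route is legitimate, but as written it is not carried out. Your first version is vacuous: $u\in U^\varphi\subset B$ gives $uBu^{-1}=B$, so it imposes no condition on $h$. In your second version you leave open exactly the existence statement the conclusion rests on, and you call it the main obstacle. So there is a gap at the decisive step.

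The second version does go through, and the missing statement follows from one more application of Lang's theorem.

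First, $h$ commutes with every $u'\in U^{-,\varphi}$. The point $u'B$ lies in $X(e)$, so $u'^{-1}hu'\in B$. Since $h\in T$, this says $(h^{-1}u'^{-1}h)u'\in U^-\cap B=\{1\}$.

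Second, $U^{-,\varphi}$ contains an element with all simple-root coordinates non-zero. Let $U^-_{\mathrm{ns}}\subset U^-$ be the subgroup generated by the $U_{-\beta}$ with $\beta\in\Phi_+\smallsetminus\Delta$.
\begin{itemize}
\item $U^-_{\mathrm{ns}}$ is connected and normal in $U^-$.
\item The quotient $U^-/U^-_{\mathrm{ns}}$ is $\prod_{\alpha\in\Delta}U_{-\alpha}\cong\GG_a^{\Delta}$, with $T$ acting on the $\alpha$-coordinate through $-\alpha$.
\item Both $U^-$ and $U^-_{\mathrm{ns}}$ are $\varphi$-stable, because $d$ permutes $\Delta$ and preserves $\Phi_+$.
\item On the quotient, $\varphi$ carries the $\alpha$-line to the $d(\alpha)$-line by $c\mapsto\epsilon_\alpha c^{q(\alpha)}$ with $\epsilon_\alpha\in k^\times$.
\item On each $d$-orbit the fixed-point condition therefore reduces to $c=\epsilon c^{Q}$ with $Q>1$ (since $\varphi^r=F^s$ with $s\geq 1$), which has non-zero solutions.
\end{itemize}
Lang's theorem for the connected group $U^-_{\mathrm{ns}}$ lifts such a fixed point, with all coordinates non-zero, to some $u'\in U^{-,\varphi}$. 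Then $hu'h^{-1}=u'$ forces $\alpha(h)=1$ for every $\alpha\in\Delta$, hence $h\in Z(G)$.

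You need neither the generation statement for $\Gamma$ nor a rank-one Levi computation. Completed this way, your proof is self-contained. It avoids the paper's reduction to simple adjoint $G$ and the identification of $Z(G^\varphi)$, at the cost of a direct root-group computation that the paper outsources to the BN-pair literature.
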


\proof
We may assume that $G$ is semisimple of adjoint type, and decomposing it into a product and handling the individual factors one by one may furthermore assume that it is simple, i.e., that the Dynkin diagram is connected. Without loss of generality we may also assume that $k$ is algebraically closed.
Now first observe that every element $g\in G(k)$ that acts trivially on $X(w)$ also acts trivially on its closure and in particular on $X(\id)$. It is therefore sufficient to consider the case $w=\id$. Saying that $g$ acts trivially on $X(\id)$ is equivalent to saying that $g$ is contained in the intersection $\bigcap_{h\in G^\varphi} hBh^{-1}$ of conjugates of $B$.

In particular, the subgroup of $G^\varphi$ consisting of elements that act trivially on $X(\id)$ is a normal subgroup which is contained in $B^\varphi$. It follows from \cite{Malle; Testerman 2011}, Lemma 24.12 that this subgroup is contained in the center of $G^\varphi$ which according to loc.~cit., Corollary 24.13 is equal to $G^\varphi\cap Z(G)$. The key point here is that $G^\varphi$ carries a BN-pair structure.
\qed

\begin{remark}
\mylabel{rem:adjoint type and simply-connected}
Passing from the reductive group $G$ to its adjoint group $G^{\rm ad}=G/Z(G)$ leaves 
the flag variety, Weyl group, and Deligne--Lusztig varieties unchanged. In forming the $X(w)$, we thus 
may assume that $G$ is semisimple, and actually  of adjoint type 
or simply connected. In terms of root data, the respective conditions mean
that $\ZZ\Phi\subset X^*$ and $\ZZ\Phi^\vee\subset X_*$ have  finite index, and $\ZZ\Phi= X^*$, or $\ZZ\Phi^\vee=X_*$.
\end{remark}

\subsection{Comparison with the original definition}
\mylabel{subsec:comparison with original}

Suppose we have a Deligne--Lusztig datum
$\mathscr{D} = (X^*,\Phi,X_*,\Phi^\vee, \Delta, p, \varphi_*)$,
with    relation $\varphi_*^r=p^s\cdot\id_{X_*}$ and  minimal exponents $r,s\geq 1$.  Write   
$$
G^\Chev_\ZZ=G(X^*,\Phi,X_*,\Phi^\vee)
$$
for the corresponding \emph{Chevalley group}, a family of   reductive groups over the ring $\ZZ$,
and consider the isogeny $\varphi:G^\text{Chev}_{\FF_p}\ra G^\Chev_{\FF_p}$ on the fiber over $\FF_p$.
Under suitable conditions, one may regard this as  an iterated Frobenius map, 
albeit stemming from a twisted form: Suppose that $s/r\geq 1$ is an integer,
and that 
\begin{equation}
\label{eq:factorization varphi}
\varphi_*=p^{s/r}\cdot\sigma_*
\end{equation} 
for some endomorphism $\sigma_*$. Then $p^s\cdot\id_{X_*}=\varphi_*^r=p^s\cdot\sigma_*^r$,
thus $\sigma_*$ is   bijective of period $r\geq 1$, 
with resulting automorphism $\sigma:G^\Chev_{\FF_p}\ra G^\Chev_{\FF_p}$. 
Write $\Gamma=\ZZ/r\ZZ$, and consider the homomorphism $\Gamma\ra\Aut(G_{\rm Chev})$ 
corresponding to $\sigma$.
Set $k=\FF_{p^{s/r}}$ and $G=G^\Chev_k$.
Viewing $Q=\Spec(\FF_{p^s})$ as a $\Gamma$-torsor over $k$ via  the inverse of $F^{s/r}$, we obtain a twisted form
$\tilde{G}  = \Gamma\backslash (G \times Q) = \Gamma\backslash (G^\Chev_{\FF_p} \otimes k')$, 
where the quotient is formed with respect to the diagonal action, and $k'=\FF_{p^s}$. It comes with an identification
$$
\psi: G\times Q\lra \tilde{G}\times Q,\quad (g,q)\longmapsto (\Gamma\cdot(g,q),q).
$$
Note that one may  likewise form the Borel group $\tilde{B}$ and the maximal torus $\tilde{T}$, which is  not necessarily split.

\begin{proposition}
\mylabel{prop:isogeny becomes frobenius}
After base-change to $k'=\FF_{p^s}$, the isogeny $\varphi:G\ra G$ coincides with the iterated Frobenius map $F^{s/r}:\tilde{G}\ra\tilde{G}$.
\end{proposition}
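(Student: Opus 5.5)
The plan is to use the Isogeny Theorem~\ref{thm:isogeny theorem} to reduce the claim to a statement about root data, and then unwind the twisted form $\tilde G$ explicitly after base change to $k'=\FF_{p^s}$.

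First I make the comparison concrete. Over $k'$ the torsor $Q$ becomes trivial: $Q\otimes_k k'\cong\coprod_{\Gamma}\Spec(k')$. Hence $\psi$ gives an identification $\tilde G\otimes_k k'\cong G\otimes_k k'=G^\Chev_{k'}$. The descent datum defining $\tilde G$ is the semilinear automorphism $\sigma\otimes F^{s/r}_{k'}$, or its inverse depending on the convention fixed by "via the inverse of $F^{s/r}$". Therefore the $k$-rational structure of $\tilde G$ is the one whose associated Frobenius-semilinear map on $G^\Chev_{k'}$ is $\sigma$ composed with the standard $p^{s/r}$-Frobenius of the split form.

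Second, by Section~\ref{subsec:frobenius maps}, the relative $p^{s/r}$-Frobenius $F^{s/r}_{\tilde G/k}$, transported to $G^\Chev_{k'}$ through this identification, is the base change of the absolute $p^{s/r}$-Frobenius of the $k$-form $\tilde G$. Written in the split coordinates, it is the split Frobenius $F^{s/r}_{G^\Chev}$ twisted by the descent automorphism, so it equals $\sigma\circ F^{s/r}$ (or $F^{s/r}\circ\sigma$; these coincide because $\sigma$ is defined over $\FF_p$ and therefore commutes with Frobenius).

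Third, I identify $\sigma\circ F^{s/r}$ with $\varphi$ using the equivalence of categories in Theorem~\ref{thm:isogeny theorem}. Both are endomorphisms of the pinned group $G^\Chev_{k'}$ and respect $T$, $B$ and the pinning: $\sigma$ comes from a pinned automorphism, and $F^{s/r}$ preserves $\psi_\alpha$ because these are defined over $\FF_p$. Their induced maps on $X_*$ are $\sigma_*\circ(p^{s/r}\cdot\id)=p^{s/r}\sigma_*$, which is $\varphi_*$ by \eqref{eq:factorization varphi}. Full faithfulness of the functor then forces $\sigma\circ F^{s/r}=\varphi$.

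I expect the main obstacle to be the bookkeeping in the second step. One has to check that the twisted form built with the inverse of $F^{s/r}$ on the $\Gamma$-torsor produces $\sigma$ rather than $\sigma^{-1}$ in front of Frobenius. One also has to check that "relative Frobenius" is understood through the identification $\tilde G^{(p^{s/r})}=\tilde G$, which holds since $k=\FF_{p^{s/r}}$. I would settle this on $k'$-points: a point $(g,q)$ maps under $\psi$ to its $\Gamma$-orbit, and one computes the action of the $q$-Frobenius of $\tilde G$ on that orbit directly.
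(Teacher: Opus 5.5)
Your proposal is correct and follows essentially the paper's route. You transport $F^{s/r}_{\tilde G}$ through $\psi$, recognize it as $\sigma\circ F^{s/r}$, and conclude from $\varphi_*=p^{s/r}\sigma_*$ via the Isogeny Theorem. The bookkeeping you defer is exactly what the paper's proof carries out: it writes the absolute Frobenius of $G\times Q$ as $(\id\times F^{s/r})\circ(F^{s/r}\times\id)$, uses that absolute Frobenius commutes with $\psi$, and checks $\Gamma\cdot(\sigma g,q)=\Gamma\cdot(g,F^{s/r}q)$. That check shows the inverse-Frobenius convention on $Q$ produces $\sigma$ rather than $\sigma^{-1}$, a distinction that matters for $r=3$; do it on $R$-valued points rather than only $k'$-points, since the finitely many $k'$-points do not determine a morphism.
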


\proof
Consider the   diagram
$$
\begin{CD}
G\times Q		@>\psi>>	\tilde{G}\times Q\\
@VF^{s/r}\times\id VV			@VVF^{s/r}\times\id V\\
G\times Q		@>\psi\circ(\sigma\times\id)>>	\tilde{G}\times Q\\
@V\id\times F^{s/r}VV			@VV\id\times F^{s/r}V\\
G\times Q		@>>\psi>	\tilde{G}\times Q.
\end{CD}
$$
The outer square is commutative, by the functoriality of   absolute Frobenius maps. In the lower square, 
the vertical arrows are isomorphisms. The two ways of composition are
$$
(g,q)\longmapsto (\Gamma\cdot (\sigma g,q),q)\longmapsto (\Gamma\cdot(\sigma g,q),F^{s/r}q)= (\Gamma\cdot(g,F^{s/r}q),F^{s/r}q),
$$
and  $(g,q)\mapsto (g,F^{s/r}q)\mapsto (\Gamma\cdot(g,F^{s/r}q),F^{s/r}q)$, so this square is commutative as well. It follows that the upper square is commutative.
This shows that with respect to the identification $\psi$ over $k'=\FF_{p^s}$, the isogeny $F^{s/r}:\tilde{G}\ra\tilde{G}$
equals   $\sigma\circ F^{s/r}=F^{s/r}\circ\sigma$. According to   \eqref{eq:factorization varphi}, this coincides
with $\varphi$.
\qed

\begin{example}\mylabel{ex:unitary DL datum}
Suppose that $\Phi$ is a root system of type $A_2$, with $\ZZ\Phi^\vee=X_*$. Then the reductive group is $G=\SL_3$, where we now work
over the prime field $k=k_0=\FF_p$.  The endomorphism $\varphi_*=(\begin{smallmatrix} &p\\p&\end{smallmatrix})$ that flips the simple coroots satisfies
$\varphi_*^r=p^s\cdot\id_{X_*}$, with minimal exponents $r=s=2$, and the   matrix $\sigma_*=(\begin{smallmatrix} &1\\1&\end{smallmatrix})$
yields a decomposition $\varphi_*=p\cdot\sigma_*$. Using the quadratic extension $k'=\FF_{p^2}$, we obtain a twisted form 
$\tilde{G} =\Gamma\backslash (G\otimes k')$.
This can be viewed as the special unitary group $\operatorname{SU}_3$ over the prime field $k$, cf.~Section~\ref{subsec:formulation a2twist}.
So  after base-changing to $k'$, the   isogeny $\varphi:G\ra G$ coincides
with the Frobenius map $F:\tilde{G}\ra \tilde{G}$.
\end{example}
 
\medskip 
We now can compare our definition   of the scheme $X(w)$  with the original construction in \cite{Deligne; Lusztig 1976}, Section 1:
There the starting point is a   reductive group $G=G_0\otimes_{k_0}k$ over $k=\FF_{p^t}^\alg$, obtained
via base-change from an algebraic group $G_0$ over a finite field $k_0=\FF_{p^t}$, with resulting isogeny $F^t\otimes\id$. 
Fixing a maximal torus $T$ and Borel group $B$ over $k$ together with a  Weyl group element $w\in W$,
Deligne and Lusztig   defined  an algebraic set $X(k)$ inside the flag variety $G/B$, as in  Definition \ref{def:deligne-lusztig datum}. 

Let $k_0\subset k_1\subset k$ be the smallest subextension such that $B,BwB\subset G$ descend to 
to some $B_1,B_1wB_1$ inside $G_1=G_{\rm Chev}\otimes k_1$. Obviously, the   algebraic set $X(k)$ stems from a subscheme $X_1\subset G_1/B_1$.
Now let $k_1\subset k_2\subset k$ be the smallest subextension such that $T$ descends to a split torus $T_2$ inside $G_2=G_{\rm Chev}\otimes k_2$.
So the latter is a  split reductive group,   and thus corresponds to a root datum $(X_*,\Phi, X^*,\Phi^\vee)$, with corresponding
split reductive group $G_\text{\rm Chev}$ over the prime field $\FF_p$. Choosing a pinning, the 
isogeny $F^t\otimes\id$ corresponds to some endomorphism $\varphi_*$. This yields a Deligne--Lusztig datum in our sense,
with resulting $X=X(w)$ already defined over the prime field $\FF_p$. By construction, this gives back
$$
X(k)=X(\FF_p^\alg)\quadand X_1\otimes_{k_1} k_{2}=X\otimes_{\FF_p} k_2.
$$
Furthermore, $X_1$ is a twisted form of $X\otimes_{\FF_p} k_1$ (cf.~Example~\ref{ex:twisted DL}). 
Note that this is exactly the situation encountered in  Proposition \ref{prop:isogeny becomes frobenius}.

The case where only a power of the isogeny
$\varphi:G\ra G$ can be related to an iterated Frobenius map is discussed in \cite{Deligne; Lusztig 1976}, Section 11, and the definition of Deligne--Lusztig varieties is extended to this case.
On then obtains as the groups $G^\varphi(k)$ the so-called \emph{Suzuki--Ree groups}.
Our definition unifies the two cases. In view of Proposition~\ref{prop:isogeny becomes frobenius} we refer to them using the following terminology:

\begin{definition}
\mylabel{def:frobenius and suzuki-ree}
Let $\mathscr{D} = (X^*,\Phi,X_*,\Phi^\vee, \Delta, p, \varphi^*)$ be a Deligne--Lusztig datum,
with    $\varphi_*^r=p^s\cdot\id_{X_*}$ and minimal exponents $s,r\geq 1$. If the  
 map $q:\Phi\ra p^\NN$ appearing in the morphism $\varphi_*$ is constant, we say that   $\mathscr{D}$ is of \emph{Frobenius type}.
Otherwise it is called of \emph{Suzuki--Ree type}.
\end{definition}

In forming the $X(w)\subset G/B$, we may assume that $G$ is simply connected, in other words
$\ZZ\Phi^\vee= X_*$. Then the simple coroots form a basis of $X_*$, and the 
 endomorphism takes the form $\varphi_*=D\cdot P$, where $P$ is a permutation matrix, 
and $D$ is a diagonal matrix with entries $p^{\nu_\alpha}=q(\alpha)$.
If $s/r\geq 1$ is an integer and $D\cdot P=\varphi_*=p^{s/r}\cdot\sigma_*$, we see that the diagonal matrix
$p^{-s/r} D= \sigma_* P^{-1}$  has entries from $\ZZ$ and  determinant from $\ZZ^\times$,  and thus $q:\Phi\ra p^\NN$ is constant.
Conversely, if this map  takes constant value $p^\nu$, we get $p^s=\varphi_*^r= p^{\nu r} P^r$,
hence $r/s=\nu$ is an integer, and thus $\varphi_*^r=p^{s/r}\cdot \sigma_*$ with $\sigma_*=P$.

Summing up, under the assumption $\ZZ\Phi^\vee= X_*$,
the Deligne--Lusztig datum  $\DLD$ is of Frobenius type if and only if Proposition \ref{prop:isogeny becomes frobenius} applies.

(The same argument often applies even without the assumption $\ZZ\Phi^\vee= X_*$. In fact, if the root datum is irreducible of type different from $D$, then the quotient of the weight lattice by the root lattice is cyclic and every subgroup is fixed by the map $\sigma^*$. See~\cite{Malle; Testerman 2011} Example~22.8.)

\begin{example}\mylabel{ex:twisted DL}
    Consider again the situation in Example~\ref{ex:unitary DL datum}, a Deligne--Lusztig datum of Frobenius type corresponding, in the sense of Deligne and Lusztig, to the special unitary group $G_0 = \SU_3$ over $\FF_p$. If $w\in W$ is fixed by $\varphi$, then the field $k_1$ in the above discussion if $\FF_p$. Independently of $w$, the field $k_2$ is $\FF_{p^2}$. As explained above, both Deligne--Lusztig varieties (i.e., in our sense and in the sense of Deligne and Lusztig) are naturally defined over $k_1$, and can be identified after base change to $k_2$. Note however that they will typically not be equal over the field $k_1$. This already happens for $w=\id$ where viewing the Deligne--Lusztig definition over $k_1$ leads to a disjoint union of copies of $\Spec \FF_p$, whereas our definition results in a scheme which also has points with residue class field $\FF_{p^2}$.
\end{example}

\subsection{Compactifications}
\mylabel{subsec:compactifications}
Fix a Deligne--Lusztig datum $\DLD$ and a ground field $k$, and form the resulting pinned reductive group
$(G,T,B,\psi)$ as above. By construction, the schemes $X(w)$ come with a naive compactification $\overline{X(w)}\subset G/B$ given as their closure inside
the flag variety. In this section we recall the smooth compactification $\bar{X}(w)$ defined by Deligne and Lusztig
(\cite{Deligne; Lusztig 1976}, Section~9) in the spirit of the construction of Bott--Samelson \cite{Bott; Samelson 1958} and Demazure \cite{Demazure 1974}.

Fix a reduced expression $w = s_{i_1}\cdots s_{i_n}$ where  $n = \ell(w)$.
Embed $X(w) \to \prod_{\nu=1}^n G/B$ as follows: 
For $g\in X(w)$, since $g_1 := g$ and $g_{n+1}:=\Frob(g)$ have relative position $w$,
 there is a unique sequence $g_2, \dots, g_n$ of elements in $G/B$ such that 
$\inv(g_\nu, g_{\nu+1}) = s_{i_\nu}$ for all $\nu=1, \dots, n$. 
We then map $g\mapsto (g_1,\dots, g_n)$. This procedure indeed works for $R$-valued points,
in view of the isomorphism~\eqref{eq: decomposition relative position}.
Now
$$
\bar{X}(w)\subset \prod_{\nu=1}^n G/B
$$
is defined  as the closure of $X(w)$.
 Note that our notation is imprecise, 
because $\bar{X}(w)$ depends on the choice of reduced expression for $w$. 
When it is necessary to indicate the reduced expression, 
we write $\bar{X}(s_{i_1},\dots, s_{i_n})$ instead. We thus have 
\[
\bar{X}(s_{i_1},\dots, s_{i_n}) =
\left\{
    (g_\nu)_\nu\in\prod_{\nu=1}^n G/B\mid \text{$ \inv(g_{\nu}, g_{\nu+1})\in \{ \id, s_{i_\nu}\}$ for $ \nu=1,\dots, n$}
\right\},
\]
where we set $g_{n+1}:=\Frob(g_1)$. Then $\bar{X}(s_{i_1},\dots, s_{i_n})$ is smooth, as can be shown by reducing it to the smoothness of Demazure resolution of the corresponding Schubert variety using the argument of Lemma~\ref{lem:properties deligne-lusztig variety}; alternatively see~\cite{Deligne; Lusztig 1976} Lemma 9.11.

Usually the compactifications $\bar{X}(w)$ and $\overline{X(w)}$ differ. In fact, the closure $\overline{X(w)}$ is often singular. But for Coxeter elements $w$, and more generally if every simple reflection occurs at most once in a (equivalently, every) reduced expression of $w$, the situation simplifies.

\begin{proposition}\mylabel{prop:comparison compactifications}
    Let $w\in W$ be an element such that every simple reflection occurs at most once in a (fixed) reduced expression of $w$.
Then the natural morphism from $\bar{X}(w)$ to the closure $\overline{X(w)}$ of $X(w)$ inside the flag variety $G/B$ is an isomorphism.
\end{proposition}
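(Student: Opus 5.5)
The plan is to show that the first projection $\pi\colon \bar{X}(w)\to G/B$, $(g_\nu)\mapsto g_1$, which lands in $\overline{X(w)}$, is bijective on points over every field extension. I would then upgrade bijectivity to an isomorphism using normality of the target. By Lemma~\ref{lem:properties deligne-lusztig variety}, $\overline{X(w)}$ is normal, and $\bar{X}(w)$ is smooth and proper. The morphism $\pi$ is proper and birational, since it restricts to an isomorphism over the open dense stratum $X(w)$, where the intermediate $g_\nu$ are uniquely determined by \eqref{eq: decomposition relative position}. So once $\pi$ is known to be quasi-finite, Zariski's Main Theorem shows that the proper, birational, quasi-finite morphism onto the normal variety $\overline{X(w)}$ is an isomorphism. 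It therefore suffices to show that the fibres of $\pi$ over geometric points are single points.

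Fix a point $g_1\in\overline{X(w)}(K)$ with $K$ algebraically closed. By Lemma~\ref{lem:properties deligne-lusztig variety}, $g_1\in X(v)$ for some $v\le w$, so $\inv(g_1,\varphi(g_1))=v$. A point of the fibre is a sequence $g_1,g_2,\dots,g_n,g_{n+1}=\varphi(g_1)$ with $\inv(g_\nu,g_{\nu+1})=t_\nu\in\{e,s_{i_\nu}\}$. Every such sequence is a gallery from $g_1$ to $\varphi(g_1)$ of type a subword $(t_1,\dots,t_n)$ of $(s_{i_1},\dots,s_{i_n})$.

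The key combinatorial input is the following. Because each simple reflection occurs at most once in $s_{i_1}\cdots s_{i_n}$, every subword product $t_1\cdots t_n$ is automatically reduced: it is a product of distinct simple reflections, and such products are always reduced. Moreover, distinct subwords give distinct elements, since the support of the element determines which reflections were used. By \eqref{eq: decomposition relative position}, the relative position of $g_1$ and $g_{n+1}$ is then exactly $t_1\cdots t_n$. Hence $t_1\cdots t_n=v$, and this forces the subword $(t_\nu)$ to be the unique one with product $v$. For that fixed reduced word, \eqref{eq: decomposition relative position} again shows that the intermediate flags $g_2,\dots,g_n$ are uniquely determined by $g_1$ and $g_{n+1}$. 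So the fibre is a single point.

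The main obstacle I expect is the scheme-theoretic justification. Bijectivity on geometric points gives quasi-finiteness, and Zariski's Main Theorem then needs birationality and normality. Both are available, but one must check that $\bar{X}(w)$ is irreducible, or handle it componentwise: $\bar{X}(w)$ is by definition the closure of $X(w)$, so its components correspond to those of $X(w)$. One must also check that $\pi$ is an isomorphism over $X(w)$ scheme-theoretically, which holds because \eqref{eq: decomposition relative position} is an isomorphism of schemes. Finally, since $\pi$ is proper, birational and quasi-finite with normal target, it is finite and birational onto a normal variety, hence an isomorphism.
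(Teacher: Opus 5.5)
Your proof is correct and follows essentially the same route as the paper. Both arguments reduce to geometric points and use that every subword of the given reduced expression is reduced, so that a point of $X(v)$ determines the unique subexpression for $v$ and hence, via \eqref{eq: decomposition relative position}, a unique preimage; both then conclude from bijectivity, birationality and normality of $\overline{X(w)}$. The only difference is that you spell out the Zariski Main Theorem step and the componentwise and birationality checks that the paper leaves implicit.
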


\proof
We may assume that $k$ is algebraically closed.
The assumption ensures that all subexpressions of a reduced expression of $w$ are themselves reduced. Now given any closed point $gB \in \overline{X(w)}$, say $gB\in X(v)$, $v\le w$, there is a unique subexpression of the given reduced expression of $w$ which is a reduced expression for $v$, and thus (using~\eqref{eq: decomposition relative position}) a unique preimage of $gB$ in $\bar{X}(w)$. We conclude that the birational morphism $\bar{X}(w)\to \overline{X(w)}$ is bijective, and hence an isomorphism, because $\overline{X(w)}$ is normal.
\qed

\subsection{Connected components}
\mylabel{subsec:connected components}
Despite being termed ``varieties'', the   $X(w)$ are often disconnected.
The goal of this section is to determine the scheme of connected components,   
describe the ensuing closure relation inside $G/B$, and analyze the relation to   the Tits building $\Delta(G)$. A key input is the characterization, originally due to Lusztig, which Deligne--Lusztig varieties are connected, see~\cite{Bonnafe; Rouquier 2006}.
 
Fix a Deligne--Lusztig datum  $\DLD=(X^*,\Phi,X_*,\Phi^\vee, \Delta, p,\varphi_*)$ and  a ground field   $k$   of the given characteristic $p>0$.
Form the resulting pinned reductive group $G$, with maximal torus $T$, Borel group $B$, Weyl group $W=N_G(T)/T$ and  isogeny $\varphi:G\ra G$.
The latter induces an automorphism  of the Weyl group, which we also denote by $\varphi: W\ra W$. It     stabilizes the 
set of simple reflections $\bbS\subset W$  corresponding to the set of simple roots $\Delta\subset\Phi$.

We now fix some $w\in W$ and write $\ell\geq 0$ for its length. For each reduced expression $w=s_{i_1}\ldots s_{i_\ell}$, the resulting set
$\{s_{i_1},\ldots, s_{i_\ell}\}\subset \bbS$ is independent of the factorization (\cite{LIE 4-6}, Chapter IV, \S1, No.\ 8, Proposition 7), 
and we denote it  by $\Supp(w)$.
This leads to the following notion:

\begin{definition}
\mylabel{def:phi-support}
The \emph{$\varphi$-support} $\Supp_\varphi(w)\subset\bbS$ is the smallest $\varphi$-stable set that contains  $\Supp(w)$.
\end{definition}

Let $B\subset P_w\subset G$ be the \emph{parabolic subgroup} corresponding to $\varnothing\subset \Supp_\varphi(w)\subset\bbS$.
By construction, this is $\varphi$-stable, and we get an induced   $\varphi:G/P_w\ra G/P_w$.

\begin{proposition}
\mylabel{prop:fixed in quotient by parabolic}
The scheme of fixed points $(G/P_w)^\varphi$ is finite and \'etale, and the action of $G^\varphi$ is transitive.
\end{proposition}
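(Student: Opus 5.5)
The plan follows the pattern of Proposition~\ref{prop:structure Gphi} and of the proof of Lemma~\ref{lem:properties deligne-lusztig variety}: finiteness and étaleness come from a transversality argument, and transitivity from Lang's theorem applied to the connected group $P_w$. Throughout write $P=P_w$ and $Y=G/P$, a smooth projective variety with an induced $\varphi\colon Y\to Y$.

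\emph{Step 1: finite and étale.} The fixed point scheme $Y^\varphi$ is the preimage of the diagonal under $(\id,\varphi)\colon Y\to Y\times Y$, so it is closed in the proper scheme $Y$. Since $\varphi$ is an isogeny with $\varphi^r=F^s$, its differential is nilpotent: $d(\varphi^r)=d(F^s)=0$. The tangent space of $Y^\varphi$ at a point $y$ is the kernel of $\id-d\varphi$ on $T_yY$, and $\id-d\varphi$ is invertible because $d\varphi$ is nilpotent. Hence $Y^\varphi$ is unramified over $k$; being closed in a proper scheme it is of finite type, so it is finite and étale. Alternatively one can argue as in Proposition~\ref{prop:structure Gphi}: $Y^\varphi\otimes\FF_{p^s}$ is a closed subscheme of $Y^{F^s}$, and $Y^{F^s}$ is the reduced finite scheme of $\FF_{p^s}$-points of $Y$.

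\emph{Step 2: transitivity.} We may pass to $k$ algebraically closed. Let $gP\in Y(k)$ with $\varphi(g)P=gP$, that is, $g^{-1}\varphi(g)\in P(k)$. The group $P$ is $\varphi$-stable, smooth and connected, and $\varphi|_P$ is an isogeny whose power is a Frobenius. Lang's theorem, as cited in the proof of Lemma~\ref{lem:properties deligne-lusztig variety} (\cite{Steinberg 1968}, Theorem 10.1, valid for connected groups with surjective endomorphism having finitely many fixed points), therefore applies to $P$. It gives $h\in P(k)$ with $g^{-1}\varphi(g)=h\varphi(h)^{-1}$. Then $gh$ satisfies $\varphi(gh)=gh$, so $gh\in G^\varphi(k)$, and $gP=ghP$. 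This exhibits every $\varphi$-fixed point as a $G^\varphi$-translate of the base point $1\cdot P$, which is fixed because $P$ is $\varphi$-stable.

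\emph{Step 3: scheme-theoretic statement.} For finite étale schemes, transitivity means that the orbit map $G^\varphi\to Y^\varphi$ is surjective on $k^\alg$-points, and Step~2 gives exactly this.

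The main obstacle I expect is justifying Lang--Steinberg for $\varphi|_P$ in the Suzuki--Ree setting, where $\varphi$ is not a Frobenius. The key point is that $P$ is connected and the fixed points of $\varphi|_P$ are finite, the latter since they lie in $P(\FF_{p^s})$. I would check this using Steinberg's criterion as stated.
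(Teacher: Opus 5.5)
Your proposal is correct and matches the paper's proof. Transitivity goes through the surjectivity of the Lang map on the connected $\varphi$-stable group $P_w$, and your fixed element $gh$ corresponds to the paper's $gh^{-1}$. For finiteness and étaleness the paper instead identifies $(G/P_w)^\varphi$ with $L^{-1}(P_w)/P_w$ and uses that the Lang map is étale. That rests on the same nilpotence of $d\varphi$ that your tangent-space argument on $G/P_w$ uses.
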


\proof 
Set $P=P_w$, and consider the Lang map $L:G\ra G$, which is the \'etale morphism  defined by $L(g)=g^{-1}\varphi(g)$.
Similarly as in the proof of Lemma~\ref{lem:properties deligne-lusztig variety} we have  $(G/P)^\varphi=L^{-1}(P)/P$, and it follows that 
the scheme of fixed points is smooth and zero-dimensional, hence finite and \'etale.
For the transitivity of the action we may assume that $k$ is algebraically closed.
Suppose some closed point $g\in G$ has $\varphi$-fixed coset $gP$, in other words
$g^{-1}\varphi(g)\in P$. Using the surjectivity of $L:P\ra P$, we find some closed point $h\in P$
with $h^{-1}\varphi(h)=g^{-1}\varphi(g)$. Then  $g'=gh^{-1}$ is a $\varphi$-fixed point
with $g'P=gP$. This shows that $gP$ belongs to the orbit of $eP$ with respect to $G^\varphi$.
\qed

\medskip
In turn, the orbit map for the coset $eP_w$ yields an identification
$$
(G/P_w)^\varphi= G^\varphi/(G^\varphi\cap P_w).
$$
For each coset $gB\in  G/B$ inside $X(w)$ we have $g\varphi(g)^{-1}\in BwB\subset P_w$, and it follows
that $gP_w\in G/P_w$ is $\varphi$-fixed. This yields a commutative diagram
$$
\begin{CD}
X(w)		@>>>	G/B\\
@V\pi VV		@VVV\\
(G/P_w)^\varphi	@>>>	G/P_w.
\end{CD}
$$

\begin{proposition}
\mylabel{prop:geometrically connected fibers}
The  morphism $\pi:X(w)\ra (G/P_w)^\varphi$ is smooth and surjective, with geometrically connected fibers. Moreover,
it induces a bijection between the sets of connected components.
\end{proposition}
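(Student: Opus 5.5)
We plan to reduce everything to a single fiber using the $G^\varphi$-equivariance of $\pi$. Set $P=P_w$ and $I=\Supp_\varphi(w)$. By Proposition~\ref{prop:fixed in quotient by parabolic} the target $(G/P)^\varphi$ is finite étale and $G^\varphi$ acts transitively on it, after base change to $k^\alg$. Moreover $\pi$ is $G^\varphi$-equivariant, because $G^\varphi$ acts by left translation on both $G/B$ and $G/P$ and preserves $X(w)$. Hence all fibers over $k^\alg$-points are isomorphic to the fiber over $eP$. Smoothness of $\pi$ follows from two facts: $X(w)$ is smooth by Lemma~\ref{lem:properties deligne-lusztig variety}, and the target is étale over $k$, so $\pi$ is a morphism of smooth $k$-schemes whose target has relative dimension zero. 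Such a morphism is flat, since each connected component of the target is a point, and its fibers are open-closed pieces of $X(w)$. Surjectivity then reduces to showing that the fiber over $eP$ is nonempty.

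Next we identify the fiber over $eP$, working over $k^\alg$. It consists of the cosets $gB$ with $g\in P$ and $g^{-1}\varphi(g)\in BwB$. Thus it is the Deligne--Lusztig variety $X_L(w)$ inside $P/B\cong L/B_L$, where $L$ is the $\varphi$-stable Levi subgroup of $P$ containing $T$. Here $w$ lies in the parabolic Weyl group $W_I$, and $\varphi$ restricts to an isogeny of $L$ that is still of the type considered in Definition~\ref{def:deligne-lusztig datum}. To justify this identification we use two facts: $BwB\subset P$, and $P\cap BwB$ corresponds under $P\to L$ to $B_LwB_L$, the unipotent radical of $P$ being absorbed into $B$. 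Nonemptiness follows from the surjectivity of the Lang map on $L$. The fiber is a translate of $X_L(w)$, and $w$ is a Weyl group element whose $\varphi$-support is all of $I$, the full set of simple reflections of $L$.

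The key input, and the main obstacle, is the connectedness criterion due to Lusztig as proved by Bonnaf\'e--Rouquier \cite{Bonnafe; Rouquier 2006}: $X(w)$ is (geometrically) irreducible if and only if $\Supp_\varphi(w)=\bbS$. Applied to $L$, this gives that the fiber over $eP$ is geometrically connected, and by equivariance so is every geometric fiber. Two points need care. The first is that \cite{Bonnafe; Rouquier 2006} work in the classical setting over $k^\alg$ with Frobenius maps, possibly Suzuki--Ree. We would invoke Section~\ref{subsec:comparison with original} to transfer the result, noting that connectedness can be checked after base change to $k^\alg$. The second is that $L$ may fail to be semisimple or simply connected. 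By Remark~\ref{rem:adjoint type and simply-connected} this does not affect the flag variety or the Deligne--Lusztig varieties.

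Finally we address the components. Since $\pi$ is surjective, and also open because it is flat of finite presentation, it has geometrically connected fibers onto a finite étale (hence discrete) target. The preimage of each point of $(G/P)^\varphi$ is then open, closed, nonempty and connected, because a connected fiber over a discrete point is its whole preimage. This gives the bijection on connected components. The same argument applies over $k$, using that geometrically connected fibers are connected and that points of the finite étale target correspond to Galois orbits on both sides.
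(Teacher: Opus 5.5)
Your proposal is correct and follows essentially the same route as the paper. Both arguments get surjectivity from the transitivity of $G^\varphi$ on $(G/P_w)^\varphi$ and identify the fiber over $eP_w$ with the Deligne--Lusztig variety of the Levi (the paper uses the quotient $P_w/U_w$, you use the $\varphi$-stable Levi subgroup containing $T$, which is the same thing). Both then conclude connectedness from Bonnaf\'e--Rouquier, since the $\varphi$-support of $w$ is all of $I$. Your version also makes explicit the smoothness argument via the étale target and the final step on connected components, which the paper leaves implicit.
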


\proof
Surjectivity follows from the transitivity of the $G^\varphi$-action on $(G/P_w)^\varphi$,
and the remaining task is to verify that  $\pi^{-1}(eP_w)$ is smooth and   connected.
We achieve this by identifying this fiber with a certain   Deligne--Lusztig variety
formed with respect to the Levi quotient of $P_w$: Let $U_w=\Rad_u(P_w)$ be the unipotent radical.
Then $G'=P_w/U_w$ is a reductive group,   $B'=B/U_w$ is a Borel group, the image of $T$ is a maximal torus $T'$, and we have an identification
$G'/B'=P_w/B$.
From $\pi^{-1}(eP_w)= X(w)\cap (P_w/B)$ we see that the fiber indeed can be seen as the Deligne--Lusztig
variety $X'(w)$ for the Deligne--Lusztig datum corresponding to the reductive group $G'$ and the induced isogeny $\varphi'$. 
Recall that   the projection $P_w\ra P_w/U_w$ admits a unique splitting whose image   contains $T$.
This  image is a Levi subgroup, and we obtain a semidirect product decomposition $P_w=U_w\rtimes G'$ 
and an inclusion of the Weyl group of $G'$ into $W$.
By construction,
the $\varphi'$-support of $w$, regarded as an element in $W'=N_{G'}(T')$, is the set of all simple reflections.
According to \cite{Bonnafe; Rouquier 2006}, Theorem 1 the Deligne--Lusztig variety $X'(w)$ is connected. 
\qed

\medskip
We see that the \emph{scheme of connected components} for $X(w)$ coincides with the  finite \'etale   scheme   $(G/P_w)^\varphi=G^\varphi/(G^\varphi\cap P_w)$,
which also can be interpreted  as the finite set of $k^\sep$-valued points  endowed with the Galois action.
Also note that the inclusions of $X(w)$ into the compactifications $\bar{X}(w)$ and $\overline{X(w)}$ yield bijections
between    sets of  irreducible components. Since these  schemes are normal,
these irreducible components are precisely the    connected components.

Given $v\leq w$ we obtain an embedding $X(v)\subset\overline{X(w)}$, and now seek to understand the ensuing closure relation
on connected components. The obvious inclusions $\Supp_\varphi(v)\subset\Supp_\varphi(w)$ and $P_v\subset P_w$ yield  a commutative diagram
$$
\begin{CD}
X(v)            @>>>            \overline{X(w)} @>>>    G/B\\
@V\pi VV                       @VV\pi V               @VVV\\
(G/P_v)^\varphi @>>\rho_{v,w}>    (G/P_w)^\varphi  @>>>    G/P_w,
\end{CD}
$$
and the morphism $\rho_{v,w}:(G/P_v)^\varphi\ra (G/P_w)^\varphi$ of finite \'etale schemes can be interpreted as the
morphism on the scheme of connected components induced by $X(v)\subset \overline{X(w)}$.
Given rational points $g,h\in G^\varphi$, it is convenient to write
$$
g X'(v)\subset X(v)  \quadand h\overline{X'(w)}\subset \overline{X(w)}
$$
for the connected component over the   cosets $gP_v$ and $hP_w$, respectively. We now can unravel the relation between 
strata, cosets and parabolics:

\begin{lemma}
\mylabel{lem:strata cosets parabolics}
For $v\leq w$ we have the equivalences
$$
g X'(v)\subset h\overline{X'(w)}\quad\Longleftrightarrow\quad
gP_v\subset hP_w\quad\Longleftrightarrow\quad
gP_vg^{-1}\subset hP_wh^{-1},
$$
which hold for all rational points $g,h\in G^\varphi$.
\end{lemma}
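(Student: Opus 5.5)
We prove the two equivalences separately, starting with the purely group-theoretic one on the right. Since $P_v\subset P_w$ (from $\Supp_\varphi(v)\subset\Supp_\varphi(w)$), the inclusion $gP_v\subset hP_w$ is equivalent to $h^{-1}g\in P_w$. In that case $gP_vg^{-1}=h(h^{-1}g)P_v(h^{-1}g)^{-1}h^{-1}\subset hP_wh^{-1}$. Conversely, suppose $gP_vg^{-1}\subset hP_wh^{-1}$. Then $x=h^{-1}g$ satisfies $xP_vx^{-1}\subset P_w$, so in particular $xBx^{-1}\subset P_w$. Both $xBx^{-1}$ and $B$ are Borel subgroups of $P_w$, so they are conjugate inside $P_w$: there is $y\in P_w$ with $xBx^{-1}=yBy^{-1}$. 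Since $N_G(B)=B$, this gives $y^{-1}x\in B\subset P_w$, hence $x\in P_w$. This argument can be carried out after base change to $k^\alg$, since all points involved are rational. It is the standard fact that parabolics are self-normalizing and that a parabolic contains exactly one conjugate of each parabolic of a given type containing it.

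For the left equivalence, we use the commutative diagram preceding the lemma. The component $gX'(v)$ is the fiber $\pi^{-1}(gP_v)$ inside $X(v)$, and $h\overline{X'(w)}$ is the preimage of $hP_w$ under $\pi:\overline{X(w)}\to(G/P_w)^\varphi$. Here $\pi$ is the restriction of $G/B\to G/P_w$, which makes sense on the closure because $(G/P_w)^\varphi$ is closed and finite. By Proposition~\ref{prop:geometrically connected fibers}, the fibers of $\overline{X(w)}\to(G/P_w)^\varphi$ are exactly the closures of the connected components of $X(w)$, and they are pairwise disjoint. Consequently $gX'(v)\subset h\overline{X'(w)}$ holds if and only if $\rho_{v,w}(gP_v)=hP_w$. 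By commutativity of the diagram, $\rho_{v,w}$ is the natural map $gP_v\mapsto gP_w$. So the condition reads $gP_w=hP_w$, which is equivalent to $gP_v\subset hP_w$.

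We expect the main obstacle to be justifying that $\pi$ extends to $\overline{X(w)}$ and that $h\overline{X'(w)}$ really is the full fiber of $\pi$ over $hP_w$. One needs that the closure of a connected component of $X(w)$ stays inside a single fiber, and that distinct components have disjoint closures. The first holds because fibers of the continuous map $G/B\to G/P_w$ over the finite discrete set of $\varphi$-fixed points are closed. The second follows because a point of $\overline{X(w)}$ maps to a unique coset. Normality of $\overline{X(w)}$ (Lemma~\ref{lem:properties deligne-lusztig variety}) then identifies these fibers with the irreducible, equivalently connected, components.
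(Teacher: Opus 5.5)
Your proof is correct and follows the paper's argument. You identify $h\overline{X'(w)}$ with the fiber of $\overline{X(w)}\to(G/P_w)^\varphi$ over $hP_w$ and use $\rho_{v,w}(gP_v)=gP_w$, and you reduce the right equivalence to $h^{-1}g\in P_w$. The only difference is cosmetic: in the converse direction you use conjugacy of Borel subgroups inside $P_w$ together with $N_G(B)=B$, whereas the paper uses that $P_w$ is the unique parabolic of its type containing $B$ and is self-normalizing, and these are interchangeable standard facts.
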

 
\proof
By the above discussion, the left condition is equivalent to $\rho_{v, w}(gP_v) = hP_w$. Since $\rho_{v, w}(gP_v) = g P_w$ by construction, this shows the first equivalence. Moreover, $gP_v \subseteq hP_w$ implies $h^{-1} g\in P_w$ and thus $gP_vg^{-1}\subset hP_wh^{-1}$. Finally, to show that the right hand condition implies the middle one, we easily reduce to the case $h=e$ and $P_v=B$. The unique parabolic subgroup of type $P_w$ containing $B$ is $P_w$ itself, so $B  \subseteq gP_wg^{-1}$ implies $gP_wg^{-1} = P_w$ and hence $g\in P_w$.
\qed

\medskip
Recall that the \emph{Tits building} $\Delta(G)$ (over $\FF_p^\alg$) is the simplicial complex whose \emph{vertices} are the  maximal parabolic  subgroups $P\subsetneqq G$ (over $\FF_p^\alg$),
and the \emph{$d$-dimensional faces} are the   $\{P_0,\ldots, P_d\}$ whose intersection remains parabolic.
By abuse of notation, one often identifies     faces and parabolic subgroups  $P=P_0\cap\ldots\cap P_d$.
Each \emph{apartment} comprises all $P$ containing  a  fixed split maximal torus. We obviously have 
$$
P\subset Q \quad\Longleftrightarrow\quad  \{P_0,\ldots, P_d\}\supset \{Q_0,\ldots, Q_e\}
$$
Recall that  in the Deligne--Lusztig stratification $G/B= \bigcup_{w\in W} X(w)$ the closure relation corresponds
to the Bruhat order in $W$. Using the  decomposition   of each $X(w)$ into connected components, 
we obtain the  \emph{refined Deligne--Lusztig stratification} of the flag variety $G/B$.
Note  that the Tits building $\Delta(G)$ alone usually does not determine its closure relations, 
because the parabolic subgroup $P_w\subset G$, or equivalently the set $\Supp_\varphi(w)\subset\bbS$,
usually do  not determine the element $w\in W$.
In the following special case, the situation simplifies:

\begin{definition}
\mylabel{def:varphi coxeter element}
An element 
$w\in W$ is called a \emph{$\varphi$-Coxeter element},
if each $\varphi$-orbit $J\subset \bbS$ contains exactly one element from $\Supp(w)$.
\end{definition}

Note that this are precisely the \emph{Coxeter elements}, provided that $\varphi:\bbS\ra\bbS$ is the identity.
If $\Phi\subset X^*$ is a root system   of type $A_2$ and the isogeny flips the two simple reflections in $\bbS=\{s_1,s_2\}$, 
then $w=s_1$ and $w'=s_2$ are 
$\varphi$-Coxeter elements but not Coxeter elements, while $v=s_1s_2$ is a Coxeter element  but not a $\varphi$-Coxeter element. 

For each parabolic subgroup $P\subset G$, the image $\varphi(P)$ is parabolic of the same dimension, and $\varphi^r(P)=P$.
Consequently, the  isogeny $\varphi:G\ra G$ induces an automorphism of  the Tits building $\Delta(G)$ of  period $r\geq 1$.
A face $P=P_0\cap\ldots\cap P_d$ is called \emph{$\varphi$-stable}, if the subscheme  $P\subset G$, or equivalently
the subset  $\{P_0,\ldots,P_d\}\subset\Delta(G)$, is  $\varphi$-stable.  

 For the sake of exposition we now assume that the finite \'etale scheme $(G/P_w)^\varphi$
is constant. Then 
\begin{equation}
\label{eq:strata to faces}
gX'(v)\longmapsto gP_vg^{-1}
\end{equation} 
is an \emph{order-reversing} map from  the set of connected strata $gX'(v)\subset \overline{X(w)}$, where $g\in G^\varphi$  
and $v\leq w$, to  the set of faces in the Tits building.
By construction the images are $\varphi$-stable.
The order relation on the connected strata is the closure relation, while the order relation
on faces is given by inclusion.

\begin{proposition}
\mylabel{prop:stratification and building}
Suppose that the finite \'etale scheme $G^\varphi/(G^\varphi\cap P_w)$ is constant, and that 
$w\in W$ is a $\varphi$-Coxeter element. Then   \eqref{eq:strata to faces} defines an order-reversing bijection between
the set of connected strata $gX'(v)\subset G/B$ in the flag variety contained  in $\overline{X(w)}$ with $v < w$,
and the set of  $\varphi$-stable faces $\{P_0,\ldots,P_d\}\subset \Delta(G)$ in the Tits building.
\end{proposition}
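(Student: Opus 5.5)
Because $w$ is a $\varphi$-Coxeter element, every simple reflection occurs at most once in a reduced expression of $w$. So $\overline{X(w)}=\bigcup_{v\le w}X(v)$ by Lemma~\ref{lem:properties deligne-lusztig variety}, and the elements $v\le w$ are exactly the subwords of the reduced expression. Each such subword is reduced and determined by its support $\Supp(v)\subset\Supp(w)$. Sending $v$ to $\Supp_\varphi(v)$ then gives a bijection between $\{v\le w\}$ and the $\varphi$-stable subsets of $\bbS$. The inverse takes a $\varphi$-stable $J$ to the subword of $w$ consisting of those letters lying in $J$; this works because each $\varphi$-orbit meets $\Supp(w)$ in exactly one element, so $\Supp(w)\cap J$ recovers $J$ via $\varphi$-saturation. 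Under this bijection $v<w$ corresponds to the proper $\varphi$-stable subsets $J\subsetneqq\bbS$. These are exactly the $\varphi$-stable types of proper parabolics, i.e.\ of faces of $\Delta(G)$ (the empty face $G$ being excluded).

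Next I would establish well-definedness, injectivity and the order property of \eqref{eq:strata to faces}. Connected strata of type $v$ correspond to cosets $gP_v\in(G/P_v)^\varphi=G^\varphi/(G^\varphi\cap P_v)$ by Proposition~\ref{prop:geometrically connected fibers}. Here I would use constancy, noting that constancy for $P_w$ should propagate to $P_v\subset P_w$; if not, I would simply base change to $k^\sep$, since everything concerns $k^\sep$-points. The face $gP_vg^{-1}$ depends only on the coset, and $g$ is $\varphi$-fixed, so it is $\varphi$-stable. Injectivity on strata of a fixed type is the normalizer statement: $gP_vg^{-1}=hP_vh^{-1}$ forces $h^{-1}g\in N_G(P_v)=P_v$. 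Strata of different types map to faces of different types, since the type $\Supp_\varphi(v)$ is recovered from the conjugacy class of the parabolic. Lemma~\ref{lem:strata cosets parabolics} gives $gX'(v)\subset h\overline{X'(v')}\iff gP_vg^{-1}\subset hP_{v'}h^{-1}$. The lemma is stated for $v'=w$, but I would apply it with $w$ replaced by any $v'\le w$, because the same argument works for any pair $v\le v'$. This yields an order-reversing embedding which reflects order. If $gP_vg^{-1}\subset hP_{v'}h^{-1}$, then the types satisfy $\Supp_\varphi(v)\subset\Supp_\varphi(v')$, and hence $v\le v'$ as subwords.

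Surjectivity is the step I expect to be the main obstacle. Given a $\varphi$-stable face $Q$, its type $J$ is $\varphi$-stable, because $\varphi$ preserves types up to the permutation it induces on $\bbS$. I need a $\varphi$-fixed $g$ with $Q=gP_Jg^{-1}$. The parabolics of type $J$ are parametrized by $G/P_J$ via $gP_J\mapsto gP_Jg^{-1}$, and this map is equivariant, with $\varphi(gP_Jg^{-1})=\varphi(g)P_J\varphi(g)^{-1}$ since $P_J$ is $\varphi$-stable. So $\varphi$-stability of $Q$ means that its point in $G/P_J$ is $\varphi$-fixed. By Proposition~\ref{prop:fixed in quotient by parabolic} applied to $P_J$ (the proof there uses only $\varphi$-stability of the parabolic), $G^\varphi$ acts transitively on $(G/P_J)^\varphi$, so $Q=gP_Jg^{-1}$ with $g\in G^\varphi$. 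Taking $v$ to be the subword attached to $J$, we have $P_v=P_J$, and the stratum $gX'(v)$ maps to $Q$. The subtle point to check carefully is the identification of the $\varphi$-stable faces with $\varphi$-fixed points of $G/P_J$, together with the bookkeeping that $\varphi$ acts on types through the diagram permutation.
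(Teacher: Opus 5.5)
Your proposal is correct and follows the paper's proof. Surjectivity comes from every proper $\varphi$-stable $J\subset\bbS$ being $\Supp_\varphi(v)$ for a subword $v<w$, and injectivity from reading off $\Supp_\varphi(v)$ from the type of the parabolic together with $N_G(P_v)=P_v$. Beyond the paper, you make explicit the Lang-theorem step that it leaves implicit in ``this implies that the map we constructed is surjective'': transitivity of $G^\varphi$ on $(G/P_J)^\varphi$, as in Proposition~\ref{prop:fixed in quotient by parabolic}. You also check that the bijection reflects the order.

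One correction: constancy passes from $(G/P_v)^\varphi$ up to $(G/P_w)^\varphi$, not down. Indeed $P_w=G$ for a $\varphi$-Coxeter element, so the hypothesis as stated is vacuous. Your fallback of base changing to $k^\sep$, or equivalently assuming $(G/B)^\varphi$ constant, is what actually makes every connected stratum of the form $gX'(v)$ with $g\in G^\varphi$.
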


\proof
We have defined above an order-reversing map from the set of connected strata $gX'(v)\subset \overline{X(w)}$, $v<w$, to the set of $\varphi$-stable faces in the Tits building of $G$. Since $w$ is a $\varphi$-Coxeter element, every proper $\varphi$-stable subset of $\mathbb S$ arises as the $\varphi$-support of some $v < w$. This implies that the map we constructed is surjective. For its injectivity, let $g_1 X'(v_1)$ and $g_2 X'(v_2)$ be strata giving rise to the same parabolic subgroup $g_1 P_{v_1}g_1^{-1} = g_2 P_{v_2} g_2^{-1}$. It follows that $P_{v_1}$ and $P_{v_2}$ are parabolic subgroups of the same type, so $\Supp_{\varphi}(v_1) = \Supp_{\varphi}(v_2)$. Using that $w$ is a $\varphi$-Coxeter element once more, we conclude that $v_1 = v_2$ and then that $g_2^{-1} g_1 \in P_{v_1}$, so that $g_1 X'(v_1) = g_2 X'(v_2)$.
\qed

\begin{example}
\mylabel{ex:sp4 tutte-coxeter}
Suppose now that $\Phi$ is a root system of type $C_2$, with $\ZZ\Phi^\vee=X_*$ and  $\varphi_*=p\cdot\id_{X_*}$.
Then the reductive group can be seen as the automorphism group scheme $G=\Sp_4$  of the standard symplectic vector space $V=k^4$, where we now work
over the prime field $k=\FF_p$.
The maximal parabolic subgroups $Q\subsetneqq G$ are   the stabilizers of lines $L\subset V$ or 
isotropic planes $U\subset V$. Note that the isotropic $U$ containing a fixed $L$ can be viewed as lines inside $L^\vee/L$.
The minimal parabolic subgroups $B\subset G$, which are precisely the Borel groups,
are the isotropic flags $L\subset U$.
In turn, the Tits building $\Delta(G)$ is a $(p+1)$-regular bipartite graph with 
 $2\cdot (p^4-1)/(p-1)$ vertices  and $(p+1)(p^4-1)/(p-1)$ edges.
2
Write $\bbS=\{s_1,s_2\}$ for the set of simple reflections. Let  $w=s_1s_2$ or $w=s_2s_1$. Then $w$ is a $\varphi$-Coxeter element,
and  $\overline{X(w)}\subset G/B$ is a surface.
Proposition \ref{prop:stratification and building} tells us that the Tits building $\Delta(G)$ is the dual graph for the divisor 
$D=\overline{X(s_1)}\cup \overline{X(s_2)}$. As discussed in the preceding paragraph,  
the  isotropic subspaces $L$ and $U$ correspond to the irreducible components, while the isotropic flags $L\subset U$ 
indicate intersections.

We are mainly interested in the case $p=2$: Then $\Delta(G)$ is a  3-regular bipartite graph 
with  30 vertices and 45 edges, and coincides with the famous   \emph{Tutte--Coxeter graph}
(\cite{Tutte 1958} and  \cite{Coxeter 1958}, compare also   \cite{Everitt 2014}, Section 5).
See Section~\ref{sec:case c2}, in particular Figure~\ref{fig:tutte-coxeter}. 
\end{example}

\subsection{Cyclic shifts, Deligne--Lusztig reduction, and compactifications}
\mylabel{subsec:cyclic shift}
Fix a Deligne--Lusztig datum $\DLD=(X^*,\Phi,X_*,\Phi^\vee,\Delta,p,\varphi_*)$ and a ground field $k$ of the given characteristic $p>0$,
and form the resulting pinned reductive group $G$, with maximal torus $T$, Borel group $B$,   Weyl group $W=N_G(T)/T$,
and isogeny $\varphi:G\ra G$.
Write $\bbS\subset W$ for the set of simple reflections.
In this section we discuss certain canonical morphisms that go by the name \emph{Deligne--Lusztig reduction}
(\cite{Deligne; Lusztig 1976}, Theorem 1.6) and study under what conditions they extend to compactifications. See also, for instance, \cite{Wang 2024} Section 4.1.

First suppose that we have two reduced expressions of the form
\begin{equation}
\label{eq:two reduced expressions}
w=s_{i_1}s_{i_2}\ldots s_{i_\ell}\quadand w'=s_{i_2} \ldots s_{i_{\ell-1}}\varphi(s_{i_1}),
\end{equation} 
for some elements $w,w'\in W$ of common length $\ell\geq 1$, and hence   $w'=sw\varphi(s)$ with the simple reflection
 $s=s_{i_1}$. Consider the   diagram
\begin{equation}
\label{eq:cyclic shift diagram}
\begin{CD}
X(w)				@>>>	X(w')\\
@VVV					@VVV\\
\bX(s_{i_1},s_{i_2},\ldots,s_{i_\ell}) 	@>>>	\bX(s_{i_2}, \ldots, s_{i_\ell}, \varphi(s_{i_1})),
\end{CD}
\end{equation} 
where  the lower horizontal arrow is given by
\begin{equation}
\label{eq:cyclic shift}
(g_1B,\ldots,g_\ell B)\longmapsto (g_2B,\ldots, g_\ell B,\varphi(g_1)B),
\end{equation} 
while the upper horizontal arrow sends $gB$ to the unique coset $g'B$ with relative positions
$\inv(g,g')=s$ and $\inv(g',\varphi(g))=sw$. We call these morphisms \emph{cyclic shifts}.
The vertical arrows are the compactifications discussed
in Section \ref{subsec:compactifications}: 
Recall that $X(w)\ra \bX(s_{i_1}, \ldots,s_{i_\ell})$ sends $gB$ to the tuple $(g_1B,\ldots, g_\ell B)$ with $g_1=g$ and
$\inv(g_\nu,g_{\nu+1})=s_{i_\nu}$, where we formally set $g_{\ell+1}=\varphi(g)$.

\begin{proposition}
\mylabel{prop:finite universal homeomorphisms}
In the above situation, the diagram \eqref{eq:cyclic shift diagram} is commutative, and the horizontal arrows are finite universal homeomorphisms.
\end{proposition}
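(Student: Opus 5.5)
First I check that the lower arrow \eqref{eq:cyclic shift} is well defined. A point $(g_1B,\ldots,g_\ell B)$ of $\bX(s_{i_1},\ldots,s_{i_\ell})$ satisfies $\inv(g_\nu,g_{\nu+1})\in\{\id,s_{i_\nu}\}$ for all $\nu$, with $g_{\ell+1}=\varphi(g_1)$. For the shifted tuple $(g_2B,\ldots,g_\ell B,\varphi(g_1)B)$ the consecutive conditions are the same up to the last one. The last one reads $\inv(\varphi(g_1),\varphi(g_2))\in\{\id,\varphi(s_{i_1})\}$, and it follows from applying $\varphi$ to $\inv(g_1,g_2)\in\{\id,s_{i_1}\}$: the isogeny $\varphi$ maps $\catO(s)$ into $\catO(\varphi(s))$, since it sends $BsB$ into $B\varphi(s)B$. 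All of this works on $R$-valued points, so we get a morphism between the products of flag varieties. It respects the closures because the defining incidence conditions are closed. Commutativity of \eqref{eq:cyclic shift diagram} is then a direct check. For $gB\in X(w)$ with associated tuple $(g_1=g,g_2,\ldots,g_\ell)$, the point $g'=g_2$ satisfies $\inv(g,g_2)=s$ and $\inv(g_2,\varphi(g))=s_{i_2}\cdots s_{i_\ell}=sw$. Moreover $\inv(g_2,\varphi(g_2))=s_{i_2}\cdots s_{i_\ell}\varphi(s)=w'$ by \eqref{eq: decomposition relative position}, using reducedness of the expression for $w'$. So $g'\in X(w')$, its tuple is $(g_2,\ldots,g_\ell,\varphi(g_1))$, and this is exactly the image under \eqref{eq:cyclic shift}.

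Next I show the horizontal arrows are finite universal homeomorphisms, starting with the lower one, call it $c$. Write $c=(\pr_{2,\ldots,\ell},\varphi\circ\pr_1)$, which makes $c$ the composition of the closed immersion $\bX(s_{i_1},\ldots)\subset\prod G/B$ with the map $\prod G/B\to\prod G/B$ given by $(x_1,\ldots,x_\ell)\mapsto(x_2,\ldots,x_\ell,\varphi(x_1))$. That map is a permutation of factors composed with $\varphi$ on one factor. The morphism $\varphi\colon G/B\to G/B$ is a finite universal homeomorphism, because $\varphi^r=F^s$ and $F^s$ is a finite universal homeomorphism. Since $\varphi$ is separated, this already forces $\varphi$ to be universally injective and integral, and it is surjective too, since $\varphi^r$ is. So the composite of $c$ with the inclusion is a finite universal homeomorphism followed by the closed immersion of the target $\bX(s_{i_2},\ldots,\varphi(s_{i_1}))$. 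Then $c$ itself is finite and universally injective, hence so is its restriction.

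Surjectivity of $c$ is where I expect the real work. My first approach: $\bX(w')$ is irreducible componentwise and of the same dimension as $\bX(w)$, and $c$ is finite, so its image is closed of full dimension. I would rather argue directly on points over $k^\alg$, though. Given $(h_1,\ldots,h_\ell)$ in the target, $h_\ell=\varphi(g_1)$ has a unique preimage $g_1$, because $\varphi$ is bijective on $k^\alg$-points. Setting $g_\nu=h_{\nu-1}$ for $\nu\ge 2$, the one remaining condition $\inv(g_1,g_2)\in\{\id,s\}$ follows from $\inv(\varphi(g_1),\varphi(g_2))\in\{\id,\varphi(s)\}$. For this I use that $\varphi$ induces a bijection on $k^\alg$-points of $\overline{\catO(s)}$ onto $\overline{\catO(\varphi(s))}$, which in turn follows from $\varphi^r=F^s$ together with $\varphi^r(s)=s$. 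So $c$ is bijective on geometric points, hence surjective, and a finite universal homeomorphism.

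The upper arrow is the restriction of $c$ to the open subset $X(w)$. I have to check that the preimage $c^{-1}(X(w'))$ equals $X(w)$, and again it suffices to do this on geometric points. A tuple lies in the interior exactly when every relative position is nontrivial and the relative positions compose to $w$. The shift preserves this property, again using that $\varphi$ matches $\catO(s)$ with $\catO(\varphi(s))$ bijectively. The base change of a finite universal homeomorphism to an open subscheme is again one, so this finishes the argument.
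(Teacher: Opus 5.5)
Your proposal is correct and follows essentially the same route as the paper: reduce to geometric points, use that $\varphi$ is bijective on $k^{\alg}$-points of $G/B$ and carries relative position $s$ exactly to $\varphi(s)$, and obtain the upper arrow from $c^{-1}(X(w'))=X(w)$. Your factorization of $c$ through the map $(x_1,\ldots,x_\ell)\mapsto(x_2,\ldots,x_\ell,\varphi(x_1))$ on $\prod G/B$ just makes explicit the finiteness of the horizontal arrows, which the paper leaves implicit.
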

 
\proof
The morphisms are of finite type, and commutativity is obvious from the functorial description. It remains to check
that the arrows are universal homeomorphisms. For this we may assume that $k$ is algebraically closed, and have to check
that the maps induce   bijections  between closed points. The maps \eqref{eq:cyclic shift} and $gB\mapsto g'B$ are indeed injective,
because this holds for the isogeny $g\mapsto \varphi(g)$.  To see that \eqref{eq:cyclic shift} is surjective,
let 
\begin{equation}
\label{eq:surjectivity cyclic shift}
(g_2B,\ldots,g_\ell B, hB)\in \bX(s_{i_2}, \ldots, s_{i_\ell}, \varphi(s_{i_1}))
\end{equation} 
be a closed point. Since $g\mapsto \varphi(g)$ is surjective, we can write $h=\varphi(g_1)$. Then 
$\inv(\varphi(g_1),\varphi(g_2))\leq  \varphi(s_{i_1})$. It follows that $(g_1B,\ldots,g_\ell B)$ belongs to $\bX(s_{i_1},s_{i_2},\ldots,s_{i_\ell})$
and maps to \eqref{eq:surjectivity cyclic shift}. This establishes surjectivity for \eqref{eq:cyclic shift}. We also see that a point in $\bX(s_{i_1},s_{i_2},\ldots,s_{i_\ell})$ is mapped to $X(w')$ if and only if it lies in the open subscheme $X(w)$, so the surjectivity of the upper horizontal arrow follows, as well.
\qed

\medskip
Note that if two elements $w,w'\in W$ of the same length are related by $w'=sw\varphi(s)$ for some simple reflection $s$ such that  $\ell(sw)<\ell(w)$, then
any reduced expression for $sw$ yields reduced expressions as in \eqref{eq:two reduced expressions}.
Also note that the cyclic shift $X(w)\ra X(w')$ in general does not extend to the closures in the flag variety $G/B$.

Suppose now  that we have reduced expressions of the form
\begin{equation}
\label{eq:second reduced expressions}
w=s_{i_1} \ldots s_{i_{\ell-1}}\varphi(s_{i_1}) \quadand w'=s_{i_2} \ldots s_{i_{\ell-1}}\varphi(s_{i_1}),
\end{equation} 
for some elements $w,w'\in W$, of respective lengths $\ell\geq 2$ and $\ell-1\geq 1$, now with $w'=sw$
for the simple reflection $s=s_{i_1}$. 
For each $(g_1B,\ldots,g_\ell B)$ from the regular compactification $ \bX(s_{i_1},\ldots,s_{i_{\ell-1}},\varphi(s_{i_1}))$ we have
$$
\inv(g_1,g_2)\leq s_{i_1}\quadand \inv(g_\ell,\varphi(g_1))\leq \varphi(s_{i_1})\quadand \inv(\varphi(g_1),\varphi(g_2))\leq \varphi(s_{i_1}),
$$
and thus $\inv(g_\ell,\varphi(g_2))\leq \varphi(s_{i_1})$. In turn,   $(g_1B,\ldots,g_\ell B)\mapsto (g_2B,\ldots, g_\ell G)$ defines   a morphism
$$
\bX(s_{i_1},\ldots,s_{i_{\ell-1}},\varphi(s_{i_1}))\lra \bX(s_{i_2},\ldots,s_{i_{\ell-1}},\varphi(s_{i_1})).
$$

\begin{proposition}
\mylabel{prop:bundles with sections}
The above morphism is a $\PP^1$-bundle, and the assignment 
\begin{equation}
\label{eq:section for bundle}
(g_2B,\ldots,g_\ell B)\mapsto (g_1B,g_2B,\ldots, g_\ell B)
\end{equation} 
with $g_1=g_2$ defines a section. Furthermore, the projection on the first coordinate defines an identification
$\bX(s_{i_2},\ldots,s_{i_{\ell-1}},\varphi(s_{i_1}))=\overline{X(w')}\subset G/B$, provided that the   simple reflections  
in $w'=s_{i_2} \ldots s_{i_{\ell-1}}\varphi(s_{i_1})$ appear without repetitions.
\end{proposition}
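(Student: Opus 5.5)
The plan is to realize the morphism $f\colon \bX(s_{i_1},\ldots,s_{i_{\ell-1}},\varphi(s_{i_1}))\to \bX(s_{i_2},\ldots,s_{i_{\ell-1}},\varphi(s_{i_1}))$ as a base change of the $\PP^1$-bundle from Lemma~\ref{lem:structure barOs}. Write $Y=\bX(s_{i_2},\ldots,s_{i_{\ell-1}},\varphi(s_{i_1}))$, whose points are tuples $(g_2B,\ldots,g_\ell B)$ with the boundary condition $\inv(g_\ell,\varphi(g_2))\leq\varphi(s_{i_1})$. A point of the source consists of such a tuple together with a $g_1B$ satisfying
\[
\inv(g_1,g_2)\leq s_{i_1}\quadand \inv(g_\ell,\varphi(g_1))\leq\varphi(s_{i_1}).
\]
The key claim is that, given a point of $Y$, the second condition is automatic once the first holds. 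The first condition says that $g_1B$ and $g_2B$ lie in the same fibre of $G/B\to G/P_s$, where $P_s$ is the minimal parabolic for $s=s_{i_1}$. Then $\varphi(g_1)$ and $\varphi(g_2)$ lie in the same fibre of $G/B\to G/P_{\varphi(s)}$. The condition $\inv(g_\ell,\varphi(g_2))\leq\varphi(s)$ says that $g_\ell$ lies in this fibre as well. Hence all three points share a $P_{\varphi(s)}$-coset, and so $\inv(g_\ell,\varphi(g_1))\leq\varphi(s)$.

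Consequently the source is identified with the fibre product $Y\times_{G/B}\overline{\catO(s)}$, formed via the map $Y\to G/B$, $(g_\nu)\mapsto g_2B$, and $\pr_2\colon\overline{\catO(s)}\to G/B$. For this to hold scheme-theoretically, I would argue at the level of $R$-valued points, using that $\overline{\catO(s)}=G/B\times_{G/P_s}G/B$ and $\overline{\catO(\varphi(s))}=G/B\times_{G/P_{\varphi(s)}}G/B$ as closed subschemes. The implication above is then a statement about morphisms to $G/P_{\varphi(s)}$, and is functorial because $\varphi$ induces a morphism $G/P_s\to G/P_{\varphi(s)}$. By Lemma~\ref{lem:structure barOs} the fibre product is a Zariski-locally trivial $\PP^1$-bundle over $Y$.

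The section $g_1=g_2$ is precisely the base change of the diagonal $G/B\to\overline{\catO(s)}$, which is a section of $\pr_2$. I expect the main technical point to be the identification of $\overline{\catO(s)}$ with $G/B\times_{G/P_s}G/B$ as schemes, and not merely as sets. It holds because the right-hand side is a smooth $\PP^1$-bundle that contains $\catO(s)\cup\catO(e)$ as a dense open subset together with the diagonal; both sides are reduced closed subschemes with the same underlying set.

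For the final assertion, assume the simple reflections in $w'$ occur without repetition. Proposition~\ref{prop:comparison compactifications} then shows that $Y\to G/B$, given by projection to the first coordinate $g_2B$, maps isomorphically onto $\overline{X(w')}$. The only thing to check is that the first coordinate of $Y$ is the one used in that proposition, which it is by the indexing of $Y$.
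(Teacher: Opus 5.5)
Your proposal is correct and follows the paper's route: the paper also realizes the map as the base change of $\pr_2\colon\overline{\catO(s)}\to G/B$ along $(g_\nu B)\mapsto g_2B$, applies Lemma~\ref{lem:structure barOs}, takes the base change of the diagonal as the section, and deduces the last claim from Proposition~\ref{prop:comparison compactifications}. The only difference is that you justify why the square is cartesian, which the paper calls obvious, by identifying $\overline{\catO(s)}$ with $G/B\times_{G/P_s}G/B$ and using that $\varphi$ maps $G/P_s$ to $G/P_{\varphi(s)}$; this is a sound way to make that step hold scheme-theoretically.
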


\proof
As above, we write $s = s_{i_1}$. Consider the commutative diagram
$$
\begin{CD}
\bX(s_{i_1},\ldots,s_{i_{\ell-1}},\varphi(s_{i_1})) @>>>    \overline{\O(s)}\\
@VVV                                                        @VVV\\
\bX(s_{i_2},\ldots,s_{i_{\ell-1}},\varphi(s_{i_1})) @>>>    G/B,
\end{CD}
$$
where $\overline{\O(s)}$ is as in Section~\ref{subsec:relative position}, the upper horizontal  morphism is   $(g_1B,\ldots, g_\ell B)\mapsto (g_1B,g_2B)$, the vertical morphism
to the right is $(g_1B,g_2B)\mapsto g_2B$, and 
the lower horizontal morphism
is $(g_2B,\ldots,g_\ell B)\mapsto g_2B$. The diagram is obviously cartesian, and by  Lemma~\ref{lem:structure barOs} the projection $\overline{\O(s)}\ra G/B$
is a $\PP^1$-bundle. For the induced $\PP^1$-bundle over $\bX(s_{i_2},\ldots,s_{i_{\ell-1}},\varphi(s_{i_1}))$ the assignment \eqref{eq:section for bundle} clearly
defines a section. The last statement follows from Proposition~\ref{prop:comparison compactifications}.
\qed

\medskip
Note that these $\PP^1$-bundles are locally trivial in the Zariski topology, because they admit a section.
The following special case will play an important role throughout:

\begin{corollary}
\mylabel{cor:structure of ruled surface}
Let $s$ be a  simple reflection  that is not $\varphi$-fixed, and set $w=s\varphi(s)$.  Then the morphism
$$
\overline{X(w)}=\bX(s,\varphi(s))\lra \bX(\varphi(s))=\overline{X(\varphi(s))},\quad (g_1B,g_2B)\longmapsto g_2B
$$
is a $\PP^1$-bundle with a section, giving $\overline{X(w)}$ the structure of a ruled surface.
\end{corollary}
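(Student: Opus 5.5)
This is the special case $\ell=2$ of Proposition~\ref{prop:bundles with sections}, with $s_{i_1}=s$, so that $w=s\varphi(s)$ and $w'=\varphi(s)$. The plan is to check that the hypotheses \eqref{eq:second reduced expressions} hold, quote that proposition, and then identify both sides with closures in the flag variety via Proposition~\ref{prop:comparison compactifications}.

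First we check that $s\varphi(s)$ is a reduced expression of length two. Since $s$ is not $\varphi$-fixed and $\varphi$ permutes $\bbS$, the element $\varphi(s)$ is a simple reflection different from $s$. A product of two distinct simple reflections has length two, since it is not the identity and not a simple reflection. Hence $w=s\varphi(s)$ and $w'=\varphi(s)$ are reduced expressions of the shape \eqref{eq:second reduced expressions}, with $\ell=2$ and $w'=sw$.

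Next, Proposition~\ref{prop:bundles with sections} says the morphism $\bX(s,\varphi(s))\ra\bX(\varphi(s))$, $(g_1B,g_2B)\mapsto g_2B$, is a $\PP^1$-bundle. The assignment $g_2B\mapsto(g_2B,g_2B)$ is a section of it. Because a section exists, the bundle is Zariski-locally trivial, as noted after that proposition.

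For the identifications, each of $w=s\varphi(s)$ and $w'=\varphi(s)$ has its simple reflections without repetition. Proposition~\ref{prop:comparison compactifications} therefore gives $\bX(s,\varphi(s))=\overline{X(w)}$ and $\bX(\varphi(s))=\overline{X(\varphi(s))}$ inside $G/B$. The latter is the last clause of Proposition~\ref{prop:bundles with sections}.

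It remains to see that $\overline{X(w)}$ is a ruled surface. By Lemma~\ref{lem:properties deligne-lusztig variety}, the base $\bX(\varphi(s))$ is a smooth proper curve, possibly disconnected with components described in Section~\ref{subsec:connected components}. So $\overline{X(w)}$ is a $\PP^1$-bundle over a smooth proper curve, which is a ruled surface in the usual sense, componentwise.

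There is no real obstacle here. The only point needing care is the reducedness check, which uses the hypothesis $\varphi(s)\neq s$; without it $w=s^2=e$ and the statement fails.
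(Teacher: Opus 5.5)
Your proposal is correct and matches the paper, which states this corollary without a separate proof as the special case $\ell=2$ of Proposition~\ref{prop:bundles with sections}, with the identifications with closures in $G/B$ coming from Proposition~\ref{prop:comparison compactifications}. Your check that $\varphi(s)\neq s$ makes $s\varphi(s)$ reduced of length two is exactly what puts the situation in the shape \eqref{eq:second reduced expressions}.
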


\subsection{Deligne--Lusztig varieties of dimensions 0 and 1}
\mylabel{subsec:small dimensions}
Let us consider a Deligne--Lusztig datum $\DLD$ with associated pinned group, Borel subgroup and maximal torus $G\supset B\supset T$, $\varphi_*^r = p^s$ and $q=p^{s/r}$.

The $0$-dimensional Deligne--Lusztig variety $X(\id)$ is the fix point scheme $(G/B)^\Frob = G^\Frob/B^\Frob$, where the equality follows from Lang's theorem. As a consequence of Proposition~\ref{prop:structure Gphi}, it is finite étale over $\FF_p$.

We then have
\[
    \sum_{x\in X(\id)} [\kappa(x):\FF_p] = \lvert X(\id)(\FF^\alg)\rvert = \sum_{w\in W^\Frob} q^{\ell(w)}.
\]
This follows from the following lemma, known as the rational Bruhat decomposition.

\begin{lemma}\mylabel{lem:rational Bruhat decomposition}
Let $G$ be a connected reductive group over $\FF^\alg$ with fixed maximal torus $T$ and Borel subgroup $B$. Let $\Frob\colon G\to G$ be an isogeny such that $\Frob^r = F_{G/\FF^\alg}^s$ for some $r$, $s$.

We denote by $G^\Frob$ the $\Frob$-fix points in $G(\FF^\alg)$, and similarly for $B$ and $G/B$.
The Bruhat decomposition of $G/B$ induces a Bruhat decomposition
\[
    (G/B)^\Frob = G^\Frob/B^\Frob = \bigsqcup_{w\in W^\Frob} B^\Frob w B^\Frob/B^\Frob.
\]
Moreover, for each $w\in W^\Frob$ we have $\lvert B^\Frob wB^\Frob/B^\Frob \rvert = q^{\ell(w)}$, where $q = p^{s/r}$ and $\ell$, as before, is the length function on $W$.
\end{lemma}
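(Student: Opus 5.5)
The plan is to combine the ordinary Bruhat decomposition $G=\bigsqcup_{w\in W}BwB$ with Lang's theorem applied to suitable connected $\varphi$-stable subgroups.

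\textbf{Step 1: the $\varphi$-fixed points of $G/B$ lie in the $\varphi$-fixed cells.} The Bruhat cell $BwB$ is mapped by $\varphi$ onto $B\varphi(w)B$, because $\varphi(B)=B$ and $\varphi(T)=T$. Hence a $\varphi$-fixed coset $gB$ lies in a cell $BwB/B$ with $\varphi(w)=w$. Since $\varphi(B)=B$, Lang's theorem for the connected group $B$ gives $(G/B)^\varphi=G^\varphi/B^\varphi$, exactly as in the proof of Proposition~\ref{prop:fixed in quotient by parabolic}.

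\textbf{Step 2: each $\varphi$-fixed cell has a $\varphi$-fixed representative.} Let $w\in W^\varphi$. The coset $wT\subset N_G(T)$ is $\varphi$-stable, and Lang's theorem for $T$ yields a representative $\dot w\in N_G(T)^\varphi$. Then $B^\varphi\dot wB^\varphi/B^\varphi$ is contained in $(BwB/B)^\varphi$.

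\textbf{Step 3: the fixed part of a cell is a single orbit, of the right size.} Write $U_w=\prod_{\alpha>0,\,w^{-1}\alpha<0}U_\alpha$, a product taken in a fixed order. This is a closed connected subgroup, and since $\varphi(w)=w$ and $\varphi$ permutes root groups while preserving positivity, $U_w$ is $\varphi$-stable. The map $U_w\to BwB/B$, $u\mapsto u\dot wB$, is an isomorphism, and it is $\varphi$-equivariant because $\dot w$ is fixed. Therefore
\[
(BwB/B)^\varphi=U_w^\varphi\,\dot wB/B\subset B^\varphi\dot wB^\varphi/B^\varphi,
\]
which together with Step 2 proves the decomposition.

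For the count, we must show $|U_w^\varphi|=q^{\ell(w)}$. The approach is to filter $U_w$ by $\varphi$-stable normal subgroups, for instance by root height, grouping the roots in $\varphi$-orbits. Each successive quotient is a vector group on which $\varphi$ acts through some $\varphi^r=F^s$. Lang's theorem makes $H^1$ of each connected stage vanish, so the cardinalities multiply along the filtration. One then computes the fixed points on each graded piece:
\begin{itemize}
\item For an orbit of roots of the same length and size $m$, the contribution is $q^m$. Indeed, the fixed points are identified with the $\FF_{p^{s'}}$-points of a single root group, where the exponent $s'$ makes the count $q^{m}$.
\item In the Suzuki--Ree cases, orbits mix long and short roots and are paired with non-abelian root subgroups of the form $U_\alpha U_\beta$. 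For such an orbit I would use the explicit exponents from the classification above, for example $q(\alpha)=2^{s_i}$ differing by one. This gives $q^{\#\text{orbit}}$, via $\sqrt{2^{2n+1}}$-style bookkeeping.
\end{itemize}

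\textbf{Main obstacle.} The hardest part is the Suzuki--Ree orbit computation, which I would handle case by case for ${}^2B_2$, ${}^2G_2$ and ${}^2F_4$, or else cite Carter~\cite{Carter 1972}. A cleaner alternative is the counting formula $|U^\varphi|=q^{|\Phi_+|}$, which follows from the order formula for $G^\varphi$ as in \cite{Malle; Testerman 2011}. This also needs to be restricted to $U_w$, which is done using the factorization $U=U_wU^w$ into $\varphi$-stable pieces.
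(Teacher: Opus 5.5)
Your proof of the decomposition is correct. Fixed cosets can only lie in cells $BwB/B$ with $\varphi(w)=w$, and Lang's theorem for $T$ gives a $\varphi$-fixed representative $\dot w$. The $\varphi$-equivariant isomorphism $U_w\to BwB/B$, $u\mapsto u\dot wB$, then gives $(BwB/B)^\varphi=U_w^\varphi\dot wB/B=B^\varphi\dot wB^\varphi/B^\varphi$ and reduces the count to $|U_w^\varphi|$. This is the standard argument behind the references the paper cites; the paper itself only derives the first equality from Lang's theorem and refers to \cite{Steinberg 1968}, Theorem~11.1, and \cite{Carter 1972}, Chapters~13--14, for the rest. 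For the count, however, you have misplaced the difficulty, and your fallback does not work.

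\textbf{The height filtration is not $\varphi$-stable in the Suzuki--Ree cases.} Let $\rho$ be the permutation of $\Phi$ defined by $\varphi(U_\alpha)=U_{\rho\alpha}$. The height filtration is $\varphi$-stable only when $\rho$ preserves heights, for example in all cases of Frobenius type. In ${}^2B_2$, ${}^2G_2$ and ${}^2F_4$ it does not:
\begin{itemize}
\item For ${}^2B_2$, the matrix defining $\varphi^*$ sends $\epsilon_1+\epsilon_2=\alpha_1+\alpha_2$ to $2p^n\epsilon_1$, and $2\epsilon_1=2\alpha_1+\alpha_2$ to $2p^n(\epsilon_1+\epsilon_2)$. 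So $\rho$ swaps roots of heights $2$ and $3$.
\item For ${}^2G_2$, the $\rho$-orbits on $\Phi_+$ are $\{\alpha_1,\alpha_2\}$, $\{\alpha_1+\alpha_2,3\alpha_1+\alpha_2\}$ and $\{2\alpha_1+\alpha_2,3\alpha_1+2\alpha_2\}$, with heights $\{1,1\}$, $\{2,4\}$ and $\{3,5\}$.
\end{itemize}
So you must filter by subgroups $U_\Psi$ with $\Psi$ a union of whole $\rho$-orbits, and their normality is not formal. For instance, $U_{2\alpha_1+\alpha_2}U_{3\alpha_1+2\alpha_2}$ is normal in $U$ only because $[x_{\alpha_1}(t),x_{2\alpha_1+\alpha_2}(u)]=x_{3\alpha_1+\alpha_2}(\pm 3tu)$ vanishes in characteristic $3$. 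Note also that for an orbit $\{\alpha,\rho\alpha\}$ the product $U_\alpha U_{\rho\alpha}$ is either abelian or, as for $\{\alpha_1,\alpha_2\}$, not a subgroup at all; what is needed is that it becomes an abelian graded piece. This structural check is the genuine case-by-case content, and it is what Carter's analysis of classes of roots supplies.

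\textbf{The exponent bookkeeping is automatic.} Suppose a graded piece is the product of the root groups over whole orbits, with $\varphi$ mapping $U_\alpha$ to $U_{\rho\alpha}$ by $t\mapsto c_\alpha t^{q(\alpha)}$. An orbit has size $m$ dividing $r$, since $\rho^r=\id$. On $U_\alpha$, $\varphi^m$ acts by $t\mapsto ct^{Q}$ with $Q^{r/m}=p^s$, hence $Q=q^m$, and the orbit contributes exactly $q^m$ fixed points. This holds uniformly in all cases and gives $|U_w^\varphi|=q^{\ell(w)}$.

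\textbf{The ``cleaner alternative'' has a real gap.} Since $U^w=U\cap\dot wU\dot w^{-1}=U_{ww_0}$ with $ww_0\in W^\varphi$, the $\varphi$-equivariant factorization $U=U_wU^w$ only yields $|U_w^\varphi|\cdot|U_{ww_0}^\varphi|=q^{|\Phi_+|}$. This does not determine $|U_w^\varphi|$. Keep the filtration argument, with unions of $\rho$-orbits in place of height levels.
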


\begin{proof}
    The first equality follows from Lang's theorem. For a proof of the other statements we refer to~\cite{Steinberg 1968} Theorem~11.1 and~\cite{Carter 1972} Chapters 13, 14, in particular Lemma~14.1.2. See also~\cite{Malle; Testerman 2011} Chapter~24.
\end{proof}

The irreducible Deligne--Lusztig curves arise from Dynkin types $A_1$, ${^2}A_2$, ${^2}B_2$ and ${^2}G_2$, see Section~\ref{subsec:connected components}.

\begin{proposition}\mylabel{prop:DL curves}
    Consider a Deligne--Lusztig datum $\mathscr D$ with underlying root datum of type ${^2}A_2$, ${^2}B_2$ or ${^2}G_2$, let $q=p^{s/r}$ where $\varphi_*^r=p^s$ and denote by $s_1, s_2$ the simple reflections of the Weyl group. Then the Deligne--Lusztig curves $X(s_1)$ and $X(s_2)$ are isomorphic, and likewise their closures $\overline{X(s_1)}$ and $\overline{X(s_2)}$ in $G/B$ are isomorphic.
\end{proposition}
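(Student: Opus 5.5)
The plan is to use the cyclic shift of Section~\ref{subsec:cyclic shift}. In the three twisted rank-two cases the isogeny $\varphi$ swaps the two simple reflections, so $\varphi(s_1)=s_2$ and $\varphi(s_2)=s_1$. I would take the reduced expression $w=s_1$ of length $\ell=1$ in \eqref{eq:two reduced expressions}, which gives $w'=\varphi(s_1)=s_2$. By Proposition~\ref{prop:finite universal homeomorphisms}, the cyclic shift $X(s_1)\ra X(s_2)$ and its extension $\bX(s_1)\ra\bX(s_2)$, $gB\mapsto\varphi(g)B$, are finite universal homeomorphisms. This step alone only yields a purely inseparable morphism, not an isomorphism. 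So I will instead look for a genuine isomorphism built from a symmetry of the situation.

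First, I would check the Frobenius-type cases ${}^2A_2$ (and ${}^2B_2$ when viewed over $\FF_{p^s}$). There the datum carries the diagram automorphism $\sigma$ with $\varphi=F^{s/r}\circ\sigma$, and $\sigma$ commutes with $\varphi$. By \eqref{eq:factorization varphi} and Proposition~\ref{prop:isogeny becomes frobenius}, $\sigma$ is an automorphism of the pinned group. It maps $B$ to $B$ and swaps $s_1$ and $s_2$, so $gB\mapsto\sigma(g)B$ sends $X(s_1)$ isomorphically onto $X(s_2)$. Indeed, $g^{-1}\varphi(g)\in Bs_1B$ implies $\sigma(g)^{-1}\varphi(\sigma(g))=\sigma(g^{-1}\varphi(g))\in Bs_2B$. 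The same map preserves closures in $G/B$.

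The Suzuki--Ree cases ${}^2B_2$ and ${}^2G_2$ are the main obstacle, because there no automorphism $\sigma$ of the root datum swaps long and short roots. For these I would argue that $\varphi$ itself, suitably normalized, gives the isomorphism. The idea is to factor $\varphi_*$. By the classification in Section~\ref{subsec:isogeny theorem}, $\varphi_*=M$ with $M^2=p^s$ and $s=2n+1$. I would use the isogeny $\psi$ with $\psi_*$ the "special isogeny" of the corresponding type. In characteristic $p=2$ (for $C_2$) or $p=3$ (for $G_2$), this is the map with $\psi_*^2=p$, and $\varphi=F^n\circ\psi$ for the minimal case. Then $\psi$ commutes with $\varphi$, and $\psi$ maps $X(s_1)$ into $X(s_2)$ by the same computation as above. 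However, $\psi$ is a purely inseparable isogeny and not an isomorphism. So the induced map $X(s_1)\ra X(s_2)$ is again only a universal homeomorphism, and after composing with powers of Frobenius it is a relative Frobenius twist.

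To conclude, I would combine this with the observation that $X(s_1)$ is defined over $\FF_p$. For a scheme $Y$ over the perfect field $\FF_p$, $Y^{(p)}\cong Y$ canonically. Hence a morphism $X(s_1)\ra X(s_2)$ that factors as a relative Frobenius $F_{X(s_1)/k}$ followed by an isomorphism $X(s_1)^{(p)}\ra X(s_2)$ yields $X(s_1)\cong X(s_2)$. The key step is therefore to show that the cyclic shift (or $\psi$) is exactly such a Frobenius, that is, it factors through $F_{X(s_1)/k}$ with a birational and finite second factor. The claim then follows from normality of $\overline{X(s_2)}$ (Lemma~\ref{lem:properties deligne-lusztig variety}) together with Zariski's main theorem. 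I expect verifying this degree computation, via a tangent-space or degree count along the $\PP^1$-fibers in $G/B$, to be the hardest point. The statement for closures then follows from Proposition~\ref{prop:comparison compactifications}, since $\bX(s_i)=\overline{X(s_i)}$.
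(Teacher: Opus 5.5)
You have the right reduction, and it matches the paper's. By Proposition~\ref{prop:finite universal homeomorphisms}, the cyclic shift $X(s_1)\to X(s_2)$ (and $\bX(s_1)\to\bX(s_2)$) is a finite universal homeomorphism of smooth curves over the perfect field $\FF_p$. Frobenius twists are trivial there, so it suffices to show that the shift is a Frobenius power followed by an isomorphism. But that is exactly the step you leave open. The tool you propose for it, a tangent-space or degree count along the $\PP^1$-fibres, is not what closes it: the value of the degree plays no role.

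The missing idea is field-theoretic. Let $K$ be the function field of $X(s_1)$. Since $k=\FF_p$ is perfect and $K$ has transcendence degree one, $[K:K^p]=p$, hence $[K:K^{p^a}]=p^a$. If $L\subset K$ is the function field of $X(s_2)$, then $K/L$ is purely inseparable of some degree $p^a$. So $K^{p^a}\subseteq L$, and comparing degrees gives $L=K^{p^a}$. Now $X(s_2)$ and $X(s_1)^{(p^a)}$ are both normal curves receiving finite maps from $X(s_1)$ that induce the same subfield of $K$, so they are isomorphic. Finally $X(s_1)^{(p^a)}=X(s_1)$ over $\FF_p$. The paper phrases the same fact via the composite $X(s_1)\to X(s_2)\to X(s_1)$ being $\varphi^2=F^s$: $X(s_2)$ is an intermediate step of the $p^s$-Frobenius, and such steps are Frobenius twists.

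Once you have this, your detours through $\sigma$ and $\psi$ are unnecessary, because the argument is uniform in the three types. Your $\sigma$-argument for ${}^2A_2$ is correct; the paper mentions it as an alternative. However, your parenthetical placing ${}^2B_2$ among the Frobenius-type cases is wrong: there $s/r$ is not an integer and no such $\sigma$ exists, as you note yourself a paragraph later.

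The factorization must also allow an arbitrary power $F^a_{X(s_1)/k}$, not a single relative Frobenius. For instance, for ${}^2C_2$ with $n=0$ the two shifts have degrees $2$ and $1$ (Proposition~\ref{prop:degrees inseparable maps}). For the closures, run the same argument for $\bX(s_1)\to\bX(s_2)$, or use that isomorphic smooth curves have isomorphic smooth projective models.
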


\begin{proof}
    Using cyclic shifts we obtain purely inseparable maps $X(s_1)\to X(s_2) \rightarrow X(s_1)$ whose composition is the endomorphism $\varphi^2$ (Section~\ref{subsec:cyclic shift}). In the cases at hand, $\varphi^2$ is the relative $q^2$-Frobenius of $X(s_{1})$. This implies that $X(s_2)$ is isomorphic to a Frobenius pullback of $X(s_{1})$ (for a $q'$-Frobenius for some divisor $q'$ of $q^2$). Since $X(s_1)$ is defined over the prime field, it follows that $X(s_1)$ and $X(s_2)$ are isomorphic. The same reasoning applies to the closed Deligne--Lusztig curves.

    In the case of ${}^2 A_2$, one can also argue more directly and use that there exists a group automorphism exchanging $s_1$ and $s_2$.
\end{proof}

For Dynkin type $A_1$, the associated Deligne-Lusztig curve is the projective line $\PP^1_k$, for ${}^2 A_2$ it is the Hermitian curve. In Section~\ref{subsec:tables} we give formulas for the genera of Deligne--Lusztig curves.

\section{Dualizing sheaves for Deligne--Lusztig varieties}
\mylabel{sec:dualizing sheaves}

The goal of this section is  to express the canonical divisor of a Deligne--Lusztig
surface $X=\bX(w)$, say $w=s_{i_1}s_{i_2}$, as a linear combination of the curves 
$D_1=\bX(s_{i_2})$ and $D_2=\bX(s_{i_1})$. We will first, following Hansen~\cite{Hansen 1999}, develop the general theory, and then specialize to the case of surfaces.

\subsection{The canonical bundle formula of Hansen}
\mylabel{subsec:canonical bundle formula}

Throughout this section, we fix a  Deligne--Lusztig datum
$$
\mathscr D = (X^*,\Phi,X_*,\Phi^\vee, \Delta, p, \varphi_*)
$$
and form  the corresponding pinned reductive group $G$ over the prime field $k=\FF_p$, with maximal torus $T$,
Borel group $B$, and isogeny $\varphi:G\ra G$ satisfying $\varphi^r=F^s$.

Fix $w\in W$. We then have the Deligne--Lusztig variety $X(w)$.
Write $\alpha_1,\ldots,\alpha_\ell\in \Delta$ for the simple roots,  
and $s_1,\ldots, s_\ell$ for the corresponding simple reflections in the Weyl group $W$,
and choose a reduced expression $w=s_{i_1}\ldots s_{i_m}$, $m = \ell(w)$.
This defines  a smooth compactification  $\bar{X}(w)$.
Write  
$$
\partial\bar{X}(w)=\bar{X}(w)\smallsetminus X(w)\quadand \pi\colon\bar{X}(w)\lra G/B
$$
for  the boundary divisor regarded as reduced scheme, and the projection to the flag variety.
We now seek to express the dualizing sheaf 
$\omega_{\bar{X}(w)}=\det(\Omega^1_{\bar{X}(w)/k})$, or equivalently the canonical divisor $K_{\bX(w)}$, 
in terms of invertible sheaves on the flag variety  and the above boundary divisor.

First observe that the unipotent radical $\Rad_u(B)$ is a complement for the maximal torus $T\subset B$,
and gives a decomposition $B=\Rad_u(B)\rtimes T$, with ensuing identification $T=B/\Rad_u(B)$.
Consequently, the restriction map 
$\Hom(B,\GG_m)\ra \Hom(T,\GG_m)$ is bijective.  In turn, each  character $\lambda\colon T\ra\GG_m$
gives rise to a
line bundle
$$
G\wedge^B\AA^1=(G\times\AA^1)/B.
$$
Here the quotient is formed with respect to the right action by $B$ given by
\[
    (g, x) \cdot b = (gb, \lambda(b^{-1})x).
\]
The corresponding invertible sheaf $\O_{G/B}(\lambda)$ on the flag variety $G/B$ is defined as the sheaf of sections of this line bundle.
Note that $\O_{G/B}(\lambda)$ is ample if and only if $\langle \alpha, \lambda \rangle < 0$ for all simple roots $\alpha$, i.e., if and only if $\lambda$ is strictly $B$-antidominant.

As customary, we write
$$
\rho=\frac{1}{2}\sum_{\alpha\in\Phi_+}\alpha\quad  \in X^*\otimes \QQ
$$
for the half-sum of positive roots. Since $2\rho\in X^*$, we get a character $2\rho\colon T\ra\GG_m$ and an invertible sheaf $\O_{G/B}(2\rho)$. Moreover, since $\rho$ is also equal to the sum of the fundamental weights, $\rho$ itself is a character if $G$ is semisimple and simply connected. Since the flag variety $G/B$ and the Deligne--Lusztig varieties are not changed when we replace $G$ by the simply connected cover of its adjoint group $G^{\rm ad}$ (the quotient of $G$ by its center), $\rho$ defines a line bundle on $G/B$ which we denote by $\O_{G/B}(\rho)$.

\begin{proposition} 
The dualizing sheaves for the flag variety $G/B$ and the  smooth compactification $\bar{X}(w)$ are given by  
$$
\omega_{G/B}=\O_{G/B}(2\rho)\quadand
\omega_{\bar{X}(w)} = \O_{\bar{X}(w)}(-\partial\bar{X}(w)) \otimes \pi^*\O_{G/B}(\rho-\Frob^*(\rho)).
$$
\end{proposition}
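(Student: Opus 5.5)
Both formulas should come from a tangent bundle computation, and I will do the flag variety first. The tangent bundle of $G/B$ is the homogeneous bundle $G\wedge^B(\lieg/\mathfrak{b})$. Its $T$-weights are the roots $\alpha$ with $\lieg_\alpha\not\subset\Lie(B)$, i.e.\ the negative roots $-\Phi_+$. Using the sign convention $(g,x)\cdot b=(gb,\lambda(b^{-1})x)$ fixed above, I compute $\det(\lieg/\mathfrak b)$ as a character of $B$ via the $T$-weights; up to the sign bookkeeping this gives $\det T_{G/B}=\O_{G/B}(-2\rho)$, hence $\omega_{G/B}=\O_{G/B}(2\rho)$. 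The ampleness criterion stated just before the proposition gives a consistency check: $\O(\lambda)$ is ample for antidominant $\lambda$, and $\omega_{G/B}$ must be anti-ample, which matches $2\rho$ being dominant.

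For $\bX(w)$ I use the description $\bX(s_{i_1},\ldots,s_{i_m})\subset\prod_\nu G/B$ together with the correspondence from Lemma~\ref{lem:properties deligne-lusztig variety}. The plan is to compute the normal bundle of the open part $X(w)\subset G/B$, and then extend across the boundary with logarithmic forms. On $X(w)$, the Lang map $L(g)=g^{-1}\varphi(g)$ has differential $dL_g(gX)=-X+(\text{term killed by }d\varphi)$. Since $d\varphi$ vanishes on root spaces with $q(\alpha)>1$, the tangent space of $X(w)$ at $gB$ is identified, via left translation, with the tangent space of $BwB/B$ pulled back along $g\mapsto L(g)B$. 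In this way $T_{X(w)}$ becomes the restriction of a homogeneous subbundle of $T_{G/B}$, twisted by $\varphi$.

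Concretely, I will work with the incidence variety $Y=\{(gB,g'B):\inv(gB,g'B)\le w\}$ and its Bott--Samelson resolution $Z$ in $\prod G/B$. Then $\bX(w)$ is the transversal intersection of $Z$ with the graph $\Gamma$ of $\varphi\colon G/B\ra G/B$, taken in the coordinates $(g_1,g_{m+1})$. Transversality holds because $d\varphi=0$ on the relevant directions, which is exactly the étaleness of $L$ used earlier. Adjunction then gives
\[
\omega_{\bX(w)}=\omega_Z|_{\bX(w)}\otimes\det N_{\Gamma}|_{\bX(w)}.
\]
Here $N_\Gamma\cong\pr_2^*T_{G/B}$ restricted to the graph, since the graph of a map with zero differential has normal bundle the pullback of the tangent bundle of the target. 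So $\det N_\Gamma$ pulls back to $\varphi^*\O(-2\rho)$ through $g_{m+1}=\varphi(g_1)$.

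The remaining input is the canonical bundle of the Bott--Samelson variety $Z$, viewed as an iterated $\PP^1$-bundle over $G/B$ via Lemma~\ref{lem:structure barOs}. The standard formula is
\[
\omega_Z=\O_Z(-\partial Z)\otimes \pr_1^*\O(\rho)\otimes\pr_{m+1}^*\O(\rho).
\]
I would prove it by induction on $m$: each $\PP^1$-bundle $\overline{\O(s)}\ra G/B$ has relative dualizing sheaf $\O(-\text{section})\otimes\O(\alpha_s)$-type twists, and these telescope into $\rho$ contributions at the two ends. Since $\partial Z$ restricts to $\partial\bX(w)$, combining the pieces gives $\O(-\partial)\otimes\O(\rho)\otimes\varphi^*\O(\rho)\otimes\varphi^*\O(-2\rho)=\O(-\partial)\otimes\O(\rho-\varphi^*\rho)$. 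For this I use $\varphi^*\O(\lambda)=\O(\varphi^*\lambda)$, which holds because $\varphi$ respects $B$ and $T$. The main obstacle is the telescoping Bott--Samelson formula together with the consistent sign conventions for $\O(\lambda)$. I would anchor the signs with the case $m=1$, $\varphi=F$ on $\SL_2$, where $\bX(s)=\PP^1$ has $q+1$ boundary points, and the answer $\deg=-2$ pins down the convention.
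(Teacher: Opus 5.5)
Your argument is correct in outline, and it takes a genuinely different route from the paper, which gives no derivation at all. The paper cites Jantzen, II.4.2, for $\omega_{G/B}$, with the sign reversed because Jantzen's Borel group corresponds to the negative roots. For $\bX(w)$ it cites Hansen's Proposition~1.

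You instead rebuild the formula from the incidence picture:
\begin{enumerate}
\item $\bX(w)=(\pr_1,\pr_{m+1})^{-1}(\Gamma_\varphi)$ inside the Bott--Samelson variety $Z\subset(G/B)^{m+1}$;
\item adjunction, with $N_{\Gamma}\cong\pr_2^*T_{G/B}$;
\item the formula $\omega_Z=\O_Z(-\partial Z)\otimes\pr_1^*\O(\rho)\otimes\pr_{m+1}^*\O(\rho)$.
\end{enumerate}
The pieces check out. Your computation $\det T_{G/B}=\O(-2\rho)$ agrees with the convention $(g,x)\cdot b=(gb,\lambda(b^{-1})x)$. The induction for $\omega_Z$ works because on $\overline{\O(s)}$ one has $\omega_{\overline{\O(s)}/(G/B)}=\pr_1^*\O(\alpha_s)$ and $\O(\Delta)\cong\pr_1^*\O(s_\alpha\rho)\otimes\pr_2^*\O(-\rho)$. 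Both sides of the latter restrict to $\O(-\alpha_s)$ on $\Delta$ and have degree one on the fibers. The identity $\alpha_s+s_\alpha\rho=\rho$ then produces the telescoping. Finally, $\varphi^*\O(\lambda)=\O(\varphi^*\lambda)$ via $(g,x)\mapsto(\varphi(g),x)$.

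Your route gives a self-contained proof, uniform over the paper's framework of $p$-isogenies, with the sign conventions under control. The citation is shorter but leaves the matching of conventions to the reader. You should add that $\rho$ is a character only after passing to the simply connected cover of $G^{\mathrm{ad}}$, as the paper remarks.

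The one step to repair is transversality. In the Suzuki--Ree cases $d\varphi\neq 0$: for ${}^2C_2$ with $n=0$ the kernel of $d\varphi$ on $\mathfrak{sp}_4$ is only five-dimensional. So \emph{$d\varphi=0$ on the relevant directions} is not a valid reason. If $d\varphi$ were zero you could simply use smoothness of $\pr_{m+1}\colon Z\to G/B$, but in general you need the $G$-action:
\begin{enumerate}
\item Since $\varphi^r=F^s$, you get $(d\varphi)^r=0$, so $\id-d\varphi$ is invertible on $\lieg$.
\item Write $X\cdot g$ for the infinitesimal action of $X\in\lieg$ at $g\in G/B$, and take $z\in\bX(w)$. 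Under $N_\Gamma\cong T_{g_{m+1}}(G/B)$, $(v,v')\mapsto v'-d\varphi(v)$, the vector field of $X$ for the diagonal $G$-action on $Z$ maps to $(X-d\varphi X)\cdot g_{m+1}$.
\item These vectors exhaust the normal space, because $\lieg\to T_{g_{m+1}}(G/B)$ is surjective.
\end{enumerate}
This is the precise form of the \'etaleness of $L$ you allude to. The same argument, applied to the $G$-stable components $\{g_j=g_{j+1}\}$ of $\partial Z$, shows that $\partial Z$ restricts to the reduced divisor $\partial\bX(w)$; you use this without comment.

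Two smaller points. The isomorphism $N_\Gamma\cong\pr_2^*T_{G/B}$ holds for the graph of any morphism, so that step never needed $d\varphi=0$. Your second paragraph, describing $T_{X(w)}$ via the Lang map, is not used afterwards and can be dropped.
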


\begin{proof}
The former appears in \cite{Jantzen 2003}, Section II.4.2 (note that in loc.~cit., the Borel subgroup $B$ corresponds to the \emph{negative} roots, a normalization opposite to ours; this explains the sign discrepancy between the above statement and the formula given in loc.~cit.).The  latter is Hansen's Canonical Bundle Formula~\cite{Hansen 1999}, Proposition~1.
\end{proof}

\medskip
Note that we have    $\partial\bar{X}(w)=D_1+\ldots+D_m$ for   the divisors
$$
D_j = \overline{X}(s_{i_1}, \dots, \widehat{s_{i_j}}, \dots, s_{i_m}),
$$
and that the summands share  no common irreducible component. To make the above formula more useful,
we will express the preimages $\O_{\bar{X}(w)}(\lambda)=\pi^*( \O_{G/B}(\lambda))$   in terms of the above boundary divisors.

Recall that  $\alpha_1,\ldots,\alpha_l\in \Phi$ are the simple roots, and that we have a
reduced expression     $w=s_{i_1}\ldots s_{i_m}$. Write
 $\alpha_1^\vee,\ldots,\alpha_l^\vee\in\Phi^\vee$ for the   simple coroots.
For $1\leq j\leq m$, we now consider the coroots
\begin{equation}
\label{coroots in formula}
\tilde{\alpha}_j^\vee= s_{i_1}\cdots s_{i_{j-1}}(\alpha_{i_j}^\vee)\in\Phi^\vee.
\end{equation} 
Our Weyl group element $w$ induces an automorphism of $X^*$, and
combining this with $\varphi^*$ we get  an endomorphism $\varphi^*\circ w^{-1}-\id_{X^*}$ of $X^*$, so that each character $\mu\in X^*=\Hom(T,\GG_m)$ yields a character
$\lambda= \varphi^*(w^{-1}(\mu))-\mu$.
According to \cite{Deligne; Lusztig 1976}, Section 9.4  we have an identification
$$
\O_{\bar{X}(w)}(\lambda) = \O_{\bar{X}(w)}  \left(\sum_j \langle \mu, \tilde{\alpha}_j^\vee\rangle D_j\right).
$$
In particular, the invertible sheaf corresponding to $\lambda:T\ra \GG_m$ corresponds to a Cartier divisor which is a linear combination of the boundary divisors of $\bar{X}(w)$.

As observed  in \cite{Haastert 1986}, proof of Proposition 2.3, the endomorphism $\varphi^*\circ w^{-1}-\id_{X^*}\in \End(X^*)$
has non-zero determinant.
For the half-sum of positive roots $\rho$ we choose   some $m>0$ and some $\mu\in X^*$ such that  
\begin{equation}
\label{m and mu}
m(\rho-\varphi^*(\rho)) = \varphi^*(w^{-1}(\mu))-\mu.
\end{equation} 
This gives the desired canonical bundle formula in terms of the boundary divisor, at least
after passing to multiples, cf.~\cite{Hansen 1999} Proposition~2.

\begin{proposition}\mylabel{prop:canonical divisor DL}
The canonical divisor of $\bar{X}(w)$ satisfies
$$
mK_{ \bar{X}(w)}  =   \sum_{j=1}^n (\langle \mu, \tilde{\alpha}_j^\vee\rangle -1) D_j,
$$
with the integer $m>0$,  character $\mu:T\ra\GG_m$, and  coroots  $\tilde{\alpha}_j^\vee$  from \eqref{coroots in formula}
and  \eqref{m and mu}.
\end{proposition}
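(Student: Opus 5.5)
The plan is to combine the two ingredients recorded just before the statement: Hansen's Canonical Bundle Formula for $\omega_{\bar{X}(w)}$, and the Deligne--Lusztig identification (\cite{Deligne; Lusztig 1976}, Section~9.4) of $\pi^*\O_{G/B}(\lambda)$ with a boundary divisor. Throughout I work with divisor classes on $\bar{X}(w)$, write the boundary as $\partial\bar{X}(w)=D_1+\ldots+D_n$, and fix $m$ and $\mu$ as in \eqref{m and mu}. One point I have not resolved is flagged in the third paragraph: as I read the two inputs, they produce boundary coefficient $-m$ rather than the stated $-1$.

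\emph{Step 1.} Hansen's formula gives
$$
K_{\bar{X}(w)}=-\sum_j D_j+\pi^*\bigl(\rho-\varphi^*(\rho)\bigr),
$$
where $\rho$ is interpreted as a character after passing to the simply connected cover of $G^{\rm ad}$. This does not affect $G/B$ or $\bar{X}(w)$, as remarked before the proposition. Since only $\rho-\varphi^*(\rho)$ need not be a boundary combination, I pass to the $m$-th multiple.

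\emph{Step 2.} By \eqref{m and mu}, the character $m(\rho-\varphi^*(\rho))$ equals $\lambda=\varphi^*(w^{-1}(\mu))-\mu$. The cited identification then gives
$$
\pi^*\O_{G/B}\bigl(m(\rho-\varphi^*\rho)\bigr)=\O_{\bar{X}(w)}\Bigl(\sum_j\langle\mu,\tilde{\alpha}_j^\vee\rangle D_j\Bigr).
$$
The existence of such $m>0$ and $\mu$ comes from Haastert's observation that $\varphi^*\circ w^{-1}-\id$ has nonzero determinant. Its image therefore has finite index in $X^*$, so some positive multiple of $\rho-\varphi^*\rho$ lies in it.

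\emph{Step 3.} This is the step I expect to be the main obstacle. Multiplying Step 1 by $m$ and substituting Step 2 gives
$$
mK_{\bar{X}(w)}=\sum_j\bigl(\langle\mu,\tilde{\alpha}_j^\vee\rangle-m\bigr)D_j .
$$
This agrees with the stated coefficient $\langle\mu,\tilde{\alpha}_j^\vee\rangle-1$ only when $m=1$, or when $(m-1)\sum_j D_j$ is linearly equivalent to zero. Neither holds in general. The determinant of $\varphi^*\circ w^{-1}-\id$ need not be $\pm1$, so $m=1$ cannot always be chosen, and the boundary divisor is usually not principal.

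To close this gap, I would first check the statement of Hansen's Proposition~2 (\cite{Hansen 1999}). The aim is to see whether his normalization of $m$ and $\mu$ is designed so that the boundary term enters with coefficient $-1$ after multiplying by $m$. For example, it might fold the term $-(m-1)\partial\bar{X}(w)$ into the choice of $\mu$, using the standard identity expressing $\O(\partial\bar{X}(w))$ via a character (\cite{Deligne; Lusztig 1976}, Section~9). If that normalization exists, the proof reduces to matching the convention in \eqref{m and mu} with Hansen's and redoing the substitution of Step 2. If it does not exist, my derivation supports $-m$ instead of $-1$. In that case I could not prove the statement as worded, and the statement itself would need to be checked.

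Apart from this normalization question, the remaining checks are routine bookkeeping. These are: the sign convention for $\O_{G/B}(\lambda)$; the fact that the divisors $D_j$ are Cartier on the smooth variety $\bar{X}(w)$; and that the $D_j$ share no components, so the coefficients are well defined.
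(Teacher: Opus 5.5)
Your derivation is the paper's own. The paper gives no argument beyond combining three things: Hansen's formula $\omega_{\bar{X}(w)}=\O_{\bar{X}(w)}(-\partial\bar{X}(w))\otimes\pi^*\O_{G/B}(\rho-\varphi^*(\rho))$, the Deligne--Lusztig identification of $\pi^*\O_{G/B}(\varphi^*(w^{-1}(\mu))-\mu)$ with $\O_{\bar{X}(w)}(\sum_j\langle\mu,\tilde{\alpha}_j^\vee\rangle D_j)$, and the choice of $(m,\mu)$, together with a pointer to Hansen's Proposition~2. The obstacle in your Step~3 is not a gap in your reasoning but a misprint in the statement. The correct formula is
$$
mK_{\bar{X}(w)}=\sum_{j=1}^n\bigl(\langle\mu,\tilde{\alpha}_j^\vee\rangle-m\bigr)D_j,
$$
exactly what you obtained, and no normalization can rescue the coefficient $-1$. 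First, $\mu$ is uniquely determined by $m$, since $\varphi^*\circ w^{-1}-\id_{X^*}$ is injective. Second, the two versions differ by $(m-1)\,\partial\bar{X}(w)$. For $m\geq 2$ this is a non-zero effective divisor, hence not principal on the projective scheme $\bar{X}(w)$. So the printed version holds only when $m=1$ is admissible, and that typically fails.

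The paper's own tables confirm the $-m$ version. Take $A_2$ and $w=s_1s_2$, and write $\mu=a\varpi_1+b\varpi_2$ in fundamental weights. The relation $m(\rho-q\rho)=q\,w^{-1}(\mu)-\mu$ gives $b=-m(q-1)^2/(q^2+q+1)$, so $m=1$ is impossible for $q\geq 2$. For $m=q^2+q+1$ one finds $\mu=(q-1)(2q+1)\varpi_1-(q-1)^2\varpi_2$. Hence $\langle\mu,\tilde{\alpha}_1^\vee\rangle=2q^2-q-1$ and $\langle\mu,\tilde{\alpha}_2^\vee\rangle=\langle\mu,\alpha_1^\vee+\alpha_2^\vee\rangle=q^2+q-2$. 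Subtracting $m$ and dividing by $m$ yields the tabulated coefficients $\frac{q^2-2q-2}{q^2+q+1}$ and $\frac{-3}{q^2+q+1}$. These also match the blow-up description in Section~\ref{subsec:case a2}, whereas subtracting $1$ would not. The same misprint recurs in the introduction's $\lambda_j$. So you may simply prove the corrected formula, and the remaining bookkeeping you list is fine.
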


Summing up, our Deligne--Lusztig variety $X=\bX(w)$ comes with distinguished  boundary divisors $D_1,\ldots, D_n$,
and we have and some multiple $mK_X$ becomes a linear combination $\sum_{j=1}^n\lambda_jD_j$,
and one may compute the coefficients $\lambda_j$ by hand  or via computer algebra programs.
Although this information in not sufficient to determine the position of the ray $\RR\cdot K_X$ with respect to the nef cone $\operatorname{Nef}(X)$ inside the real vector space $\Num(X)_\RR$, the following terminology is very natural: We say that the canonical divisor $K_X$ \emph{has negativity} if all the coefficients satisfy $\lambda_j\leq 0$.

\subsection{Tables}
\mylabel{subsec:tables}

In Proposition \ref{prop:canonical divisor DL}, one may compute
$\tilde{\alpha}_j^\vee$,  $m$,  $\mu$ and thus the   coefficients in the
canonical bundle formula by hand, or more efficiently with  computer algebra. We
record the results for those connected Deligne--Lusztig surfaces arising from
connected Dynkin diagrams and where both coefficients of $K_{X}$ as in
Proposition~\ref{prop:canonical divisor DL} are non-positive (for at least one
$q$) in the following table. Furthermore, we add a table with data on $0$- and
$1$-dimensional Deligne--Lusztig varieties, because it will be useful to have
this information at hand in later sections.
See the appendix (Section~\ref{sec:appendix}) for the analogous data for connected Deligne--Lusztig surfaces where the canonical divisor does not have negativity for any $q$.

\begin{landscape}
    \noindent
    \textbf{Surfaces with negativity in the canonical divisor.}
    In the Suzuki--Ree cases ${}^2C_2$ and ${}^2 G_2$ we have $p=2$ and $p=3$, respectively; we then have $q = p^{\frac{2n+1}{2}}$ for some $n \ge 0$ and we write $q_{0} = p^n$.
    \bigskip
    \begin{longtable}{p{1.5cm}|C{2cm}C{1.5cm}|C{1.8cm}p{2.8cm}p{2.5cm}C{2cm}p{5cm}}
\toprule
\makecell{Dynkin\\type} 	& $r$, $s$	& $q = p^{\frac{s}{r}}$	& $w$ 	& \multicolumn{2}{l}{Coefficients in $K_X$}							& \makecell{$K_X$\\negative?} & Geometry\\
\toprule
$A_2$			& $1$, $s$			& $p^s$	& $s_1s_2$		& $\dfrac{q^{2} - 2 q - 2}{q^{2} + q + 1}$, &  $\dfrac{-3}{q^{2} + q + 1}$				& \makecell{$p=2$\\$q=2$} & blow-up of $\PP^2_{\FF_p}$ in $\PP^2(\FF_q)$\\
\midrule
${}^2A_2$			& $2$, $2 s_{0}$			& $p^{s_{0}}$	& $s_1s_2$		& $\dfrac{q - 2}{q + 1}$, & $\dfrac{-3}{q + 1}$						& \makecell{$p=2$\\$q=2$}& {sym.\ product of $E_3/\FF_2$}\\
\midrule
$C_2$	
        & $1$, $s$			& $p^s$	& $s_2s_1$		& $\dfrac{q^{2} - q - 2}{q^{2} + 1}$, & $\dfrac{2 q - 4}{q^{2} + 1}$				& \makecell{$p=2$\\$q=2$} & {supersing.\ K3 surface/$\FF_2$}\\
\midrule
${}^2C_2$		
          &	$2$, $2n+1$		& $2^{\frac{2n+1}{2}}$ & $s_1s_2$		& $\dfrac{4 q_{0}^{2} - 6 q_{0} + 2}{2 q_{0}^{2} - 1}$, & $\dfrac{-4 q_{0} + 3}{2 q_{0}^{2} - 1}$				& \makecell{$p=2$\\$q_{0}=1$} & {sym.\ product of $E_5/\FF_2$}\\[.5cm]
          & 			& 	& $s_2s_1$		& $\dfrac{2 q_{0}^{2} - 4 q_{0} + 2}{2 q_{0}^{2} - 1}$, & $\dfrac{-6 q_{0} + 4}{2 q_{0}^{2} - 1}$				& \makecell{$p=2$\\$q_{0}=1$} & {can.\ ruled over $E_5/\FF_2$}\\
\midrule
${}^2G_2$
        &		$2$, $2n+1$	&	$3^{\frac{2n+1}{2}}$ & $s_2s_1$		& $\dfrac{3q_{0}^{2} - 5 q_{0} + 2}{3q_{0}^{2} - 1}$, & $\dfrac{-9 q_{0} + 5}{3q_{0}^{2} - 1}$		& \makecell{$p=3$\\$q_{0}=1$} &  {ruled over Ree curve/$\FF_3$}\\
\bottomrule
\end{longtable}

\bigskip
\noindent
\textbf{Deligne--Lusztig varieties of dimension $0$.}
In this table we list the number of geometric points $\lvert X(\id)(\FF_p^\alg)\rvert$ for the root systems of rank $2$.

\bigskip
\begin{tabular}{p{3cm}|C{4cm}C{4.5cm}C{2cm}C{2cm}C{2cm}}
    \toprule
    Dynkin type & $A_2$ & $C_{2}$ & ${}^2 A_2$ & ${}^2 C_2$ & ${}^2 G_2$ \\
    \midrule
    $\lvert X(\id)(\FF_p^\alg)\rvert$ & $q^3 + 2q^2 + 2q + 1$ &  $q^4+2q^3+2q^2+2q+1$ & $q^3+1$ & $q^4+1$ & $q^6+1$\\[.3cm]
    \toprule
\end{tabular}
\end{landscape}

\bigskip
\noindent
\textbf{Deligne--Lusztig curves.}
In this table we record the genera of the irreducible Deligne--Lusztig curves $\bar{X}(s)$. 
Recall that by Proposition~\ref{prop:DL curves} the genus is independent of the choice of element $w$ of length $1$, once the Deligne--Lusztig datum is fixed. See also~\cite{Hansen 1992}. See above for the definition of $q_{0}$ in the Suzuki--Ree cases.

\bigskip
\begin{tabular}{p{3cm}|C{1cm}C{1.7cm}C{2cm}C{4cm}}
    \toprule
    Dynkin type & $A_1$ & ${}^2 A_2$ & ${}^2 C_2$ & ${}^2 G_2$ \\
    \midrule
    Genus $g(\bar{X}(s))$ & $0$ & $\dfrac{q^2-q}{2}$ & $2q_{0}^3 - q_{0}$ & $\dfrac{27q_0^5 + 9q_0^4 - 3q_0^2 - 3q_0}{2}$\\[.3cm]
    \toprule
\end{tabular}

\section{Weil restriction}
\mylabel{sec:weil restriction}

\subsection{Cases of Frobenius type}
\mylabel{sec:case of frobenius}

Let $q= p^{s_0}$ be a power of the prime $p$ and let $d > 1$. We consider the Deligne--Lusztig datum with underlying pinned root datum the product of $d$ copies of the root datum of $\SL_2$, i.e., $X^* = (\ZZ^2/(1, 1)\ZZ)^d$ and $\Phi = \{ 0, (1, -1), (-1, 1)\}^d\setminus \{(0, \dots, 0)\}$, and further $X_* = \left( \{(a, -a);\ a\in \ZZ\} \right)^d$ and $\Phi^\vee = \{ 0, (1, -1), (-1, 1) \}^d \setminus \{(0, \dots, 0)\}$. We let $\Delta = \left\{ (0, \dots, 0, (1, -1), 0, \dots, 0) \right\}$.
The corresponding pinned reductive group is $G=(\SL_2)^d$ with the standard pinning.
The prime $p$ is the prime chosen above, and we define $\varphi_*$ by
\[
    \varphi_* ((a_1, -a_1), \dots, (a_d, -a_d)) = (q(a_2, -a_2), \dots, q(a_d, -a_d), q(a_1, -a_1)).
\]
We have $(\varphi_*)^d = q^d = p^{s_{0}d}$, and it is easy to check that this is a Deligne--Lusztig datum of Frobenius type.

In fact, we may describe a corresponding group $G_0 / \FF_q$ explicitly in terms of Weil restriction of scalars. Let $k$ be the finite field with $q^d$ elements. Let $G_0 = \Res_{k/\FF_q}(\SL_{2, k})$ be the Weil restriction of scalars of $\SL_2$ along the extension $k/\FF_q$. We then have $G_k=G_0\otimes_{\FF_q}k = \prod_{\gamma\in \Gal(k/\FF_q)} \SL_{2, k}$.

Let $B_0\subset G_0$ be the standard Borel subgroup (i.e., the Weil restriction of the Borel of upper triangular matrices in $\SL_2$). Then $B= B_0\otimes_{\FF_q}k$ is the product of the subgroups of upper triangular matrices in each factor. The action of the $q$-Frobenius $\Frob = F_{G/k}^{s_0}$ on $G$ is given, for a suitable numbering $G = \prod_{i=1}^d \SL_2$, by
\[
    \Frob((g_1, \dots, g_d)) = (F^{s_{0}}(g_2), \dots, F^{s_0}(g_d), F^{s_0}(g_1)),
\]
where $F^{s_0} = F_{\SL_{2, k}/k}^{s_0}$ is the $q$-Frobenius on $\SL_{2, k}$.

We have $G/B = (\PP^1)^d$.
The absolute Weyl group $W$ is the product $S_2\times\cdots\times S_2$ of $d$ copies of the Weyl group of $\SL_2$. We denote by $s_i\in W$ the element which is non-trivial in the $i$-th entry, and trivial in all other entries. The elements $s_i$ have order $2$ and commute with each other. The action on $W$ induced by $\Frob$ is given by cyclically permuting the factors.

Let $1\le i < j \le d$, and let $w = s_is_j$. Then each element of $X(w)$ is determined by its entries in places $i$ and $j$, and on these, an open condition is imposed. Thus $\overline{X}(w) = \overline{X(w)} \cong \PP^1_k\times\PP^1_k$. It is not hard to write down the ``boundary divisors'' $\overline{X(s_j)}$, $\overline{X(s_i)}$ in terms of graphs of powers of $F^{s_{0}}$, and to compute the coefficients of an expression of the canonical divisor as a linear combination of the boundary divisors.

If we start with a Deligne--Lusztig datum corresponding to a connected Dynkin
diagram, there may be non-connected Deligne--Lusztig surfaces whose connected
components are of this type. See the discussion in Section~\ref{subsec:connected components}. Examples are the cases ${}^2A_3$, $w = s_1s_3$, and
${}^2A_4$, $w = s_1s_4$, and ${}^3D_4$, $w = s_1s_3$.

\subsection{A Suzuki--Ree case}
\mylabel{subsec:non-abs simple Suzuki-Ree}

A similar case which however is not of Frobenius type can be constructed as follows; clearly, many variants of this are possible. Let again $q = p^{s_{0}}$ for some prime number $p$. Consider the pinned root datum which is the product of two copies of the root datum for $\SL_2$, as in the previous section but now, for simplicity, with just two factors.

We define
\[
    \varphi_*((a_1, -a_1), (a_2, -a_2)) = ((a_2, -a_2), q(a_1, -a_1)).
\]
We then have $(\varphi_*)^2 = q = p^{s_{0}}$.
Again it is easy to check that we have defined a Deligne--Lusztig datum of Suzuki--Ree type.

The corresponding split reductive group is $G = \SL_2\times \SL_2$, and the isogeny $\varphi$ is given by
\[
    \Frob(g_1, g_2) = (g_2, F_{\SL_2}^{s_{0}}(g_1)).
\]
Similarly as in Section~\ref{sec:weil restriction}, $\overline{X(s_1s_2)} \cong \PP^1\times \PP^1$.
Deligne--Lusztig surfaces of this type occur as the connected components of the Deligne--Lusztig surface for Dynkin type ${}^2F_4$ and $w = s_1s_4$.

\section{The case \texorpdfstring{$A_2$}{A2}}
\mylabel{sec:case a2}

\subsection{Deligne--Lusztig varieties in Dynkin type \texorpdfstring{$A_n$}{An}}
\mylabel{subsec:an}
Let $(X^*, \Phi, X_*, \Phi^\vee, \Delta)$ be a pinned root datum of type $A_{n-1}$ which corresponds to the reductive group $G=\SL_n$, the Borel subgroup $B$ of upper triangular matrices and the diagonal maximal torus $T$. Fix a prime number $p$, an integer $s \ge 1$, and set $\varphi_* = p^s\cdot\id$. We obtain a Deligne--Lusztig datum $(X^*, \Phi, X_*, \Phi^\vee, \Delta, p, \varphi_*)$. Write $q = p^s$. The associated isogeny is $\varphi = F^s\colon \SL_n\to \SL_n$, the $q$-Frobenius morphism.

\begin{example}\mylabel{ex:drinfeld-space}
    Let $k = \FF_p$ and fix $s \ge 1$, $q = p^s$, and $\varphi$ the $q$-Frobenius as above.
    Let $w$ be the ``standard'' Coxeter element $w = s_1s_2\cdots s_{n-1}$. The Deligne--Lusztig variety $X(w)$ is called the \emph{Drinfeld upper half space} over the finite field $\FF_q$. See~\cite{Deligne; Lusztig 1976} Section~2.2.

    Let $\pi\colon G/B \to \PP^{n-1}$ be the projection which maps a full flag $(F_i)_i$ to the line $F_1$. Then $\pi$ induces an isomorphism from $X$ onto its image
    \[
        \pi(X) = \PP_k^{n-1} \setminus V,
    \]
    where $V$ is the (unique) closed subscheme of $\PP_k^{n-1}$, such that $V\otimes_k k^\alg \subset \PP^{n-1}_{k^\alg}$ is the union of all $\FF_q$-rational hyperplanes. In fact, this union clearly descends to a hypersurface $V$ in $\PP_k^{n-1}$, and it is thus enough to prove the description over $k^\alg$, where it follows from Lemma~\ref{lem:relative position explicit} without much difficulty; see loc.~cit. Explicitly, $V$ is the vanishing locus of the Moore determinant attached to the homogeneous coordinates.

    One can construct the compactification $\bar{X}(w)=\overline{X(w)}$ of the Drinfeld half space as a sequence of blow-ups of projective space. See \cite{Genestier 1996} Proposition II.1.6, \cite{Langer 2019} Proposition~8.3.
See the following section for a more detailed study of the case $n=3$.
\end{example}

\subsection{Description of Deligne--Lusztig surfaces in type \texorpdfstring{$A_2$}{A2}}
\mylabel{subsec:case a2}

We now restrict to the case $n=3$. We work over the prime field $k=\FF_p$.
We write $\Flag := G/B$ and view this as the variety of full flags in $k^3$.
For $w = s_{1}s_{2}$ we obtain the Drinfeld space $X := \bX(s_{1}s_{2})$, Example~\ref{ex:drinfeld-space}. We also define $X' = \bX(s_{2}s_{1})$.

We have the following descriptions in terms of flags. These are easy consequences of Lemma~\ref{lem:relative position explicit} (and have analogs for the Drinfeld upper half space of higher dimension; cf.~\cite{Deligne; Lusztig 1976} Section~2.2),
\[
    X(s_{1}s_{2}) = \left\{ (L, L\oplus \Frob(L)) \in \Flag;\ 
        L\subset k^3\ \text{a line s.t.}\
        k^3 = \bigoplus_{i=0}^2 \Frob^i(L)
    \right\}
\]
and similarly
\[
    X(s_{2}s_{1}) = \left\{ (U\cap \Frob U, U)\in \Flag;\ 
        U\subset k^3\ \text{a plane s.t.}\
        k^{3\vee} = \bigoplus_{i=0}^2 \Frob^i(U^\perp)
    \right\}
\]
where $-^\vee$ denotes the dual vector space, and $U^\perp = (k^3/U)^\vee\subset k^{3\vee}$, and where the descriptions have to be read as descriptions of the functors represented by $X(s_{1}s_{2})$ and $X(s_{2}s_{1})$ in the obvious way.

The compactifications $X$ and $X'$ are obtained by taking the union of $X(s_{1}s_{2})$ or $X(s_{2}s_{1})$, respectively, with $\bX(s_1)\cup \bX(s_2)$, and
\[
\bX(s_1) = \left\{ (L, U)\in\Flag;\ U=\varphi(U) \right\},\quad
\bX(s_2) = \left\{ (L, U)\in\Flag;\ L=\varphi(L) \right\}.
\]

We have an isomorphism $X\cong X'$ induced by identifying $(k^3)^\vee$ with $k^3$ in the natural way, i.e., matching the standard basis of $k^3$ and its dual basis, and correspondingly, identifying $\PP^2$ with the dual projective space. Explicitly, this morphism maps $L$ to $L^\perp\subset (k^3)^\vee\cong k^3$.

The projection $\pi\colon X\to \PP^2$ mapping $(L \subset U)\mapsto L$ restricts to an isomorphism on $X(s_{1}s_{2})$ and contracts the curve $\bX(s_2)$ to the finite closed subscheme $(\PP^2_k)^\varphi$. More precisely, the morphism $X\to \PP^2$ may be identified with the blow-up of $\PP^2$ in $(\PP^2_k)^\varphi$, which is the closed subscheme consisting of all points whose residue class field is contained in $\FF_q$. In fact, since $\bX(s_2) = \pi^{-1}((\PP^2)^\varphi)$ is a Cartier divisor in $X$, we have a natural morphism from $X$ into the blow-up, and this birational morphism is easily checked to be bijective, and hence an isomorphism. After base change to $\FF_q$, this is the blow-up of $\PP^2$ in the $q^2+q+1$ points $\PP^2(\FF_q)$. Note that these points are not in general position, so even if $q=2$, we do not obtain a del Pezzo surface.

The exceptional divisor of the blow-up $\pi$ is $\bX(s_2)$. The divisor $\bX(s_1)$ is mapped onto the closed subscheme $\PP^2\setminus\pi(X(s_1 s_2))$.
Cf.~Example~\ref{ex:drinfeld-space}. See also~\cite{Hansen 1999} \S~4 Example~1.

We see from the table in Section~\ref{subsec:tables} or by computing the canonical divisor using the description as a blow-up, that there is negativity in the canonical divisor if and only if $p=q=2$.

\section{Prerequisites from algebraic geometry}
\mylabel{sec:prerequisites}

In this section we collect some basic facts concerning K3 surfaces, ruled surfaces, elliptic curves, and Albanese maps, which play a role throughout.

\subsection{K3 surfaces}
\mylabel{subsec:k3 surfaces}

Let $k$ be a ground field of characteristic $p\geq 0$,
and $X$ be  a smooth proper surface that is geometrically connected.
We say that $X$ \emph{satisfies $c_1=0$}  or is \emph{numerically $K$-trivial}, if  the dualizing sheaf  has intersection number $(\omega_X\cdot C)=0$ for each curve $C\subset X$.
Then   the only possible values for the second Betti number   are $2,6,10, 22$,
by  the classification of algebraic surfaces (\cite{Bombieri; Mumford 1977}, Introduction).
For  $b_2=22$ one says that $X$ is a  \emph{K3 surface}. The simplest examples are smooth quartics in $\PP^3$.

We now collect standard  facts on K3 surfaces that are used freely throughout.
For further details we refer to the monographs of Huybrechts \cite{Huybrechts 2016} and Kond\={o} \cite{Kondo 2020}. First note that $\omega_X=\O_X$,
and the  cohomological invariants are 
$$
h^0(\O_X)= 1\quadand h^1(\O_X)=0\quadand  h^2(\O_X)=1.
$$
The group $\Pic^0(X)$  is trivial, hence $\Pic(X)=\Num(X)$ is a free
abelian group of some rank $\rho$,  endowed with a non-degenerate intersection pairing.
In turn, the Picard scheme $\Pic_{X/k}$ is a local system over $\Spec(k)$ 
whose rank is some $\rho^\sep$. These numbers are related by $1\leq\rho\leq \rho^\sep\leq b_2=22$. 
For $k=\CC$, the singular cohomology group $H^2(X,\ZZ)$ with   cup product is isometric to the unimodular lattice 
$E_8(-1)^{\oplus 2}\oplus U^{\oplus 3}$.
Moreover, the Hodge decomposition shows   $\rho\leq h^{1,1}=20$.

This is no longer true in characteristic $p>0$: If the lattice $L=\Pic(X\otimes k^\sep)$ has rank $b_2=22$,
the K3 surface $X$ is called \emph{supersingular}. It then follows that the discriminant group $L^*/L$
is an elementary abelian $p$-group endowed with a symplectic form, of even dimension  
$2\sigma$ over $\FF_p$ for some $1\leq \sigma\leq 10$. The latter is called the \emph{Artin invariant} for the supersingular
K3 surface $X$; it determines the lattice $L$ up to isometry. For details we refer to 
the discussions in \cite{Kondo; Schroeer 2021} before Lemma 3.4 and Proposition 3.5. 
The Artin invariant is lower semi-continuous in families.
Up to twisted forms, there is but one supersingular K3 surface $X$ of Artin invariant $\sigma =1$,
which over $k^\sep $  can be characterized by   configurations of curves (\cite{Dolgachev; Kondo 2003}, Theorem 1.1).

Suppose we have a morphism $f:X\ra B$ to a curve $B$ with $f_*(\O_X)=\O_B$. Then $h^0(\O_B)=1$
and $h^1(\O_B)=0$  by the Leray--Serre spectral sequence, hence the base is  isomorphic to $\PP^1$ provided that $X$ carries
a zero-cycle   of odd degree, or $k$ has trivial Brauer group. For the generic fiber $X_\eta$, 
the dualizing sheaf coincides with the structure sheaf,
  thus $h^0(\O_{X_\eta})=h^1(\O_{X_\eta})=1$, and one says that $f:X\ra B$ is a \emph{genus-one fibration}.
The types of the geometric fibers are distinguished by their
Kodaira symbols $\I_n,\II, \ldots, \II^*, \I_n^*$. For each closed  point $s\in B$, the fiber $D=X_s$ has the property   $(D\cdot D_i)=0$
for all its irreducible components $D_i$.
Conversely, suppose we have a curve $D=\sum_{i=1}^r m_iD_i$   with intersection numbers $(D\cdot D_i)=0$, and obviously also $(\omega_X\cdot D_i)=0$.
We then say that $D$ is a \emph{curve of canonical type}. It   follows that the invertible sheaf $\shL=\O_X(D)$
is semiample, and defines a morphism $f:X\ra B$ to a curve as above,  with $D$ as a multiple of some closed fiber. 
In turn, to specify a genus-one fibration $f:X\ra B$ amounts to choosing a curve of canonical type $D\subset X$.
For   details, see \cite{Cossec; Dolgachev 1989}, Chapter III, Section 1 and \cite{Schroeer 2023}, Section 3.

\subsection{Ruled surfaces and Albanese maps}
\mylabel{subsec:ruled and albanese}
Fix a ground field $k$ of characteristic $p\geq 0$. Following \cite{Laurent; Schroeer 2024},  a proper scheme $P$ is called  \emph{para-abelian variety} if for some field extension $k\subset k'$, the base-change
$P'=P\otimes k'$ admits a group law that turns the scheme into an abelian variety.  According to loc.\ cit.\, Theorem 10.2
each proper scheme $X$ with $h^0(\O_X)=1$ comes with  
a morphism $X\ra\Alb_{X/k}$ that is universal for morphisms to para-abelian varieties. One calls $\Alb_{X/k}$ the \emph{Albanese variety}, and $X\ra\Alb_{X/k}$ the \emph{Albanese map}.
Note that  this  is functorial in $X$,   equivariant with respect to $\Aut_{X/k}$, and commutes with base field extension $k\subset k'$. 
Once a rational point $e$ is chosen, which  is always possible if the ground field is  separably closed or finite, one may view the para-abelian variety $P$ as abelian variety $A=(P,e)$, and then has $\Aut_{P/k}=A\rtimes\Aut_{A/k}$. 

Now let $X$ be a smooth proper surface with $h^0(\O_X)=1$. A \emph{ruling} is a morphism $f:X\ra C$ to a curve with $\O_C=f_*(\O_X)$ such that all geometric fibers are projective lines. It then follows that the induced map 
on Albanese varieties is an isomorphism. If $h^1(\O_C)=h^1(\O_X)$ is non-zero, we conclude that
$f:X\ra C$ can be seen as the Stein factorization of the Albanese map $X\ra\Alb_{X/k}$, hence the ruling is an intrinsic property of the surface.
If the ruling admits  sections, we have $X=\PP(\shE)$ for some locally free sheaf $\shE$ of rank two on $C$,
and the sections $B\subset X$ correspond to invertible quotient $\shL=\shE/\shN$, via $B=\PP(\shL)$.
Note that $B^2=\deg(\shL)-\deg(\shN)$, and that the  intersection $B\cap B'$ of two sections $B\neq B'$ can be identified with the zero-scheme on $C$ for the composite map $\shN\subset \shE\ra \shL'$.

Given a smooth proper curve $C$ of  genus $g=h^1(\O_C)$,  the vector space $\Ext^1(\omega_C^{\otimes-1},\O_C)$
is one-dimensional, and we call the ensuing non-split extension
$$
0\lra \O_C \lra \shF\lra \omega_C^{\otimes-1}\lra 0
$$
the \emph{canonical extension}. The ensuing ruled surface $X=\PP(\shE)$ comes with a distinguished section
$B=\PP(\omega_C^{\otimes-1})$, with selfintersection number $B^2=2-2g$.

Recall that for elliptic curves $B=E$, Atiyah \cite{Atiyah 1957} classified all the  locally free sheaves.
It turns  out that the   above canonical extension $\shF$ is the indecomposable sheaf of rank $r=2$ and
degree $d=0$. For each  rational point $a\in E$, the vector space $\Ext^1(\omega_E(a),\O_E)$ is one-dimensional as well, and we can form the non-split extension $0\ra\O_E\ra\shE\ra\O_E(a)\ra\O_E\ra0$, which is indecomposable 
As already remarked by Atiyah  (\cite{Atiyah 1957}, Section 3.2) the ruled surface $\PP(\shE)$
is isomorphic to the symmetric product $\Sym^2_{E/k}$, 
see also \cite{Arbarello et al 1985}, Chapter VII, Proposition 2.1. Moreover, if $k=\FF_2$ and $E$ is supersingular, we have $F^*(\shE)=\shF(2a)$, according to \cite{Togashi; Uehara 2022}, Lemma 2.12.

\subsection{Elliptic curves over \texorpdfstring{$\FF_2$}{F2}}
\mylabel{subsec:elliptic curves F2}
Over any finite field, there are only finitely many Weierstra\ss{} equations, hence only finitely many
isomorphism classes of elliptic curves. 
For the prime field in characteristic two, the situation is analyzed in 
\cite{Husemoeller 1987}, Chapter 3, Section 6:
Up to isomorphism, there are exactly five
elliptic curves $E_1,\ldots, E_5$ over the prime field $k=\FF_2$, which we may index so that $|E_i(k)|=i$.
The curves $E_2,E_4$ have invariant $j=1$, whereas
the curves $E_1,E_3,E_5$ are supersingular with invariant $j=0$.
The supersingular curves can be described by
\begin{equation}
E_5\colon y^2+y=x^3+x\quadand E_3\colon y^2+y=x^3\quadand E_1\colon y^2+y=x^3+x+1.
\end{equation} 
Using change of coordinates $x=x'+r$ and $y=y'+sx'+t$ with coefficients  $r,s,t\in\FF_2$, 
one easily checks that the automorphism group is   defined by the
equations $r=s$ and $r(a_4+a_6+1)=0$. From this one gets
$$
E_i(k)\rtimes\Aut(E_i)=\begin{cases}
C_i\rtimes\Aut(C_i)	& \text{if $i=3,5$;}\\
\{\pm 1\}		&\text{if $i=1$,}
\end{cases}
$$
where $C_i$ denotes the cyclic group of order $i\geq 1$.

\section{The untwisted case \texorpdfstring{$C_2$}{C2}}
\mylabel{sec:case c2}
 
The goal of this section is to unravel in the untwisted case $C_2=B_2$  both  geometry and arithmetic of the  Deligne--Lusztig surface 
over the prime field with negativity of the canonical divisor.

\subsection{Formulation of structure result}
\mylabel{subsec:formulation c2}

Let us first review the root system: Working inside $V=\RR^2$ with the standard orthogonal basis $\epsilon_1,\epsilon_2$ we set
\begin{equation}
\label{eq:root systems c2}
\alpha_1=\epsilon_1-\epsilon_2 ,\quad \alpha_2=2\epsilon_2\quadand
\alpha_1^\vee= \epsilon_1-\epsilon_2,\quad \alpha_2^\vee= \epsilon_2.
\end{equation} 
We obtain a root system of type $C_2$ in $V$ with $\Delta=\{\alpha_1,\alpha_2\}$ the system of simple roots. 
Denote by $\Phi$ and $\Phi^\vee$ the sets of roots and dual roots.
The matrices $s_1=(\begin{smallmatrix}&1\\1\end{smallmatrix})$ and 
$s_2=(\begin{smallmatrix}1\\&-1\end{smallmatrix})$ generate the  Weyl group
$W\subset\GL(V)$ of this root system, which  can be seen as the wreath product  $\{\pm 1\} \wr S_2=\{\pm 1\}^2\rtimes S_2$ of signed permutations.
Let $X_*\subset V$ be the lattice generated by the coroots. Then the dual lattice $X^*$ is generated by
$\alpha_1$ and $\frac{1}{2}\alpha_2$. Note that the corresponding pinned reductive group stems from  $G=\Sp_4$.
 
We now fix $s\ge 0$, set $\varphi_*=p^s\cdot\id_{X_*}$ and consider the Deligne--Lusztig datum
\begin{equation}
\label{eq:dld c2}
\DLD=(X^*,\Phi,X_*,\Phi^\vee,\Delta,p,\varphi_*).
\end{equation} 
For   $w = s_1 s_2$, the Deligne--Lusztig surface $X=\bX(w)$ over $k=\FF_p^\alg$ was already studied in~\cite{Deligne; Lusztig 1976} Section 2.4, and in more detail by Rodier \cite{Rodier 2000} Section~7. It turns out that this surface is not minimal, 
and contracts to the smooth surface $Y\subset \PP^3$ defined by the equation  $x_0x_3^q - x_0^qx_3 + x_1x_2^q - x_1^q x_2=0$, where $q=p^s$. The exceptional divisor equals $\bX(s_1)$, a disjoint union of  $q^3+q^2+q+1$ projective lines. For $q=2$, the divisor $Y\subset\PP^3$ is a del Pezzo surface, while for $q=3$ we get a K3 surface. 

According to the table in Section~\ref{subsec:tables} (and the table in the Appendix),  negativity of the canonical divisor for $\bX(w)$  arises only for   $q=2$ and $w=s_2s_1$.

\begin{theorem}
\mylabel{thm:structure c2}
For $p=2$ and $s=1$ and $w=s_2s_1$, the Deligne--Lusztig variety $X=\bar{X}(w)$ over the prime field $k=\FF_2$
is a supersingular K3 surface with Artin invariant $\sigma=1$.
It arises as  the minimal resolution of singularities for the family of   cubic curves over $\PP^1=\Spec k[t]\cup\Spec k[t^{-1}]$
given by the Weierstra\ss{} equation $y^2=x^3+ a_6$ with $a_6=t^5+t^7$.
\end{theorem}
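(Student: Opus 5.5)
The plan is to start from the boundary configuration given by Example~\ref{ex:sp4 tutte-coxeter}. Since $w=s_2s_1$ is a $\varphi$-Coxeter element, Propositions~\ref{prop:comparison compactifications} and \ref{prop:stratification and building} identify $X=\bX(w)=\overline{X(w)}$. Its boundary $D=\bX(s_1)\cup\bX(s_2)$ has dual graph the Tutte--Coxeter graph. It consists of $15+15$ smooth rational curves (each is an $A_1$ Deligne--Lusztig curve, hence $\PP^1$), meeting transversally in the $45$ points of $X(\id)$, with three curves through each point and each curve containing three points.

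The first step is to compute the self-intersections of the boundary curves. For each component $D_j$ I would use the table value $K_X\equiv\tfrac{4}{5}D_1+0\cdot D_2$ for $q=2$, that is $5K_X=4D_1$, where $D_1=\bX(s_1)$ and $D_2=\bX(s_2)$ are unions of $15$ curves. By symmetry under $G^\varphi=\Sp_4(\FF_2)\cong S_6$, which acts transitively on the components of each type, all components of $D_1$ have a common self-intersection $a$, and all components of $D_2$ have a common self-intersection $b$. Adjunction gives $C^2+K_X\cdot C=-2$. For $C\subset D_2$ we get $K_X\cdot C=\tfrac45\cdot 3$. For $C\subset D_1$ we get $K_X\cdot C=\tfrac45(a+3)$. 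This seems inconsistent with $\chi$ unless I have the coefficients reversed, so I would recheck which divisor carries which coefficient. With the correct assignment, I expect to find $K_X\cdot C=0$ for every boundary curve, so that all $30$ curves are $(-2)$-curves. The payoff is that $5K_X$ is numerically a combination of curves orthogonal to all its own components. Hence $K_X\cdot D_j=0$ and $K_X^2=0$. I would then argue that $K_X$ is numerically trivial: the boundary supports an ample-ish divisor, since $X$ minus the boundary is affine, so the boundary components span enough of $\Num(X)$ to conclude that $K_X\equiv 0$. The Euler characteristic $e(X)=e(X(w))+e(D)$ can be computed via Lefschetz/point counts over $\FF_{2^n}$, using the Lang map and $|X(w)(\FF_{q^n})|$. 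This should give $b_2=22$, hence a K3 surface.

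The second step produces the elliptic fibration. I would find in the Tutte--Coxeter graph a subconfiguration of $(-2)$-curves forming an extended Dynkin diagram, e.g. a $\tilde D_{n}$ or $\tilde E_8$ tree, or a hexagon/octagon cycle ($\tilde A_{5},\tilde A_7$; the graph has girth $8$). Such a configuration is a curve of canonical type and gives a genus-one fibration $X\to\PP^1$, as in Section~\ref{subsec:k3 surfaces}. Remaining disjoint boundary curves then serve as sections and fiber components. Counting, via the Shioda--Tate formula, the fiber components supplied by the $30$ curves should show $\rho=22$, so $X$ is supersingular. Comparing the discriminant of the resulting lattice, or invoking the characterization by curve configurations in \cite{Dolgachev; Kondo 2003}, should then yield $\sigma=1$. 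The quasi-elliptic nature (wild behavior in characteristic $2$, with many reducible fibers) is consistent with a Weierstra\ss{} model $y^2=x^3+a_6$.

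The final step identifies the equation. I would determine the reducible fibers, presumably types $\I_n^*$ or $\II^*$ at the $\FF_2$-rational points $t=0,1,\infty$ and possibly elsewhere. For $y^2=x^3+t^5+t^7$, Tate's algorithm in characteristic $2$ gives specific rational double points at $t=0$ ($t^5$), at $t=1$ ($t^7+t^5=t^5(1+t)^2$), and at $\infty$. I would then show that a quasi-elliptic Weierstra\ss{} equation over $\FF_2$ with these fiber types is unique up to coordinate change; here the arithmetic over the prime field and the $S_6$ symmetry cut down the choices. I expect the main obstacle to be this last identification: pinning down exactly which Weierstra\ss{} equation over $\FF_2$ (not just over $\FF_2^\alg$) realizes the configuration. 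This requires a careful classification of $a_6$ of small degree and their singular fibers, together with a check that the boundary configuration forces the specific fiber types.
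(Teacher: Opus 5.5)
Your overall plan (use the Tutte--Coxeter boundary to build a jacobian genus-one fibration, then pin down its Weierstra\ss{} model) is the idea behind Theorem~\ref{thm:characterization k3}. However, several steps fail as written.

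\textbf{$K$-triviality and the Picard rank.} You misread the table. For $C_2$, $w=s_2s_1$, $q=2$, both coefficients $\frac{q^2-q-2}{q^2+1}$ and $\frac{2q-4}{q^2+1}$ vanish. So Hansen's formula gives $mK_X=0$, and $K_X\equiv 0$ follows immediately. Your fallback (the complement is affine, so the boundary components span $\Num(X)$) is not a valid principle, and it is false here. The Gram matrix of the $30$ boundary curves is $-2E+A$, with $A$ the adjacency matrix of the Tutte--Coxeter graph. Its spectrum is $\pm 3$, $\pm 2$ (each of multiplicity $9$) and $0$ (multiplicity $10$). So the $30$ curves span a lattice of rank only $21$, while $\rho(X)=22$. (Also, each point of $X(\id)$ lies on exactly two boundary curves, not three.)

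The same computation shows that no Shioda--Tate count of boundary curves can reach $\rho=22$. In the paper one uses a $22$-vertex subgraph: two $\tilde E_8$-configurations joined through a vertex that becomes a section, with a chain $\epsilon_0\epsilon_1\epsilon_2$ attached. This gives $\II^*$ fibers over $0,\infty$ and a fiber over $t=1$ with at least $4$ components, hence only $\rho\geq 21$. Supersingularity then needs extra input: Artin's bound $\rho\le 22-2h$ for a formal Brauer group of finite height $h$.

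\textbf{The Artin invariant.} You cannot read off $\sigma=1$ by ``comparing discriminants'' until the fibration is known exactly, and this is the heart of the argument your plan skips. One must first show that the fiber over $t=1$ is $\I_0^*$ rather than $\I_4$ or $\I_5$. Ogg's formula reduces the alternative to $\II^*+\II^*+\I_4$ with Mordell--Weil rank one. A Gram-matrix computation with a generating section then shows the discriminant group would not be $2$-elementary. Next one shows the fibration is quasi-elliptic, again by Ogg, since $10+10+6>24$. Only then does the trivial lattice, of rank $22$ and discriminant $-4$, equal $\Pic(X)$ and force $\sigma=1$. The curve configuration in Dolgachev--Kondo's characterization is a different one, and the boundary does not supply it.

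\textbf{The Weierstra\ss{} equation.} You cannot restrict to the shape $y^2=x^3+a_6$, nor use Tate's algorithm, which assumes a smooth generic fiber. After normalizing $a_1=a_3=a_2=0$, the coefficient $a_4$ remains. The paper removes it with the quasi-discriminant $\Psi=a_4(Da_4)^2+(Da_6)^2\in H^0(\PP^1,\O_{\PP^1}(20))$. Its valuations $\geq 8,8,4$ at $0,\infty,1$ (Ito) force $a_4=\lambda_2t^2+\lambda_6t^6$ and $a_6=\mu(t^5+t^7)$. Lipman's classification of rational double points then shows that the $\II^*$ fibers require $\lambda_2=\lambda_6=0$. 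It also shows that a $C_3$ or $D_4$ point over $t=1$ requires $\mu$ to be a cube. The $S_6$-symmetry does not help here, since it does not preserve the fibration.
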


The idea is to exploit the configuration of the irreducible components in the divisors $\bX(s_2)$ and $\bX(s_1)$,
which stems from  the building of the symplectic group $G=\Sp_4$, or equivalently the Tutte--Coxeter graph (see Section~\ref{subsec:connected components}, Example~\ref{ex:sp4 tutte-coxeter}).
The proof requires some preparation, and the theorem will be proved, in consequence to Theorem~\ref{thm:characterization k3} below, at the end of Section \ref{subsec:arithmetic of rational double points}. 

\subsection{Explicit description in terms of symplectic flags}
\mylabel{subsec:explicit symplectic flags}
We consider $V_0 = \FF_q^4$ as a symplectic vector space with the alternating form given by the matrix 
\[
    \begin{pmatrix} & & & -1 \\ & & -1 \\ & 1 \\ 1\end{pmatrix},
\]
(entries $=0$ omitted).
Let $G = {\Sp}_{4, \FF_q}$ be the symplectic group of $V$, $B$ its Borel subgroup of upper triangular matrices, $T$ the diagonal torus.

The ``symplectic full flag variety'' $\mathscr F:=G/B$ can be described as the variety
\[
    \mathscr F = \{ (L\subset U \subset V);\ \dim(L) = 1,\ \dim(U) = 2,\ U = U^\perp \}.
\]
Mapping $(L\subset U)$ to the full flag $(L\subset U\subset L^\perp)$ in $V$ we get an embedding of $\mathscr F$ into the flag variety for $\SL_4$ (which is simply the map induced by the embedding $\Sp_4\to \SL_4$).
Mapping $(L\subset U)$ to $L$ or to $U$, we obtain maps from $\mathscr F$ to $\PP^3$ and to ${\rm Gr}$ (the Grassmannian of maximal totally isotropic subspaces of the fixed symplectic vector space), respectively.

\bigskip
Next we investigate the surface attached to the element $w = s_2 s_1$.

\begin{proposition}
\mylabel{prop:description symplectic flags}
    \begin{enumerate}
        \item
            We have
            \[
                X(s_2 s_1) = \{ (L\subset U)\in \mathscr F;\ L = U\cap \Frob U,\ L\ne \Frob L\}.
            \]
        \item
            We have
            \[
                X(s_2) = \{ (L\subset U)\in \mathscr F;\ L = \Frob L,\ U\ne \Frob U\}.
            \]
        \item
            Consider the map $p_2\colon \overline{X(s_2s_1)}\to {\rm Gr}$ and denote by $S$ its image.  We have
            \[
                S = \{ U\in {\rm Gr};\ \dim(U\cap \Frob(U)) \ge 1 \}.
            \]
            The morphism $p_2$ restricts to an isomorphism
            \[
                \overline{X(s_2s_1)} \setminus \overline{X(s_1)} = X(s_2s_1)\cup X(s_2)\to S \setminus {\rm Gr}(\FF_q),
            \]
            and contracts each irreducible component of $\overline{X(s_1)}$ to a point in ${\rm Gr}(\FF_q)$.
        \item
            The singular locus of $S$ equals ${\rm Gr}(\FF_q)$. Each singularity is a singularity of type $A_1$.
    \end{enumerate}
\end{proposition}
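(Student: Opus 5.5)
Parts (1) and (2) come straight from Lemma~\ref{lem:position symplectic flags} together with the decomposition \eqref{eq: decomposition relative position}; I work over $k^\alg$, which is enough since all loci are defined over $\FF_p$.

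For (1): a flag $(L\subset U)$ lies in $X(s_2s_1)$ exactly when there is an intermediate flag $(L'\subset U')$ with $\inv((L,U),(L',U'))=s_2$ and $\inv((L',U'),(\Frob L,\Frob U))=s_1$. This means $L'=L$, $U'\ne U$ and $U'=\Frob U$, $L\ne \Frob L$. So $L\subset U\cap\Frob U$, and $U\ne\Frob U$ forces $\dim(U\cap \Frob U)=1$, which gives $L=U\cap\Frob U$. The extra condition $L\ne\Frob L$ rules out the smaller relative positions. The converse is read off the same table, since $U\cap \Frob U$ being a line with $L\neq \Frob L$ pins down the intermediate flag $(L\subset \Frob U)$. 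Part (2) is the length-one row of the table: $L=\Frob L$ and $U\ne \Frob U$.

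For (3), I first note that $\overline{X(s_2s_1)}=X(s_2s_1)\cup X(s_2)\cup X(s_1)\cup X(e)$, and on each stratum $\dim(U\cap\Frob U)\ge1$. This gives $p_2(\overline{X(s_2s_1)})\subset S$, and the stratum $X(s_1)\cup X(e)$ is exactly where $U=\Frob U$, i.e.\ $U\in \mathrm{Gr}(\FF_q)$. Conversely, if $U\in S$ is not rational, then $L:=U\cap\Frob U$ is the unique preimage, and it lies in $X(s_2s_1)$ or $X(s_2)$ according to whether $L\ne \Frob L$ or not. If $U$ is rational, the fibre is $\{L\subset U\}\cong\PP^1$, which is precisely a component of $\overline{X(s_1)}$; these components match the rational $U$ by Proposition~\ref{prop:stratification and building}.

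To upgrade the bijection to an isomorphism, I would show that $S\setminus \mathrm{Gr}(\FF_q)$ is smooth, hence normal, and then use that $p_2$ is proper and birational with finite fibres there; Zariski's Main Theorem then gives the isomorphism. For smoothness, I write $S$ as a degeneracy locus. The tautological rank-two bundle $\mathcal U$ on the three-dimensional $\mathrm{Gr}$ gives a map $\mathcal U\to \mathcal V/\Frob^*\mathcal U$ of rank-two bundles, and $S$ is the zero locus of its determinant, a section of a line bundle. So $S$ is a Cartier divisor in $\mathrm{Gr}$, and its smoothness away from $\mathrm{Gr}(\FF_q)$ follows because the differential of the Frobenius term vanishes.

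For (4), I would compute locally at a rational point, taking $U_0=\langle e_1,e_2\rangle$ after a $G^\Frob$-translation. In the affine chart of Lagrangians, $U$ is the graph of a symmetric $2\times2$ matrix $A=\begin{pmatrix}a&b\\b&c\end{pmatrix}$ (up to the sign conventions of the form). Then $\Frob U$ is the graph of $A^{(q)}$, and $\dim(U\cap\Frob U)\ge1$ becomes $\det(A-A^{(q)})=0$, i.e.
\[
(a-a^q)(c-c^q)-(b-b^q)^2=0 .
\]
Near the origin the units $1-a^{q-1}$ etc.\ can be absorbed into the coordinates $a'=a-a^q$, $b'=b-b^q$, $c'=c-c^q$, which form an étale coordinate system. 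The equation then reads $a'c'=b'^2$, an $A_1$ singularity in every characteristic. The same computation also exhibits the smoothness elsewhere: at a non-rational point, the differential of $\det(A-A^{(q)})$ equals the differential of $\det(A-A_0)$ for a frozen constant $A_0$ of rank one, and this is nonzero. I expect this local computation, and getting the symplectic chart conventions right, to be the main work; the rest is formal.
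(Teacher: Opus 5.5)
Your arguments for (i), (ii) and (iv) are the paper's. In (iv), your remark that $(a,b,c)\mapsto(a-a^q,b-b^q,c-c^q)$ is étale on all of $\AA^3$ (its Jacobian is the identity) is a clean way to make the paper's ``after a change of coordinates'' precise. It also shows at the same time that $S$ is smooth away from its $\FF_q$-points.

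The gap is in (iii), where you pass from the set-theoretic bijection $X(s_2s_1)\cup X(s_2)\to S\setminus{\rm Gr}(\FF_q)$ to an isomorphism via Zariski's Main Theorem. That theorem needs $p_2$ to be birational, but you only know that it is proper and bijective on geometric points. In characteristic $p$ this is not enough: a finite bijective morphism onto a smooth surface can be purely inseparable of degree $p^k$. The maps induced by $\Frob$ and the cyclic shifts between Deligne--Lusztig varieties in this very paper are of exactly that kind. So birationality is precisely the point that has to be proved, and you assert it without argument.

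The paper settles this directly. On $S\setminus{\rm Gr}(\FF_q)$ one has $\dim(U\cap\Frob U)=1$ at every point, so $U+\Frob U$ has constant rank $3$ and $U\cap\Frob U$ is a line subbundle of the tautological bundle. Hence $U\mapsto(U\cap\Frob U\subset U)$ is a morphism, and it is inverse to $p_2$. With this, neither Zariski's Main Theorem nor smoothness of the target is needed for (iii).

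If you prefer to keep your route, supply birationality by showing that $p_2$ is unramified there:
\begin{itemize}
\item Since $d\Frob=0$, a vertical tangent vector $\dot L\in\Hom(L,U/L)$ at $(L\subset U)$ is tangent to $\overline{X(s_2s_1)}=\{L\subset\Frob U\}$ only if $\dot L$ takes values in $(U\cap\Frob U)/L$.
\item This quotient is $0$ when $U\neq\Frob U$, so $dp_2$ is injective on $X(s_2s_1)\cup X(s_2)$.
\item A finite unramified morphism induces a separable extension of function fields, and bijectivity on geometric points then forces degree one.
\end{itemize}
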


\begin{proof}
    Parts~(i) and~(ii) follow from Lemma~\ref{lem:position symplectic flags}, and for Part~(iii) note that in view of (i) and (ii) each flag lying in $X(s_2s_1)\cup X(s_2)$ can be recovered from its $2$-dimensional component $U$ by $U\mapsto (U\cap \Frob U, U)$. This defines a morphism which is inverse to $p_2$.

    Let us check Part~(iv) by writing down a (local) equation for $S$. The surface $S$ consists of all elements $U\in {\rm Gr}$ such that $\dim U \cap \Frob U \ge 1$, equivalently that $\dim U+\Frob U \le 3$. In the open chart $U_{12}\cong \mathbb A^3$ of ${\rm Gr}$ of subspaces with basis the columns of a matrix of shape
    \[
        \begin{pmatrix}
            1 \\
            & 1 \\
            a & b \\
            c & -a
        \end{pmatrix},
    \]
    the surface $S\cap U_{12}$ is given by the condition
    \[
        \det
        \begin{pmatrix}
            1        & & 1\\
                     & 1       & & 1 \\
            a & b & a^q & b^q\\
            c & -a & c^q & -a^q
        \end{pmatrix}
        =0
    \]
    which gives the equation
    \[
        (a^q-a)^2 + (b^q-b)(c^q-c) = 0.
    \]
    The singular points are precisely the points in $\mathbb A^3(\FF_q)$, and using the action of $G_0(\mathbb F_q)$ on $S\subset {\rm Gr}$ we also obtain the same description on all of $S$. Locally around any fixed rational point, after a change of coordinates the equation becomes $a^2 + bc = 0$ considered in the point $(a, b, c) = (0, 0, 0)$, a rational double point of type $A_1$.
\end{proof}

\bigskip
Computing the canonical bundle $K$ of $\overline{X(s_2s_1)}$ one sees that it is
numerically trivial if $q=2$, cf.~the table in Section~\ref{subsec:tables}. In this case, the Euler
characteristic is $24$, as can be checked using the formula in~\cite{Deligne; Lusztig 1976} Theorem 7.1. It follows that for $q=2$, $\bar{X}(s_2s_1)$ is a K3 surface.

The closed subschemes $\overline{X(s_i)}$, $i=1, 2$, are disjoint unions of $15$ projective lines each. Their intersection behavior is described by the building of $\Sp_{4, \FF_q}$ (see Proposition~\ref{prop:stratification and building}, Example~\ref{ex:sp4 tutte-coxeter}),  or in other words by the Tutte-Coxeter graph. Thus Theorem~\ref{thm:characterization k3} below applies and determines the Deligne--Lusztig surface in this case up to isomorphism; see Theorem~\ref{thm:structure c2}.

\subsection{A particular K3 surface}
\mylabel{subsec:particular k3}
In this section we formulate a structure result for  $K$-trivial surfaces $X$ having a certain configuration 
of twenty-two projective lines whose dual graph 
$\Gamma=\{\gamma_0,\ldots,\gamma_8,\delta_0,\ldots,\delta_8,\rho, \epsilon_0,\epsilon_1,\epsilon_2\}$  
is contained in the  Tutte--Coxeter graph. The graph $\Gamma$ in question is:
\begin{equation}
\label{eq:graph with 22 vertices}
\begin{gathered}
\begin{tikzpicture}
[node distance=1cm, font=\small]
\tikzstyle{vertex}=[circle, draw, inner sep=0mm, minimum size=1.5ex]
\node[vertex]	(C1)  	at (0,0) 		[label=above:{ $\gamma_1$}] 		{};
\node[vertex]	(C3)			[right of=C1,label=above:{$\gamma_3$}]	{};
\node[vertex]	(C4)			[right of=C3,label=below:{$\gamma_4$}]	{};
\node[vertex]	(C2)			[above of=C4,label=right:{$\gamma_2$}]	{};
\node[vertex]	(C5)			[right of=C4,label=above:{$\gamma_5$}]	{};
\node[vertex]	(C6)			[right of=C5,label=above:{$\gamma_6$}]	{};
\node[vertex]	(C7)			[right of=C6,label=above:{$\gamma_7$}]	{};
\node[vertex]	(C8)			[right of=C7,label=above:{$\gamma_8$}]	{};
\node[vertex]	(C0)			[right of=C8,label=above:{$\gamma_0$}]	{};

\node[vertex]	(R)			[below right of=C0,label=above right:{$\rho$}]	{};

\node[vertex]	(E0)			[right of=R,label=below:{$\epsilon_0$}]	{};
\node[vertex]	(E1)			[right of=E0,label=below:{$\epsilon_1$}]	{};
\node[vertex]	(E2)			[right of=E1,label=below:{$\epsilon_2$}]	{};

\node[vertex]	(D0)			[below left of=R,label=below:{$\delta_0$}]	{};
\node[vertex]	(D8)			[left of=D0,label=below:{$\delta_8$}]	{};
\node[vertex]	(D7)			[left of=D8,label=below:{$\delta_7$}]	{};
\node[vertex]	(D6)			[left of=D7,label=below:{$\delta_6$}]	{};
\node[vertex]	(D5)			[left of=D6,label=below:{$\delta_5$}]	{};
\node[vertex]	(D4)			[left of=D5,label=above:{$\delta_4$}]	{};
\node[vertex]	(D3)			[left of=D4,label=below:{$\delta_3$}]	{};
\node[vertex]	(D1)  	 		[left of=D3,label=below:{$\delta_1$}]	{};
\node[vertex]	(D2)			[below of=D4,label=right:{$\delta_2$}]	{};

\draw [thick] (C1)--(C3)--(C4)--(C5)--(C6)--(C7)--(C8)--(C0)--(R)--(E0)--(E1)--(E2);
\draw [thick] (C4)--(C2);

\draw [thick] (D1)--(D3)--(D4)--(D5)--(D6)--(D7)--(D8)--(D0)--(R);
\draw [thick] (D4)--(D2);
\end{tikzpicture}
\end{gathered}
\end{equation}
Let $A$ be its adjacency matrix, and consider the symmetric matrix  $N=-2E+A$,
where  $E$ is the identity matrix.  It endows the group $ \bigoplus_{v\in \Gamma}\ZZ$ with the structure of an even lattice.
The associated real quadratic space is isometric to a space of the form $(+1)^{\oplus m} \oplus (-1)^{\oplus n} \oplus (0)^{\oplus r}$
for uniquely determined integers $m,n,r\geq 0$ with $m+n+r=22$, and we regard the  tuple $(m,n,r)$ as the  \emph{signature} of the lattice.
Note that $r\geq 0$ is the rank of the radical $\Rad(N)\subset\bigoplus_{v\in \Gamma}\ZZ$. 
One easily checks that for the vectors 
\begin{gather*}
a=2\gamma_1+3\gamma_2+4\gamma_3+6\gamma_4+5\gamma_5+4\gamma_6+3\gamma_7+2\gamma_8+\gamma_0,\\
b=2\delta_1+3\delta_2+4\delta_3+6\delta_4+5\delta_5+4\delta_6+3\delta_7+2\delta_8+\delta_0,
\end{gather*}
the difference $a-b$ belongs to the radical.

\begin{proposition}
\mylabel{prop:signature}
The lattice  $\bigoplus_{\nu\in \Gamma}\ZZ$ has signature $(1,20,1)$. Moreover, its radical $\Rad(N)$ is generated by  $a-b$.
\end{proposition}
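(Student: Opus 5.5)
The plan is to change to a basis adapted to the visible structure of $\Gamma$, in which the Gram matrix $N$ becomes block diagonal with blocks whose signatures are known.

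First I would identify the pieces of $\Gamma$. The nine vertices $\gamma_1,\ldots,\gamma_8,\gamma_0$ form the tree $T_{2,3,6}$, that is, the extended Dynkin diagram $\tilde E_8$, with $\gamma_0$ as the extending vertex. Indeed, $\gamma_4$ has arms $\gamma_3\!-\!\gamma_1$, $\gamma_2$ and $\gamma_5\!-\!\cdots\!-\!\gamma_0$. Hence $\gamma_1,\ldots,\gamma_8$ span a copy of $E_8(-1)$. The vector $a$ is the null vector of this $\tilde E_8$, since its coefficients are the marks $2,3,4,6,5,4,3,2,1$. Therefore $a\cdot\gamma_i=0$ for all $i=0,\ldots,8$. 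The same holds for the $\delta$'s and $b$.

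The remaining vertices are $\rho$ and $\epsilon_0,\epsilon_1,\epsilon_2$. The vertex $\rho$ is adjacent only to $\gamma_0,\delta_0,\epsilon_0$, and the $\epsilon$'s form a chain attached to $\rho$. Since $\gamma_0$ has coefficient $1$ in $a$, we get $a\cdot\rho=1$, and $a$ is orthogonal to all $\delta_j$ and $\epsilon_j$. Likewise $b\cdot\rho=1$, so $a-b$ is orthogonal to every vertex, which confirms the stated radical element.

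Next I would pass to the new basis
$$
\gamma_1,\ldots,\gamma_8,\ \delta_1,\ldots,\delta_8,\ a,\ a-b,\ \rho,\ \epsilon_0-a,\ \epsilon_1,\ \epsilon_2 .
$$
This is obtained from the vertex basis by a unimodular transformation, because $a$ and $b$ contain $\gamma_0$ and $\delta_0$ with coefficient $1$.

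In the new basis the Gram matrix splits orthogonally as follows:
\begin{enumerate}
\item two copies of $E_8(-1)$, spanned by the $\gamma_i$ and by the $\delta_i$ with $i\geq 1$; none of these vertices is adjacent to $\rho$ or the $\epsilon$'s, and they are orthogonal to $a$ and $a-b$;
\item the plane spanned by $a$ and $\rho$, with Gram matrix $\left(\begin{smallmatrix}0&1\\1&-2\end{smallmatrix}\right)$, which is a hyperbolic plane $U$;
\item the span of $\epsilon_0-a,\epsilon_1,\epsilon_2$. Here $\epsilon_0-a$ is orthogonal to $a$ and $\rho$ and has square $-2$, so this is a copy of $A_3(-1)$;
\item the rank-one radical $\ZZ(a-b)$.
\end{enumerate}
The signature is then read off blockwise: $U$ contributes $(1,1)$, $A_3(-1)$ contributes $(0,3)$, and $E_8(-1)^{\oplus2}$ contributes $(0,16)$. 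Together with the radical this gives $(1,20,1)$.

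Finally, the lattice $U\oplus A_3(-1)\oplus E_8(-1)^{\oplus 2}$ is nondegenerate, with determinant $\pm4\neq0$. Hence $\Rad(N)$ has rank one and equals $\ZZ(a-b)$ within the full lattice. Here I use that $a-b$ is a member of a $\ZZ$-basis, so it is primitive.

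The only step needing care is checking that the block decomposition really is orthogonal. That comes down to the adjacency bookkeeping around $\rho$ and the correction $\epsilon_0\mapsto\epsilon_0-a$, which is a short direct verification.
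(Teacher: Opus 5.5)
Your proof is correct, and it takes a different route from the paper. The orthogonality relations you need all hold:
\[
a\cdot\gamma_i=0 \ (0\le i\le 8),\quad a\cdot\rho=b\cdot\rho=1,\quad (\epsilon_0-a)\cdot\rho=0,\quad (\epsilon_0-a)^2=-2,\quad (\epsilon_0-a)\cdot\epsilon_1=1.
\]
The change of basis is unimodular, so $\bigoplus_{\nu\in\Gamma}\ZZ$ is isometric to $E_8(-1)^{\oplus 2}\oplus U\oplus A_3(-1)\oplus(0)$, and both claims follow at once.

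The paper's route is shorter and uses Sylvester's law of inertia, bounding each index from below:
\begin{enumerate}
\item the negative definite sublattice $-(E_8\oplus E_8\oplus A_4)$ spanned by $\gamma_1,\dots,\gamma_8,\delta_1,\dots,\delta_8,\rho,\epsilon_0,\epsilon_1,\epsilon_2$ gives $n\ge 20$;
\item the vector $\rho+2a$ of square $2$ gives $m\ge 1$;
\item the radical vector $a-b$ gives $r\ge 1$.
\end{enumerate}
Since $20+1+1=22$, all three bounds are sharp. Primitivity of $a-b$ is then read off from its coefficient $1$ at $\gamma_0$. This needs no basis bookkeeping, but it yields only the signature.

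Your decomposition gives more: the isometry class of $N/\Rad(N)$ and its determinant $\pm 4$. Primitivity of $a-b$ also comes for free, since it is a basis vector. This is exactly what the paper re-derives later, in the proof of part (iii) of Theorem~\ref{thm:characterization k3}. There it splits off the unimodular sublattice $-(E_8\oplus E_8)\oplus U$ spanned by the $\gamma_i,\delta_i$ ($i\ge 1$) together with $\gamma_0,\rho$. This is the same lattice as the span of your first sixteen vectors together with $a$ and $\rho$. The paper then identifies the orthogonal complement, with basis $\epsilon_1,\epsilon_2,\epsilon_0-a,a-b$, as $-A_3$ plus the radical.
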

 
\proof
Write $(m,n,r)$ for the signature.
The vectors $\gamma_1,\ldots,\gamma_8$ and $\delta_1,\ldots,\delta_8$ and $\rho,\epsilon_1,\epsilon_2,\epsilon_3$ generate
a root lattice of type $-(E_8\oplus E_8\oplus A_4)$. This  is negative-definite, so $n\geq 20$.
The vectors $\rho$ and  $a$ have Gram matrix $(\begin{smallmatrix}-2&1\\1&0\end{smallmatrix})$. This yields
$(\rho+2a)^2=2$, and thus $m\geq 1$.
The vector $a-b$ is non-zero and belongs to the radical, hence  $r\geq 1$.  From $22=m+n+r\geq 20+1+1$ one infers $n=20$ and $m=r=1$.
In light of its coefficient at $\gamma_0$, the element $a-b\in \Rad(N)$ is primitive, hence a generator.
\qed

\medskip
Given a ground field $k$ and a finite \'etale scheme $F$, we write $\ZZ F$ for the ensuing 
sheaf of free abelian groups of rank $d=\deg(F)$ on the category $(\Aff/k)$   endowed with the \'etale topology.
Such sheaves are called  \emph{local systems} of rank $d$, and  $\det(\ZZ F)$ is a local system of rank one.
The corresponding Galois representation $\Gal(k^\sep/k)\ra\{\pm 1\} $ is obtained from the
permutation representation stemming from $F$ by composing with the signum $S_r\ra\{\pm1\}$.
We now can  formulate our result on $K$-trivial surfaces:
 
\begin{theorem}
\mylabel{thm:characterization k3}
Let $X$ be a $K$-trivial  surface over  a ground field $k$ of characteristic $p\geq 0$. Suppose that it  
contains an snc-configuration $\sum_{\nu\in\Gamma}R_\nu$ of twenty-two projective lines with dual graph $\Gamma$.
Then the following holds:
\begin{enumerate}
\item 
The characteristic must be $p=2$.
\item 
The surface $X$ is a supersingular K3 surface  with Artin invariant $\sigma=1$.
\item 
The local system  of Picard groups sits in an exact sequence 
$$
0\lra \operatorname{Rad}(N)_k\lra (\bigoplus_{\nu\in \Gamma}\ZZ)_k\lra\Pic_{X/k}\lra \det(\ZZ\mu_{3,k})\lra 0 
$$
where the sum  of rank twenty-two is generated by  the curves $R_\nu\subset X$.
\item
The K3 surface $X$ is the minimal resolution of singularities for the family of   cubic curves over $\PP^1=\Spec k[t]\cup\Spec k[t^{-1}]$
given by a Weierstra\ss{} equation $y^2=x^3+\alpha^3(t^5+t^7)$ for some $\alpha\in k^\times$.
\end{enumerate}
Conversely, for each  non-zero $\alpha$ from a field $k$ of characteristic $p=2$, the minimal resolution 
of singularities $X_\alpha$ for the family $y^2=x^3+\alpha^3(t^5+t^7)$ is a K3 surface as above.
Moreover, we have $X_\alpha\simeq X_{\alpha'}$ provided that  $\alpha/\alpha'\in k^{\times 2}$. 
\end{theorem}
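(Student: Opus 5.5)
The proof will be organized around a genus-one fibration determined by the configuration. The divisors $a$ and $b$ are supported on the two $\tilde E_8$-subgraphs $\{\gamma_i\}$ and $\{\delta_i\}$, and they have the multiplicities of a Kodaira fibre of type $\II^*$. Hence $(a\cdot R_\nu)=0$ for every component $R_\nu$ of $a$, so $a$ is a curve of canonical type in the sense of Section~\ref{subsec:k3 surfaces}. It therefore defines a genus-one fibration $f\colon X\ra B$ with $a$ a (possibly multiple) fibre. Since $a-b\in\Rad(N)$, the divisor $b$ is numerically equivalent to $a$. Because $b$ is disjoint from $a$, it is a second fibre, also of type $\II^*$. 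The curve $\rho$ meets $\gamma_0$ and $\delta_0$, which have multiplicity one, so it is a section. In particular the fibration has no multiple fibres, and $B\simeq\PP^1$. The chain $\epsilon_0,\epsilon_1,\epsilon_2$ is disjoint from $a$ and $b$ and from all other components, so it lies in further fibres.

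First I would show that $X$ is a K3 surface. Since $K_X\equiv 0$, Proposition~\ref{prop:signature} shows that the image of $\bigoplus\ZZ R_\nu$ in $\Num(X)$ has rank $21$, so $b_2\geq\rho\geq 21$. Among the surfaces with $c_1=0$, only K3 surfaces have $b_2\geq 21$, so $X$ is a K3 surface. Its Euler number is $24$. The two $\II^*$ fibres contribute $10+10$. The $\epsilon$-chain is an $A_3$-configuration, so it lies in a fibre of Euler number at least $4$ (or $3$ plus wild contributions). This exhausts $24$, so the fibration has exactly these three reducible singular fibres and no others.

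Next comes the characteristic. I expect this to be the main obstacle. In characteristic $\neq 2,3$ and in characteristic $3$, I would use the Shioda--Tate formula together with a discriminant or conductor count to rule out the fibre configuration $2\II^*+(\text{fibre containing }A_3)$ with Euler sum $24$. The approach is to write a Weierstra\ss{} model $y^2=x^3+a_4(t)x+a_6(t)$, with the $\II^*$ fibres at $t=0,\infty$, and to impose vanishing orders $\ord_0(a_4)\geq4$, $\ord_0(a_6)=5$ and symmetrically at $\infty$. The degrees are $\deg a_4\leq 8$ and $\deg a_6\leq12$, which forces $a_4=ct^4$ and $a_6=t^5(\lambda+\mu t+\nu t^2)$. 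One then checks that the remaining fibre can carry an $A_3$ only if the generic fibre is quasi-elliptic. The discriminant $\Delta$ has degree $24$, so zeros of total order $4$ remain. Requiring an $\I_4$ or $\I_1^*$ fibre there while keeping smoothness yields a contradiction unless $p=2$. In characteristic $2$, the standard reduction to the shape $y^2=x^3+a_6$ gives the quasi-elliptic case. Matching a fibre of type $\I_0^*$ or $\III^*$ would be checked by hand. Normalizing by $t\mapsto\beta t$ and scaling $x,y$ turns the equation into $y^2=x^3+\alpha^3(t^5+t^7)$, which gives part~(iv) up to the square-class statement.

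For quasi-elliptic surfaces, $\rho=22$, so $X$ is supersingular. The Artin invariant can then be computed from the trivial lattice: $U\oplus E_8^2\oplus$ (the fibre lattice at $t=1$) together with the Mordell--Weil group. Its discriminant should be $2^2$, giving $\sigma=1$. Part~(iii) follows by comparing ranks: the image has rank $21$, the cokernel has rank one, and Galois acts on the extra class through the splitting field of the roots of $t^2+t+1$ over the prime field, which is the character $\det(\ZZ\mu_{3,k})$. For the converse I would resolve the explicit equation and exhibit the $22$ lines. For the isomorphism $X_\alpha\simeq X_{\alpha'}$ when $\alpha/\alpha'=\beta^2$, I would use the substitution $x\mapsto\beta^2x$, $y\mapsto\beta^3y$.
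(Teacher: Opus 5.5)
The genuine gap is the exclusion of an \emph{elliptic} fibration, and it starts with your Euler count. The formula $24=\sum_s(e(X_s)+\delta_s)$ holds only for elliptic fibrations. For a quasi-elliptic one it reads $24=4+\sum_s(e(X_s)-2)$. In the case that actually occurs, the fibre over $t=1$ has type $\I_0^*$ and $e=6$. So your conclusion that this fibre has Euler number $4$ and ``exhausts $24$'' is false there, and it contradicts your own later expectation of an $\I_0^*$ fibre. You must split into the elliptic and quasi-elliptic cases before counting.

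In the elliptic case, the counts you propose do not exclude $\II^*+\II^*+\I_4$. The Euler numbers give $10+10+4=24$ with $\delta_0=\delta_\infty=0$, and Shioda--Tate gives $\rho=21+m$; both are consistent. Your short Weierstra\ss{} computation does exclude this for $p\geq 5$. With your $a_4=ct^4$ and $a_6=t^5(\lambda+\mu t+\nu t^2)$, the residual quartic $4c^3t^2+27(\lambda+\mu t+\nu t^2)^2$ of $\Delta$ cannot vanish to order four at $t=1$ while $c\neq 0$. But that normal form is unavailable for $p=3$, where an $a_2x^2$ term survives, and for $p=2$, where an elliptic fibration needs $a_1$ or $a_3\neq 0$. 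For $p=2$ you never treat the elliptic case and pass straight to $y^2=x^3+a_6$. The quasi-elliptic case in characteristic $3$ is not addressed either.

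The paper handles the elliptic case uniformly. Artin's results on the formal Brauer group force $\rho=22$, since finite height $h$ would give $\rho\leq 22-2h$. It then computes the orthogonal complement of $E_8^{\oplus 2}$ in $\Pic(X)$, spanned by the $\I_4$ components, the zero section and a Mordell--Weil generator. Its discriminant group is never $p$-elementary, as it must be for a supersingular K3 surface. Within your framework there is a cheaper fix for $p\in\{2,3\}$. Tate's normal form for $\II^*$ ($\ord a_i\geq i$ for $i\leq 4$, $\ord a_6=5$) gives $\ord\Delta\geq 11$ at such a fibre in these characteristics. Hence $24=\deg\Delta\geq 11+11+4$, a contradiction. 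Some such argument must be supplied.

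Part (iv) is also not justified as written. Once $a_1=a_3=a_2=0$, coordinate changes alter $a_4$ only by fourth powers ($u^4a_4'=a_4+s^4$), so there is no standard reduction to $y^2=x^3+a_6$. Since $\Delta$ vanishes identically, Tate's algorithm does not apply to a quasi-elliptic fibration. The same goes for the vanishing orders $\ord_0(a_4)\geq 4$, $\ord_0(a_6)=5$ that you imposed. You need a direct analysis of the singularities of the Weierstra\ss{} model.

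The paper uses the quasi-discriminant $\Psi=a_4(\partial_t a_4)^2+(\partial_t a_6)^2$. Its zeros of orders $8,8,4$ at $0,\infty,1$ force $a_4=\lambda_2t^2+\lambda_6t^6$ and $a_6=\mu(t^5+t^7)$. Lipman's classification of rational double points then shows that the fibres over $0$ and $\infty$ have type $\II^*$ only if $\lambda_2=\lambda_6=0$.

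Rescaling changes $\mu$ only by sixth powers, so it cannot make $\mu$ a cube. The factor $\alpha^3$ comes from the chain $E_0+E_1+E_2$ being defined over $k$. This rules out type $G_2$ for the rational double point over $t=1$ and forces $T^3+\mu$ to have a root in $k$. The Galois action on the other two roots goes through $k(\xi)$, with $\xi$ a primitive cube root of unity, over $k$ rather than over the prime field. That action is what produces $\det(\ZZ\mu_{3,k})$ in (iii).
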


So for perfect fields, and in particular for $k=\FF_2$, the K3 surface $X$ is unique up to isomorphism. 
The proof   of the theorem requires some preparation and will be completed towards the end of Section \ref{subsec:arithmetic of rational double points}.
Throughout, $k$ denotes  a ground field of characteristic $p\geq 0$,  
and $X$ is a $K$-trivial surface with an snc-configuration $\sum_{\nu\in\Gamma}R_\nu$ 
as in the theorem. From Proposition \ref{prop:signature} we get $b_2\geq 21$, and immediately see that    $X$ is a K3 surface.

To gain further insight we have to introduce certain genus-one fibrations (cf.~Section~\ref{subsec:k3 surfaces}).
To start with, write $C_i,D_i,E_i,R\subset X$ for the curves corresponding to the vertices $\gamma_i,\delta_i,\epsilon_i,\rho\in\Gamma$.
Then $$F_0=2C_1+3C_2+4C_3+6C_4+5C_5+4C_6+3C_7+2C_8+C_0$$ is a curve of canonical type $\II^*$. 
Let $f:X\ra\PP^1$ be the resulting  genus-one fibration. It is jacobian, because $(F_0\cdot R)=1$.
The connected curves $D_0+\ldots+D_8$ and $E_0+E_1+E_2$ are disjoint from $F_0$, hence belong to certain fibers.
In fact,  $$F_\infty=2D_1+3D_2+4D_3+6D_4+5D_5+4D_6+3D_7+2D_8+D_0$$ is another curve of canonical type $\II^*$, hence equals
some fiber.  Applying an automorphism of $\PP^1$ we achieve
$$
F_0 = f^{-1}(0)\quadand  F_\infty= f^{-1}(\infty)\quadand  E_0+E_1+E_2\subset f^{-1}(1) 
$$
for the rational points $0,1,\infty$ inside the projective line $\PP^1=\Spec k[t]\cup\Spec k[t^{-1}]$.
We will now show in two steps that the geometric fiber over $t=1$ has Kodaira symbol $\I_0^*$.

\begin{proposition}
\mylabel{prop:supersingular}
The K3 surface $X$ is supersingular, and the  geometric fiber over the point $t=1$ has Kodaira symbol 
$\I_4$ or $\I_5$ or $\I_0^*$. 
\end{proposition}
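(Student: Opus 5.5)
Since $f\colon X\to\PP^1$ is a jacobian genus-one fibration with zero-section $R$, I will use the Shioda--Tate formula together with the Euler characteristic count $e(X)=24=\sum_s e(X_{\bar s})$ (with wild correction terms in small characteristic). The fibers over $0$ and $\infty$ have type $\II^*$. Each contributes $e=10$ and rank $8$ to the trivial lattice. So the remaining fibers have total Euler number $24-20=4$, up to wild conductor terms which only make the count tighter.

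First step: supersingularity. The trivial lattice already contains $U\oplus E_8^2$, which has rank $18$. The fiber over $t=1$ contains the chain $E_0+E_1+E_2$ of three components disjoint from $R$. In the geometric fiber over $t=1$, the non-identity components therefore span a root lattice containing $A_3$. By Proposition~\ref{prop:signature}, the twenty-two curves span a lattice of rank $21$ in $\Pic(X\otimes k^\sep)$, so $\rho\geq 21$. For a K3 surface, $\rho\geq 21$ forces $\rho=22$: over characteristic zero one has $\rho\leq 20$, and in positive characteristic the Picard number is either $\leq 20$ or equal to $22$ (Artin--Tate, Artin). I will cite this dichotomy. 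Hence $X$ is supersingular, $\rho=22$, and in particular $X$ is unirational and $p>0$.

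Second step: the fiber type over $t=1$. Write $\Phi_1$ for the geometric fiber over $t=1$ and $r_1$ for the rank of its non-identity component lattice, so $r_1\geq 3$. Euler-number additivity gives $e(\Phi_1)\leq 4$. Running through the Kodaira list:
\begin{itemize}
\item $\I_n$ has $e=n$ and rank $n-1$, so rank $\geq 3$ needs $n\geq 4$; $e\leq 4$ gives $n=4$, and the wild part vanishes for multiplicative fibers.
\item $\IV$ has $e=4$ but rank only $2$, so it is excluded.
\item $\I_0^*$ has $e=6$ in characteristic $\neq 2$, but with possible wild contributions the bound becomes the issue (see below).
\end{itemize}

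To sharpen this, I will argue with rank rather than Euler number alone. Shioda--Tate gives
\[
22=2+16+\sum_{s\neq 0,\infty} r_s+\operatorname{rank}\MW.
\]
The contributions $r_s$ from other singular fibers must supply $4$ in total. The fiber at $t=1$ with $r_1\geq 3$ and a chain of length three of non-identity components (attached to the identity component through $E_0$, adjacent to $R$'s component) forces its dual graph to be $\tilde A_{n-1}$ with $n\geq 4$ or $\tilde D_4$. Further, $\sum_s (e_s+\delta_s)\leq 4$ where $\delta_s$ is the Swan conductor; note that $e(\I_0^*)=6$ exceeds $4$. So I must recheck the count. For a fibration with two $\II^*$ fibers the budget is $24-20=4$ only if $\delta=0$ at $0,\infty$. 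I instead expect the correct statement to use that in characteristic $2$ or $3$ quasi-elliptic fibrations occur: the Euler characteristic of a quasi-elliptic fibration counts $e(X_{\bar\eta})=2$. That is, $24=\sum(e_s-2)+2\cdot 2$, giving the budget $20=8+8+(e_1-2)+\dots$, so $e_1\leq 6$. This allows $\I_4$, $\I_5$ (elliptic, $\I_5$ still possible if the $\I_4$ budget is relaxed by the wild terms) and $\I_0^*$ (quasi-elliptic).

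The main obstacle is this bookkeeping: handling elliptic versus quasi-elliptic, and the wild conductor terms. I will split into the elliptic case, where $e(\Phi_1)+\delta_1\leq 4$ together with $r_1\geq 3$ gives $\I_4$ (or $\I_5$ allowing for $\Pic$ considerations), and the quasi-elliptic case, where the fibers are additive and $r_1\geq3$ with $e_1\leq 6$ gives $\I_0^*$ (types $\III$ and $\IV$ have rank $<3$).
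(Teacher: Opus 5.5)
\textbf{The lower bound on the fiber over $t=1$ rests on a false statement.} In $\Gamma$ the vertex $\rho$ is adjacent to $\epsilon_0$, so $E_0$ meets the zero section $R$. It is therefore the identity component of $f^{-1}(1)$, not a curve disjoint from $R$. The known non-identity components are only $E_1,E_2$, which span $A_2$. So ``the non-identity components span a lattice containing $A_3$'' does not follow, and your later phrase about a chain of non-identity components ``attached through $E_0$'' contradicts itself.

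What you need is that $E_0+E_1+E_2$ is not the whole fiber. The intersection matrix of this chain is negative definite, so by Zariski's lemma no fiber is supported on it. Equivalently, the three-component types $\I_3$ and $\IV$ would force $E_0\cdot E_2=1$, contradicting the dual graph. Hence $f^{-1}(1)$ has at least four components. This is exactly the first step of the paper's proof.

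\textbf{The Euler-number part contains false claims.} Euler numbers of Kodaira fibers do not depend on the characteristic; $e(\I_0^*)=6$ always. The Swan terms $\delta_s\geq 0$ only shrink the budget and never ``relax'' it. So in the elliptic case $e_1+\delta_1\leq 24-20=4$ leaves only $\I_4$, excluding $\I_5$ and $\I_0^*$. In the quasi-elliptic case $24=4+\sum_s(e_s-2)$ gives $e_1\leq 6$, hence $\I_0^*$. Once the four-component bound is in place, this final case split is correct and proves slightly more than the statement: it anticipates the Euler-number steps of Propositions~\ref{prop:fiber t=1} and~\ref{prop:quasi-elliptic}.

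It is also not needed. Your own Shioda--Tate line $22=18+\sum_{s\neq 0,\infty}r_s+\operatorname{rank}\MW$, together with $r_1\geq 3$, already gives $r_1\leq 4$. That means four or five components, and Kodaira's list then leaves exactly $\I_4,\I_5,\I_0^*$. This is the paper's argument. It uses only $\rho\leq b_2=22$, so it does not need supersingularity beforehand.

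\textbf{Supersingularity.} Reading off $\rho\geq 21$ directly from Proposition~\ref{prop:signature} is fine, and more direct than the paper. But the dichotomy ``$\rho\leq 20$ or $\rho=22$'' is, in general, Artin's bound $\rho\leq 22-2h$ for finite height combined with the Tate conjecture for K3 surfaces. That is much heavier than necessary and should be cited as such; it is not ``Artin--Tate.''

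The paper avoids it:
\begin{enumerate}
\item If $f^{-1}(1)$ has five components, Shioda--Tate gives $\rho=22$ outright.
\item If $f^{-1}(1)$ is $\I_4$, the fibration is elliptic, since quasi-elliptic fibrations have no multiplicative fibers. Then Artin's theorem that elliptic K3 surfaces of infinite height have $\rho=22$ applies.
\end{enumerate}
Within your route you can do the same: use Artin's theorem in the elliptic case, and in the quasi-elliptic case use that such K3 surfaces are unirational, hence supersingular. The aside on unirationality plays no role otherwise.
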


\proof
It suffices to treat the case that $k$ is algebraically closed. Let $\eta\in\PP^1$ be the generic point. 
Recall that $\MW(X/\PP^1)=\Pic^0(X_\eta)$
is called the \emph{Mordell--Weil group}.
It can be identified with the set of sections $P\subset X$ for   $f:X\ra\PP^1$, via the bijection
$P\mapsto \O_X(P-R)|X_\eta$, 
with $P=R$ as the zero element. 

Let  $G=\Reg(X/\PP^1)$ be the \emph{N\'eron model} for  $f:X\ra \PP^1$, which we can define in the situation at hand as the 
complement of the closed subscheme of $X$ defined by the sheaf of first Fitting ideals for $\Omega^1_{X/\PP^1}$.
It carries the structure of a relative group scheme, with the zero section $R$ as neutral element.
For each closed point $s\in\PP^1$ we write $\Phi_s=G_s/G^0_s$ for the
finite \'etale   group scheme of connected components of the fiber $G_s$. We then have   \emph{specialization maps}
$$
\MW(X/\PP^1) = G_\eta\lra \Phi_s
$$
sending $\shN=\O_X(P-R)$ to the  fiber  component of   $G_s$ hitting the section $P$.

Write $m,\rho$  for the rank of the Mordell--Weil group and the Picard group of $X$, respectively.
For each closed point $s\in \PP^1$, let $r_s=\ord(\Phi_s)$ be the number of irreducible components of $f^{-1}(s)$.
Since a fiber cannot equal a chain of $3$ projective lines, we have $r_1\geq 4$. Expressing $\rho$ by the Shioda--Tate formula, we find that our quantities are related by 
$$
22=b_2\geq \rho =  m  + 2 + \sum_s (r_s-1) = m+17+r_1 + \sum_{s\neq 0,1\infty}(r_s-1)\geq 17+r_1,
$$
and thus  $r_1\leq 5$. The only possible Kodaira symbols   with four or five components are   $\I_4$ and $\I_5$ and $\I_0^*$.

The above  also implies  $22=b_2\geq \rho\geq 17+4=21$,
and if $r_1=5$ we must have $ \rho=22$. 
In the remaining case $f^{-1}(1)$ has Kodaira symbol $\I_4$. Then the genus-one fibration $f$ is elliptic,
for example by \cite{Cossec; Dolgachev 1989}, Chapter V, Lemma 5.2.3. 
If the formal Brauer group $\widehat{\Br}(X)$ of our elliptic K3 surface $X$ is isomorphic to $\widehat{\GG}_a$, we have $\rho=22$ by \cite{Artin 1974},
Theorem 1.7.
Otherwise, the formal Brauer group  has some finite height $h\geq 1$. By loc.\ cit., Theorem 0.1 we then have $\rho\leq b_2-2h\leq 20$,
contradiction.  
\qed

\medskip
Let $Y\ra\PP^1$ be the \emph{Weierstra\ss{} model} of $X$,  obtained by contracting all vertical curves disjoint
from the zero section $R\subset X$, and write  $\PP^1=\Spec k[t]\cup\Spec k[t^{-1}]$. Arguing as in \cite{Schroeer 2023}, Section 6  we find polynomials $a_i\in k[t]$ of degree $\deg(a_i)\leq 2i$
so that  the Weierstra\ss{} equations
\begin{equation}
\label{eq:weierstrass equations}
 \begin{aligned}
     y^2+a_1xy+a_3y & = x^3 + a_2x^2 + a_4x +a_6,\\
     y^2+a'_1xy+a'_3y & = x^3 + a'_2x^2 + a'_4x +a'_6  
 \end{aligned}
\end{equation}
with $a'_i=a_i/t^{2i}$ describe the Weierstra\ss{} model $Y$ over the charts with coordinate rings $k[t]$ and $k[1/t]$. The ensuing discriminants $\Delta_t$ and $\Delta_{1/t}$
are related by $\Delta_t=t^{24}\Delta_{1/t}$, and thus define a global section for the invertible sheaf $\O_{\PP^1}(24)$.
Note that for each closed $s\in \PP^1$, the   Weierstra\ss{} equation is relatively minimal with respect to the 
discrete valuation ring $\O_{\PP^1,s}$.
 
\begin{proposition}
\mylabel{prop:fiber t=1}
The  geometric fiber over  $t=1$ has Kodaira symbol $\I_0^*$.
\end{proposition}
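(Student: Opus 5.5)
The plan is to combine the three possibilities left open by Proposition~\ref{prop:supersingular} with an explicit analysis of the Weierstra\ss{} equations \eqref{eq:weierstrass equations}. We may assume $k$ is algebraically closed.

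The first step is an Euler number count. For a K3 surface the Euler numbers of the singular fibers, including Swan conductor contributions, sum to $c_2=24$. Equivalently, the discriminant is a section of $\O_{\PP^1}(24)$ and $\ord_s(\Delta)=e(X_s)+\delta_s$ with $\delta_s\geq 0$. Each $\II^*$ fiber over $0$ and $\infty$ has Euler number $10$, so $\ord_0(\Delta)\geq 10$ and $\ord_\infty(\Delta)\geq 10$. Hence $\ord_1(\Delta)\leq 4$. An $\I_5$ fiber needs $\ord_1(\Delta)=5$, which is excluded. An $\I_4$ fiber is only possible if all inequalities are equalities: no wild ramification at $0,\infty$, $\ord_1(\Delta)=4$, and all other fibers smooth.

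The second step makes the Weierstra\ss{} coefficients explicit in order to kill the $\I_4$ case. By Tate's algorithm, $\II^*$ at $t=0$ forces $\ord_0(a_i)\geq i$ for $i\le 4$ and $\ord_0(a_6)=5$, after translating the coordinates, which I will do over the local ring at $0$ and then globalize. The same condition at $\infty$ applies to $a'_i=a_i/t^{2i}$ and gives $\deg a_i\leq i$ for $i\leq 4$ and $\deg a_6=7$. Together with $\deg a_i\leq 2i$ this pins the coefficients down to
$$
a_1=c_1t,\quad a_2=c_2t^2,\quad a_3=c_3t^3,\quad a_4=c_4t^4,\quad a_6=t^5(u+vt+wt^2),\quad u,w\neq 0.
$$
A homogeneity substitution $x\mapsto t^2x$, $y\mapsto t^3y$ then removes the first four coefficients to a constant curve twisted only through $a_6$. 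Comparing the remaining conditions, I expect to show that the constants $c_i$ must vanish. Otherwise either $j$ becomes nonconstant with a pole of order incompatible with the $\II^*$ fibers (in characteristic $\neq 2,3$, $\ord j\equiv 2 \bmod 3$ at each $\II^*$ fiber), or the discriminant picks up extra zeros. The surviving equation is of the shape $y^2+c_3t^3y=x^3+t^5(u+vt+wt^2)$.

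The third step is to compute $\Delta$ for this reduced shape and examine the fiber at $t=1$, where the chain $E_0+E_1+E_2$ must lie. In the multiplicative case $\I_4$ one needs $c_4(1)\neq 0$, hence $a_1$ or $a_2$ nonzero at $t=1$. The previous step shows this is incompatible with the $\II^*$ conditions, or equivalently that such a fibration would have $j$ nonconstant with a single pole of order $4$ at $t=1$, which contradicts the constraints at $0$ and $\infty$. So the reduction at $t=1$ is additive. An additive fiber with $4$ or $5$ components among the three candidates can only be $\I_0^*$.

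The main obstacle is the second step. Making the Tate normalizations at $0$ and at $\infty$ simultaneously, and uniformly in all characteristics (especially $p=2,3$, where wild conductors enter), needs care. If it becomes messy, my fallback for excluding $\I_4$ is the lattice argument. With $\I_4$ the Mordell--Weil rank is $m=1$ by Proposition~\ref{prop:supersingular}. Then $\det\Pic(X)=-4\cdot h(P)/|\MW_{\rm tors}|^2$, where $h(P)=4+2(P\cdot R)-i(4-i)/4$. This must be $-p^{2\sigma}$, and I would check that no admissible $h(P)$ satisfies this.
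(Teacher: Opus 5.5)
Your Step 1 is how the paper's proof begins. You should say explicitly that an $\I_4$ or $\I_5$ fiber forces the fibration to be elliptic, so that there is a discriminant to count.

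The gap is in excluding $\I_4$. In your main route the decisive step is only announced, and the reason you give does not work. For $p\neq 2,3$, in the $\I_4$ scenario $j$ has a single pole, of order $4$ at $t=1$, and zeros of order $\equiv 2 \bmod 3$ at $0$ and $\infty$. The function $j=\lambda t^2/(t-1)^4$ satisfies all of this, so the constraints at $0$ and $\infty$ alone give no contradiction. Also, for $p\geq 5$ the condition $c_4(1)\neq 0$ does not force $a_1$ or $a_2$ to be non-zero at $t=1$, because $b_4$ contains $2a_4$.

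Closing this route needs input you do not supply:
\begin{enumerate}
\item For $p\neq 2,3$, apply Riemann--Hurwitz to the separable degree-four map $j$. Use also that $\ord_s(j-1728)=2\ord_s(c_6)$ is even at the smooth fibers. The total ramification is then at least $3+2+2>6$, a contradiction.
\item For $p=2,3$, Tate's normal form for $\II^*$ gives $\ord_0(\Delta)\geq 11$, so such fibers are always wild there. Hence the equality case of Step 1 cannot occur.
\end{enumerate}
Your Step 2 leaves exactly these characteristics open.

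Your fallback is the route the paper actually takes. The paper computes the discriminant of the rank-six lattice spanned by $E_0,\dots,E_3,R,P$ as $-(8c+16)$, $-(8c+13)$, $-(8c+12)$, according as $P$ meets $E_0$, $E_1$ (or $E_3$), or $E_2$. These are your numbers $-4h(P)$.

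However, the test as you formulate it, that no admissible $h(P)$ gives $-p^{2\sigma}$, is false. If $P$ meets $E_0$, then $-8(c+2)=-2^{2\sigma}$ whenever $c=2^{2\sigma-3}-2$; for instance $c=0$ and $\sigma=2$.

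The missing ingredient, which the paper uses at exactly this point, is that the discriminant group of a supersingular K3 lattice is $p$-elementary. In this case $P-R-(c+2)F$ is orthogonal to $R$, to $F$, and to all fiber components not meeting $R$. Hence $\Pic(X)\cong U\oplus E_8(-1)^{\oplus 2}\oplus A_3(-1)\oplus\langle -(4+2c)\rangle$, whose discriminant group contains $\ZZ/4\ZZ$ and so is not $2$-elementary.

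The other cases are excluded by congruences. First, $8c+13\equiv 5$ modulo $8$ is never of the form $p^{2\sigma}$. Second, $8c+12=p^{2\sigma}$ forces $p=2$ and then $c=-1$. Finally, $\MW$ is torsion-free, because every section $P\neq R$ has $h(P)\geq 3$.
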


\proof
It suffices to treat the case that $k$ is algebraically closed.
Seeking a contradiction, we assume that the Kodaira symbol is not $\I_0^*$. By Proposition \ref{prop:supersingular} it must
be either $\I_4$ or $\I_5$, hence the generic fiber $X_\eta$ is smooth.
In particular, the   discriminant  $\Delta$ is non-zero.
Let  $v_s=\val_s(\Delta)$ be its valuation at the closed point $s\in\PP^1$. We   have 
\begin{equation}
\label{eq:discrimiant divisor and ogg}
24 = \sum_s v_s\quadand v_s =  r_s + \tau_s + \delta_s -1.
\end{equation}
The former holds because the discriminant divisor $\operatorname{div}(\Delta)\subset\PP^1$ has degree $24$.
The latter is Ogg's Formula, where  $\tau_s\geq 0$ is the tame part, and $\delta_s\geq 0$ is the wild part of the   conductor exponent.
Note that the tame part is given by
$$
\tau_s=\begin{cases}
0	& \text{if the fiber is smooth;}\\
1	& \text{if the fiber is semistable;}\\
2	& \text{if the fiber is unstable.}
\end{cases}
$$
The wild part is a representation-theoretic invariant that may be non-trivial only for unstable fibers in characteristic  two and three.
From \eqref{eq:discrimiant divisor and ogg} and using that the fibers over $t=0$ and $t=\infty$ are unstable with $9$ 
components each, and the fiber over $t=1$ by our assumption is semistable with at least $4$ components, we get
$$
24\geq \sum_{s=0,\infty,1} v_s= 24 + \delta_0+\delta_\infty+ (r_1-4).
$$
It follows that the fibers outside  $s\neq 0,1,\infty$ must be smooth,   that the configuration of singular fibers
is $\II^* + \II^* +\I_4$, and that $\delta_0=\delta_\infty=0$. From $22=  m  + 2 + \sum_s (r_s-1)$
we see that   $\MW(X/\PP^1)$ has rank $m=1$.

The lattice $\Pic(X)$ is generated by  the sections and the  vertical curves for the fibration $f:X\ra\PP^1$.
The curves $C_1,\ldots, C_8$ and $D_1,\ldots,D_8$ are disjoint from all sections since they occur with multiplicity $>1$ in a fiber. They generate a unimodular sublattice  
of type $(-E_8)^{\oplus 2}$. Its orthogonal complement $L\subset \Pic(X)$ has rank $r=6$, and is generated by
the sections $P$ and the vertical curves $E_0,E_1,E_2,E_3$ that form the fiber with Kodaira symbol $\I_4$.
We identify  the  group of components $\Phi_1$ with $\ZZ/4\ZZ$, such that $E_i$ corresponds to   the congruence class $i+4\ZZ$.

Consider the sublattice $L_0\subset L$ generated by $E_0,\ldots,E_3,R$. This is a lattice of type $ -A_3\oplus U$,
which has rank $r_0=5$ and discriminant $d_0=4$. Its primitive closure $L_0'$
is an even lattice satisfying  $4=[L'_0:L]\cdot\disc(L'_0)$.
Now if  the inclusion $L_0\subset L'_0$ is not an equality,  $L'_0$ would be an even unimodular lattice of odd rank.
But this does not exist, see for example \cite{Serre 1973} V.2 Corollary~2.
Thus  $L_0$ is primitive in $L$.  

Choose a section $P\subset X$ so that $ E_0,\ldots,E_3,R,P\in L$ form a basis. Set $c=(P\cdot R)$, and let
$E_n\subset f^{-1}(1)$ be the irreducible component that intersects the additional section $P$.
By symmetry, it suffices to treat the cases  $0\leq n\leq 2$.
The Gram matrix, written with Kronecker deltas, takes the form
\begin{equation}
\label{eq:gram matrix}
S=\begin{pmatrix}
-2 		& 1 		& 0 		& 1 	& 1 	& \delta_{n0}\\
1 		& -2 		& 1 		& 0 	& 0 	& \delta_{n1} \\
0 		& 1 		& -2		& 1 	& 0 	& \delta_{n2}\\  
1 		& 0 		& 1 		& -2 	& 0 	& 0\\  
1 		& 0 		& 0 		& 0 	& -2 	& c\\ 
\delta_{n0} 	& \delta_{n1} 	& \delta_{n2} 	& 0 	& c 	& -2 
\end{pmatrix}.
\end{equation}
Computing the determinant we get
\begin{equation}
\label{eq:three discriminants}
\disc(L)=\det(S) = \begin{cases}
-(8c + 16) 	& \text{if $n=0$;}\\
-(8c + 13) 	& \text{if $n=1$;}\\
-(8c + 12) 	& \text{if $n=2$.}\\
\end{cases} 
\end{equation} 
Suppose $n=1$.
The invertible sheaf $\shN=\O_X(P-R)|X_\eta$ generates the Mordell--Weil group, and its image in the component group
$\Phi_1=\ZZ/4\ZZ$ is a generator.
Consider the invertible sheaf  $\shN^{\otimes 2}=\O_X(2P-2R)|X_\eta$.
It defines via $\shN^{\otimes 2} = \O_X(P'-R)|X_\eta$  a new section $P'\subset X$, which by construction passes through $E_2$.
Moreover, the difference $(P'-R) - (2P-2R) $ is a sum of vertical curves, which implies that 
$E_0,\ldots,E_3,R,P'$ generate a sublattice $L'\subset L$ of index two.
We saw in \eqref{eq:three discriminants}  that the ensuing Gram matrix has $\disc(L')= -(8c + 12)$.
In turn, we get
$$
-(8c+12) = \disc(L') = [L:L']^2\cdot\disc(L) = -4\cdot(8c+13)
$$
and thus    $3c=-5$, contradiction.
Now suppose $n=2$. Arguing in the same way, we find a sublattice $L'\subset L$ of index two,
now with $\disc(L')=-(8c+16)$, giving  $-(8c+16) =  -4\cdot(8c+13)$ and hence $2c=-3$, again   a contradiction.
 
Consequently we have  $n=0$. Now recall that $\disc(L)=\disc\Pic(X) = -p^{2\sigma}$ for some $1\leq \sigma\leq 10$,
and \eqref{eq:three discriminants} gives  $-2^3(c + 2)=  -p^{2\sigma}$. Thus $p=2$ and $c=p^{2\sigma-3}-2$.
The finitely many possibilities for $\sigma$ and $c$
can be tabulated:
$$
\begin{array}{*{9}l}
\toprule
\sigma	 		& 3	& 4	& 5	& 6	& 7	& 8	& 9	& 10	\\
\midrule
c	 		& 6	& 30	& 126	& 510	& 2046	& 8190	& 32766	& 131070	 \\
\bottomrule
\end{array}
$$
With computer algebra, one sees that for $3\leq \sigma\leq 10$ the   invariant factors for the matrix \eqref{eq:gram matrix}
are given by $d_1=p^2$ and $d_2=p^{4+2(c-3)}$. 
In turn, the discriminant groups are never $p$-elementary. This yields the desired contradiction.
\qed

\begin{proposition}
\mylabel{prop:quasi-elliptic}
The characteristic is $p=2$, the fibration $f:X\ra\PP^1$ is quasi-elliptic, 
the configuration of geometrically reducible fibers is $\II^* + \II^* +\I_0^*$,
and the Artin invariant is $\sigma=1$.
\end{proposition}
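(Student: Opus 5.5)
We may assume $k$ is algebraically closed. By Proposition~\ref{prop:fiber t=1}, the fiber over $t=1$ has Kodaira symbol $\I_0^*$, with $r_1=5$ components. Together with the two fibers $\II^*$ over $0$ and $\infty$, the Shioda--Tate formula gives
$$
22\geq\rho = m+2+8+8+4+\sum_{s\neq0,1,\infty}(r_s-1) = m+22+\sum_{s\neq 0,1,\infty}(r_s-1).
$$
Hence $\rho=22$, the Mordell--Weil rank is $m=0$, and every other fiber is irreducible. In particular the configuration of reducible fibers is $\II^*+\II^*+\I_0^*$.

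The first main step is to show that $f$ is quasi-elliptic. Suppose instead that $f$ is elliptic, so that $\Delta\neq0$, and run the discriminant count of Proposition~\ref{prop:fiber t=1}. Ogg's formula gives $v_0,v_\infty\geq 9+2-1=10$ and $v_1\geq 5+2-1=6$, hence
$$
24\geq\sum_s v_s\geq 26+\delta_0+\delta_\infty+\delta_1,
$$
a contradiction. So $\Delta=0$ and $f$ is quasi-elliptic. By the theory of quasi-elliptic fibrations (Bombieri--Mumford) this forces $p=2$ or $p=3$.

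The second step rules out $p=3$. In characteristic three, quasi-elliptic fibers are only of types $\II,\IV,\IV^*,\II^*$; in particular $\I_0^*$ never occurs. This contradicts Proposition~\ref{prop:fiber t=1}, so $p=2$. (Alternatively, one can use the discriminant argument below with $p=3$.)

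The third step computes the Artin invariant by a lattice calculation. The Mordell--Weil group is finite and, for quasi-elliptic fibrations, $p$-elementary. The torsion sections inject into the product of component groups, and $\Phi_0=\Phi_\infty=0$ for $\II^*$ fibers while $\Phi_1=(\ZZ/2)^2$ for $\I_0^*$. The trivial lattice $U\oplus(-E_8)^2\oplus(-D_4)$ has discriminant group of order $4$. Hence
$$
|\disc\Pic(X)| = 4/|\MW|^2\in\{4,1\}.
$$
Discriminant $1$ is impossible for a supersingular K3 surface, since $\disc=-p^{2\sigma}$ with $\sigma\geq1$. So $|\MW|=1$ and $\disc=-4=-2^{2\sigma}$, giving $\sigma=1$. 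This is also consistent with the $\mu_3$-twist appearing later in part~(iii) of Theorem~\ref{thm:characterization k3}.

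The main obstacle is making sure the quasi-elliptic fiber-type restriction in characteristic three and the torsion-injection statement are correctly quoted. I would take both from Cossec--Dolgachev, Chapter~V. If the injection is delicate, the fallback is to read off $\sigma$ directly from the discriminant of the explicit Gram matrix of the sublattice spanned by the $22$ curves together with $R$ and the components of the $\I_0^*$ fiber.
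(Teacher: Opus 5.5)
Your proof is correct and follows the paper's argument: the discriminant count $10+10+6>24$ rules out the elliptic case, the absence of $\I_0^*$ among quasi-elliptic fibers in characteristic three forces $p=2$, and $\sigma=1$ comes from comparing the discriminant $-4$ of the trivial lattice $U\oplus(-E_8)^{\oplus 2}\oplus(-D_4)$ with $\disc\Pic(X)=-2^{2\sigma}$. The differences are minor. You obtain the fiber configuration from the Shioda--Tate count, which also shows that the trivial lattice $L_0$ has full rank, where the paper cites Ito's classification. Your remarks on $p$-elementary torsion and the injection into component groups are unnecessary, since $4=[\Pic(X):L_0]^2\cdot 2^{2\sigma}$ with $\sigma\geq 1$ already forces index one and $\sigma=1$.
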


\proof
We may assume that $k$ is algebraically closed.
The fibers over $t=0,\infty $ have Kodaira symbol $\II^*$, 
whereas at $t=1$ the Kodaira symbol is $\I_0^*$, according to Proposition \ref{prop:fiber t=1}.
Suppose that $f:X\ra\PP^1$ is elliptic. We then get
$$
24=\sum_s\val_s(\Delta) = \sum_s (r_s+\tau_s + \delta_s-1)\geq 10 + 10 + 6,
$$
contradiction. Thus the fibration is quasi-elliptic, which occurs only in characteristic two and three.
In the latter case, the only possible Kodaira symbols are  $\II,\II^*,\IV,\IV^*$. Thus  $p=2$.
According to \cite{Ito 1994}, Proposition 4.3 , the configuration of 
geometrically reducible fibers must be $\II^* + \II^* +\I_0^*$. 
Inside $L=\Pic(X)$, the  vertical curves and the section   generate  a sublattice $L_0$ of type $(-E_0)^{\oplus 2}\oplus D_4\oplus U$. From $\disc(L_0)=-4$ and $\disc(L)=-2^{2\sigma}$ we infer that the inclusion $L_0\subset L$ is an equality, and $\sigma=1$.
\qed

\subsection{Arithmetic of quasi-elliptic Weierstra\ss{} equations}
\mylabel{subsec:quasielliptic weierstrass}
We continue to work towards the  proof of Theorem \ref{thm:characterization k3}. We already established 
that our K3 surface $X$ admits  a jacobian quasi-elliptic fibration with rather special properties.
To understand these matters in full clarity, it is now necessary to carry out an analysis of  quasi-elliptic Weierstra\ss{} equations
in an \emph{arithmetic setting},
extending   results of Miyanishi \cite{Miyanishi 1977} and  Ito \cite{Ito 1994}. The material appears to be of foundational
nature and  independent interest.

Fix a ground field  $k$  of characteristic $p=2$, which may be imperfect, and consider a Weierstra\ss{} equation
\begin{equation}
\label{eq:long weierstrass equation}
y^2+a_1xy+a_3y = x^3 + a_2x^2 + a_4x +a_6
\end{equation}
over some  $k$-algebra $R$. We write $Y\ra\Spec(R)$ for the resulting family of cubic curves. Note that the family of points at infinity,
with homogeneous coordinates $(0:1:0)$, defines a section for this family.

Each change of variables $x=u^2x'+r$ and $y=u^3y'+su^2x'+t$,
where $u\in R^\times$ and $r,s,t\in R$, produces another Weierstra\ss{} equation. Throughout, we will freely use the formulas in \cite{Deligne 1975}
and \cite{Tate 1975} for the  resulting new coefficients $a_i'$.

\begin{proposition}
\mylabel{prop:vanishing left side}
We have $a_1=a_3=0$ if and only if there is a change of coordinates over some fpqc extension $R\subset R'$ that transforms 
\eqref{eq:long weierstrass equation} into the trivial Weierstra\ss{} equation $y^2=x^3$.
\end{proposition}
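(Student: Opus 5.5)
Both implications reduce to the transformation formulas for the Weierstra\ss{} coefficients in characteristic two. Under $x=u^2x'+r$, $y=u^3y'+su^2x'+t$ the standard formulas of \cite{Deligne 1975}, \cite{Tate 1975} read
$$
ua_1'=a_1+2s=a_1,\qquad u^3a_3'=a_3+ra_1+2t=a_3+ra_1,
$$
since $2=0$ in $R$. So the pair $(a_1,a_3)$ transforms by an invertible upper-triangular rule, and the locus $a_1=a_3=0$ is preserved by every change of variables.

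\emph{Only if.} Suppose a change of coordinates over an fpqc extension $R\subset R'$ produces $y^2=x^3$, so that $a_1'=a_3'=0$ in $R'$. The displayed formulas then give $a_1=ua_1'=0$ and $a_3=u^3a_3'-ra_1=0$ in $R'$. Since $R\to R'$ is faithfully flat, hence injective, we get $a_1=a_3=0$ already in $R$.

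\emph{If.} Suppose $a_1=a_3=0$, so the equation is $y^2=x^3+a_2x^2+a_4x+a_6$. With $u=1$, the remaining formulas become, modulo $2$,
$$
a_2'=a_2+s^2+3r,\qquad a_4'=a_4+3r^2+2ra_2,\qquad a_6'=a_6+r^3+r^2a_2+ra_4+t^2,
$$
where I still need to double-check the signs and cross terms against Tate's list. Reading off, $a_4'=a_4+r^2$ and $a_2'=a_2+r+s^2$. The plan is to kill the coefficients in order. First adjoin $r=\sqrt{a_4}$, which makes $a_4'=0$. Then adjoin $s=\sqrt{a_2+r}$, which makes $a_2'=0$. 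Finally adjoin $t=\sqrt{a_6'}$, which kills the constant term.

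Each step takes place over $R'=R[T]/(T^2-c)$, which is free of rank two over $R$ and hence fpqc. Composing the three extensions gives a single fpqc extension $R\subset R'$. Over $R'$ the equation becomes $y^2=x^3$, with $a_1,a_3$ still zero by the first paragraph.

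The main obstacle is getting the characteristic-two transformation formulas exactly right, in particular:
\begin{itemize}
\item confirming that $a_4'$ involves only $r^2$ and no $s$-terms once $a_1=a_3=0$;
\item choosing the order of the three adjunctions so that each square root kills its coefficient without reintroducing the earlier ones.
\end{itemize}
With $r$ fixed first, $a_2'$ and $a_6'$ are affected only by later choices of $s$ and $t$ respectively, and $s$ does not enter $a_4'$ or $a_6'$ when $a_1=a_3=0$. So the order $r$, then $s$, then $t$ should work.
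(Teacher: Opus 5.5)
Your proof is correct and follows the paper's argument. The triangular rule $ua_1'=a_1$, $u^3a_3'=a_3+ra_1$ gives the ``only if'' direction. For the converse, your characteristic-two formulas are right ($a_2'=a_2+r+s^2$, $a_4'=a_4+r^2$, $a_6'=a_6+ra_4+r^2a_2+r^3+t^2$ for $u=1$), and adjoining square roots for $r$, $s$, $t$ in that order over a free extension of rank eight kills these coefficients.

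Your successive adjunction is in fact the precise form of the paper's remark about $\alpha,\beta,\gamma\in R$. The radicand $a_2+r$ needed for $s$ lives in $R[\sqrt{a_4}]$ rather than in $R$. The radicand for $t$ does simplify to $a_6+a_2a_4\in R$, because $r^3=ra_4$.
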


\proof
In characteristic two, any change of coordinate yields $ua_1'=a_1$ and $u^3a_3'= a_3+ra_1$.
In turn, the condition $a_1= a_3=0$ holds over $R$ if and only if there is a change of coordinates over some ring extension $R\subset R'$
with $a'_1=a'_3=0$.

Suppose now that we have $a_1=a_3=0$. Then the effect of any change of coordinates   simplifies to 
\begin{equation}
\label{eq:change of coordinates}
u^2a_2'=a_2+ r+s^2, \quad u^4a_4'= a_4 +  r^2,\quad u^6a_6'=a_6+ ra_4 +r^2a_2+r^3+t^2.
\end{equation}
Clearly, the desired change of coordinates   exists  over the   locally free extension 
$$
R'=R[X,Y,Z]/(X^2-\alpha,Y^2-\beta, Z^2-\gamma),
$$
of degree eight, with suitable   $\alpha,\beta,\gamma\in R$.
\qed

\medskip
We are mainly interested in the case $a_1=a_3=0$. By   \eqref{eq:change of coordinates} 
we see that we additionally may assume $a_2=0$.
So from now on, we will assume that our Weierstra\ss{} equation has the classical short form
$y^2=x^3+a_4x+a_6$, albeit in characteristic two. Then all $b$-values,   $c$-values, and the discriminant $\Delta$ vanish,
with the sole exception of $b_8=a_4^2$. A straightforward computation shows:

\begin{proposition}
\mylabel{prop:transformation coefficients}
The change of variables $x=u^2x'+r$ and $y=u^3y'+su^2x'+t$ preserves the short form if and only if $r=s^2$.
In this case 
$$
u^4a'_4= a_4+s^4, \quad u^6a'_6=a_6+t^2+ s^2(s^4+a_4),\quad u^8b'_8= b_8+s^8.
$$
\end{proposition}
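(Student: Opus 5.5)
This is a direct substitution, so the plan is to start from the general transformation rules in characteristic two with $a_1=a_3=0$, already recorded in \eqref{eq:change of coordinates}. For the original equation in short form we have $a_2=0$, so the rule reads $u^2a_2'=r+s^2$. Since $u$ is a unit, the transformed equation has $a_2'=0$ precisely when $r=s^2$; that settles the ``if and only if''. One should also check that $a_1'$ and $a_3'$ stay zero. This follows from $ua_1'=a_1=0$ and $u^3a_3'=a_3+ra_1=0$, as in the proof of Proposition \ref{prop:vanishing left side}, so the short form is preserved exactly under this condition.

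Next we substitute $r=s^2$ into the remaining formulas of \eqref{eq:change of coordinates}, again with $a_2=0$. The $a_4$ rule gives $u^4a_4'=a_4+r^2=a_4+s^4$ at once. The $a_6$ rule gives
$$u^6a_6'=a_6+ra_4+r^3+t^2=a_6+t^2+s^2a_4+s^6=a_6+t^2+s^2(s^4+a_4),$$
which is the stated expression.

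For $b_8$ we use the general relation $u^8b_8'=b_8+\dots$, or more simply the observation above that in short form $b_8=a_4^2$. This gives $u^8b_8'=(u^4a_4')^2=(a_4+s^4)^2=a_4^2+s^8=b_8+s^8$, where squaring is additive in characteristic two.

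The only point requiring care is the trustworthiness of the formulas \eqref{eq:change of coordinates} themselves, which come from Tate's change-of-variable formulas specialized to $p=2$. In particular one must keep track of where $s$ enters: it appears in $a_2'$ through $s^2$, and drops out of $a_4'$ and $a_6'$ because $a_1=0$. If in doubt, I would verify these by substituting $x=u^2x'+r$ and $y=u^3y'+su^2x'+t$ directly into $y^2=x^3+a_4x+a_6$. Expanding $y^2=u^6y'^2+s^2u^4x'^2+t^2$ in characteristic two, comparing the coefficients of $x'^2$, $x'$ and $1$, and dividing by $u^6$ gives everything immediately.
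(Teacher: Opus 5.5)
Your proposal is correct and follows the paper's route; the paper gives no details beyond calling this a straightforward computation. That computation is exactly what you do: specialize the formulas \eqref{eq:change of coordinates} to $a_2=0$, note that $a_1'=a_3'=0$ holds automatically, and use $b_8=a_4^2$ for the last identity.
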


In light of Proposition \ref{prop:vanishing left side}, there are no non-constant polynomials $P(a_4,a_6)$ with coefficients in $R$ that are invariant under
all changes of variables over all fpqc extensions $R\subset R'$.
This changes if  one adds structure to the ground ring:

\emph{From now on, we assume that the ring $R$ is endowed with a  derivation $D:R\ra R$.} So besides $a_4,a_6$
we also have $Da_4,Da_6\in R$.
Following Ito \cite{Ito 1994}, Section 3 but using different notation, we consider the expression
$$
\Psi = a_4 \cdot (Da_4)^2 + (Da_6)^2\in R 
$$
and call it  the \emph{quasi-discriminant} of our Weierstra\ss{} equation $y^2=x^3+a_4x+a_6$. Again a straightforward computation shows:

\begin{proposition}
\mylabel{prop:transformation quasi-discriminant}
A change of variables $x=u^2x'+s^4$ and $y=u^3y'+su^2x'+t$ transforms the quasi-discriminant according to  $u^{12}\Psi'=\Psi$.
\end{proposition}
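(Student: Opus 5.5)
The plan is a direct computation that reduces everything to Proposition~\ref{prop:transformation coefficients} together with two characteristic-two properties of a derivation. First, $D(f^2)=2fDf=0$ for every $f\in R$, so $D$ kills all squares. Second, $D(u^{2n})=0$, so $D(u^{2n}g)=u^{2n}Dg$, i.e.\ multiplication by even powers of $u$ commutes with $D$.

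I read the shift of $x$ in the statement as the one that preserves the short form. By Proposition~\ref{prop:transformation coefficients} this means $r=s^2$. The transformation rules there then give
$$
u^4a_4'=a_4+s^4\quadand u^6a_6'=a_6+t^2+s^6+s^2a_4 .
$$

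Next I apply $D$ to both relations. On the left, the factors $u^4$ and $u^6$ pass through $D$ by the second property. On the right, the terms $s^4$, $t^2$ and $s^6$ are killed by the first property. For the last term, $D(s^2a_4)=s^2Da_4$. This yields
$$
u^4Da_4'=Da_4\quadand u^6Da_6'=Da_6+s^2Da_4.
$$

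Substituting into $\Psi'=a_4'(Da_4')^2+(Da_6')^2$ and multiplying by $u^{12}$ gives
$$
u^{12}\Psi'=(a_4+s^4)(Da_4)^2+(Da_6+s^2Da_4)^2 .
$$
Expanding the square in characteristic two, the cross term vanishes and we get $(Da_6)^2+s^4(Da_4)^2$. The term $s^4(Da_4)^2$ then appears twice and cancels. What remains is $a_4(Da_4)^2+(Da_6)^2=\Psi$, which is the claim.

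No real obstacle is expected. The only points needing care are the two already noted: $D$ commutes with multiplication by even powers of $u$ (since $D(u^{2n})=0$), and the cross term of the square vanishes in characteristic two.
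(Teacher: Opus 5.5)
Your computation is correct and is exactly the ``straightforward computation'' the paper has in mind. It combines the transformation rules of Proposition~\ref{prop:transformation coefficients} with the fact that in characteristic two $D$ kills squares and commutes with multiplication by even powers of $u$. Your reading of the shift as $r=s^2$ is the right one: the $s^4$ in the statement is presumably a typo, since only $r=s^2$ keeps $a_2'=0$, which is the short form on which $\Psi'$ is defined.
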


Note that the formation of $\Psi$ commutes  with base-change to algebras $R'$
endowed with a compatible derivation $D'$, and in particular with localizations $R'=S^{-1}R$.
It also commutes with base changes $R'=R\otimes_{R_0}R'_0$,
where $R_0\subset R$ is the kernel of the derivation.

Recall that our Weierstra\ss{} equation $y^2=x^3+a_4x+a_6$ yields a family of cubic curves $Y\ra \Spec(R)$.
Note that a change of coordinates gives another family of cubics $Y'$ that, when the embedding into $\PP^2_R$ 
is disregarded, is isomorphic to $Y$.
The locus of non-smoothness $\Sing(Y/R)$, which we view, as in \cite{Fanelli; Schroeer 2020}, Section 2, as a closed subscheme defined by the sheaf of first Fitting ideals
for $\Omega^1_{Y/R}$, is also called the \emph{scheme of cusps}. It is given by the equation $x^2+a_4=0$,
hence constitutes an effective Cartier divisor, with coordinate ring
$R[x,y]/(x^2-a_4,y^2-a_6)$.
If the ring  $R$ is noetherian, we also have the singular  
locus $\Sing(Y)$, which is contained in the union of $\Sing(Y/R)$ and the preimage of $\Sing(R)$.

\begin{proposition}
\mylabel{prop:over fields}
If $R=k$ is a field, then the curve $Y$ is regular if and only if $a_4\not\in k^2$ or $a_6\not\in k^2$.
This is also equivalent to the existence of a derivation $D:k\ra k$ with $\Psi\neq 0$.
\end{proposition}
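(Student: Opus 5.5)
The plan is to reduce regularity of $Y$ to the local question at the cusp. Outside the scheme of cusps $\Sing(Y/k)$ the morphism $Y\to\Spec k$ is smooth, hence $Y$ is regular there; the point at infinity is also smooth, since locally there the equation reads $z + \ldots = 0$. So $Y$ is regular if and only if it is regular at the points of $\Sing(Y/k)$, which by the description above has coordinate ring $k[x,y]/(x^2-a_4,y^2-a_6)$. This is a local artinian ring of degree four whose residue field is $K=k(\sqrt{a_4},\sqrt{a_6})$, so there is exactly one non-smooth point $y_0$.

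For the first equivalence, suppose first that $a_4,a_6\in k^2$, say $a_4=\alpha^2$ and $a_6=\beta^2$. Then $y_0$ is the rational point $(\alpha,\beta)$. Substituting $x=x'+\alpha$ and $y=y'+\beta$ gives the equation $y'^2=x'^3+\alpha x'^2$, which lies in $\maxid^2$, so $y_0$ is not regular. Conversely, suppose $y_0$ is not regular. Then the local equation $f=y^2-x^3-a_4x-a_6$ lies in $\maxid_{y_0}^2$ inside the regular local ring $\O_{\AA^2,y_0}$. I would test this by passing to the residue field. Write $\bar x,\bar y\in K$ for the images. Since $f\in\maxid^2$, the image of $f$ in $\maxid/\maxid^2$ vanishes, and that image is governed by the differential $df$ together with the ``constant part'' at $y_0$.

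The cleanest route is to consider the purely inseparable extension $k\subset K$. If, say, $a_6\notin k^2$, I choose the uniformizer-type element $y^2-a_6$ at the closed point of $\AA^2$ lying under $y_0$. Then $f=(y^2-a_6)-x(x^2+a_4)$, and $x^2+a_4$ vanishes at $y_0$ as well. I then expect $y^2-a_6$ and $x^2-a_4$ (or a single one of them, when one of $a_4,a_6$ is a square) to be part of a regular system of parameters. This reduces the claim to the statement that $f$ is a parameter, i.e.\ that $f\notin\maxid^2$. Making this clean, especially the case where exactly one of $a_4,a_6$ is a square, is the step I expect to be the main obstacle. In that case I would first apply Proposition~\ref{prop:transformation coefficients} with $s^4=a_4$ (after replacing $s^2$ by a square root where available) to normalise $a_4=0$ or $a_6=0$.

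For the second equivalence, recall that $a\in k^2$ if and only if $D(a)=0$ for every derivation $D$ of $k$; this holds via $p$-bases, since $k^2=k^p$. If some $D$ gives $\Psi\neq0$, then $Da_4\neq0$ or $Da_6\neq0$, so $Y$ is regular by the first part. Conversely, assume $Y$ is regular. If $a_6\notin k^2$, I extend the $p$-independent element $a_6$ to a $p$-basis. For the derivation $D$ with $Da_6=1$ that kills the other basis elements, I get $\Psi=a_4(Da_4)^2+1$, and I choose $D$ so that this is nonzero. If $a_4\in k^2$ then $Da_4=0$ and $\Psi=1$. Otherwise I still have freedom: I can take $D$ killing $a_6$ when $a_4,a_6$ are $p$-independent, or else scale $D$ suitably. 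If $a_6\in k^2$ but $a_4\notin k^2$, I take $D$ with $Da_4=1$, which gives $\Psi=a_4\neq0$.
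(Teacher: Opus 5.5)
Your easy direction is fine: if $a_4,a_6\in k^2$ you get a rational point of $\Sing(Y/k)$ at which $f\in\maxid^2$. The converse is not proved, though. This is the claim that $a_4\notin k^2$ or $a_6\notin k^2$ forces regularity at the cusp $y_0$. You only \emph{expect} $x^2-a_4$ and $y^2-a_6$ to be part of a regular system of parameters, and you postpone the verification.

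That expectation fails in a case you did not single out: $a_4,a_6$ both non-squares but $2$-dependent. Take $a_4=a_6=a\notin k^2$. Then $K=k(\sqrt a)$ has degree two and $\maxid=(x+y,\,y^2+a)$. Moreover $x^2+a=(x+y)^2+(y^2+a)\equiv y^2+a$ modulo $\maxid^2$, so these two elements are not part of a regular system of parameters. Here $f\equiv(1+\sqrt a)(y^2+a)$, and regularity rests on the extra input $\sqrt a\notin k$.

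Your proposed normalisation cannot repair this. By Proposition~\ref{prop:transformation coefficients} one has $u^4a_4'=a_4+s^4$, so $a_4'=0$ requires $a_4\in k^4$. You also may not pass to a field containing such an $s$, because regularity is not preserved under inseparable extensions. Indeed, over $K$ the curve is always singular, by your own first step.

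There are two ways to close the gap. The first is the paper's idea: cut out the cusp locally by one function whose zero scheme at $y_0$ is a field. Then $\maxid_{Y,y_0}$ is principal and $\O_{Y,y_0}$ is a discrete valuation ring. The function must be adapted to the case:
\begin{itemize}
\item if $a_6\in k^2$ and $a_4\notin k^2$, use $y+\sqrt{a_6}$, whose zero scheme is $V(x(x^2+a_4))$;
\item if $a_4,a_6$ are $2$-independent, use $x^2+a_4$;
\item if $\sqrt{a_4}=b+c\sqrt{a_6}$ with $b,c\in k$, use $x+b+cy$. On $Y$ the equation becomes $(y^2+a_6)(1+c^2b+c^3y)$, and the second factor equals $1+c^2\sqrt{a_4}\neq 0$ at $y_0$.
\end{itemize}

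The second way finishes your own approach uniformly. Since $K$ is separable over $\FF_2$, the map $\maxid/\maxid^2\to\Omega_{k[x,y]/\FF_2}\otimes K$ is injective. It sends $f$ to $\sqrt{a_4}\,da_4+da_6\in\Omega_{k/\FF_2}\otimes_kK$. This vanishes only if $da_4=da_6=0$: if $da_4\neq0$ and $da_6=\lambda\,da_4$, use $\sqrt{a_4}\notin k$. Derivations detect nonzero elements here, via a $p$-basis. Pairing with $D$ and squaring gives $\Psi=(\sqrt{a_4}\,Da_4+Da_6)^2$, so the second equivalence follows at the same time.

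Finally, in your derivation argument, ``scale $D$ suitably'' does nothing, since $\Psi(\lambda D)=\lambda^2\Psi(D)$. No scaling is needed. Any $D$ with $Da_6=1$ already works: $a_4(Da_4)^2=1$ would give $a_4\in k^2$, hence $Da_4=0$ and $\Psi=1$. Alternatively, split on $a_4\notin k^2$ first, as the paper does; then $Da_4=1$ gives $\Psi=a_4+(Da_6)^2\notin k^2$.
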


\proof
Clearly, the sets $\Sing(Y)\subset\Sing(Y/k)$ are singletons  or empty, and the latter is defined as a scheme by $x^2=a_4$ and $y^2=a_6$.
Suppose first $a_4,a_6\in k^2$. Then $\Sing(Y/k)$ has a $k$-rational point, hence  $Y$ is singular.
If $a_6\not\in k^2$, then the effective Cartier divisor defined by $x=0$ is supported on the union of $\Sing(Y/k)$ and the point at infinity.
Since the latter is a Cartier divisor, so is $\Sing(Y/k)$. It has coordinate ring $k[x]/(x^2-a_4)$, which is a field. It follows that $Y$ is regular.
Finally, suppose that $a_4\not\in k^2$, $a_6\in k^2$, and let $\sqrt{a_6}\in k$ be a square root of $a_6$. The effective Cartier divisor  defined by $y=\sqrt{a_6}$ is supported on the union of $\Sing(Y/k)$ and the point $(0, \sqrt{a_6})$, and an analogous argument applies.

If $a_4,a_6$ are squares  then $Da_4$, $Da_6$ and hence $\Psi=0$ vanish for every $D\in\Der(k)$.
For the converse, suppose first that $a_4\not\in k^2$. Then $da_4\in\Omega^1_{k/k^2}$ is non-zero,
hence there is a $p$-basis $\omega_i\in k$, $i\in I$ for the height-one extension $k^2\subset k$ containing $a_4$ 
(\cite{A 4-7}, Chapter V, \S13, No.\ 2, Corollary 2 and Theorem 1).
Choose a derivation $D:k\ra k$ with $Da_4=1$. Then $\Psi=a_4 + (Da_6)^2$ is non-zero, because it does not belong to $k^2$.
It remains to treat the case $a_4\in k^2$ and $a_6\not\in k^2$. Arguing as above, we find a derivation $D$ with $Da_6=1$.
Now $\Psi=(Da_6)^2=1$.
\qed

\begin{proposition}
\mylabel{prop:over power series}
Suppose   $R=k[[\pi]]$ is a ring of  formal power series  over the field $k$, 
equipped with the derivation $D=\partial/\partial\pi$, and that  $a_4=\sum_{n=0}^\infty\lambda_n\pi^n$ and $a_6=\sum_{n=0}^\infty\mu_n\pi^n$.
Then the following are equivalent:
\begin{enumerate} 
\item The base-change $Y\otimes_kk'$ is regular for every finite field extension $k\subset k'$.
\item The coefficients of $a_4, a_6\in R$ satisfy $\lambda_0\lambda_1^2+\mu_1^2\neq 0$.
\item The quasi-discriminant $\Psi\in R$ is a unit.
\end{enumerate}
Moreover, the closed fiber $Y\otimes_Rk$ is regular if and only if $\lambda_0\not\in k^2$ or $\mu_0\not\in k^2$.
\end{proposition}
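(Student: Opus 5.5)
The plan is to first dispose of the equivalence (ii)$\Leftrightarrow$(iii) and the final statement about the closed fiber, and then connect regularity to the quasi-discriminant. For (ii)$\Leftrightarrow$(iii), compute $\Psi$ modulo $\pi$. With $D=\partial/\partial\pi$ we have $Da_4\equiv\lambda_1$ and $Da_6\equiv\mu_1$ modulo $\pi$. Hence
$$
\Psi\equiv \lambda_0\lambda_1^2+\mu_1^2 \mod \pi .
$$
Since $R$ is local with maximal ideal $(\pi)$, the element $\Psi$ is a unit exactly when this residue is non-zero. The final statement about the closed fiber $Y\otimes_Rk$ is Proposition \ref{prop:over fields}, applied to the field $k$ with coefficients $\lambda_0,\mu_0$.

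For (i)$\Leftrightarrow$(ii), we need to know where $Y$ can fail to be regular. By smoothness away from the scheme of cusps, $Y$ can only fail to be regular at the closed point of $\Sing(Y/R)$ lying over the closed point of $\Spec R$. This point has coordinates $x^2=\lambda_0$, $y^2=\mu_0$. After a finite extension $k\subset k'$ containing square roots of $\lambda_0,\mu_0$, we may assume it is rational. Now use the change of variables of Proposition \ref{prop:transformation coefficients} with $s^2=\sqrt{\lambda_0}$ (so $s$ lies in a further finite extension) and a suitable constant $t$. This translates the point to the origin and kills $\lambda_0$ and $\mu_0$. By Proposition \ref{prop:transformation quasi-discriminant}, the class of $\Psi$ modulo $\pi$ is unchanged up to a unit. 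The new residue is $\lambda_0'\lambda_1'^2+\mu_1'^2=\mu_1'^2$.

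At the origin, the local ring of $Y\otimes k'$ has equation
$$
y^2=x^3+a_4'x+a_6', \qquad a_4'=\lambda_1'\pi+\dots,\quad a_6'=\mu_1'\pi+\dots
$$
It is regular if and only if this equation does not lie in $\mathfrak m^2$ for $\mathfrak m=(x,y,\pi)$, which happens if and only if $\mu_1'\neq0$. That is exactly the non-vanishing of the residue of $\Psi$. The argument works for every finite $k'$, since the residue is invariant under base change. Conversely, if the residue is non-zero, it stays non-zero after any finite base change, so regularity holds over every $k'$. If the point is not rational over $k'$, we pass to a larger extension: regularity over the larger field implies regularity over the smaller one, because the extension is faithfully flat with regular fibers.

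The main obstacle is the bookkeeping in the translation step. One has to check that the change of coordinates $x=u^2x'+s^4$ with constant $s$ and $t$ genuinely sends the singular point to the origin. One also has to check that the invariance of $\Psi$ is compatible with $D$, which holds because $s$ and $t$ lie in $\ker D=k'$. Finally, one must keep track of when $\lambda_0$ is not a square in $k$, which forces the finite extension; this is why statement (i) must quantify over all $k'$.
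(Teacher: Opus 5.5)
Your argument is correct, but for (i)$\Leftrightarrow$(ii) it takes a different route from the paper.

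\textbf{The paper's route.} The paper stays over $k$ at the closed point $c$ of the cusp scheme, which need not be rational there. It truncates $a_4,a_6$ modulo $\pi^2$ and works with partial derivatives of $f=y^2+x^3+a_4x+a_6$. Since $\partial f/\partial y=0$, $\partial f/\partial x\in\maxid$ and $(\partial f/\partial\pi)^2\equiv\lambda_0\lambda_1^2+\mu_1^2$ modulo $\maxid$, a non-zero residue forces $f\notin\maxid^2$. For the converse it assumes $k$ perfect and invokes the Jacobian criterion (regular implies smooth).

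\textbf{Your route.} You make the cusp rational over a finite extension and move it to the origin by a short-form change of variables. With $u=1$ one has $\mu_1'=\mu_1+\sqrt{\lambda_0}\,\lambda_1$, so $\mu_1'^2=\lambda_0\lambda_1^2+\mu_1^2$; this is exactly what the invariance of $\Psi$ encodes. You then read regularity off the linear term of the equation in the regular local ring $k'[[\pi]][x,y]_{(\pi,x,y)}$, and descend along the faithfully flat base change.

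\textbf{Comparison.} Your version needs neither the Jacobian criterion nor a reduction to perfect ground fields, and it uses only finite extensions, as in the formulation of (i). It also shows transparently why $\Psi$ modulo $\pi$ is the relevant invariant: once the cusp sits at the origin, it is the square of the coefficient of $\pi$ in $a_6$. The paper's derivative computation, in turn, gets regularity of $Y$ itself directly from (ii), at a possibly non-rational point, with no base change or descent.

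\textbf{Minor points.}
\begin{enumerate}
\item The kernel of $\partial/\partial\pi$ on $k'[[\pi]]$ is $k'[[\pi^2]]$, not $k'$. This is harmless, since you only need $s,t\in k'$ to be killed by $D$.
\item Descent of regularity needs only faithful flatness, not regular fibres.
\item The fourth root of $\lambda_0$ can be avoided. The plain translation $x\mapsto x+\sqrt{\lambda_0}$, $y\mapsto y+\sqrt{\mu_0}$ already gives $f\equiv(\mu_1+\sqrt{\lambda_0}\,\lambda_1)\pi$ modulo $\mathfrak m^2$, and preserving the short form plays no role in the $\mathfrak m^2$ test.
\end{enumerate}
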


\proof
First observe that the quasi-discriminant is given by 
$$
\Psi = \left(\sum_{i}\lambda_it^i\right) \left( \sum_{\text{$i$ odd}}\lambda^2_it^{2i-2}\right) + \sum_{\text{$i$ odd}} \mu^2_it^{2i-2},
$$
with constant term $\lambda_0\lambda_1^2+\mu^2_1$. This already gives (ii)$\Leftrightarrow$(iii).
The statement on the closed fiber follows from Proposition \ref{prop:over fields}.

Consider the expression
$f=y^2+x^3+a_4x +a_6$,
and recall that the singular locus of $Y$ is contained in the scheme of cusps $C=\Sing(Y/R)$, which  has coordinate ring $k[[\pi,x,y]]/(x^2-a_4,y^2-a_6)$.
Let $c\in C$ be the closed point. Then the scheme $Y$ is regular if and only if the local ring $\O_{Y,c}$ is regular.
Let $\maxid\subset k[[\pi]][x,y]$ be the maximal ideal  corresponding to the coordinates $\pi=0$ and $x^2=a_4$ and $y^2=a_6$.
Then the cotangent space for $\O_{Y,c}$ coincides with $\maxid/ (\maxid^2, f)$.
Since $\pi\in\maxid$, neither the maximal ideal nor the cotangent space changes if we modify $f$ by elements from $(\pi^2)$.
We thus may assume $a_4=\lambda_0+\lambda_1\pi$ and $a_6=\mu_0+\mu_1\pi$.
This also allows us to work with the subring $k[\pi,x,y]$.
Computing partial derivatives, we see 
\begin{equation}
\label{eq:partial derivatives}
\partial f/\partial y=0\quadand  \partial f/\partial x\in\maxid\quadand (\partial f/\partial\pi)^2 \equiv \lambda_0\lambda_1^2+\mu_1^2,
\end{equation} 
the latter congruence modulo $(\maxid,\pi)$.

We now can establish (i)$\Leftrightarrow$(ii). For this we may assume that $k$ is perfect.
If $\lambda_0\lambda_1^2+\mu_1^2\neq 0$, we see that $\partial f/\partial\pi\not\in\maxid$, hence $f\not\in\maxid^2$ and $\O_{Y,c}$ is regular.
Conversely, if this local ring is regular, so is the residue class ring  $k[\pi,x,y]/(f)$   at $\maxid$.
Since $k$ is perfect, the ring is smooth over $k$. In turn, one of the partial derivatives does  not belong to $\maxid$.
From \eqref{eq:partial derivatives} we see that  $\partial f/\partial\pi\not\in\maxid$, and thus $\lambda_0\lambda_1^2+\mu_1^2\neq 0$.
\qed

\subsection{Arithmetic of rational double points}
\mylabel{subsec:arithmetic of rational double points}

We keep the set-up of the previous section, but now  specialize  to the polynomial ring $R=k[t]$ in some indeterminate $t$,
and assume that the coefficients of the Weierstra\ss{} equation $y^2=x^3+a_4x+a_6$ have
degrees $\deg(a_i)\leq d i$, for some fixed $d\geq 1$.
Note there is an  unfortunate clash of notation,
since $t$ is also a standard symbol in   changes of variables, but we believe this
should cause no confusion.  

With the  change of variables with $u=t^d$ in the ring $k[t^{\pm1}]$ we now actually have a pair of Weierstra\ss{} equations
$$
y^2=x^3 + a_4x+a_6\quadand y'^2=x'^3 + a_4'x'+a'_6,
$$
where  $a'_i= a_i/t^{di}$ in $R'=k[1/t]$. 
This allows to  globalize our setting from ground rings to  the projective line  $\PP^1=\Spec(R)\cup \Spec(R')$. 
First recall
that any pair $(f_0,f_1)\in R\times R'$  satisfying  the relation $f_0=t\cdot f_1$ 
defines a global section of $\O_{\PP^1}(1)$.
We now consider $\shL=\O_{\PP^1}(-d)$.
In light of the relation $a_i=t^{di}a'_i$  the   pairs $(a_i,a'_i)$ define global sections of $\shL^{\otimes- i}$.
Let $Y\ra\PP^1$ be the family of cubics defined by the above Weierstra\ss{} equations,   inside
the relative homogeneous spectrum of $\Sym^\bullet(\O_{\PP^1}\oplus \shL^2\oplus \shL^3)$. 
The scheme $Y$ is locally of complete intersection, and in particular Gorenstein.
As before, we have the \emph{curve of cusps} $C=\Sing(Y/\PP^1)$, defined by the sheaf of first Fitting ideals for $\Omega^1_{Y/\PP^1}$, which
is locally given by $x^2+a_4=0$ and $x'^2+a'_4=0$ and contains $\Sing(Y)$.

Note that the standard derivations $D_t=\partial/\partial t$ and $D_{t^{-1}}=\partial/\partial t^{-1}$ are not compatible.
Rather, they are related by $D_t = t^{-2}D_{t^{-1}}$. For the ensuing quasi-determinants 
$\Psi_t$ and $\Psi_{t^{-1}}$ it follows that $ \Psi_t=t^{12d-4}\Psi_{1/t}$,
 whence the pair $(\Psi_t,\Psi_{1/t})$ defines a global section 
$\Psi$ of the invertible sheaf $\O_{\PP^1}(12d-4)$. 

Suppose for the moment that $\Psi\neq 0$. It then defines 
an effective Cartier divisor $\operatorname{div}(\Psi)=\sum\val_s(\Psi)\cdot s$ of degree $12d-4$ called the \emph{quasi-discriminant divisor}.
According to Proposition \ref{prop:over fields}, the generic fiber $Y_\eta$ is regular, and thus a  \emph{quasi-elliptic curve}  over the function field $F=k(t)$.
This also ensures that the surface $Y$ is normal. 
According to Proposition \ref{prop:over power series}, the singular locus $\Sing(Y)$ lies
over the quasi-discriminant divisor.
Let  $h:X\ra Y$ be the minimal resolution of singularities. 
Then  $R^1h_*(\O_X)$ is a coherent sheaf supported by $\Sing(Y)$.
If it vanishes,  
the singularities on $Y$ are \emph{rational Gorenstein singularities}, or equivalently \emph{rational double points}.
Note that we allow $k$ to be imperfect, so $Y$  need not be geometrically normal, and $X$ need not be geometrically regular.

\begin{proposition}
\mylabel{prop:classification by value}
Suppose     $d=2$, and  that  $\Psi\in H^0(\PP^1,\O_{\PP^1}(20))$ is non-zero with
\begin{equation}
\label{eq:estimates valuations}
\val_0(\Psi),\val_\infty(\Psi)\geq 8\quadand \val_1(\Psi)\geq 4.
\end{equation}
If $k$ is perfect, then  after a  change of variables with $u=1$ we obtain  $a_4=\lambda_2t^2+\lambda_6t^6$ and $a_6=\mu(t^5+t^7)$
with certain $\mu\in k^\times$ and $\lambda_2,\lambda_6\in k$.
\end{proposition}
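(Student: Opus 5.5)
The plan is to normalize the coefficients by changes of variables with $u=1$, rewrite $\Psi$ using the special form of derivatives in characteristic two, and then read off the valuation conditions \eqref{eq:estimates valuations} one term at a time.

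\emph{Normalization.} Since $d=2$, we have $\deg a_4\leq 8$ and $\deg a_6\leq 12$, and we may use changes of variables $x=x'+s^4$, $y=y'+sx'+t_0$ with $s\in k[t]$ of degree $\leq 2$ and $t_0\in k[t]$ of degree $\leq 6$. These respect the degree bounds on both charts. By Proposition \ref{prop:transformation coefficients} they act by $a_4\mapsto a_4+s^4$ and $a_6\mapsto a_6+t_0^2+s^2(s^4+a_4)$, and by Proposition \ref{prop:transformation quasi-discriminant} they leave $\Psi$ unchanged.
\begin{itemize}
\item Because $k$ is perfect, the coefficients have fourth roots. Choosing $s$ suitably kills the coefficients of $t^0,t^4,t^8$ in $a_4$.
\item Afterwards, choosing $t_0$ kills all even-degree coefficients of $a_6$, again using perfectness.
\end{itemize}
So we may assume that $a_4$ involves only $t^1,t^2,t^3,t^5,t^6,t^7$, and that $a_6$ involves only odd powers of $t$.

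\emph{Rewriting $\Psi$.} In characteristic two the derivative of a polynomial only sees its odd part, and $Df$ is a square. Write $a_4=e^2+tP^2$ with $\deg e\leq 3$ and $\deg P\leq 3$, so that $e^2$ and $tP^2$ are the even and odd parts. Then $Da_4=P^2$. Similarly $Da_6=B^2$ for some $B$ with $\deg B\leq 5$, and $B\neq 0$ unless $a_6=0$. This gives
$$
\Psi = a_4P^4+B^4 = (eP^2+B^2)^2 + tP^6 .
$$
This is the key decomposition: the first summand is a square, and the second has odd order at $t=0$. At $t=\infty$ we get the analogous statement by using $\val_\infty(\Psi)=20-\deg\Psi$ and noting that $\deg(tP^6)$ is odd while the square has even degree.

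\emph{Applying the valuation conditions.} Orders of a square and of $t$ times a square never cancel. Hence:
\begin{itemize}
\item $\val_0(\Psi)\geq 8$ forces $1+6\val_0(P)\geq 8$, so $\val_0(P)\geq 2$.
\item $\deg\Psi\leq 12$ (equivalent to $\val_\infty(\Psi)\geq 8$) forces $1+6\deg P\leq 12$, so $\deg P\leq 1$.
\end{itemize}
Together these give $P=0$. Thus $a_4=\lambda_2t^2+\lambda_6t^6$, and $\Psi=B^4$, which is nonzero, so $B\neq 0$. The conditions now read:
\begin{itemize}
\item $4\deg B\leq 12$, so $\deg B\leq 3$;
\item $4\val_0(B)\geq 8$, so $\val_0(B)\geq 2$;
\item $4\val_1(B)\geq 4$, so $(t+1)$ divides $B$.
\end{itemize}
Here we use that valuations of a fourth power are simply four times the valuations of the base, so no parity argument is needed at $t=1$. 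It follows that $B=\nu t^2(t+1)$ with $\nu\in k^\times$. Then $Da_6=\nu^2(t^4+t^6)$, and since $a_6$ has only odd terms, $a_6=\nu^2(t^5+t^7)$. Setting $\mu=\nu^2$ gives the claimed form.

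\emph{Expected obstacle.} The main point is the splitting of $\Psi$ into a square plus $t$ times a square. I expect this to be the hardest step to make airtight: one must check that the normalization really produces the parity structure, with $a_4$ having no $t^0,t^4,t^8$ terms so that $e$ is unconstrained, and one must handle the order at $\infty$ carefully via $\Psi_t=t^{36}\Psi_{1/t}$ rather than by a direct parity argument.
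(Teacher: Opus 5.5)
Your argument is correct, but it extracts the coefficient conditions differently from the paper. The paper first uses $\deg\operatorname{div}(\Psi)=20$ to turn the inequalities into equalities and pin down $\Psi_t=\epsilon(t^{12}+t^8)$ exactly. After the same normalization as yours, it expands $\Psi_t$ in the $\lambda_i,\mu_j$ and compares coefficients degree by degree. Degrees $0,1,4,16,19,20$ kill $\lambda_1,\lambda_7,\mu_1,\mu_3,\mu_9,\mu_{11}$, degrees $7,13$ kill $\lambda_3,\lambda_5$, and degrees $8,12$ give $\mu_5=\mu_7$.

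You never compute $\Psi$. Instead you use the even/odd splitting $\Psi=(eP^2+B^2)^2+tP^6$; its odd part is just $t(Da_4)^3$, because in characteristic two the odd part of any polynomial $f$ is $t\,Df$. The parity of orders at $0$ and $\infty$ then kills all odd coefficients of $a_4$ at once, using only the conditions at $0$ and $\infty$. After that $\Psi=(Da_6)^2$, and the three conditions determine $Da_6$, hence the odd part $t\,Da_6$ of $a_6$.

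Your route explains why the paper's scattered vanishing conditions occur. The paper's route is more pedestrian, but its first step records at once that the inequalities are equalities and that $\Psi$ has no further zeros.

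Your ``expected obstacle'' is not one: the splitting holds for arbitrary $a_4,a_6$, so the normalization is only needed for the final shape, not for the parity argument.

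Two small slips:
\begin{enumerate}
\item The substitution should be $x=x'+s^2$ rather than $x'+s^4$. You copied a misprint from Proposition~\ref{prop:transformation quasi-discriminant}; the formulas of Proposition~\ref{prop:transformation coefficients} that you actually use require $r=s^2$.
\item The transition is $\Psi_t=t^{12d-4}\Psi_{1/t}=t^{20}\Psi_{1/t}$, not $t^{36}$. Your working formula $\val_\infty(\Psi)=20-\deg\Psi_t$ is the correct one.
\end{enumerate}
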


\proof
First note  that the  divisor $\operatorname{div}(\Psi)$ has degree $12d-4=20$.
In turn, the  inequalities \eqref{eq:estimates valuations} are actually equalities, and for    $s\neq 0,1,\infty$
we have $\val_s(\Psi)=0$.  
Thus the quasi-discriminant takes the form $\Psi_t=\epsilon(t^{12}+t^8)$ for some $\epsilon\in k^\times $.
Now write $a_4=\sum_{i=0}^8\lambda_it^i$ and $a_6=\sum_{j=0}^{12}\mu_jt^j$. Using Proposition \ref{prop:transformation coefficients},
we may assume that $\lambda_i=0$ and $\mu_j=0$ for $4\mid i$ and $2\mid j$. So $\Psi_t=a_4(Da_4)^2+(Da_6)^2$
takes the form 
$$
(\lambda_1t  + \ldots+\lambda_7t^7)(\lambda_1^2+\lambda_3^2t^4 +\ldots+\lambda_7^2t^{12}) + (\mu_1^2+\mu_3^2t^4 +\ldots + \mu_9t^{16} + \mu_{11}^2t^{20}).
$$
The support of the product on the left does not contain the degrees $d=0,4,16,20$, and the support for the second summand does not contain $d=1,19$.
Comparing coefficients with $\Psi_t=\epsilon(t^{12}+t^8)$,
we see that $\lambda_i^3,\mu_j^2$ vanish for  for $i=1,7$ and   $j=1,3,9,11$. So the same holds for the corresponding $\lambda_i,\mu_j\in k$.
The quasi-discriminant simplifies to  
$$
\Psi_t = (\lambda_2t^2+\lambda_3t^3+\lambda_5t^5+\lambda_6t^6)(\lambda^2_3t^4+\lambda^2_5t^8) + (\mu_5^2t^8+\mu_7^2t^{12})
$$
Arguing as above with $d=7$ we see that $\lambda_3$ vanishes. Then using  $d=13$ we get that $\lambda_5$   vanishes as well.
Thus  $ \Psi_t=\mu_5^2t^8+\mu_7^2t^{12}$. Comparing coefficients with $\Psi_t=\epsilon(t^{12}+t^8)$
yields   $\mu_5=\mu_7$.
\qed

\medskip
%
Suppose now that our Weierstra\ss{} equation $y^2=x^3+a_4x+a_6$ is given by  some polynomials of the form
$$
a_4=\lambda_2t^2+\lambda_6t^6\quadand a_6=\mu(t^5+t^7)
$$
with  $\mu\in k^\times$ and $\lambda_2,\lambda_4\in k$, where $k$ may be imperfect.
The quasi-discriminant is $\Psi_t=\mu(t^{12}+t^8)$.
Computing the partial derivatives, we see that $\Sing(Y/k)$ indeed comprises three points $y_0,y_1,y_\infty\in Y$,
with respective images $0,1,\infty\in\PP^1$.
The point $y_0$ is rational, with coordinates $t=x=y=0$. The point $y_\infty$ is rational as well, with coordinates
$1/t=x'=y'=0$. 
The  remaining   $y_1$ has $t=1$ and $y=0$ and $x^2=\lambda_2+\lambda_6$,  which is not necessarily a rational point.

\begin{proposition}
\mylabel{prop:rdp regular}
The local ring $\O_{Y,y_1}$ is regular if and only if $\lambda_2+\lambda_6\in k$ is not a square.
\end{proposition}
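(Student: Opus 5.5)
We will compute the local ring $\O_{Y,y_1}$ directly, in the spirit of the proof of Proposition~\ref{prop:over power series}. Since we work in the affine chart $k[t,x,y]$, the point $y_1$ corresponds to the maximal ideal $\maxid=(t-1,\,y,\,x^2-c)$ with $c=\lambda_2+\lambda_6$. Here we use that $a_6(1)=\mu(1+1)=0$, so $y^2=0$ and hence $y=0$ at $y_1$. We substitute $\pi=t-1$ and expand $a_4$ and $a_6$ around $\pi=0$:
$$
a_4=c+\lambda_2\pi^2+\lambda_6(1+\pi)^6-\lambda_6, \qquad a_6=\mu(1+\pi)^5(1+(1+\pi)^2)=\mu\pi^2(1+\pi)^5 .
$$
In characteristic two, $(1+\pi)^6=(1+\pi^2)^3\equiv 1 \bmod \pi^2$. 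Hence $a_4\equiv c$ and $a_6\equiv 0$ modulo $\pi^2$. In the notation of Proposition~\ref{prop:over power series} this gives $\lambda_0=c$, $\lambda_1=0$, $\mu_0=\mu_1=0$.

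The argument then splits according to whether $c$ is a square.

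\emph{Case $c\in k^2$.} Then $y_1$ is a rational point with $x=\sqrt c$. We check that $f=y^2+x^3+a_4x+a_6$ lies in $\maxid^2$. Writing $x=\sqrt c+\xi$, we have $x^3+cx=\xi^2(\xi+\sqrt c)$, and $a_4x-cx\in(\pi^2)$ and $a_6\in(\pi^2)$. So $f\in(y,\xi,\pi)^2=\maxid^2$. The cotangent space is therefore three-dimensional, and the two-dimensional local ring is not regular.

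\emph{Case $c\notin k^2$.} We pass to the residue field $\kappa=k(\sqrt c)$. We must show that $f\notin\maxid^2$ in $k[\pi,x,y]_\maxid$, where now $\maxid=(\pi,\,y,\,x^2-c)$. Reducing modulo $(\pi,y)$, $f$ becomes $x^3+cx=x(x^2+c)$. Since $x$ is a unit at $\maxid$ and $x^2+c$ generates the maximal ideal of $k[x]_{(x^2-c)}$, this element lies in $\maxid\setminus\maxid^2$ even after passing to the quotient by $(\pi,y)$. As $(\pi,y)\subset\maxid$, we conclude that $f\notin\maxid^2$. Hence $\O_{Y,y_1}$ is a quotient of a three-dimensional regular local ring by an element outside $\maxid^2$, and is therefore regular. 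Alternatively, this case follows from the final statement of Proposition~\ref{prop:over power series} together with the fact that $y_1$ is the only point of $\Sing(Y/k)$ over $t=1$. The fiber $Y\otimes_R\kappa(1)$ is then regular because $\lambda_0=c\notin k^2$. A scheme over a discrete valuation ring whose closed fiber is regular, cut out by the uniformizer $\pi$, is itself regular at that point.

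The main care goes into the reduction modulo $\pi^2$ in characteristic two, namely verifying that $\lambda_1=\mu_0=\mu_1=0$. The second case needs the nonsquare argument for the element $x^2+c$. Both steps are routine.
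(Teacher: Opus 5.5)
Your proof is correct and is essentially the paper's argument. For $c=\lambda_2+\lambda_6\notin k^2$ the paper gets regularity from the regularity of the closed fibre over $t=1$ (Proposition~\ref{prop:over fields}), which your reduction of $f$ modulo $(t-1,y)$ makes explicit; for $c\in k^2$ the paper cites Fanelli--Schr\"oer (a rational point of $\Sing(Y/k)$ is singular), whereas you check $f\in\maxid^2$ by hand. One cosmetic point: $(t-1,y,x^2-c)$ is the maximal ideal of $y_1$ only when $c\notin k^2$, and in the square case it is $(t-1,y,x-\sqrt{c})$, which is the ideal your computation actually uses.
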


\proof
Developing $a_4,a_6\in k[t]$ in terms of   $u=t-1$ we obtain
$$
a_4= \lambda_6u^6+ \lambda_2u^4 + (\lambda_6+\lambda_2)u^2+ (\lambda_6+\lambda_2)\quadand 
a_6= \mu(u^7+u^6+u^3+u^2).
$$
By Proposition \ref{prop:over fields}   the fiber $Y\otimes\kappa(1)$ and hence the local ring $\O_{Y,y_1}$  are regular provided   $\lambda_2+\lambda_6\not\in k^2$.
Conversely, if $\lambda_2+\lambda_6\in k^2$, then $y_1\in Y$ is a rational point belonging to $\Sing(Y/k)$, so the local ring
$\O_{Y,y_1}$ is singular, by \cite{Fanelli; Schroeer 2020}, Corollary 2.6.
\qed

\medskip
Suppose  that $y_1\in Y$ is singular, in other words $\omega=(\lambda_2+\lambda_6)^{1/2}$ belongs to $k$. Then $y_1\in Y$ is the rational point with coordinates
$t=1$ and $x=\omega$ and $y=0$, and the local ring $\O_{Y,y_1}$ is singular.
Developing the Weierstra\ss{} equation $y^2=x^3+a_4x+a_6$ in terms of $u=t-1$ and $v=x-\omega$, we see that the 
defining polynomial is
$$
y^2 + v^3 +  v(\lambda_6u^6+ \lambda_2u^4+\omega^2u^2)  + \mu(u^7+u^6+u^3+u^2) + \omega(\lambda_6u^6+ \lambda_2u^4+\omega^2u^2).
$$
It turns out that the  quadratic and cubic parts  
\begin{equation}
\label{eq:quadratic and cubic}
Q(u, y)= y^2+(\mu+\omega^3) u^2\quadand P(u,v)= v^3+ \omega^2u^2v + \mu u^3.
\end{equation}
govern the type of the singularity. By abuse of notation, we write their dehomogenizations   as
 $Q(T)=T^2+\mu+(\lambda_2+\lambda_6)^{3/2}$
and $P(T)=T^3+(\lambda_2+\lambda_6)T+\mu$. The latter is separable; it  is convenient
to call it \emph{semi-split} if it is the product of a linear  and an irreducible quadratic factor.
 
\begin{proposition}
\mylabel{prop:rdp d4}
In the above situation, the   local ring $\O_{Y,y_1}$ is a rational double point of type
$$
A_1,G_2,C_3\quad\text{or}\quad D_4.
$$
If $\mu+(\lambda_2+\lambda_6)^{3/2}\in k$ is not a square the type is $A_1$. Otherwise, the type depends
on the separable cubic $P(T)=T^3+(\lambda_2+\lambda_6)T+\mu$ according to the following table:
$$
\begin{array}{lllll }
\toprule
		& \phantom{123}	& \text{$P(T)$ irreducible}	& \text{$P(T)$ semi-split} 	& \text{$P(T)$ split}\\
\midrule
\text{$Q(T)$ split}	& 		& G_2			& C_3			& D_4\\
\bottomrule
\end{array}
$$
\end{proposition}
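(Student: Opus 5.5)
We classify the singularity at the rational point $y_1$ by working over the separable closure, reading off the geometric type from the equation, and then identifying the arithmetic type from the Galois action on the exceptional curves. Over $k^\sep$ the completed local ring is $k^\sep[[u,v,y]]/(g)$, where $g$ is the displayed polynomial. Its leading form is the quadratic part $Q(u,y)=y^2+(\mu+\omega^3)u^2$. In characteristic two this is a square, $(y+\sqrt{\mu+\omega^3}\,u)^2$, of a linear form defined over $k^{1/2}$. So the singularity is a double point whose tangent cone is a double plane, and the cubic part $P(u,v)$ decides the type, as in the standard classification of double points in characteristic two (Artin's normal forms).

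\textbf{Case $\mu+\omega^3\notin k^2$.} Here $Q$ is irreducible over $k$, and the tangent cone is a double plane that is not defined over $k$ but only over $k(\sqrt{\mu+\omega^3})$. Blowing up $y_1$ once, the exceptional curve on the blow-up is the reduced plane $\{y+\sqrt{\mu+\omega^3}\,u=0\}$ taken over $k$. This yields an irreducible curve whose geometric picture is nonreduced, and I expect the blow-up to be already regular, so the resolution is a single $(-2)$-curve and the type is $A_1$. The verification is the same cotangent-space argument as in Proposition~\ref{prop:over power series}: in the chart, the strict transform contains the term $\mu u$ or the analogous term coming from $\mu u^3/u^2$, and this is a linear term modulo $\maxid^2$.

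\textbf{Case $\mu+\omega^3=\beta^2$ with $\beta\in k$.} Substitute $y\mapsto y+\beta u$, so that $g=y^2+P(u,v)+(\text{higher order})$. The cubic $P(u,v)=v^3+\omega^2u^2v+\mu u^3$ is separable because $\mu\neq0$; its dehomogenization is $P(T)$, and the substitution $T\mapsto v/u$ identifies $P(T)=0$ with the zero locus of $P(u,v)$ (the correct coefficient is $\omega^2=\lambda_2+\lambda_6$). Geometrically, $y^2+(\text{separable cubic})$ plus higher terms is $D_4^0$ in Artin's list, since distinct tangent lines give $D_4$. I would confirm this with one blow-up: the exceptional divisor over $k^\sep$ becomes a curve meeting three $A_1$ points, which sit at the three roots of $P(T)$ on the exceptional line. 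The main obstacle is checking that the higher-order terms, for instance $\omega\lambda_2u^4$ and $\mu u^6$, do not spoil the $A_1$ singularities at these points; this needs an explicit chart computation at each root $T=\theta$.

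The three $A_1$ curves correspond bijectively to the roots of $P(T)$, so $\Gal(k^\sep/k)$ acts on the $D_4$ Dynkin diagram through its permutation action on these roots, while the central curve is fixed. The orbit structure then gives the arithmetic type, via Lipman's classification of rational double points over non-closed fields as folded Dynkin diagrams. Three singleton orbits (split) give $D_4$. One fixed point together with an orbit of size two (semi-split) gives $C_3$. A single orbit of size three (irreducible) gives $G_2$. Rationality in the sense $R^1h_*\O_X=0$ can be checked after separable base change, where it follows from the geometric $D_4$ or $A_1$ type.
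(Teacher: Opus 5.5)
Your strategy works, and in the square case it takes a genuinely different route from the paper. The paper matches $P(u,v)$ with Lipman's $\overline G(U,V)$ and reads off his cases IIIa--c. You instead redo that analysis by hand:
\begin{enumerate}
\item shift $y\mapsto y+\beta u$;
\item blow up once and locate the singular points of the strict transform at the roots of $P(T)$ on the reduced exceptional line;
\item fold the $D_4$ diagram via the Galois action over $k^\sep$. This is legitimate because $k\subset k^\sep$ is ind-\'etale, so the minimal resolution commutes with it.
\end{enumerate}
What you call the main obstacle is harmless. In the $u$-chart, after dividing by $u^2$, every term of degree $\geq 4$ lands in $(u^2)$. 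So at a root $\theta$, with $w=v_1-\theta$, the quadratic part is $y_1^2+(\theta+\omega)^2uw+\omega\lambda_2u^2$. Here $(\theta+\omega)^2=P'(\theta)\neq 0$ because $P(\omega)=\mu\neq 0$. This form is non-degenerate, so each such point is $A_1$. Your route gives a self-contained account of the configuration; the paper's citation is shorter. In both, the names $C_3$ and $G_2$ for the folded diagrams come from Lipman's conventions.

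The case $\mu+\omega^3\notin k^2$ needs repair.
\begin{itemize}
\item \emph{The framing over $k^\sep$ does not apply here.} The element $\sqrt{\mu+\omega^3}$ is purely inseparable over $k$, so it does not lie in $k^\sep$. Over $k^\alg$ the same shift shows this point is geometrically a $D_4$ singularity. So the type $A_1$ is not a geometric type, and rationality cannot be deduced from one.
\item \emph{Your regularity check is aimed at the wrong place.} The exceptional divisor of $\Bl_{y_1}(Y)$ is the integral conic $y^2+(\mu+\omega^3)u^2=0$. Wherever this Cartier divisor is regular, the blow-up is automatically regular. That covers the whole $u$-chart, where the term $\mu u$ you point to plays no role.
\item \emph{The only point that matters.} The conic has exactly one non-regular point, namely $u=y=0$, the origin of the $v$-chart. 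There $\mu u^3$ becomes $\mu u_1^3v$, which lies in $\maxid^2$. The strict transform reads $y_1^2+(\mu+\omega^3)u_1^2+v(1+\omega^2u_1^2+\mu u_1^3)+\ldots$, so the linear term is $v$, coming from $v^3/v^2$.
\end{itemize}
With this correction the blow-up is regular. The exceptional curve is an integral conic $E$ with $h^0(\O_E)=1$ and $E^2=-2$, so the type is $A_1$. Rationality follows directly from this one-step resolution. This is exactly the paper's argument for that case.
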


\proof 
This  relies on Lipman's explicit description of rational double points (\cite{Lipman 1969}, \S 24).
First note that $P(u,v)$ is square-free, as one sees by computing partial derivatives.
Also note that 
the exceptional divisor $E=\Proj\bigoplus\maxid^n/\maxid^{n+1}$ 
for the blowing-up $Y'=\Bl_{y_1}(Y)\ra Y$ can be seen as  the quadratic curve defined by $Q(u,y)=0$ inside the homogeneous spectrum of $k[u,v,y]$.

We start with the case that $Q(u,y)$ is irreducible, in other words, $\mu+\omega^3\in k$ is not a square.
Then $E$ has a unique singularity, located at the origin of the $v$-chart of the blowing-up. One easily checks that this is not a singularity on $Y'$,
hence $Y'$ is regular. It follows that $\O_{Y,y_1}$ is a rational double point of type $A_1$.
It remains to treat the case that  $Q(u,y)$ is reducible.  In the notation of loc.\ cit., page 265, 
our polynomial $P(u,v)$ corresponds to Lipman's $\overline{G}(U,V)$, and our three table entries are his  cases III a--c.
\qed

\medskip
It remains to understand the singularities $y_0,y_\infty\in Y$.
First note that the situation is symmetric, because the change of variables with $u=t^2$ gives $a'_6=\mu(t^{-5}+t^{-7})$
and $a'_4=\lambda_6t^{-2}+\lambda_2t^{-6}$, the latter with switched coefficients. 
It thus suffices to   concentrate on $y_0\in Y$, which has coordinates $t=x=y=0$. The defining polynomial becomes
\begin{equation}
\label{eq:singularity over origin}
y^2+x(x^2+\lambda_2t^2) +   \mu t^5  + \lambda_6 t^6x+ \mu t^7.
\end{equation}

\begin{proposition}
\mylabel{prop:rdp e8}
The  local ring  $\O_{Y,y_0}$ is a rational double point of type 
$$
C_3,C_5 ,C_7 \quad\text{or}\quad D_8\quad\text{or}\quad E_8.
$$ 
In the special case $\lambda_2=0$ the type is $E_8$. For $\lambda_2\in k^\times$ the 
 precise dependence is    given by the following cumulative conditions:
\begin{enumerate} 
\item 
If $ \lambda_2$ is not a square the type is $C_3$.  Otherwise suppose $ \lambda_2^{1/2}\in k^\times$.
\item 
If $\mu\lambda_2^{-5/2}$ is not a square the type is $C_5$.
Otherwise suppose  $ \mu^{1/2}\lambda_2^{-5/4}\in k^\times$. 
\item 
If the separable quadric $Q(T)=T^2 + T + (\lambda_6\lambda_2^{-6/2} + \mu\lambda_2^{-7/2})$ is irreducible the type is $C_7$. Otherwise the type is $D_8$.
\end{enumerate}
\end{proposition}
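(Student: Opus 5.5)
The plan is to resolve the singularity at $y_0$ by explicit successive blow-ups, or weighted coordinate changes, and to read off the type by comparing with Lipman's list of rational double points over possibly imperfect fields (\cite{Lipman 1969}, \S\S 24--25). That list includes the non-split forms $C_n$, $B_n$, $F_4$, $G_2$, as in the proof of Proposition~\ref{prop:rdp d4}. I work with the equation \eqref{eq:singularity over origin}, $y^2+x^3+\lambda_2t^2x+\mu t^5+\lambda_6t^6x+\mu t^7=0$, at the rational point $t=x=y=0$. The tangent cone is $y^2$, a double line, so the multiplicity is two. The first blow-up yields an exceptional curve, and in the chart $x=tx_1$, $y=ty_1$ the strict transform becomes
$$
y_1^2+t(x_1^3+\lambda_2x_1)+\mu t^3+\lambda_6t^5x_1+\mu t^5=0.
$$

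\emph{Case $\lambda_2=0$.} Here I would instead use the weighted substitution giving $t,x,y$ weights $6,10,15$. This shows that $y^2+x^3+\mu t^5$ is the leading quasi-homogeneous part, and the remaining terms $\lambda_6t^6x$ and $\mu t^7$ have higher weight. The equation $y^2+x^3+\mu t^5$ is the $E_8$ normal form. I would check that $E_8$ admits no nontrivial twisted forms, since the Dynkin diagram has no symmetries. Thus the type is $E_8$ even when $\mu$ is not a fifth power: one can absorb $\mu$ by scaling $t\mapsto \mu^{-1}t$ together with $x,y$ via powers of $\mu$ of matching weights ($t\mapsto\mu^a t$, $x\mapsto\mu^b x$, $y\mapsto\mu^c y$ with $2c=3b=5a+1$, e.g.\ $a=1,b=2,c=3$).

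\emph{Case $\lambda_2\in k^\times$.} The first blow-up leaves singular points on the exceptional line where $x_1^2=\lambda_2$. At this point (of residue field $k(\lambda_2^{1/2})$) the local equation is again of the form $y_1^2+t\cdot(\text{unit})\cdot(x_1-\lambda_2^{1/2})^{2}+\dots$. If $\lambda_2$ is not a square, this point is non-rational and the configuration is the twisted $A_3$, namely $C_3$; this is precisely Lipman's criterion, with the quadratic residue field indicating the Galois folding. If $\lambda_2^{1/2}\in k$, I substitute $x_1=\lambda_2^{1/2}+x_2$ and blow up again. The next leading coefficient is $\mu\lambda_2^{-5/2}$, and whether it is a square decides between stopping at $C_5$ and continuing. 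In the continuing case, after rescaling by $\mu^{1/2}\lambda_2^{-5/4}$, one more blow-up produces a quadratic form of Artin--Schreier type $T^2+T+c$ with $c=\lambda_6\lambda_2^{-3}+\mu\lambda_2^{-7/2}$. This form is separable. Irreducibility then gives the twisted form $C_7$ of $A_7$, while splitting produces two rational branches whose configuration completes to $D_8$.

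The main obstacle is the bookkeeping in the successive charts. I must track exactly which monomials survive to the next level and verify the normalized coefficients $\mu\lambda_2^{-5/2}$ and $c$. I would organize this by assigning weights at each stage and discarding higher-order terms modulo $\maxid^2$-type arguments, as in the proof of Proposition~\ref{prop:over power series}. Identification with $C_{2n+1}$ versus $A_{2n+1}$ and $D_8$ then follows from comparing the resulting dual graphs, with residue fields, against Lipman's tables. Finally, I would confirm rationality, i.e.\ $R^1h_*\O_X=0$, from the fact that each step is a double point whose blow-up stays Gorenstein of multiplicity two.
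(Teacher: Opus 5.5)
Your overall route (repeatedly blow up the double point $y_0$ and compare with Lipman's list) is the paper's. However, the step that actually identifies the type does not work as you describe it.

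\emph{Missing $A_1$ points.} On the first blow-up you record only the singular point over $x_1^2=\lambda_2$. You overlook the point $t=x_1=y_1=0$, where your strict transform has quadratic part $y_1^2+\lambda_2tx_1$, so it is an $A_1$ point; the paper's proof mentions it. An analogous $A_1$ point reappears at the next stage. Each blow-up therefore contributes the reduced exceptional line plus an $A_1$ curve, and this is what produces the counts $3\to5\to7$ and $4\to6\to8$. Without these curves your dual graphs cannot come out as $C_5$, $C_7$, $D_8$.

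\emph{Galois folding does not apply to conditions (i) and (ii).} In characteristic two, $x_1^2=\lambda_2$ defines a single point, and $k(\lambda_2^{1/2})$ and $k(\mu^{1/2}\lambda_2^{-5/4})$ are purely inseparable over $k$. Hence the types $C_3$, $C_5$ persist over $k^\sep$, with no Galois action to fold. Over $k^\alg$ all three conditions hold and the type is $D_8$. When $\lambda_2\notin k^2$, the $C_3$ arises as follows. The point over $x_1^2=\lambda_2$ has tangent cone $y_1^2+\lambda_2^{1/2}tu$, with $u=x_1^2+\lambda_2$, which is a smooth conic over the inseparable residue field. Hence it contributes a single exceptional curve $E$ with $H^0(E,\O_E)=k(\lambda_2^{1/2})$.

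\emph{Your labels are inconsistent even in the Galois case.} Lipman's $C_n$ is the chain of $n$ curves whose end curve has field of constants of degree two. In the Galois case it is a twisted $D_{n+1}$, as in Proposition~\ref{prop:rdp d4}, where a semi-split cubic gives $C_3$. A twisted $A_3$ has only two curves over $k$ and a twisted $A_7$ only four. So ``twisted $A_3$, namely $C_3$'' and ``$C_7$ twisted form of $A_7$'' cannot be right: for irreducible $Q(T)$ one gets $D_8$ with the two end curves of its fork interchanged.

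What you need is Lipman's case~III criterion, applied to the cubic part at every stage as the paper does. For a double point with tangent cone $y^2$:
\begin{enumerate}
\item a linear form times an irreducible quadratic form, separable or not, gives $C_3$ (case IIIb);
\item three distinct linear factors give $D_4$ (case IIIc);
\item a linear form times a square forces another blow-up, which yields an $A_1$ point and one new point of the same shape, lengthening the chain by two.
\end{enumerate}
Here the successive cubic parts are $x(x^2+\lambda_2t^2)$, then $t(\lambda_2^{1/2}x_2^2+\mu t^2)$ with $x_2=x_1+\lambda_2^{1/2}$, and finally a linear form times the separable quadric whose dehomogenization is $Q(T)$.

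Two smaller points. For $\lambda_2=0$, your principal-part argument is not justified as stated. In characteristic two $\partial f/\partial y\equiv 0$, so the Milnor number is infinite and the usual semi-quasihomogeneous reasoning is unavailable. You should either resolve explicitly or cite Lipman's case IVd, as the paper does. Second, ``a double point whose blow-up stays Gorenstein of multiplicity two'' does not imply rationality. For $y^2+x^5+t^5$ the blow-up has exactly this property, yet it is singular along the whole exceptional curve. You need normality at each stage and termination of the process, which again is built into Lipman's analysis.
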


\proof
Again we follow Lipman's analysis in \cite{Lipman 1969}, \S 24.
Assume first that $\lambda_2\neq 0$. This will be handled with loc.\ cit., pages 264--267. If $\lambda_2$ is not a square, 
the quadratic part of \eqref{eq:singularity over origin} is semi-split. We are thus  in Lipman's case IIIb,
so $\O_{Y,y_0}$ is a rational double point of type $C_3$.
Suppose now that $\lambda_2^{1/2}\in k^\times$. Expanding \eqref{eq:singularity over origin} in the variables $x,y$ and $s=x+\lambda_2^{1/2}t$ we obtain
\begin{equation*}
\begin{split}
y^2+xs^2	& + \mu\lambda_2^{-5/2}(s^5+s^4x+sx^4+x^5) \\
	& + \lambda_6\lambda_2^{-6/2}(s^6x+s^4x^3+x^2x^5 + x^7)\\
	& + \mu\lambda_2^{-7/2}(s^7+s^6x+s^5x^2+\ldots+sx^6 + x^7).
\end{split}
\end{equation*}
The blowing-up $Y'=\Bl_{y_0}(Y)\ra Y$ contains two singularities, the first being a rational double point of type $A_1$. The second singularity $y'\in Y'$ 
is located on the $x$-chart,
with defining polynomial
\begin{equation*}
\begin{split}
\left(\frac{y}{x}\right)^2+x\left(\left(\frac{s}{x}\right)^2+\mu\lambda_2^{-5/2}x^2\right)	
	& + \mu\lambda_2^{-5/2}(\left(\frac{s}{x}\right)^5 +\left(\frac{s}{x}\right)^4 +\left(\frac{s}{x}\right))x^3 \\
	& + \lambda_6\lambda_2^{-6/2}(\left(\frac{s}{x}\right)^6 +\left(\frac{s}{x}\right)^4 +x^2  + 1)x^5\\
	& + \mu\lambda_2^{-7/2}(\left(\frac{s}{x}\right)^7+\left(\frac{s}{x}\right)^6 +\ldots +\left(\frac{s}{x}\right)+\left(\frac{s}{x}\right)  + 1)x^5.
\end{split}
\end{equation*}
The cubic part is semi-split if  $\mu\lambda_2^{-5/2}$ is not a square, and then $y'\in Y'$ has type $C_3$, consequently $y_0\in Y$ has type $C_5$.
Suppose now   $ \mu^{1/2}\lambda_2^{-5/4}\in k^\times$. To proceed we have to work in the indeterminates $x$ and $y'= y/x+x^2$ and $s'=(s/x + \mu^{1/2}\lambda_2^{-5/4}x)$.
On the $x$-chart of the blowing-up $\Bl_{y'}(Y')\ra Y'$, the defining polynomial takes the form
$$
y'^2 + x \left(\frac{s'}{x}\right)^2 + x^2 \left(\frac{s'}{x}\right)  + (\lambda_6\lambda_2^{-6/2} + \mu\lambda_2^{-7/2})x^ 3 + \ldots
$$
where   the omitted terms have degree   four or higher. The cubic part  contains the factor $x$, and the remaining
quadratic factor is separable. The latter is  irreducible if and only if this holds for the dehomogenization
$Q(T)=T^2 + T + (\lambda_6\lambda_2^{-6/2} + \mu\lambda_2^{-7/2})$.
These are   Lipman's cases IIIb and IIIc.
So if  $Q(T)$ splits, the singularity $y'\in Y'$ is of type $D_4$, and otherwise of type $C_3$. In turn, $y_0\in Y$ is of type $D_8$ or $C_7$, respectively.

It remains to treat the case $\lambda_2=0$. This can be handled with  loc.\ cit., page 267--268: In Lipman's notation, our
equation \eqref{eq:singularity over origin} yields $\alpha=\beta=\rho=0$ and $\sigma=1$. In turn,
we are in Lipman's case IVd, and  the singularity $y_0\in Y$ is a rational double point of type $E_8$.
\qed

\medskip\noindent
\emph{Proof of Theorem \ref{thm:characterization k3}.}
Let $k$ be a ground field, and $X$ be a $K$-trivial surface with an snc-configuration $\sum_{\nu\in\Gamma}R_\nu$ 
as in the theorem.   We already observed that $X$ is a   K3 surface.  In Proposition \ref{prop:supersingular}
and \ref{prop:quasi-elliptic} we saw that  it comes with a quasielliptic fibration $f:X\ra\PP^1$
whose  geometrically reducible fibers occur over $0, \infty,1\in\PP^1$, with respective Kodaira symbols $\II^*+\II^*+\I_0^*$.
Furthermore, the characteristic must be $p=2$, and  the K3 surface $X$ is supersingular with Artin invariant $\sigma=1$.
This settles (i) and (ii). 
 
Let $Y\ra\PP^1$ be the Weierstra\ss{} model for the quasi-elliptic fibration, given by some Weierstra\ss{} equations \eqref{eq:weierstrass equations}.
In light of  to  Proposition \ref{prop:vanishing left side} we have $a_1=a_3=0$, and by \eqref{eq:change of coordinates} may furthermore assume $a_2=0$.
According to \cite{Ito 1994}, Proposition 4.2, the quasi-discriminant $\Psi$ satisfies the conditions of Proposition \ref{prop:classification by value},
hence our Weierstra\ss{} equations takes the form
$$
y^2=x^3+(\lambda_2t^2+\lambda_6t^6)x +\mu(t^6+t^7)\quadand y'^2=x'^3+(\lambda_6t^{-2} +\lambda_2t^{-6})x + \mu(t^{-5}-t^{-7}).
$$
Using that the fibers over $t=0,\infty$ have Kodaira symbol $\II^*$, we obtain $\lambda_2=\lambda_6=0$ from Proposition \ref{prop:rdp e8}.
Let $y_1\in Y$ be the singular point in the fiber over $t=1$. The local ring $\O_{Y,y_1}$ is a rational double point, 
of type either $C_3$ or $D_4$, because the fiber $f^{-1}(1)\subset X$ contains a chain of three projective lines.
According to Proposition \ref{prop:rdp d4}, the polynomial $P(T)=T^3+\mu$ has  a root, and thus $\mu=\alpha^3$ for some $\alpha\in k^\times$.
This gives assertion (iv).

It remains to verify   (iii). Let $F$ be the local system defined by the exact sequence
$$
0\lra \operatorname{Rad}(N)_k\lra (\bigoplus_{\nu\in \Gamma}\ZZ)_k\lra\Pic_{X/k}\lra F\lra 0.
$$
It follows from Proposition \ref{prop:signature} that its stalk $F(k^\alg)$ is a cyclic group.
Let $N_0$ be the sublattice in $N=\bigoplus_{\nu\in \Gamma}\ZZ$ generated by $\gamma_1,\ldots,\gamma_8$ and 
$\delta_1,\ldots,\delta_8$ and $\gamma_0,\rho$.
Being isomorphic to $-(E_8\oplus E_8)\oplus U$, this sublattice is unimodular.
Its orthogonal complement $N_1$ has basis vectors  $\epsilon_1,\epsilon_2$ and $\epsilon_0-a$ and $a-b$.
Here we use the notation in and below  \eqref{eq:graph with 22 vertices}. We see that $N_1/\Rad(N_1)$ is a root lattice of type $-A_3$, with discriminant $d=-4$.
If the image of $N$   in $L=\Pic(X\otimes k^\alg)$ would be imprimitive, 
the primitive closure would be unimodular, so the discriminant group of $L$ would by cyclic,
contradiction.
It follows that   $F(k^\alg)$ is free of rank one.

Let $\xi\in k^\sep$ be a primitive third root of unity. There are two cases:
Suppose first that $\xi\in k$. Then the local system $\det(\ZZ\mu_{3,k})$ is isomorphic to $\ZZ_k$
and the cubic polynomial $P(T)=T^3-\mu$  splits. By Proposition~\ref{prop:rdp d4}, 
the Picard group $\Pic(X)$ has rank $\rho=22$, hence $F$ is isomorphic to $\ZZ_k$.
Finally, assume $\xi\not\in k$. Then $\det(\ZZ\mu_{3,k})$ is  the twisted system $\tilde{\ZZ}_k$ 
corresponding to the quadratic extension $k'=k(\xi)$ and $P(T)$ is semisplit.
Again by Proposition~\ref{prop:rdp d4}, the rational double point $y\in Y$ over $t=1$ has type $C_3$.
In other words, the Galois group $\Gal(k'/k)$ permutes two exceptional curves on the base change $Y\otimes k'$,
and it follows that $F$ is  isomorphic to the    twisted system $\tilde{\ZZ}_k$ corresponding to the quadratic extension $k'=k(\xi)$.
This establishes (iv).

Finally, let $Y_\alpha\ra\PP^1$ be the family of cubic curves defined by the Weierstra\ss{} equation 
$y^2=x^3+\alpha^3(t^5+t^7)$, for some non-zero scalar $\alpha$ from a field $k$ of characteristic $p=2$. 
As observed before Proposition \ref{prop:rdp regular}, the locus $\Sing(Y/k)$ consists of three rational  points $y_0,y_1,y_\infty$,
and we saw in Propositions~\ref{prop:rdp d4} and \ref{prop:rdp e8} that this are rational double points of respective types $E_8,D_4,E_8$.
In turn, the minimal resolution of singularities $X_\alpha\ra Y_\alpha$ is a K3 surface. It inherits a jacobian quasi-elliptic fibration.
The section $R$, together with the exceptional divisors $C_i,D_i,E_i$ form an snc-configuration of projective lines
with dual graph $\Gamma$. Finally,   if $\alpha/\alpha'=u^2$ for some $u\in k^\times$,  a change of coordinate of coordinates
gives  $Y_\alpha\simeq Y_{\alpha'}$, hence also $X_\alpha\simeq X_{\alpha'}$
\qed

\begin{figure}
\begin{tikzpicture}[line width=.4mm]
  \def\numNodes{30}
  
  \foreach \n in {0,...,\numexpr\numNodes-1\relax} {
    \pgfmathsetmacro{\angle}{360/\numNodes * (\n + 7)}
    \node[draw, circle, minimum size=1.5ex, inner sep=0pt] (\n) at (\angle:4cm) { };
     }
    \node at (12*20:4.5cm) {$\gamma_0$}; 
    \node at (12*9:4.5cm) {$\gamma_1$};      
    \node at (12*36:4.5cm) {$\gamma_2$}; 
    \node at (12*8:4.5cm) {$\gamma_3$};      
    \node at (12*7:4.5cm) {$\gamma_4$}; 
    \node at (12*24:4.5cm) {$\gamma_5$};      
    \node at (12*25:4.5cm) {$\gamma_6$}; 
    \node at (12*12:4.5cm) {$\gamma_7$};      
    \node at (12*11:4.5cm) {$\gamma_8$};    	

    \node at (12*28:4.5cm) {$\delta_0$}; 
    \node at (12*17:4.5cm) {$\delta_1$};      
    \node at (12*30:4.5cm) {$\delta_2$}; 
    \node at (12*18:4.5cm) {$\delta_3$};      
    \node at (12*31:4.5cm) {$\delta_4$}; 
    \node at (12*32:4.5cm) {$\delta_5$};      
    \node at (12*33:4.5cm) {$\delta_6$}; 
    \node at (12*34:4.5cm) {$\delta_7$};      
    \node at (12*27:4.5cm) {$\delta_8$};  

    \node at (12*21:4.5cm) {$\rho$}; 

    \node at (12*22:4.5cm) {$\epsilon_0$};      
    \node at (12*15:4.5cm) {$\epsilon_1$}; 
    \node at (12*14:4.5cm) {$\epsilon_2$}; 

  \foreach \n in {0,...,\numexpr\numNodes-2\relax}{
    \pgfmathtruncatemacro{\nextn}{\n+1}
    \draw (\n) -- (\nextn); }
  \draw (29) -- (0);

        	\draw (0) -- (17);
        	\draw (1) -- (22);
    	\draw (2) -- (9);
	\draw (3) -- (26);
	\draw (4) -- (13);
        	\draw (5) -- (18);
    	\draw (6) -- (23);
	\draw (7) -- (28);     	
	\draw (8) -- (15);
    	\draw (10) -- (19);
    	\draw (11) -- (24);
	\draw (12) -- (29);
    	\draw (14) -- (21);
	\draw (16) -- (25);
	\draw (18) -- (17);
    	\draw (20) -- (27);
\end{tikzpicture}
\caption{The Tutte--Coxeter graph}
\label{fig:tutte-coxeter}
\end{figure}

\medskip\noindent
\emph{Proof of Theorem \ref{thm:structure c2}.}
Let $X=\bX(w)$ be the Deligne--Lusztig surface formed with  the Deligne--Lusztig datum \eqref{eq:dld c2} and the Weyl group element
$w=s_2s_1$.
This is a smooth proper surface over the ground field $k=\FF_2$ that is geometrically connected. By the table in Section~\ref{subsec:tables}, the dualizing sheaf
$\omega_X$ is numerically trivial.
Furthermore, the irreducible components of the 
divisors $\bX(s_1)$ and $\bX(s_2)$ form an snc configuration of projective lines  whose dual graph stems from 
the building for the symplectic group $G=\Sp_4$, cf.~Proposition~\ref{prop:stratification and building}.
This is depicted in Figure \ref{fig:tutte-coxeter},  
and coincides with  the  Tutte--Coxeter graph, see Example~\ref{ex:sp4 tutte-coxeter}.
The indicated vertices specify a subgraph 
$\Gamma=\{\gamma_0,\ldots,\delta_0,\ldots,\rho,\epsilon_0,\epsilon_1,\epsilon_2\}$.
Its edges  in the subgraph are as in \eqref{eq:graph with 22 vertices}, hence Theorem \ref{thm:characterization k3} applies.
\qed

\section{The twisted case \texorpdfstring{${^2}A_2$}{2A2}}
\mylabel{sec:case a2twist}

The goal of this section is to understand in  the twisted case ${}^2 A_2$  both  geometry and arithmetic over the prime field
of the  Deligne--Lusztig surfaces whose canonical divisor is negative. We start by briefly reviewing the Deligne--Lusztig datum (of Frobenius type) attached to Dynkin type ${}^2 A_{n-1}$, and then specialize to $n=3$.

\subsection{Formulation of structure result}
\mylabel{subsec:formulation a2twist}
Let us first review the root system: Let $\epsilon_1, \dots,\epsilon_n\in\RR^n$ be the standard orthogonal basis, and set
$$
\alpha_i=\alpha_i^\vee=\epsilon_i-\epsilon_{i+1},\quad i=1, \dots, n-1,
$$
as in the Bourbaki tables \cite{LIE 4-6}.
Let $\Phi=\Phi^\vee$ be the resulting root system of Dynkin type $A_{n-1}$ in the coordinate-sum-zero hyperplane $V\subset\RR^3$,
and write $\Delta=\{\alpha_1, \dots, \alpha_{n-1}\}$ for the system of simple roots. The Weyl group is isomorphic to the group of $n\times n$ permutation matrices, hence $W=\cong S_n$, and we denote by $s_1, \dots, s_{n-1}$ the generating system of simple reflections.
Let $X_*\subset V$ be the lattice generated by the simple coroots $\alpha_1^\vee, \alpha_2^\vee$.
Then the  dual lattice $X^*\subset V$ is generated by $X_*$ together with $\frac{1}{n}((n-1)\epsilon_1-\epsilon_2-\cdots -\epsilon_n)$.  

Fix a prime $p>0$, an integer $d\geq 0$, and set $r=2$ and $s=2d$.
With respect to the     basis $\alpha_1,\dots, \alpha_{n-1}\in V$, the symmetric matrix
$$
p^d\begin{pmatrix}
    0 & 0 & 1 \\ 
    0 & \iddots & 0 \\
    1 & 0 & 0
\end{pmatrix}\in\GL(V)
$$
stabilizes the lattices $X^*\subset X_*$ inside $ V$. The induced maps $\varphi^*$ and $\varphi_*$ satisfy
$$
\varphi^*(\alpha_i)=p^d\alpha_{n-i+1},\quadand 
\varphi_*(\alpha_i^\vee)=p^d\alpha^\vee_{n-i+1},
$$
and thus define a $p$-isogeny of the pinned root datum $(X^*,\Phi,X_*,\Phi^\vee, \Delta)$, obviously satisfying   $(\varphi^*)^r=p^s$.
Summing up, we get a  Deligne--Lusztig datum
\begin{equation}
\label{dld 2A2}
\DLD=(X^*,\Phi,X_*,\Phi^\vee,\Delta,p,\varphi^*).
\end{equation} 
 
This is a Deligne--Lusztig datum of Frobenius type. It is associated, in the sense of the definition of Deligne and Lusztig, to the unitary group over $\FF_{p^d}$, cf.~Example~\ref{ex:unitary DL datum}.
Let us make this explicit.
The root datum $(X^*,\Phi,X_*,\Phi^\vee)$ corresponds to the split reductive group $G=\SL_{n,k}$ over the prime field $k=\FF_2$,
endowed with the   torus $T$ of diagonal matrices and   the Borel group $B$ of upper triangular matrices.
%
%
Now let $q=p^{\frac{s}{r}}=p^d$ and consider
\[
    J = \begin{pmatrix}
        & & 1 \\ & \iddots \\ 1
    \end{pmatrix}\qquad\in M_n(\FF_{q^2}).
\]
Then the morphism
\[
    \Frob\colon \SL_n\to \SL_n,\quad g\mapsto J\, {}^t (g^{(q)})^{-1} J
\]
realizes the isogeny $\varphi$ (for a suitable choice of pinning). In fact, it clearly preserves $T$ and $B$ and has the desired effect on the sets of simple roots and of simple coroots.

\subsection{Explicit description in terms of flags}
\mylabel{subsec:description flags}
Now consider the flag variety $\Flag = \SL_n/B$ of full flags in an $n$-dimensional vector space (over the prime field $\FF_p$). As before, we set $q=p^{s/r} = p^d$. To describe the effect of $\varphi$ on $\Flag$, we define, for $u, v\in R^n$ where $R$ is any $\FF_p$-algebra,
\[
    \langle u, v\rangle = {}^t u^{(q)} J v.
\]
For a direct summand $U\subseteq R^n$ we let
\[
    U^\perp = \{v\in V;\ \forall u\in U: \langle u, v\rangle = 0 \},
\]
a direct summand of $R^n$ of rank $n-{\rm rk}(U)$.

\begin{lemma}\mylabel{lem:2A2 phi on flags}
    The morphism $\varphi\colon \Flag\to \Flag$
    is given, in terms of flags, by 
    \[
        \varphi\colon\quad (0\subset U_1\subset\cdots U_{n-1}\subset V) \ \longmapsto\  (0 \subset U_{n-1}^\perp \subset \cdots \subset U_1^\perp \subset V).
    \]
\end{lemma}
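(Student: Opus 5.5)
The plan is to compute $\varphi$ on $G/B$ directly from the formula $\varphi(g)=J\,{}^t(g^{(q)})^{-1}J$, by tracking how the standard flag moves. A point of $\Flag$ is a coset $gB$, corresponding to the flag $U_i=g\langle e_1,\dots,e_i\rangle$, i.e. the span of the first $i$ columns of $g$. Its image is $\varphi(g)B$, whose flag has $i$-th member $\varphi(g)\langle e_1,\dots,e_i\rangle$. This is compatible with the functorial description: $\varphi(B)=B$ holds because $J\,{}^t(\cdot)^{-1}J$ carries upper triangular matrices to upper triangular matrices. The identity can therefore be checked on $R$-valued points $g$, after fpqc-localizing so that a flag lifts to $g\in\SL_n(R)$.

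The core step is the following linear algebra identity:
$$
\varphi(g)\langle e_1,\dots,e_i\rangle=\bigl(g\langle e_1,\dots,e_{n-i}\rangle\bigr)^\perp .
$$
Put $h=\varphi(g)=J\,{}^t(g^{(q)})^{-1}J$. Then ${}^t g^{(q)}Jh=J$, using $J^2=1$. Hence for columns $u=ge_a$ and $v=he_b$ we get
$$
\langle u,v\rangle={}^t e_a\,{}^t g^{(q)}Jh\,e_b={}^t e_aJe_b=\delta_{a+b,n+1}.
$$
Here $(ge_a)^{(q)}=g^{(q)}e_a$, because $e_a$ has entries $0,1$. So for $b\le i$ and $a\le n-i$ we have $a+b\le n<n+1$, and the pairing vanishes. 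This shows $h\langle e_1,\dots,e_i\rangle\subseteq U_{n-i}^\perp$.

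To get equality, we compare ranks: both sides are direct summands of rank $i$. Indeed, $U^\perp$ has rank $n-\mathrm{rk}(U)$, since the form is nondegenerate, $J$ and $g$ being invertible. A direct summand contained in another of the same rank equals it, which can be checked fiberwise over points of $\Spec R$.

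Reindexing, the $i$-th member of the image flag is $U_{n-i}^\perp$. Its first member is $U_{n-1}^\perp$ and its last proper member is $U_1^\perp$, which is the claimed formula.

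The only mild subtlety, and the point where I expect the care to be needed, is the semilinearity: the form is $q$-semilinear in the first variable. We need $U^\perp$ to be well defined and of complementary rank over an arbitrary $\FF_p$-algebra $R$. This follows because $U^\perp$ is the kernel of the surjection $R^n\to\Hom(U^{(q)},R)$ induced by ${}^t(\cdot)^{(q)}J$, and $U^{(q)}$ has the same rank as $U$. The rest is routine.
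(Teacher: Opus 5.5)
Your proposal is correct and is essentially the paper's own proof. In both, one lifts the flag to a matrix $g$ after an fpqc base change, uses the identity ${}^t g^{(q)}\,J\,\varphi(g)=J$ to see that the relevant columns of $g$ and $\varphi(g)$ pair to zero, and upgrades the inclusion to an equality because both sides are direct summands of the same rank (your remarks on semilinearity and on $U^\perp$ having complementary rank spell out what the paper takes for granted).
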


\begin{proof}
    The individual steps of the flags are direct summands of the pertaining rank, thus it is enough to show the inclusion $\subseteq$ for each step of the two flags. After a suitable fpqc base change, we may assume that the flag $(U_i)_i$ is described by a matrix $g\in \GL_n(R)$. Then $\Frob((U_i)_i)$ is given by $\Frob(g)=J\,{}^t (g^{(q)})^{-1} J$, and we need to show that each of the first $i$ columns of $g$ pairs to $0$ with each of the first $n-i$ columns of $\Frob(g)$. But
    \[
        {}^t g^{(q)}\, J\, \Frob(g) = J
    \]
    which shows that the desired property holds.
\end{proof}

\subsection{The case of \texorpdfstring{$\U(3)$}{U(3)}}
\mylabel{subsec:unitary group}

We now specialize to the case $n=3$.
We are interested in the surfaces $\bar{X}(s_1 s_2)$ and $\bar{X}(s_2 s_1)$. Since the situation is symmetric in $s_1$ and $s_2$, they are isomorphic, and we therefore restrict our attention to $\bar{X}(s_1s_2)$. The curves $\bar{X}(s_1)$ and $\bar{X}(s_2)$ embed into $\bar{X}(s_1s_2)$ as divisors intersecting in $X(\id)$. Furthermore, by Deligne--Lusztig reduction, $\bar{X}(s_1 s_2)$ is a ruled surface over $\bar{X}(s_2)$; see Corollary~\ref{cor:structure of ruled surface}.

Let us give an explicit description of the situation.
To this end, we view the flag variety $\Flag = \{ (L\subset U)\}$ with $L$ a line, $U$ a plane as a closed subscheme of $\PP^2\times \PP^2$ via the closed embedding
\begin{equation}
\label{embedding   flag variety}
G/B\lra \PP^2\times\PP^2,\quad (L\subset U)\longmapsto (L,U),
\end{equation} 
where the respective factors parameterize lines and planes. (We identify the projective plane with its dual.) In view of Lemma~\ref{lem:2A2 phi on flags} the embedding is $\varphi$-equivariant, if we let $\varphi$ act on $\PP^2\times \PP^2$ by $(L, U)\mapsto (U^\perp, L^\perp)$. In particular, $\varphi^2 = F^s$ stabilizes each factor and $L^{\perp\perp} = \varphi^2(L)$, and similarly for $U$.

Using Lemma~\ref{lem:relative position explicit}, let us make this explicit for $w =\id, s_1, s_2, s_1s_2$. We work over the prime field $\FF_p$.
We have
\[
    X(\id) = \Flag^\varphi = \{ (L\subset U);\ U= L^\perp,\ L=L^{\perp\perp} \}
\]
which we can rewrite as
\[
    X(\id) = \{ v\in \PP^2;\ \varphi^2(v)=v,\ \langle v, v\rangle =0\}
    = \{ v\in (\PP^2)^{\varphi^2};\ \langle v, v\rangle =0\}
\]
The fix point scheme $(\PP^2)^{\varphi^2}$ is the closed subscheme of $\PP^2_{\FF_2}$ consisting of all points whose residue class field admits an embedding into $\FF_{q^2}$, a finite étale $\FF_p$-scheme.
The set $X(\id)(\FF_{q^2})$ has $q^3+1$ points. This may be seen from the explicit description above or from group theory (cf.~Lemma~\ref{lem:rational Bruhat decomposition}), since the elements in the Weyl group $W \cong S_3$ fixed by $\Frob$ are $\id$ and $s_1s_2s_1$.

For the Deligne--Lusztig curves in this setting, we have
\[
    \overline{X(s_1)} = \{ (L\subset U);\ U=L^\perp\} \isomarrow \{ L\in \PP^2; L\subset L^\perp \},
\]
the curve in $\PP^2$ (the space of lines in $V$) given by the homogeneous equation
\[
    X^qZ + Y^{q+1} + Z^qX = 0.
\]

Similarly,
\[
    \overline{X(s_2)} = \{ (L\subset U);\ L = U^\perp\} \isomarrow \{ U\in \PP^{2};\ U^\perp\subset U\},
\]
a curve in the projective plane of $2$-planes in $3$-dimensional space.

The genus of $\bar{X}(s_1)$ and of $\bar{X}(s_2)$ is equal to $q(q-1)/2$, as follows from the genus-degree formula or from the calculation of the canonical divisor, see the table in Section~\ref{subsec:tables}.

Finally, flags $(L,U)$ and $(L',U')$ have relative position  $w=s_1s_2=(1,2,3)$ if
and only if $L\not\subset U'$ and $L'\subset U$. This yields the description   
$$
\bX(s_1s_2)=\overline{X(s_1s_2)}=\{(L, U)\mid \text{$L\subset U$ and $U^\perp\subset U$}\}
$$
of the functor  of points, embedded into  $\PP^2\times\PP^2$. We see here that the second projection $\bX(s_1s_2)\to \PP^2$ factors through $\bX(s_2)$, giving $\bX(s_1s_2)$ the structure of ruled surface over $\bX(s_2)$. Compare Corollary~\ref{cor:structure of ruled surface}.

\begin{proposition}
\mylabel{prop:inside product planes}
The  closed subscheme $\bX(s_1s_2)\subset\PP^2\times\PP^2$ is  given by the bihomogeneous equations
\begin{equation}
\label{bihomogeneous equations}
x_0y_0+x_1y_1+x_2y_2=0\quadand y_0y_2^{q} - y_1^{q+1} + y_2y_0^{q}=0,
\end{equation} 
where the two    factors in $\PP^2\times\PP^2$ are regarded as   homogeneous spectra of the polynomial rings
$ k[x_0,x_1,x_2]$ and  $k[y_0,y_1,y_2]$, respectively.
\end{proposition}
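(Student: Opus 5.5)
We identify the two bihomogeneous equations with the two incidence conditions in the functorial description $\bX(s_1s_2)=\{(L,U)\mid L\subset U,\ U^\perp\subset U\}$ obtained just before the statement. We then check that the closed subscheme they cut out is reduced and coincides with this description scheme-theoretically, not only on points.

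\emph{Conventions.} A point of the second factor is a plane $U\subset k^3$. Through the identification of $\PP^2$ with its dual we record $U$ by a normal vector $y=(y_0:y_1:y_2)$, so that $U=\{x\mid \sum x_iy_i=0\}$. Then the condition $L\subset U$ for $L=[x]$ is exactly the first equation $x_0y_0+x_1y_1+x_2y_2=0$. This bilinear equation defines the flag variety $\Flag\subset\PP^2\times\PP^2$ scheme-theoretically, since it is the standard incidence divisor. The same holds over any $\FF_p$-algebra $R$ for direct summands, by passing to an fpqc cover where $L,U$ are free.

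\emph{The second equation.} Next we express $U^\perp\subset U$ in the coordinate $y$. By definition $U^\perp=\{v\mid \langle u,v\rangle=0 \text{ for all } u\in U\}$ with $\langle u,v\rangle={}^tu^{(q)}Jv$. Since $U$ is the kernel of the linear form $y$, the line $U^\perp$ is spanned by the vector $v$ with $J v=(y^{(q)})$ up to the appropriate twist. Concretely, ${}^tu^{(q)}Jv=0$ for all $u\in U$ means $Jv$ is proportional to $y^{(q)}$, so $v=J\,y^{(q)}=(y_2^q,y_1^q,y_0^q)$. One should double-check whether the Frobenius twist lands on $y$ or on $v$, but either way one gets the same equation up to Frobenius, and the stated form is the one with $v=J y^{(q)}$. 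The condition $U^\perp\subset U$ then reads $\sum_i y_i v_i = y_0y_2^q+y_1^{q+1}+y_2y_0^q=0$.

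\emph{The sign.} The statement has a minus sign in front of $y_1^{q+1}$ where the computation above gives a plus sign. In characteristic two the two agree. In general one tracks the sign convention for $J$, or the identification of $\PP^2$ with its dual, which may replace $y_1$ by $-y_1$. I would fix conventions so that the displayed equation comes out exactly, noting that the curve $\bX(s_2)$ is the Hermitian curve in the dual plane, as stated earlier.

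\emph{Scheme-theoretic equality.} Both sides are closed subschemes of $\PP^2\times\PP^2$, and the functorial computation above works for $R$-valued points with $U$ a rank-two direct summand. So the functor cut out by the two equations equals the functor $\{L\subset U,\ U^\perp\subset U\}$. That latter functor is $\bX(s_1s_2)$ by the description preceding the statement, which uses Lemma~\ref{lem:relative position explicit} and the fact that the closure equals $\bX(s_1s_2)$ by Proposition~\ref{prop:comparison compactifications}.

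\emph{Main obstacle.} The delicate point is the identification of the closure $\overline{X(s_1s_2)}$ with this incidence functor as a scheme rather than as a set. To settle it, we note that the equations define a $\PP^1$-bundle over the smooth Hermitian curve given by the second equation (the fibre over $U$ is the line $\PP(U)$). This subscheme is therefore smooth, hence reduced. By Lemma~\ref{lem:relative position explicit} it contains $X(s_1s_2)$ as a dense open subset, its complement being $\bX(s_1)\cup\bX(s_2)$. Since the subscheme is reduced and irreducible, it equals the closure.
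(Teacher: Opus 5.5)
Your proposal is correct and is essentially the paper's proof: both read the first equation as $L\subset U$ and the second as $U^\perp\subset U$, using that $U^\perp$ is spanned by $J y^{(q)}$ (the paper verifies $\langle\lambda_{\bullet,j},J\mu_\bullet^{(q)}\rangle=({}^t\lambda_{\bullet,j}\mu_\bullet)^{(q)}=0$), and your observation that the zero locus is a $\PP^1$-bundle over the smooth Hermitian curve, hence reduced and irreducible, makes explicit the scheme-theoretic identification with the closure, which the paper simply takes from the functorial description preceding the statement. On the sign: with the paper's $J$ this computation gives exactly your $+y_1^{q+1}$ (consistent with the equation $X^qZ+Y^{q+1}+Z^qX=0$ given for $\overline{X(s_1)}$), so the minus sign in the statement is a slip that is invisible for $p=2$, and it cannot be absorbed by $y_1\mapsto -y_1$, which changes nothing for odd $q$ since $q+1$ is even (only a different choice of $J$, e.g.\ with middle entry $-1$, would produce the minus sign).
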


\proof

First note that in terms of matrices and homogeneous coordinates, the   morphism \eqref{embedding   flag variety} 
sends a point $(\lambda_{ij})B\in G/B$ to the pair with entries
$$
(\lambda_{11}:\lambda_{21}:\lambda_{31})\quadand 
(\mu_{1}:\mu_{2}:\mu_{3}).
$$
Here the former are the coordinates for the line $L$, and $\mu_\bullet$ is the cross product of the first two columns of the matrix $(\lambda_{ij})$,
so that $\mu_{\bullet}$ pairs to $0$ with each of the first two columns of $(\lambda_{ij})$ with respect to the standard dot product.
The first equation in \eqref{bihomogeneous equations} corresponds to the condition $L\subset U$. The second equation then is equivalent to the condition $U^\perp \subset U$. To see this, it is enough to show that $U^\perp$ is the line generated by the vector $J\mu_\bullet^{(q)}$. Indeed, we have $J\mu_\bullet^{(q)} \in U^\perp$, since
\[
    \langle \lambda_{\bullet, j}, J\mu_\bullet^{(q)} \rangle =
    {}^t \lambda_{\bullet, j}^{(q)} J^2 \mu_\bullet^{(q)} = 
    ({}^t \lambda_{\bullet, j} \mu_\bullet)^{(q)} = 0
\]
for $j = 1, 2$, where $\lambda_{\bullet, j}$ is the $j$-th column of the matrix $(\lambda_{ij})$.
\qed

\medskip
From the equations we see once again that the second projection $\bX(s_1s_2)\ra \PP^2$ factors through the plane curve 
$$
C:\quad y_0y_2^{q} - y_1^{q+1} + y_2y_0^{q}=0
$$
of degree $q+1$, and $\bX(s_1s_2)=\PP(\shF)$ for some
locally free sheaf $\shF$ of rank two on $C$. So the latter is the image of the Albanese map for $\bX(s_1s_2)$.
Furthermore, the  Adjunction Formula gives 
$$
\omega_{\bX(s_1s_2)}=\O_{\PP^2\times\PP^2}(-2,q-2).
$$
We are particularly interested in the case $q=2$, i.e., $p=2$ and $d=1$, when the canonical divisor of $\bar{X}(s_1s_2)$ is negative (cf.~Section~\ref{subsec:tables}), and
the plane curves $\bar{X}(s_i)$ have degree $3$ and hence genus $1$.

As mentioned above, we know from group theory that $X(\id)(\FF_2^\alg) = \overline{X(s_i)}(\FF_{4})$ consists of $9$ points. Therefore $\overline{X(s_i)}_{\FF_4}$ is isomorphic to the unique elliptic curve over $\FF_4$ with $9$ rational points (see~\cite{Schoof 1987} Example~5.3). The rational points form the $3$-torsion subgroup and the curve is supersingular.

We can also read off the cardinality of $\bar{X}(s_i)(\FF_2)$ from the description in terms of equations above: It is the number of solutions of the homogeneous equation $X^2Z + Y^{3} + Z^2X = 0$ in $\PP^2(\FF_2)$, and thus equal to $3$. Thus $\bar{X}(s_i)\cong E_3$, the unique supersingular elliptic curve over $\FF_2$ with $3$ rational points; it has $j$-invariant $0$. See Section~\ref{subsec:elliptic curves F2}.

As the following theorem shows, we can describe $\bar{X}(s_1s_2)$ precisely as a specific ruled surface over this elliptic curve.

\begin{theorem}
\mylabel{thm:structure a2twist}
Suppose  $p=2$ and $n=1$. Then  the Deligne--Lusztig variety $X=\bX(s_1s_2)$ has the elliptic curve $E=E_3$ with three rational points as Albanese variety,
and is isomorphic to the symmetric power $\Sym^2(E)$.
\end{theorem}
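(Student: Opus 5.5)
Here $q=2$, and $X=\bX(s_1s_2)\subset\PP^2\times\PP^2$ is cut out by the equations of Proposition~\ref{prop:inside product planes}. The second projection realizes $X=\PP(\shF)\ra C=\bX(s_2)\cong E_3$, and this is a $\PP^1$-bundle with a section by Corollary~\ref{cor:structure of ruled surface}. Since $h^1(\O_E)=1$, the discussion in Section~\ref{subsec:ruled and albanese} shows that this ruling is the Stein factorization of the Albanese map. Hence $\Alb_{X/k}=E$. The task is then to identify $\shF$, up to twisting by a line bundle, with Atiyah's indecomposable extension $0\ra\O_E\ra\shE\ra\O_E(a)\ra0$ for a rational point $a$. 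By the remark there, $\PP(\shE)\cong\Sym^2_{E/k}$.

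\textbf{Identifying the bundle.} Over $C$, the fiber of $X$ is the pencil of lines $L\subset U$. So $\shF$ is, up to twist and dualization, the rank-two tautological bundle $\mathscr U|_C$, where $\mathscr U\subset\O_C^3$. The first step is to use the two distinguished sections $\bX(s_2)$ and $\bX(s_1)$ inside $X$:
\begin{itemize}
\item $\bX(s_2)$ is the section $L=U^\perp$, i.e. the sub line bundle $\mathscr U^\perp\subset\mathscr U$. This is the Frobenius twist of the line $\O_C(-1)$ under the hermitian form.
\item $\bX(s_1)$ is the section $U=L^\perp$; via $L\subset U$ it corresponds to another quotient of $\mathscr U$.
\end{itemize}
These two sections meet exactly in $X(\id)$, which consists of $9$ geometric points. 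Using the formula $B^2=\deg\shL-\deg\shN$ and the intersection description $B\cap B'=$ zero scheme of $\shN\ra\shL'$, one reads off the degrees:
\begin{itemize}
\item $\deg\mathscr U=-3$, since $\mathscr U^\vee$ restricts from $\O(1)$ on the dual plane, where $C$ is a cubic;
\item $\deg\mathscr U^\perp=-6$, since it is the Frobenius pullback of $\O_C(-1)$, up to the identification.
\end{itemize}
Hence $\mathscr U$ has a sub line bundle of degree $-6$ with quotient of degree $3$, while $\deg\mathscr U$ is odd. Alternatively, the table gives $K_X$ as a combination of $D_1,D_2$; combined with adjunction on the two sections this yields their self-intersections, including $D^2=1$ for the relevant section. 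This is consistent with $\Sym^2E$, where the curve $\{a\}+E$ is a section of self-intersection $1$.

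\textbf{Indecomposability, the main obstacle.} The ruled surface has odd degree, hence invariant $e=-1$ or is decomposable with odd $e$. To exclude the decomposable case, I would show that there is no section of negative self-intersection. The canonical class is $K_X=\O(-2,0)$ restricted, i.e. $-2h_1$, where $h_1$ is the pullback of a line from the first factor. Suppose a section $B$ had $B^2=-e<0$ with $e\geq1$ odd. Then $K_X\cdot B=-2h_1\cdot B$ and $h_1\cdot B\geq 0$, while adjunction gives $K_X\cdot B=-B^2+2g(E)-2=e$. So $e=-2h_1\cdot B\leq0$, a contradiction. Thus $e=-1$ and $\shF$ is indecomposable of odd degree. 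By Atiyah's classification, $\shF\otimes\shL\cong\shE$ for the unique indecomposable extension of $\O_E(a)$ by $\O_E$, where $a$ is any rational point (for instance the origin of $E_3$). Therefore $X\cong\PP(\shE)\cong\Sym^2_{E/k}$, already over $k=\FF_2$: the twisting line bundle exists over $k$ because $E(k)\neq\varnothing$ and the Picard group is the rational one over a finite field.
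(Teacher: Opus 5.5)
Your overall plan matches the paper's in spirit. The paper feeds $D=\bX(s_1)$ and $D'=\bX(s_2)$, with $K_X\equiv -D$ and $(D\cdot D')$ odd, into Theorem~\ref{thm:characterization symmetric product}, where sections of non-positive self-intersection are excluded via the cone of curves and the parity of $(D\cdot D')$. You instead get odd degree from $\det\mathscr U=\O(-1)|_C$ and then exclude negative sections.

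\textbf{The gap.} The decisive step rests on $K_X=-2h_1$, and this is false. For the complete intersection of bidegrees $(1,1)$ and $(0,q+1)$ in $\PP^2\times\PP^2$, adjunction gives $\omega_X=\O(-3,-3)\otimes\O(1,1)\otimes\O(0,q+1)|_X=\O(-2,q-1)|_X$. So the exponent $q-2$ displayed after Proposition~\ref{prop:inside product planes} is a misprint, and for $q=2$ one has $K_X=-2h_1+h_2$.

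You can check this independently. On $X$ one has $h_1^2=h_1\cdot h_2=q+1=3$ and $h_2^2=0$, so $(-2h_1)^2=12$. But a ruled surface over an elliptic curve has $K_X^2=8(1-g)=0$, and indeed $(-2h_1+h_2)^2=0$. With the correct class, a section $B$ only gives $K_X\cdot B=3-2(h_1\cdot B)$, and your inequality $K_X\cdot B\leq 0$ no longer follows as written.

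\textbf{The repair.} Use Hansen's formula as the paper does: for $q=2$ the table gives coefficients $0$ and $-1$, i.e.\ $K_X\equiv-\bX(s_1)$. Contrary to what you write, $\bX(s_1)$ is not a section. It maps to $C$ via $L\mapsto L^\perp$, which is purely inseparable of degree $q=2$, so it is an irreducible bisection and differs from every section $B$. Adjunction on $B\cong E$ then gives $B^2=-K_X\cdot B=\bX(s_1)\cdot B\geq 0$. Together with your odd-degree observation this forces $e=-1$, and your Atiyah step goes through.

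Alternatively, keep $K_X=-2h_1+h_2$ and observe that $h_1\cdot B\geq 2$. This holds because the first projection $X\to\PP^2$ is finite and a genus-one curve cannot map birationally onto a line.

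\textbf{Side remark to correct.} Neither distinguished curve has self-intersection $1$. Adjunction gives $\bX(s_2)^2=\bX(s_1)\cdot\bX(s_2)=9$, consistent with your degrees $-6$ and $-3$ since $3-(-6)=9$. It also gives $\bX(s_1)^2=K_X^2=0$.
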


The symmetric power $\Sym^2(E)$ of an elliptic curve
is isomorphic to $X=\PP(\shF)$ for some locally free sheaf $\shF$ that is indecomposable of rank $r=2$
and degree $d=1$, see Section~\ref{subsec:ruled and albanese}.
Such sheaves are given by the non-split extension $0\ra\O_E\ra \shF\ra \O_E(e)\ra 0$
for some rational point $e\in E$. The group of translations $E(k)$ acts transitively on $E(k)$,
so up to isomorphism the surface $X$ does not depend on the point $e\in E$.
In contrast to $\PP(\shE)$, however, the symmetric power $\Sym^2(E)$ makes sense for para-elliptic curves,
which may lack rational points.

The proof  for the above theorem  requires some preparation, and will be given at the end of Section \ref{subsec:characterization a2twist},
as a consequence of an abstract     characterization of   ruled surfaces containing certain configurations of curves.

\begin{remark}
    The map $X\to \PP^2$, $(L\subset U)\mapsto L$, is finite surjective of degree $q+1$.
\end{remark}

\subsection{Characterization via symmetries and curves}
\mylabel{subsec:characterization a2twist}

We now give a  characterization, in terms of configuration of curves,
of certain ruled surfaces that arise as Deligne--Lusztig surfaces:

\begin{theorem}
\mylabel{thm:characterization symmetric product}
Let  $X$ be ruled over its Albanese image, and write $f:X\ra E$ for the resulting ruling.
Suppose there are para-elliptic divisors $D,D'\subset X$ with 
$$
K_X\equiv -D \quadand (D'\cdot D)\equiv 1 \quad \text{\rm modulo $2$.}
$$
Then the following holds:
\begin{enumerate}
\item
The curve $E$ is para-elliptic and  coincides with $\Alb_{X/k}$.
\item
The  induced  map $D'\ra E$ is an isomorphism, whereas $\deg(D/E)=2$.
\item
The surface $X$ is isomorphic to $\PP(\shE)$ for some   locally free sheaf $\shE$ on $E$
that is indecomposable of rank $r=2$ and degree $d>0$.
\end{enumerate}
Moreover, if $E$ admits  an invertible sheaf of degree two, then there is a rational point $e\in E$, 
one may choose $\shE$ as the  non-split extension of $\O_E(e)$ by the structure sheaf, and furthermore $X\simeq\Sym^2(E)$.
\end{theorem}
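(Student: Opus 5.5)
The plan is to compute everything on the ruled surface $f\colon X\ra E$ via intersection numbers in $\Num(X)=\ZZ F\oplus\ZZ B$. Here $F$ is the fiber class, and $B$ is a class with $(B\cdot F)=1$; such a class exists after passing to $k^\alg$, and numerical statements may be checked there. Write $g$ for the genus of $E$, so that $K_X\equiv -2B+(2g-2+e')F$ with $e'=-B^2$ in the standard normalization. Since $D$ is para-elliptic, it is a curve of arithmetic genus one. From $K_X\equiv -D$, adjunction gives $0=2p_a(D)-2=D^2+(K_X\cdot D)=D^2-D^2=0$, which is consistent but uninformative. The useful information is $(D\cdot F)=-(K_X\cdot F)=2$, so $D\ra E$ has degree $2$.

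First I show that $g=1$. The composition $D\ra E$ is finite of degree two, because $D$ cannot be a union of fibers, since $(D\cdot F)=2\neq 0$. Applying Riemann--Hurwitz (or simply $h^1(\O_E)\le h^1(\O_D)$ for a finite surjection from an integral curve with $h^0=1$) gives $g\le 1$. If $g=0$, then $E=\PP^1$, contradicting that $E$ is the Albanese image. Hence $g=1$. The Albanese map $X\ra\Alb$ factors through the ruling, because fibers are $\PP^1$. Thus $E$, being a genus-one curve receiving $D$ and being the Albanese image, is para-elliptic and equals $\Alb_{X/k}$, as discussed in Section~\ref{subsec:ruled and albanese}. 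This gives (i) and $\deg(D/E)=2$.

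Next I turn to $D'$. Write $d'=(D'\cdot F)=\deg(D'/E)$. Since $K_X\equiv -2B-e'F$ when $g=1$, we have $D\equiv 2B+e'F$. Therefore $(D'\cdot D)=2(D'\cdot B)+e'd'$, and the parity condition forces $e'$ and $d'$ to be odd. A degree-$d'$ map of genus-one curves $D'\ra E$ is an isogeny-type map. I would then use adjunction on $D'$: with $D'\equiv d'B+bF$,
\[
0=D'^2+(K_X\cdot D')=(d'B+bF)\cdot\bigl((d'-2)B+(b-e')F\bigr),
\]
which yields an equation that, combined with $d'\ge1$ odd and the effectivity constraints on ruled surfaces over elliptic curves (Hartshorne V.2, with $e'\ge -1$), should force $d'=1$. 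Thus $D'$ is a section, and it is isomorphic to $E$. I expect this numerical case analysis to be the main obstacle: one must rule out $d'\ge3$ using that $D'$ is integral of genus one, so that $D'\ra E$ is étale by Riemann--Hurwitz. Étaleness then forces the pulled-back ruling to behave well, so one might instead argue via $e'$.

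Finally, $D'$ is a section, so $X=\PP(\shE)$ with $\shE\ra\shL$ corresponding to $D'$, and $e'$ is odd. In particular, $\shE$ is not decomposable with $e'\ge 0$ even, and parity excludes the split cases. Atiyah's classification then forces $e'=-1$, so $\shE$ is indecomposable of odd degree, normalized to degree $d>0$. This gives (iii). For the final assertion, suppose $E$ carries an invertible sheaf of degree two. Combined with the odd-degree sheaf $\det\shE$, some sheaf of degree one exists, hence a rational point $e$ exists by Riemann--Roch. Twisting $\shE$ to degree one and using uniqueness of the non-split extension of $\O_E(e)$ by $\O_E$ (indecomposables of rank two and degree one are unique up to translation and twist, and translations by $E(k)$ together with twists act transitively on the relevant determinants) identifies $\shE$. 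Then Atiyah's identification $\PP(\shE)\simeq\Sym^2(E)$, recalled in Section~\ref{subsec:ruled and albanese}, concludes the proof.
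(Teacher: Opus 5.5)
You have the right framework, but two of your steps are not established as written.

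\textbf{Step (ii).} You leave this open (``should force $d'=1$''), and the fallback you suggest would not work. There are étale degree-$3$ covers between genus-one curves (isogenies), so Riemann--Hurwitz cannot exclude $d'\ge 3$. What you need is already in your adjunction expression. Using $B^2=-e'$, $B\cdot F=1$, $F^2=0$, it factors as
\[
(d'B+bF)\cdot\bigl((d'-2)B+(b-e')F\bigr)=(d'-1)(2b-e'd').
\]
You have shown that $e'$ and $d'$ are both odd, so $2b-e'd'$ is odd, hence non-zero, and therefore $d'=1$. This is exactly the paper's argument written in the basis $D,F$. There one writes $D'\equiv aD+F_0$ with $a=d'/2$ and $F_0$ a multiple of the fiber class. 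Adjunction then gives $(2a-1)(D\cdot F_0)=0$, and $D\cdot F_0=0$ would force $D'\cdot D=aD^2=0$.

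\textbf{Step (iii).} Your claim that ``parity excludes the split cases'' is false. For example, $\O_E\oplus\shN$ with $\deg\shN=-1$ has invariant $e'=1$, and decomposable bundles realize every $e'\ge 0$.

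The missing input is that $-K_X$ is represented by the irreducible curve $D$. Take $B=C_0$ to be a section of minimal self-intersection over $k^{\alg}$, so $e'$ is Hartshorne's invariant; its parity does not depend on the choice. Then $D\equiv 2C_0+e'F$. If $e'\ge 0$, Hartshorne V.2.20 says that an irreducible curve $\equiv aC_0+bF$ other than $C_0,F$ satisfies $b\ge ae'$. Here this gives $e'\ge 2e'$, so $e'=0$, contradicting oddness. Hence $e'=-1$, and $\shE\otimes k^{\alg}$ is indecomposable of odd degree; consequently $\shE$ itself is indecomposable.

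The paper argues differently. $D$ and $F$ span the two boundary rays of $\NE(X)$. So a section $C$ with $C^2\le 0$ would satisfy $D\equiv 2C$, making $D\cdot D'$ even. Thus every section has $C^2>0$, which rules out split $\shE$ and the self-extension of $\O_E$ directly, without a case distinction on $e'$.

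\textbf{Minor points.}
\begin{itemize}
\item Your first canonical class formula should read $K_X\equiv -2B+(2g-2-e')F$; you later use the correct $-2B-e'F$.
\item In (i), a regular genus-one curve need not be para-elliptic over an imperfect field. Use either smoothness of $E$ (descended from $X$ along the flat ruling), or the paper's observation that the image of the para-abelian $D$ in $\Alb_{X/k}$ is para-abelian.
\end{itemize}
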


\proof
By assumption, the geometric fibers of the Albanese map $f:X\ra E$ are projective lines, hence both para-elliptic curves
$D$ and $D'$ are finite over $E$.  Since $D$ is para-abelian, its image in $\Alb_{X/k}$ must be para-abelian,
by  \cite{Laurent; Schroeer 2024}, Proposition 5.4, and it follows that $E$ is para-abelian.
It thus coincides with $\Alb_{X/k}$, according to the universal property of Albanese maps. 
This already gives (i).

By the Adjunction Formula, $\omega_X$ has degree $d=-2$ on the  geometric  fibers of $f:X\ra E$,
so by our assumption $\deg(D/E)=2$.
Seeking a contradiction, we assume that $D'$ is not a section.
The numerical class of $D'$ takes the form $aD+F$ for some   $F$ that is the preimage of
some divisor on $E$, and the coefficient is some half-integer $a=m/2\geq 1$.
Using  $D^2=F^2=0$  and the Adjunction Formula,  we get  
$$
0=(D'+K_X)\cdot D' = ((a-1)D +F)\cdot(aD+F)= (2a-1) D\cdot F 
$$
So $D\cdot F=0$ because $2a-1\geq 1$.
We get $D'\equiv aD$, which gives $0=aD^2= D'\cdot D$. By assumption, the latter is odd, contradiction.
Thus (ii) holds.

We come to (iii). For this we claim that 
every section $C\subset X$ for the Albanese map $f:X\ra E$ has self-intersection $C^2>0$.
Seeking a contradiction, we assume that $C^2\leq 0$. According to \cite{Kollar 1995}, Chapter II, Lemma 4.12,
the class $[C]\in\Num(X)_\RR$ lies in the boundary of the cone of curves $ \NE(X)$. This has exactly two
extremal rays, which are generated by $D$ and $F$.
We thus have $\RR_{\geq 0}[C]=\RR_{\geq 0}[D]$, and get $D\equiv 2C$.
Then $D\cdot D'=2C\cdot D'$ is even, contradiction.

Since $f:X\ra E$ has a section, we can write $X=\PP(\shE)$ with some locally free sheaf $\shE$
of rank two, which is unique up to tensoring with invertible sheaves. Without loss of generality 
 $0\leq \deg(\shE)\leq 2i-1$ where $i\geq 1$ the positive generator of the degree map $\Pic(E)\ra\ZZ$.
Recall that 
the sections $C\subset X$ correspond to the invertible quotients $\shL=\shE/\shN$   via $C=\PP(\shL)$.
Furthermore, in the exact sequence $0\ra \shN\ra\shE\ra \shL\ra 0$ the degrees $n=\deg(\shN)$ and $l=\deg(\shL)$
are subject to the conditions $n+l=\deg(\shE)$ and $n-l=C^2$, see  \cite{Fanelli; Schroeer 2020}, Lemma 6.1 for the latter.
Summing up, we have 
$$
0\leq n+l\leq 2i-1 \quadand n-l>0.
$$
It follows that $\shE$ is indecomposable: If $\shE=\shN\oplus\shL$ we may swap the summands if necessary, and get $n-l\leq 0$,
contradiction.  
We can also rule out the non-split extension of $\O_A$ by itself: then both degrees $n$ and $l$ vanish, in contradiction to $n-l>0$.
This establishes (iii).

Finally, suppose $E$ admits an invertible sheaf of degree two. Since $(D\cdot D')$ is odd, there is actually some
$\shL$ of degree one. Riemann--Roch and Serre Duality gives $h^0(\shL)=1$, and the effective divisor coming
from a non-zero global section yields a rational point. 
We thus may choose our indecomposable sheaf $\shE$ having degree one. Then $\det(\shE)=\O_E(e)$ for some
unique rational point $e\in E$.
As explained in Section \ref{subsec:ruled and albanese},
the sheaf $\shE$ is isomorphic
to the non-split extension of $\O_E(e)$ by $\O_E$ and the ruled surface $X=\PP(\shE)$ 
is isomorphic to the symmetric product $\Sym^2(E)$.
\qed

\medskip
\emph{Proof of Theorem \ref{thm:structure a2twist}.}
We have already seen that $\bX(s_2)$ is the unique supersingular elliptic curve over $\FF_2$ with $3$ rational points, and that $\bX(s_1s_2)$ is a ruled surface over this curve. The divisors $D = \bX(s_1)$ and $D' = \bX(s_2)$ satisfy the assumptions of Theorem~\ref{thm:characterization symmetric product} in view of the computation of the canonical divisor in Section~\ref{subsec:unitary group} (or in Section~\ref{subsec:tables}) and since $(D'\cdot D) = \lvert X(\id)\rvert = 3$.
\qed

\section{The Suzuki case \texorpdfstring{${^2}C_2$}{twistC2}} 
\mylabel{sec:case c2twist}

\newcommand{\liesp}{\mathfrak{sp}}
\newcommand{\quat}{\text{\rm quat}}
\newcommand{\cent}{\text{\rm sign}}
The goal of this section is to unravel for the root systems ${}^2C_2={}^2B_2$ 
both  geometry and arithmetic of the  Deligne--Lusztig surfaces  that have negativity in the canonical divisor.

\subsection{Formulation of structure result}
\mylabel{subsec:formulation c2twist}

We start with the root system of type $C_2$ as in Section~\ref{subsec:formulation c2}:
Working inside $V=\RR^2$ with the standard orthogonal basis $\epsilon_1,\epsilon_2$ we set
\begin{equation}
\label{eq:root systems c2twist}
\alpha_1=\epsilon_1-\epsilon_2 ,\quad \alpha_2=2\epsilon_2\quadand
\alpha_1^\vee= \epsilon_1-\epsilon_2,\quad \alpha_2^\vee= \epsilon_2
\end{equation} 
and $\Delta=\{\alpha_1,\alpha_2\}$, the system of simple roots. Denote by $\Phi$ and $\Phi^\vee$ the sets of roots and coroots and by $W$ the Weyl group.
Let $X_*\subset V$ be the lattice generated by the simple coroots $\alpha_1^\vee,\alpha_2^\vee$. Then the dual lattice 
$X^*\subset V$ is generated   $\alpha_1,\frac{1}{2}\alpha_2$.

In order to define the Deligne--Lusztig datum of Suzuki--Ree type, we now set
$$
p=2\quadand r=2\quadand s=2n+1 
$$
for some given integer $n\geq 0$. With respect to the  standard basis $\epsilon_1,\epsilon_2\in V$, the symmetric matrix
\begin{equation}
\label{eq:matrix giving isogeny c2twist}
p^n\begin{pmatrix}
1 &  1 \\ 
1 & -1
\end{pmatrix}\in\GL(V)
\end{equation} 
stabilizes the lattices $X_*\subset X^*$. The induced maps $\varphi_*=\varphi^*$ satisfy
$$
\varphi^*(\alpha_1)=p^{n}\alpha_2,\;\varphi^*(\alpha_2)=p^{n+1}\alpha_1\quadand 
\varphi_*(\alpha_1^\vee)=p^{n+1}\alpha^\vee_2,\;\varphi_*(\alpha^\vee_2)=p^{n}\alpha^\vee_1,
$$
and thus define a $p$-isogeny of the pinned root datum $(X^*,\Phi,X_*,\Phi^\vee, \Delta)$, obviously satisfying   $\varphi_*^r=p^s\cdot\id_{X_*}$.
Summing up, we have a  Deligne--Lusztig datum
\begin{equation}
\label{eq:dld c2twist}
\DLD=(X^*,\Phi,X_*,\Phi^\vee,\Delta,p,\varphi^*)
\end{equation} 
for the characteristic $p=2$. 

Fix a field $k$ of characteristic $p=2$  and form the ensuing pinned reductive group $G$,
with maximal torus $T$, Borel group $B$, Weyl group $W=N_G(T)/T$,  and isogeny $\varphi:G\ra G$.
For each $w\in W$ and reduced expression $w=s_{i_1}\ldots s_{i_\ell}$ we then obtain an $\ell$-dimensional Deligne--Lusztig variety $X(w)\subset G/B$ 
with smooth compactification $\bX(w)=\bX(s_{i_1},\ldots, s_{i_\ell})$.
For $\ell=2$ the only possibilities are $w=s_1s_2$ and $w=s_2s_1$; in both cases the canonical divisor has negativity if and only if  $n=0$,
according to the table in Section~\ref{subsec:tables}.

From now on  we assume $n=0$, in other words, the isogeny $\varphi:G\ra G$ satisfies $\varphi^2=F$.
It follows that the finite group scheme $G^\varphi$ is constant, and that 
 $\bX(s_1)$ and $\bX(s_2)$ are isomorphic   curves of genus $g=1$.
The two surfaces are related by a commutative diagram
\begin{equation}
\label{eq:diagram dl surfaces}
\begin{tikzcd} 
\bX(s_1s_2)\ar[r,"\varphi"]\ar[d]\ar[rr,bend left=20,"F"]	& \bX(s_2s_1)\ar[r,"\varphi"]\ar[d]		& \bX(s_1s_2)\ar[d]\\
\bX(s_2)\ar[r,"\varphi"']           				& \bX(s_1)\ar[r,"\varphi"']\ar[u,bend left=40]	&  \bX(s_2)\ar[u,bend right=40],                             
\end{tikzcd}
\end{equation} 
where the horizontal arrows are universal homeomorphisms,   the vertical arrows are rulings stemming from Deligne--Lusztig reductions,
and the sections are given by the inclusions   $\bX(s_1)\subset \bX(s_2s_1)$
and $\bX(s_2)\subset\bX(s_1s_2)$. On all this, the finite group   $G^\varphi(k)$  acts    in a compatible way.

We now seek to describe the  schemes, maps and symmetries in \eqref{eq:diagram dl surfaces} in a completely different
and entirely  explicit way. To this end, let  $E$ be an   elliptic curve, for the moment in arbitrary characteristic $p>0$,
and write $e\in E$ for the origin.
The multiplication map $p:E\ra E$ factors over $E^{(p)}=E/E[F]$, and
the induced projection   $V:E^{(p)}\ra E$, an    isogeny of degree $p$, is  called \emph{Verschiebung}.
We now form the symmetric product $\Sym^p_{E/k}=(E\times\ldots\times E)/S_p$, which by the
Hilbert--Chow map can be identified with   $\Hilb^p_{E/k}$.
The group law induces a morphism
\begin{equation}
\label{eq:addition map}
\Sym^p_{E/k}\lra E,\quad \{a_1,\ldots,a_p\}\longmapsto a_1+\ldots+a_p.
\end{equation} 
Note that the image of the diagonal $\Delta\subset E^p$ under the quotient map is $E^{(p)}\subset\Sym^p_{E/k}$, and the restricting the addition morphism to $E^{(p)}$ gives the Verschiebung.
We now form the commutative diagram
\begin{equation}
\label{eq:diagram symmetric products}
\begin{tikzcd} 
\Sym^p_{E^{(p)}/k}\ar[r,"\can"]\ar[d]\ar[rr,bend left=20, "V"]	& \Sym^p_{E/k}\times_EE^{(p)}\ar[r,"\pr"]\ar[d]		& \Sym^p_{E/k}\ar[d]\\
E^{(p)}\ar[r,"\id"']           				& E^{(p)}\ar[r,"V"']\ar[u,bend left=40]		& E,                   
\end{tikzcd}
\end{equation} 
where the vertical arrows are induced by addition, the upper curved arrow stems from   Verschiebung, and the central curved
arrow is the section induced by the embedding $E^{(p)}\subset\Sym^p_{E/k}$.
While it is also easy to write down sections of the vertical arrow on the right, there does not seem to be a distinguished section. For instance, the map $a\mapsto (a, e, \dots, e)$ is not stable under translations by rational points of $E$.
Note that via the identification $\Sym^p_{E/k} = \Hilb^p_{E/k}$, such sections are nothing but  closed subschemes $C\subset E\times E$ such that
$\pr_1\colon C\ra E$ is finite and flat with $\deg(C/E)=p$, and the geometric fibers $\pr_1^{-1}(y)=\{x_1,\ldots,x_p\}$
satisfy $x_1+\ldots+x_p=y$.

The addition map \eqref{eq:addition map} is 
a bundle of projective $(p-1)$-spaces and
more precisely, $\Sym^p_{E/k}\simeq\PP(\shE)$ 
for each indecomposable sheaf $\shE$ of  rank $r=p$ and degree $d=p-1$,
cf.~Section~\ref{subsec:ruled and albanese}.
From this point of view,    sections can be interpreted as  invertible quotients $\shN=\shE/\shN_0$.

The vertical arrows are Albanese map, and therefore compatible
with the formation of automorphism group schemes (\cite{Laurent; Schroeer 2024}, Corollary 10.3).
Recall that $E\rtimes\Aut_{E/k}$ is the automorphism group scheme of $E$ without the group laws, that is,    as para-elliptic curve.
By functoriality we have an induced action on $\Sym^p_{E/k}=\Hilb^p_{E/k}$. 
The Albanese map is equivariant with respect to $\Aut_{E/k}$, while for the translations  it is equivariant via the multiplication map $p:E\ra E$.
The situation for the base-change  is even more complicated: 
Now the group scheme $\Kernel(p-V)$, defined with the homomorphism $E\times E^{(p)}\stackrel{p-V}{\ra}E$,
acts   on   $\Sym^p_{E/k}\times_EE^{(p)}$ by functoriality. This action is faithful,  and the Albanese map is equivariant with respect
to  $\pr_2:\Kernel(p-V)\ra E^{(p)}$.
Since the kernel of Verschiebung is normalized by automorphisms, we get an identification
$\Aut_{E^{(p)}/k}=\Aut_{E/k}$, and see that this group acts diagonally on the fiber product.

Let us now restrict to the special case that $E$ is a supersingular elliptic curve in characteristic $p=2$.
First note that $E$ being supersingular implies that the Verschiebung is  purely inseparable.
Next  observe  that for each rational point $a\in E$, 
the extension group
$\Ext^1(\O_E(a),\O_E)$ is one-dimensional, and the indecomposable sheaves of rank $r=2$ and degree $d=1$ arise  from the non-split extensions
\begin{equation}
\label{eq:indecomposable r=2 and d=1}
0\lra \O_E\lra \shE\lra \O_E(a)\lra 0.
\end{equation} 
Here the sheaf $\shE$ depends on $a\in E$, but the scheme $\PP(\shE)$ does not.
To simplify notation write $E'=E^{(p)}$ for   Frobenius pullback and $\psi:E'\ra E$ for   Verschiebung, and set 
$$
X=\Sym^p_{E/k}\quadand X'=X\times_E (E',\psi)\quadand X''=X\times_E(E,p).
$$
These   can also be interpreted as    $X=\PP(\shE)$ and $X'=\PP(\psi^*\shE)$ and $X''=\PP(p^*\shE)$. 
What makes $p=2$ so  special is the following:
As explained in \cite{Togashi; Uehara 2022}, Lemma 2.12, the pullback of \eqref{eq:indecomposable r=2 and d=1} under Verschiebung   
also sits in  a non-split extension
\begin{equation}
\label{eq:first  pullback}
0\lra \O_{E'}(a')\lra \psi^*\shE\lra \O_{E'}(a')\lra 0
\end{equation} 
where   $a' =F_{E/k}(a)$ is the Frobenius image. In turn, the ruled surface $X'$  comes with two distinguished sections, namely
$$
E^{(p)}\subset\Sym^p_{E/k}\times_E E^{(p)}\quadand \PP(\O_{E'}(a)),
$$
the first already present in \eqref{eq:diagram symmetric products}, 
the second defined by the invertible quotient \eqref{eq:first  pullback}.
These actually  coincide,  and     have an intrinsic meaning:

\begin{proposition}
\mylabel{prop:distinguished curve}
The above define the same curve $R'\subset X'$, which has the properties
$\O_{R'}(R')=\O_{R'}$   and $\omega_{X'}=\O_{X'}(-2R')$. Moreover, $R'$ is not numerically equivalent to 
another curve on $X'$.
\end{proposition}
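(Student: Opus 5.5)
Both curves are sections of the ruling $f'\colon X'=\PP(\psi^*\shE)\ra E'$, so they correspond to invertible quotients of $\psi^*\shE$; we first compute their self-intersections and then use the extension \eqref{eq:first  pullback} to identify them. For the quotient $\O_{E'}(a')$ in \eqref{eq:first  pullback} the kernel is also $\O_{E'}(a')$, so by the formula $B^2=\deg(\shL)-\deg(\shN)$ from Section~\ref{subsec:ruled and albanese} its section $R'_1=\PP(\O_{E'}(a'))$ has $R_1'^2=0$. Since $R'_1\simeq E'$ has genus one, adjunction then gives $(K_{X'}\cdot R'_1)=0$.

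Next we show that this is the only section of self-intersection $\leq 0$, and in fact the only curve in its numerical class. Any other section $C$ corresponds to a quotient $\shL$ with $\shN\subset\psi^*\shE$. If $\shN\neq\O_{E'}(a')$, the composite $\shN\ra\psi^*\shE\ra\O_{E'}(a')$ is non-zero, so $\deg\shN\leq 1$, with equality only if $\shN\simeq\O_{E'}(a')$ splits the extension. Non-splitness therefore forces $\deg\shN\leq 0$, hence $\deg\shL\geq 2$ and $C^2\geq 2$. For a general curve $D\equiv aR'_1+bF$ with $D\neq R'_1$ irreducible, we have $a\geq0$ and $b=(D\cdot R'_1)\geq 0$, so $D\equiv R'_1$ would force $D\cap R'_1=\emptyset$. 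Then $D$ is a multisection of degree one disjoint from $R'_1$, i.e.\ another section with $D^2=0$, which we just excluded. This gives uniqueness in the numerical class.

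To see that the image $R'_2$ of $E^{(p)}\subset\Sym^p_{E/k}$ coincides with $R'_1$, by the uniqueness just proved it suffices to show $R_2'^2\leq 0$. I would compute $R_2'^2$ through the purely inseparable projection $\pr\colon X'\ra X$ of degree $2$. The image of $R_2'$ is the diagonal curve $\Delta_E\subset\Sym^2_{E/k}$. On $\Sym^2(E)=\PP(\shE)$ this curve is $\PP$ of a quotient; classically the diagonal class is $\equiv 4e_\text{sec}-\ldots$, and more usefully $2\Delta_E$ is the branch divisor of $E\times E\ra\Sym^2$, with $\Delta_E^2=4-4=0$? Rather than chase this, the cleanest route is to compare canonical classes. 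The branch description gives $K_X\equiv-\Delta_E$, since $K_{E\times E}=0$ and the quotient map is inseparable along the diagonal in characteristic $2$. I expect this to be the main obstacle, because wild ramification in characteristic two needs care; alternatively, use the known numerics $K_X\equiv -2C_0+F$ on $\PP(\shE)$ with $\deg\shE=1$. Pulling back along $\pr$ and the étale-free base change formula $\omega_{X'}=\pr^*\omega_X\otimes f'^*(\omega_{E'}\otimes\psi^*\omega_E^{-1})=\pr^*\omega_X$, together with $\pr^*\Delta_E=2R'_2$ (since $\pr$ is purely inseparable of degree $2$ over a section, the pullback is non-reduced), would yield $K_{X'}\equiv -2R'_2$. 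Then $4R_2'^2=K_{X'}^2=8(1-g)=0$, so $R_2'^2=0$, and the uniqueness step gives $R'_2=R'_1=:R'$.

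Finally, $\O_{R'}(R')$ has degree zero. Identifying it with the conormal computation $\shN^{-1}\otimes\shL=\O_{E'}$ for the quotient in \eqref{eq:first  pullback} gives $\O_{R'}(R')=\O_{R'}$. The difference $\omega_{X'}(2R')$ is numerically trivial and restricts trivially to the fibers, so it is $f'^*\shM$ for some $\shM$ of degree zero on $E'$. We pin down $\shM=\O_{E'}$ from the exact formula $\omega_{X'}=\O_{X'}(-2)\otimes f'^*(\omega_{E'}\otimes\det\psi^*\shE)$ with $\det\psi^*\shE=\O_{E'}(2a')$ and $\O_{X'}(1)|$ identified with $\O_{X'}(R')\otimes f'^*\O_{E'}(a')$.
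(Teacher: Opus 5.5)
Your treatment of the extension section $R'_1=\PP(\O_{E'}(a'))$ is correct, and it takes a different route from the paper.
\begin{itemize}
\item You show, via degree bounds on rank-one subsheaves of $\psi^*\shE$, that $R'_1$ is the only section with self-intersection below $2$ and the only curve in its numerical class.
\item You obtain $\omega_{X'}=\O_{X'}(-2R'_1)$ exactly from the relative dualizing sheaf of a $\PP^1$-bundle.
\end{itemize}
The paper instead first proves $K_{X'}=-2A$ for the diagonal section $A$. It then applies adjunction to the other section to force disjointness, and hence splitting.

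\textbf{The gap.} The first assertion of the proposition, that the section $R'_2$ coming from $E^{(p)}\subset\Sym^2_{E/k}$ is the same curve, is not proved. Nothing you actually establish involves $R'_2$. Your identification hinges on $K_X\equiv-\Delta_E$, which you explicitly leave open. The fallback $K_X\equiv-2C_0+F$ does not close it, because it says nothing about the class of $\Delta_E$. Your aside that the diagonal is a $4$-section describes characteristic $\neq 2$. Here addition restricted to $\Delta_E=E^{(p)}$ is the Verschiebung, so $\Delta_E$ is a bisection.

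\textbf{How the paper supplies this step.}
\begin{itemize}
\item The quotient $E\times E\to\Sym^2_{E/k}$ is unramified off the diagonal, so $\omega_X=\O_X(n\Delta_E)$ for some $n$.
\item Comparing degrees on the fibres of the ruling ($-2$ against $2n$) gives $n=-1$.
\item Pulling back along $\pr$, with $\pr^*\Delta_E=2R'_2$, gives $K_{X'}=-2R'_2$. This is exactly the input you were missing.
\end{itemize}

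\textbf{A cheaper fix within your own strategy.} It avoids wild ramification altogether.
\begin{itemize}
\item Let $\pi\colon E\times E\to\Sym^2_{E/k}$ be the quotient map. Then $\pi^{-1}(\Delta_E)=\Delta$ set-theoretically, so $\pi^*\Delta_E=c\Delta$ with $c\geq 1$.
\item Hence $2\Delta_E^2=c^2\Delta^2=0$, since the diagonal of $E\times E$ has self-intersection $2-2g=0$.
\item The map $\pr$ is finite flat and sends $R'_2$ isomorphically onto $\Delta_E$. The projection formula gives $2R_2'^2=\pr^*\Delta_E\cdot R'_2=\Delta_E\cdot\pr_*R'_2=\Delta_E^2=0$.
\item Your uniqueness step then yields $R'_2=R'_1$.
\end{itemize}
So the value $\Delta_E^2=0$ that you guessed in passing is precisely what is needed, but as written it is not established.
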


\proof
Write  $A,B\subset X'$ for the two sections in question. We first observe $\O_B(B)=\O_{E'}(a')\otimes \O_{E'}(-a')=\O_{E'}$.
The quotient map $E\times E\ra\Sym^2_{E/k}=X$ is unramified outside the diagonal, hence $\omega_X=\O_X(nE^{(p)})$ for some integer $n$.
Actually $n=-1$, because both $K_X$ and $-E^{(p)}$ have degree $d=-2$ on the fibers of the ruling.
The projection $\psi:X'\ra X$ restricts to a closed embedding on $A$, hence $\psi^{-1}(\psi(A))=2A$.
Using $\omega_{X'}=\omega_{X'/E'}=\psi^*\omega_{X/E}=\psi^*\omega_X$ we obtain  $K_{X'}=-2A$.
Seeking a contradiction, we assume $A\neq B$. The Adjunction Formula gives $0=\deg(\omega_B)=(-2A+B)\cdot B= -2A\cdot B$,
thus $A,B\subset X$ are disjoint sections. Hence the extension \eqref{eq:first  pullback} splits, contradiction.
Likewise we see that  $R'=A=B$ is not numerically equivalent to  a curve $C\neq R'$: From $(C\cdot R')=0$
we see that $C$ contains no fibers, hence must be a section disjoint from $R'$, again a contradiction.
\qed

\medskip
In light of the Canonical Bundle Formula $K_{X'}=-2R'$ we call $2R'$ the \emph{canonical curve}, and 
$R' $ the \emph{distinguished section}. The  existence of these curves  puts some constraints on symmetries of the surface $X'$:

\begin{corollary}
\mylabel{cor:distinguished curve stable}
The  subscheme $2R'\subset X'$ is stable with respect to the group scheme $\Aut_{X'/k}$, and the reduction $R'$
is stable with respect to the group $\Aut(X')$. 
\end{corollary}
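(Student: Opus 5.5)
The plan is to derive both claims from the intrinsic characterization in Proposition~\ref{prop:distinguished curve}, namely $\omega_{X'}=\O_{X'}(-2R')$ together with the rigidity of $R'$ in its numerical class.

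For the group scheme statement, note that the dualizing sheaf $\omega_{X'}$ is canonically attached to $X'$. Hence every automorphism of $X'\otimes R$, for any $k$-algebra $R$, carries $\omega_{X'}$ to an isomorphic sheaf, so the induced action of $\Aut_{X'/k}$ on $\Pic_{X'/k}$ fixes the class of $\omega_{X'}^{\otimes -1}=\O_{X'}(2R')$. To pass from this class to the actual divisor, I would compute $h^0(\O_{X'}(2R'))=1$. Pushing forward along the ruling gives $f_*\O_{X'}(2R')=\Sym^2$ of a rank-two sheaf. Using the non-split extension \eqref{eq:first  pullback}, suitably twisted so that $\O_{X'}(R')$ corresponds to $\psi^*\shE\otimes\O_{E'}(-a')$, this rank-two sheaf is the indecomposable Atiyah sheaf $\shF_2$ of degree zero. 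By Atiyah's classification its symmetric square has exactly one-dimensional global sections. Equivalently, one can argue numerically: any effective divisor in $|2R'|$ is numerically $2R'$, and $R'$ is the only curve numerically equivalent to itself, which leaves only $2R'$.

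Given that $h^0(\O_{X'}(2R'))=1$, the linear system $|{-K_{X'}}|$ is a single point. Its unique member $2R'$ is therefore an intrinsically defined closed subscheme, compatible with base change. The subfunctor of $\Aut_{X'/k}$ stabilizing it is then the whole group scheme, because an automorphism $\sigma$ over $R$ maps the unique anticanonical divisor of $X'_R$ to the unique anticanonical divisor. To make the relative version rigorous, I would use that $H^0$ commutes with flat base change and is one-dimensional, so the relative effective Cartier divisor in $|{-K}|$ over $\Spec R$ is unique.

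For the group statement, an automorphism $\sigma\in\Aut(X')$ stabilizes $2R'$, hence also its reduction $R'$, since the underlying reduced subscheme is preserved by any automorphism. Alternatively, $\sigma(R')$ is a curve numerically equivalent to $R'$, because $\sigma$ preserves $K_{X'}\equiv-2R'$ and $\Num(X')$ is torsion-free. Proposition~\ref{prop:distinguished curve} then forces $\sigma(R')=R'$. The distinction between the two claims matters: the non-reduced group scheme $\Aut_{X'/k}$ need not preserve $R'$ itself, since infinitesimal automorphisms do not commute with taking reductions in families, so only $2R'$ is asserted for the group scheme.

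The main obstacle I expect is the relative uniqueness argument, specifically showing that $|2R'|$ stays a single divisor after arbitrary base change. The first approach I would try is the cohomology-and-base-change computation of $h^0=1$ via the Atiyah sheaf.
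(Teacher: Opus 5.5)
The step that fails is $h^0(\O_{X'}(2R'))=1$. The anticanonical system $|{-K_{X'}}|=|2R'|$ is a pencil, not a single divisor.

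With your twist, $f_*\O_{X'}(2R')\cong\Sym^2\shF$, where $\shF$ is the non-split self-extension of $\O_{E'}$. Atiyah's isomorphism $\Sym^2\shF\cong\shF_3$ fails in characteristic $2$:
\begin{itemize}
\item the squaring map $v\mapsto v^2$ gives an injection $F^*\shF\hookrightarrow\Sym^2\shF$;
\item $F^*\shF\cong\O_{E'}\oplus\O_{E'}$, because Frobenius kills $H^1(E',\O_{E'})$ on the supersingular curve $E'$.
\end{itemize}
This is the same phenomenon that makes the Frobenius pullback split and yields $X''=E\times\PP^1$. Hence $h^0(\O_{X'}(2R'))=2$.

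Concretely, besides $2R'$ the pencil contains the images of the horizontal curves $E\times\{t\}$, $t\neq(1:0)$, under $X''\to X'$. These are smooth double sections disjoint from $R'$. Your numerical fallback does not exclude them. Proposition~\ref{prop:distinguished curve} rules out curves numerically equivalent to $R'$, and hence reducible members of $|2R'|$ other than $2R'$. An irreducible curve $D\equiv 2R'$, however, only has to satisfy $D\cdot R'=0$, i.e.\ be disjoint from $R'$. So the obstacle is not relative uniqueness after base change: uniqueness already fails over $k$, and \emph{the unique member of $|{-K_{X'}}|$} does not exist.

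The missing idea is the one the paper uses. $\Aut_{X'/k}$ acts linearly on the two-dimensional space $H^0(X',\omega_{X'}^{\otimes-1})$ through the canonical linearization of $\omega_{X'}$, hence on the pencil of anticanonical double sections. One must then single out $2R'$ inside this pencil by an intrinsic property: it is the only member that is nowhere smooth. Indeed, by the Proposition, a non-reduced member must be $2C$ with $C$ a section numerically equivalent to $R'$, hence $C=R'$. This gives stability of the ideal $\O_{X'}(-2R')$.

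For the non-reduced group scheme, this property should be phrased functorially. Then the corresponding point of the pencil is fixed scheme-theoretically, and not only on $k$-points.

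Your second argument for the group $\Aut(X')$ is correct and does not depend on the flawed step: $\sigma(R')$ is a curve numerically equivalent to $R'$, hence equal to $R'$.
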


\proof
The dualizing sheaf $\omega_{X'}=\det(\Omega^1_{X'/k})$ carries a canonical linearization with respect to $\Aut_{X'/k}$,
and the same goes for the dual sheaf.
For each non-zero vector $s\in H^0(X',\omega_{X'}^{\otimes-1})$, the zero-scheme is a double section for the ruling.
By the proposition, from these double sections only $2R'$ is everywhere non-smooth.
In turn, the sheaf of ideals  $\O_{X'}(-2R')\subset \O_{X'}$ and equivalently the closed subscheme  $2R'\subset X'$  are stable.
 The elements $\sigma\in\Aut(X')$ stabilize   also the reduction $R'$.
\qed

\medskip
As explained in \cite{Togashi; Uehara 2022}, Lemma 2.12, the pullback of \eqref{eq:first pullback}
under the relative Frobenius  splits, and the choice of a splitting for the extension
\begin{equation}
\label{eq:second  pullback}
0\lra \O_E(pe)\lra p^*\shE\lra \O_E(pe)\lra 0
\end{equation} 
gives an identification $X''=E\times\PP^1$. 
On this product surface, the preimage of $R'\subset X'$ becomes a fiber, and without  loss of generality we may assume that this fiber
equals $E\times\{(1:0)\}$. Summing up, we have a cartesian diagram
\begin{equation}
\begin{tikzcd} 
E\times\PP^1\ar[r]\ar[d]		& X'\ar[r,"\psi"]\ar[d]	& X\ar[d]\\
E\ar[r,"F_{E/k}"']\ar[u,bend left=40]	& E'\ar[r,"\psi"']\ar[u,bend left=40]	& E,                   
\end{tikzcd}
\end{equation} 
where the respective curved arrows stem from   $E\times\{(0:1)\}$ and the distinguished section  $R'$.
By abuse of notation we use the letter $\psi$ also to denote the first projection on $X'=X\times_EE'$.

The second projection $X\ra E$ is the Albanese map, and thus induces a homomorphism $\Aut_{X/k}\ra\Aut_{E/k}$.
Let us write   $\Aut_{X/k,E}$ for its kernel, and $\Aut(X/E)$ for the group of rational points. Expressed differently, $\Aut_{X/k,E}$ is the $k$-scheme obtained from the relative automorphism scheme $\Aut_{X/E}$ via the composition $\Aut_{X/E}\to E\to \Spec(k)$.
We likewise  form such groups for $X'$ and $X''$. Taking pullbacks gives inclusions
$$
\Aut_{X/k,E}\subset\Aut_{X'/k,E'}\subset\Aut_{X''/k,E}=\PGL_2.
$$
The stabilizer for  $(1:0)\in\PP^1$ is the standard Borel group $\GG_a\rtimes \GG_m\subset\PGL_2$,
and the additive part is the    kernel for the tangent representation. This shows:

\begin{proposition}
\mylabel{prop:relative automorphism group}
In the above inclusions we actually have $\Aut_{X'/k,E'}\subset \GG_a$.
\end{proposition}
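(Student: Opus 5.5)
The plan is to show that every relative automorphism of $X'$ over $E'$, viewed inside $\Aut_{X''/k,E}=\PGL_2$, fixes the point $(1:0)\in\PP^1$ and acts trivially on its tangent space. Such an element then lies in the kernel of the tangent representation of the Borel group $\GG_a\rtimes\GG_m$, which is $\GG_a$.

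\emph{Step one: landing in the Borel.} Corollary~\ref{cor:distinguished curve stable} says that the closed subscheme $2R'\subset X'$ is stable under the whole group scheme $\Aut_{X'/k}$, and in particular under the subgroup scheme $\Aut_{X'/k,E'}$. Pulling back along $E\times\PP^1\ra X'$, the preimage of $2R'$ is stable under the image of $\Aut_{X'/k,E'}$ in $\PGL_2$. Its reduction is the fiber $E\times\{(1:0)\}$, so the reduced subscheme, and hence the point $(1:0)$, is stable. Working with $T$-valued points for arbitrary $k$-algebras $T$ gives a scheme-theoretic inclusion into the stabilizer $\GG_a\rtimes\GG_m$.

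\emph{Step two: killing the torus part.} I need the finer information contained in the nonreduced structure of $2R'$. The map $E\times\PP^1\ra X'$ is the base change of the relative Frobenius $F_{E/k}$, which is purely inseparable of degree $2$. I will compute the preimage of $2R'$ explicitly: in the local coordinate $z$ at $(1:0)$ it should be the first-order thickening $E\times\Spec k[z]/(z^2)$. If it is instead $E\times\{z^{2m}=0\}$, the argument still goes through, because I will use the different characterization below. Stability of this thickening by itself only recovers the Borel.

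To detect the character of the $\GG_m$-factor, I will use Proposition~\ref{prop:distinguished curve}. It gives a trivialization $\O_{R'}(R')\cong\O_{R'}$, which is canonical up to scalars, together with $\omega_{X'}=\O_{X'}(-2R')$. An element $g$ acting by $z\mapsto \lambda z+c$ acts on the normal bundle of the fiber by $\lambda$. On the other hand, the descent datum realizing $X'$ as a quotient of $E\times\PP^1$ involves the nonsplit extension \eqref{eq:first pullback}. The idea is that an element commuting with this descent must preserve the extension class, and preserving a nonzero class in the one-dimensional $\Ext^1(\O_{E'}(a'),\O_{E'}(a'))$, as opposed to rescaling it, should force $\lambda=1$.

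Concretely, I will describe $\Aut_{X'/k,E'}$ as the automorphism group scheme of the pair $(\psi^*\shE,\text{filtration})$ modulo scalars. Such automorphisms are matrices $\bigl(\begin{smallmatrix}\alpha&\beta\\0&\delta\end{smallmatrix}\bigr)$ in $\End$ of the two graded pieces, and these must preserve the nonsplit extension. That condition forces $\alpha=\delta$, so after dividing by scalars only the unipotent part $\beta$ remains, which is $\GG_a$.

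I expect this second step to be the main obstacle. The nontrivial part is showing that $\lambda=1$ rather than merely that $\lambda$ lies in some finite subgroup scheme such as $\mu_2$, and the extension-class argument must be carried out group-scheme-theoretically, not just on $k$-points.
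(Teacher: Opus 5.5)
Your Step one does not work for group schemes, and its conclusion is in fact false. Stability of the non-reduced subscheme $2R'$ does not imply stability of its reduction over non-reduced test rings. If $z$ is a local coordinate centred at $(1:0)$ and $s^2=0$, then $z\mapsto z+s$ preserves the ideal $(z^2)$, because $(z+s)^2=z^2$ in characteristic two, yet it moves the point $z=0$. This is exactly why Corollary~\ref{cor:distinguished curve stable} only asserts that the \emph{group} $\Aut(X')$ stabilizes $R'$.

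Such elements really lie in $\Aut_{X'/k,E'}$. The map $X''=E\times\PP^1\to X'$ is the quotient by $E[F]$ acting through $h\mapsto(h,\iota(h))$, so descent identifies $\Aut_{X'/k,E'}$ with the scheme-theoretic centralizer of $\iota(E[F])$ in $\PGL_2$. With $x=1/z$, the group $\iota(E[F])$ is $x\mapsto x+t$ with $t^2=0$, that is, $z\mapsto z+tz^2$. A direct check shows that $z\mapsto z+s$ commutes with it. Alternatively: $E[2]$ is commutative, so each $\id_E\times\iota(h)$ with $h\in E[2]$ descends to $X'$, and the paper itself notes that $\iota(E[2])$ lies in no Borel subgroup. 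Hence $\Aut_{X'/k,E'}$ is the non-reduced group $x\mapsto x+t+sx^2$ with $s^2=0$. It is not contained in $\GG_a$, and not even in the Borel.

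The group-scheme version of your Step two fails for the same reason. Over non-reduced $T$, a relative automorphism of $\PP(\psi^*\shE)$ need not come from an automorphism of $(\psi^*\shE)_T$. One must also allow isomorphisms $(\psi^*\shE)_T\cong(\psi^*\shE)_T\otimes L$ with $L$ in the infinitesimal but non-trivial group $\Pic^0_{E'}[2]$. So the obstacle you anticipated is not technical: the inclusion holds for the group of rational points, and for the reduced subgroup scheme, but not for the group scheme itself.

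The rational-point version is all the paper's argument gives, and all that is used later, in Proposition~\ref{prop:second section}. The paper argues in two lines. Rational relative automorphisms stabilize $R'$, hence lie in the Borel subgroup fixing $(1:0)$. They then lie in the kernel of the tangent representation, which is $\GG_a$; the reason for this last step is left implicit.

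Your Step two, carried out on $k$-points, is a correct and more explicit route to that last step, and it makes Step one unnecessary:
\begin{itemize}
\item A line bundle $L$ with $\psi^*\shE\otimes L\cong\psi^*\shE$ satisfies $L^{\otimes 2}\cong\O_{E'}$. It is therefore trivial, since a supersingular elliptic curve in characteristic two has no non-trivial $2$-torsion line bundles.
\item Every endomorphism of $\psi^*\shE$ preserves the subsheaf $\O_{E'}(a')$, because a non-zero composite $\O_{E'}(a')\to\psi^*\shE\to\psi^*\shE\to\O_{E'}(a')$ would split the extension.
\item Preserving the non-zero extension class forces $\alpha=\delta$.
\end{itemize}
An even shorter way to kill the torus part: $z\mapsto\lambda z$ commutes with $z\mapsto z+tz^2$ only if $\lambda t=\lambda^2 t$, that is, only if $\lambda=1$.
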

 
As already discussed earlier, we also have a commutative diagram
\begin{equation}
\label{eq:all automorphisms}
\begin{tikzcd}[column sep=small]
								& \Aut_{X'/k}\ar[d]				& \Aut_{X/k}\ar[d]\\	
\Kernel(p-V)\rtimes\Aut_{E/k}\ar[r,"\pr_2\rtimes\id"']\ar[ur] 	& E'\rtimes\Aut_{E/k}\ar[r,"\psi\rtimes\id"'] 	& E\rtimes\Aut_{E/k}	& E\rtimes\Aut_{E/k}\ar[l,"p\rtimes\id"]\ar[ul].
\end{tikzcd}
\end{equation} 
Note that the term on the left  is one-dimensional, with two-dimensional Lie algebra.  
By construction, the projection $X''\ra X$ is a torsor with respect to the infinitesimal group scheme $H=E[p]$.
Then $X''\ra X'$ is the partial quotient by the Frobenius kernel, and $X'\ra X$ is the induced quotient by the Frobenius image.
Here $H$  embeds into  $\Aut_{X''/k}=(E\rtimes\Aut_{E/k})\times\PGL_2$ via $h\mapsto (h,\id_E,\iota(h))$
for some monomorphism $\iota:H\ra\PGL_2$.

It is rather surprising that embeddings of $E[p]$ into $\PGL_2$ exist,  compare \cite{Gouthier; Tossici 2024}:
The image is not contained in any Borel group, because the Verschiebung precludes
embedding into the additive group.
However, the assignment $x\mapsto x+ t + sx^2$, where $t$ is arbitrary and $s^2=0$ defines an action of a semidirect product 
$\GG_a\rtimes\alpha_p$ on the affine line $\AA^1=\Spec k[x]$ that extends to the projective line $\PP^1$,
and $E[2]$ is obtained  by setting $t^4=0$ and $s=t^2$. Compare \cite{Hilario; Schroeer 2023} for related situations.

In light of the diagram \eqref{eq:diagram dl surfaces} we expect that in 
certain situations not only $X'\ra E'$, but also    $X\ra E$ should have a distinguished section,
and our next major task is to understand this. To this end consider the set 
$$
\Upsilon=\{A\subset X\mid\text{$A$ is section for $X\ra E$ having  $A^2=1$}\}.
$$
It comes with a map $\Upsilon\ra E(k)$,   $A\mapsto a$   defined by  $\O_A(A)=\O_E(a)$.

\begin{proposition}
\mylabel{prop:sections and rational points}
The above map is bijective, and equivariant with respect to the action of the group $\Aut(X)$.
\end{proposition}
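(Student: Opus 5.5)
The plan is to translate sections of the ruling $X=\PP(\shE)\ra E$ with $A^2=1$ into invertible quotients of $\shE$, and then read off the point $a$ from the sub-line-bundle. Recall from Section~\ref{subsec:ruled and albanese} that sections $A\subset X$ correspond bijectively to invertible quotients $\shE\ra\shL$ with kernel $\shN$. Moreover $A^2=\deg\shL-\deg\shN$ and $\deg\shL+\deg\shN=\deg\shE=1$. Hence $A^2=1$ forces $\deg\shL=1$ and $\deg\shN=0$. The normal bundle satisfies $\O_A(A)=\shL\otimes\shN^{-1}$, which has degree one on $A\cong E$. Since every degree-one sheaf on $E$ is $\O_E(a)$ for a unique rational point $a$ (Riemann--Roch, $h^0=1$), the map $A\mapsto a$ is well-defined.

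For bijectivity, fix a rational point $b$ and write $\shE$ as the non-split extension $0\ra\O_E\ra\shE\ra\O_E(b)\ra0$. I first classify the degree-zero subsheaves $\shN\subset\shE$ with invertible quotient. Composing $\shN\ra\shE\ra\O_E(b)$: if this map is zero, then $\shN\subset\O_E$ is saturated of degree zero, so $\shN=\O_E$, which is the original section. Otherwise $\shN\ra\O_E(b)$ is a nonzero map of degree $0\to1$, so $\shN=\O_E(b-c)$ for a unique rational point $c$, the zero of this map. One must then check that $\shN=\O_E(b-c)$ lifts to $\shE$, i.e.\ that the extension class restricts to zero in $\Ext^1(\O_E(b-c),\O_E)=H^1(\O_E(c-b))$. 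This group vanishes for $c\neq b$, and for $c=b$ the class is nonzero by non-splitness, which matches the case handled by $\shN=\O_E$. The lift is unique because $\Hom(\O_E(b-c),\O_E)=0$ for $c\ne b$. One must also check that the lift is saturated; a degree count shows that the quotient is torsion-free. Hence the sections with $A^2=1$ are indexed by $\{\O_E\}\cup\{\O_E(b-c): c\ne b\}$, which is a set in bijection with $E(k)$.

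Under this indexing, $\O_A(A)=\det\shE\otimes\shN^{-2}=\O_E(b)\otimes\shN^{-2}$. For $\shN=\O_E$ this gives $a=b$, and for $\shN=\O_E(b-c)$ it gives $\O_E(2c-b)$. I expect the main obstacle to be verifying that the resulting map $c\mapsto 2c-b$, extended by $b\mapsto b$, is bijective. In characteristic two with $E$ supersingular, the map $[2]$ is purely inseparable and bijective on $E(k)$ (for finite or perfect $k$; in general one uses that $[2]$ has trivial kernel on rational points), so $c\mapsto 2c-b$ is a bijection on $E(k)$ with $b\mapsto b$ included. I would treat the general ground field by noting that injectivity only needs $E[2](k)=0$, while surjectivity requires solving $2c=a+b$. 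If that step fails over imperfect fields, I would instead restrict to the case $k$ finite or perfect, which is the situation actually used.

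Equivariance is formal. An automorphism $\sigma\in\Aut(X)$ permutes the sections of the Albanese ruling, because the ruling is intrinsic by Section~\ref{subsec:ruled and albanese}, and it preserves self-intersection numbers. The induced automorphism $\bar\sigma$ of $E$ satisfies $\O_{\sigma A}(\sigma A)=\bar\sigma_*\O_A(A)$, so $a\mapsto\bar\sigma(a)$, which is compatible with the action through $\Aut(X)\ra E(k)\rtimes\Aut(E)$.
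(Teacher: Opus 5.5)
Your argument is correct, and it is organized differently from the paper's. The paper never classifies $\Upsilon$. For surjectivity, it re-chooses the model $X=\PP(\shE_a)$, where $\shE_a$ is the non-split extension of $\O_E(a)$ by $\O_E$, and exhibits the section $\PP(\O_E(a))$. For injectivity, it normalizes the model at one of the two sections, $\shE=f_*\O_X(A)$. The second section $B$ then has kernel $\O_E(a-b)$ and $\O_B(B)=\O_E(2b-a)$, so $2b\sim 2a$. Absence of $2$-torsion gives $b=a$, and $h^0(\shE)=1$ forces the two copies of $\O_E$ in $\shE$ to coincide.

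You instead fix one model $\PP(\shE_b)$ and find all saturated degree-zero subsheaves through the lifting obstruction in $\Ext^1(\O_E(b-c),\O_E)=H^1(\O_E(c-b))$. Your case $c=b$ is exactly the statement $h^0(\shE)=1$, i.e.\ that $H^1(\O_E(-b))\to H^1(\O_E)$ is bijective. This costs an extension computation, but it parametrizes $\Upsilon$ explicitly by $E(k)$, under which the map becomes $c\mapsto 2c-b$. So the proposition is equivalent to bijectivity of $[2]$ on $E(k)$.

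That makes your final hedge a real point, not a weakness. Injectivity only needs $E(k)[2]=0$, which holds for supersingular $E$ over any field. Surjectivity needs $E(k)=2E(k)$, and this can fail. For example, take $E\colon y^2+y=x^3$ over a separably closed imperfect field. There $[2](x,y)=(x^4,y^4+1)$ misses every rational point with $x\notin k^2$, and such points exist.

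The paper's surjectivity step uses the same input tacitly: $X\simeq\PP(\shE_a)$ over $E$ holds only for $a$ in a single coset of $2E(k)$. Over perfect $k$, in particular over $k=\FF_2$ where the result is applied, both arguments are complete.
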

 
\proof
Equivariance is obvious. For surjectivity we start with a rational point $a\in E$, form the
non-split extension $0\ra \O_E\ra\shE\ra\O_E(a)\ra 0$, and make an identification $X=\PP(\shE)$.
Then $A=\PP(\O_E(a))$ defines a section with the property $\O_A(A)=\O_E(a)\otimes\O_E^{\otimes-1}=\O_E(a)$.

For injectivity, suppose we have  $A,B$ giving the same rational point $a\in E$.
Taking the direct images  of the short exact sequence $0\ra\O_X\ra\O_X(A)\ra\O_A(A)\ra 0$,
one sees that $A$ arises as in the preceding paragraph, and the other section becomes
an invertible quotient $\shN=\shE/\shN_0$, say with degrees $d=\deg(\shN)$ and $d_0=\deg(\shN_0)$.
We have $1=B^2=d-d_0$, while additivity of degrees gives $d+d_0=1$, and thus $d=1$ and $d_0=0$.
Write $\shN=\O_E(b)$ and $\shN_0=\O_E(a-b)$. From $\O_E(a)=\O_B(B)= \O_E(b)\otimes\O_E(b-a)$ we see
that $2b$ and $2a$ are linearly equivalent. This implies $b=a$, because  $\Pic(E) $ has no two-torsion,
and therefore $\shN_0=\O_B$. Using $h^0(\shE)=1$ we finally conclude that $\O_E=\shN_0$ as subsheaves
inside $\shE$, and thus $A=B$.
\qed

\medskip 
Let $A\subset X$ be the section  with $\O_A(A)=\O_E(a)$ for the origin $a=e$.  We now 
consider sections $R\subset X$ with $R\cap A=3e$.
Each such section corresponds to a commutative diagram
\begin{equation}
\label{eq:two invertible quotients}
\begin{tikzcd}[row sep=tiny]
0\ar[r]		& \O_E(-2e)\ar[rd] 		& 		& \O_E(e)\ar[r] 	& 0 \\		
		&						& \shE\ar[ru]\ar[rd]\\
0\ar[r]		& \O_E\ar[ur]\ar[rr,dashed]	&  		& \O_E(3e)\ar[r]		& 0
\end{tikzcd}
\end{equation}
where the two sequences from the upper left to the lower right, and the lower right to the upper left are exact, and the dashed arrow is the canonical inclusion.

\begin{proposition}
\mylabel{prop:only two extensions}
Surjective extensions $\shE\ra\O_E(3e)$ of the  injection $\O_E\ra\O_E(3e)$ do exist. For the prime field $k=\FF_2$,
there are exactly two such extensions.
\end{proposition}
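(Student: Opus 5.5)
We work with the non-split extension $0\ra\O_E\stackrel{i}{\ra}\shE\ra\O_E(e)\ra 0$ of \eqref{eq:indecomposable r=2 and d=1} with $a=e$, and study the morphisms $\shE\ra\O_E(3e)$ that restrict on $\O_E$ to the canonical inclusion $j\colon\O_E\ra\O_E(3e)$. Apply $\Hom(-,\O_E(3e))$ to the extension to get
$$
0\ra H^0(\O_E(2e))\ra \Hom(\shE,\O_E(3e))\stackrel{i^*}{\ra} H^0(\O_E(3e))\stackrel{\delta}{\ra}\Ext^1(\O_E(e),\O_E(3e))=H^1(\O_E(2e))=0.
$$
The last group vanishes because $\deg\O_E(2e)>0$. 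So $i^*$ is surjective and the lifts of $j$ form a torsor under $H^0(\O_E(2e))$, a $k$-vector space of dimension two. Over $k=\FF_2$ there are therefore exactly four lifts.

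Next we decide which lifts $\phi$ are surjective. A lift has image $\O_E(3e-D)$ for some effective divisor $D$, and $\phi$ fails to be surjective exactly when $D\neq0$. The kernel of $\phi$ is invertible of degree $\deg\shE-3+\deg D=\deg D-2$. Compose it with $\shE\ra\O_E(e)$. The kernel meets $\O_E$ trivially, since $\phi$ restricted to $\O_E$ is the injection $j$. Hence the kernel maps injectively to $\O_E(e)$, which forces $\deg D-2\le1$, so $\deg D\le3$.

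The key reduction is the following. Because $\phi\circ i=j$ has cokernel supported on $3e$, the factorization $j=(\O_E\ra\O_E(3e-D)\subset\O_E(3e))$ forces $D\le 3e$. So $\phi$ is non-surjective exactly when $\phi$ vanishes at the point $e$. Evaluating at $e$ gives an affine-linear map
$$
\{\text{lifts}\}\lra \Hom(\shE\otimes\kappa(e),\O_E(3e)\otimes\kappa(e)),
$$
modelled on the linear map $H^0(\O_E(2e))\ra \O_E(3e)\otimes\kappa(e)$, $s\mapsto \pi\cdot s(e)$. Here $\pi$ denotes the composite $\shE\ra\O_E(e)\subset\O_E(3e)$ (tensored with the section $s$). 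Its image is the $\O_E(e)$-direction of the fiber at $e$, and it is nonzero because $H^0(\O_E(2e))$ has a section not vanishing at $e$. Since $j$ vanishes at $e$, every lift vanishes on the $\O_E$-line in the fiber. It remains to test the quotient direction, which is a one-dimensional condition $c(\phi)=0$. The function $c$ is affine in $s$ with nonzero linear part, so $c^{-1}(0)$ is an affine line. The non-surjective lifts therefore form a one-dimensional affine subspace, and the surjective ones are its complement, which is nonempty over any field.

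Over $\FF_2$ this gives $4-2=2$ surjective extensions. The main obstacle is to make the fiber-at-$e$ analysis rigorous: one must check that vanishing of $\phi(e)$ is exactly the single linear condition described, and in particular that the $\O_E$-line imposes no further condition because $j(e)=0$ is automatic. I would handle this locally. Trivialize $\shE$ near $e$ as $\O\oplus\O$ using a local splitting, write $\phi=(j,\,g+s)$ with $s$ a local section, and compute the order of vanishing of the second component at $e$. This also confirms the conclusion $R\cap A=3e$ from \eqref{eq:two invertible quotients}, using the bound $\deg D\le 3$ together with $D\le 3e$.
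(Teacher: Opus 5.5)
Your proposal is correct and follows essentially the same route as the paper. Both use the long exact $\Hom(-,\O_E(3e))$ sequence with $H^1(\O_E(2e))=0$ to make the lifts a torsor under $H^0(E,\O_E(2e))$, and both observe that surjectivity can only fail at $e$, where it is a single affine-linear condition on $s(e)$, leaving $4-2=2$ surjective lifts over $\FF_2$. Your justification, via $D\le 3e$ and exactness of the extension on fibers, is more explicit than the paper's one-line assertion. It already settles the ``main obstacle'' you flag, so no local trivialization is needed, and the bound $\deg D\le 3$ is superfluous.
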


\proof
Set $\shL=\O_E(3e)$. Consider the long exact sequence
$$
0\ra\Hom(\O_E(e),\shL)\ra\Hom(\shE,\shL)\ra\Hom(\O_E,\shL)\ra\Ext^1(\O_E(e),\shL).
$$
The term on the right vanishes, hence extensions of any given map $\O_E\ra\O_E(3e)$ to a homomorphism $\shE\ra\O_E(3e)$ exist. They are unique up to maps that factor over $\O_E(e)$, 
in other words, elements of $H^0(E,\O_E(2e))$.
Since $\O_E(2e)$ is globally generated,
we indeed find surjective extensions $h:\shE\ra \shL$. For $k=\FF_2$, the   space $H^0(E,\O_E(2e))$ contains four vectors $s$,
and the extension $h'=h+s$  is non-surjective if and only if $s(e)=1$. Since there are exactly two vectors with $s(e)=0$,
we see that there are exactly two surjective extensions.
\qed

\begin{proposition}
\mylabel{prop:intersection distinguished section}
In the above situation, the ruling gives an identification
$$
\psi(R')\cap R= e+D
$$for some geometrically reduced 
divisor $D\subset E$ that is disjoint from the point $e\in E$, and linearly equivalent to   $4e$.
\end{proposition}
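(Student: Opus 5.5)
The plan is to determine the intersection divisor class first and then analyse it pointwise. Recall from the proof of Proposition~\ref{prop:distinguished curve} that $\omega_X=\O_X(-E^{(p)})$, where $E^{(p)}\subset\Sym^2_{E/k}=X$ is the image of the diagonal, and that $\psi(R')=E^{(p)}$. So the intersection $\psi(R')\cap R$ is the zero scheme of a section of $\omega_X^{\otimes -1}|_R$ on $R\simeq E$.

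\emph{Step 1: the divisor class.} By the Adjunction Formula, $\omega_R=\O_R(K_X+R)$. Since $R$ has genus one, $\omega_X^{\otimes-1}|_R\simeq\O_R(R)$. In diagram \eqref{eq:two invertible quotients}, the section $R$ corresponds to the quotient $\shE\ra\O_E(3e)$ with kernel $\O_E(-2e)$. As recalled in Section~\ref{subsec:ruled and albanese}, this gives
$$
\O_R(R)=\O_E(3e)\otimes\O_E(-2e)^{\otimes-1}=\O_E(5e).
$$
Hence $\psi(R')\cap R$, read on $E$ via the ruling, is an effective divisor linearly equivalent to $5e$. It is a finite scheme, because $R$ is a section and $\psi(R')$ is a double section, so they share no component.

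\emph{Step 2: the point $e$ lies in the intersection.} In terms of $\Sym^2$, a point of $X$ over $y\in E$ is an unordered pair $\{x_1,x_2\}$ with $x_1+x_2=y$, and it lies on $\psi(R')$ exactly when $x_1=x_2$. Over $y=e$ such a point satisfies $2x=0$. Since $E$ is supersingular there is no non-trivial $2$-torsion, so the only candidate is $\{e,e\}$. Next I would identify the section $A$ with $\O_A(A)=\O_E(e)$ concretely as the image of $\{e\}\times E$, namely $y\mapsto\{e,y\}$. Using Proposition~\ref{prop:sections and rational points}, this amounts to checking that its self-intersection is $1$ and its normal bundle is $\O_E(e)$. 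Then $A$ passes through $\{e,e\}$ over $e$. Because $R\cap A=3e$ contains $e$, the section $R$ also passes through $\{e,e\}$, and therefore $e\in\psi(R')\cap R$. Writing $\psi(R')\cap R=me+D$ with $e\notin D$, Step~1 gives $D\sim(5-m)e$.

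\emph{Step 3: $m=1$ and $D$ geometrically reduced.} This is the main obstacle. My first attempt would be a local computation at $\{e,e\}$: choose a uniformizer $\pi$ at $e$ and local coordinates on $\Sym^2$ given by the elementary symmetric functions $\sigma_1,\sigma_2$ in a formal parameter. In these coordinates the diagonal is cut out by a discriminant-type equation, and $R$ is the graph of one of the two surjections from Proposition~\ref{prop:only two extensions}. The multiplicity at $e$ should then come out as $1$ because the tangency of $R$ with $A$ at $e$ is concentrated in the $A$-direction, while the diagonal is transverse to $A$ there.

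For reducedness of $D$ I would pass to $k^\alg$ and pull everything back along $X''=E\times\PP^1\ra X'\ra X$. There $\psi(R')$ becomes the fibre $E\times\{(1:0)\}$ with multiplicity, and $R$ pulls back to a curve given by an explicit rational function on $E$. Reducedness then becomes separability of an explicit degree-$4$ equation, which I would verify using the Weierstrass equation $y^2+y=x^3+x$ for $E=E_5$. If that becomes too messy, the fallback is to compute $\shE$ and both extensions $h$ and $h+s$ explicitly via Čech cocycles on $E_5$ and check directly that the four residual intersection points are distinct and different from $e$.
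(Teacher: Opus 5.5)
Your Steps 1 and 2 are sound and give a valid alternative to the paper's bookkeeping. Adjunction with $\omega_X=\O_X(-E^{(p)})$ shows that $\psi(R')\cap R$, read on $R\simeq E$, is an effective divisor in $|5e|$. The $\Sym^2$-model shows that $R$, $A=\{e\}+E$ and $\psi(R')$ all pass through $\{e,e\}$. The paper instead pulls \eqref{eq:two invertible quotients} back along Verschiebung, and reads $R'\cap\psi^{-1}(A)=e'$ and $R'\cap\psi^{-1}(R)=e'+D'$ with $D'\sim 4e'$ off the zero schemes of the composite maps.

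The gap is Step 3, which is the real content of the proposition: you do not prove that the multiplicity at $e$ is one, nor that $D$ is geometrically reduced. The local computation is only announced (``should then come out as $1$''). The reducedness argument is a computation on $E_5$ over $\FF_2$ that is not carried out. Even if carried out, it would not cover the statement, which concerns an arbitrary supersingular $E$ over any field of characteristic two and any section $R$ with $R\cap A=3e$. Moreover, on $X''$ the relevant intersection is the pullback of $e+D$ along a purely inseparable isogeny, hence never reduced. So reducedness does not simply become separability of an equation there.

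The missing idea is a tangent-space argument that treats all intersection points at once. Over $k^{\alg}$, the curve $\psi(R')=E^{(p)}$ maps to $E$ by Verschiebung, which is purely inseparable since $E$ is supersingular. Its differential therefore vanishes, so $T_P\psi(R')$ is the tangent line of the ruling fibre at every $P\in\psi(R')$. A section $R$ maps isomorphically onto $E$, so it is never tangent to a fibre. Hence $R$ and $\psi(R')$ meet transversally everywhere, which gives $m=1$ and geometric reducedness of $D$ simultaneously.

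Your heuristic for $m=1$ alone can be made rigorous: $A\cdot\psi(R')=-K_X\cdot A=1$ forces $A$ and $\psi(R')$ to be transverse at $\{e,e\}$, while $R\cap A=3e$ forces $T R=T A$ there. But this says nothing about the points of $D$.

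The paper runs the equivalent argument on $X'$. There $\psi\colon X'\to X$ is a free $\alpha_p$-quotient, so $d\psi$ has a one-dimensional kernel $K$. The tangent line of $\psi^{-1}(R)$ equals $K$, because $\psi^{-1}(R)\to R$ is purely inseparable. On the other hand $T R'\cap K=0$, because $\psi|_{R'}$ is a closed embedding. So $R'\cap\psi^{-1}(R)\simeq\psi(R')\cap R$ is transversal.
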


\proof
First note that $\psi:X'\ra X$ restricts to a closed embedding on $R'$, giving an identification $\psi(R')\cap Z=R'\cap\psi^{-1}(Z)$
for each closed subscheme $Z\subset X$.
Pulling back \eqref{eq:two invertible quotients}, we obtain a commutative diagram 
\begin{equation}
\label{eq:three invertible quotients}
\begin{tikzcd}[row sep = small]
0\ar[r]		& \O_{E'}(-4e')\ar[rd]		& 		& \O_{E'}(2e')\ar[r]		& 0 \\		
0		& \O_{E'}(e')\ar[l]					& \psi^*\shE\ar[ru]\ar[rd]\ar[l]	& 	& \O_{E'}(e')\ar[ll]\ar[dl,dashed]\ar[ul,dashed]	& 0\ar[l]\\
0\ar[r]		& \O_{E'}\ar[ur]\ar[rr,dashed]	&  		& \O_{E'}(6e')\ar[r]		& 0
\end{tikzcd}
\end{equation}
with three invertible quotients, where the middle row corresponds to the section $R'$. The ruling identifies the pairwise intersections for the   sections  with the schemes of zeros
for the respective dashed arrows.  
The   arrow $ \O_{E'}(e')\ra \O_{E'}(2e')$ immediately  gives $R'\cap \psi^{-1}(A)=e'$. By construction we   have $\psi^{-1}(R)\cap \psi^{-1}(A)=6e'$,
and thus $e'\in R'\cap \psi^{-1}(R)$. 
The   arrow $\O_{E'}(e')\ra\O_{E'}(6e')$ now reveals     $R'\cap \psi^{-1}(R)=e'+D'$, where the second summand is linearly equivalent to $4e'$. 
To show that the divisor $e'+D'$ is geometrically reduced, we may assume that the base field is algebraically closed. Let $x'$ be a point in the intersection $R'\cap \psi^{-1}(R)$. The map induced by $\psi$ on the tangent space $T_{X', x'}$ has one-dimensional kernel $K$, because $\psi$ is the quotient by a free $\alpha_p$-action. Since $\psi^{-1}(R)$ is a smooth curve, we have $T_{\psi^{-1}(R), x'} = K$. On the other hand, $T_{R', x'}\cap K = 0$, since $R'$ embeds into $X$ under $\psi$. Thus the intersection $R'\cap \psi^{-1}(R)$ is transversal, and $e'+D'$ is geometrically reduced. 
\qed

\medskip
Since $p=2$, our supersingular elliptic curve $E$ has invariant $j=0$, and thus stems from 
a Weierstra\ss{} equation of the form $E:\, y^2+a_3y=x^3+a_4x+a_6$ with $a_3\neq 0$, compare \cite{Tate 1975}.
The quotient by the sign involution can be seen as the morphism 
$$
\epsilon:E\lra E/\{\pm 1\} = \Spec k[x]\cup\Spec k[1/x] =\PP^1,
$$
This is also given by the linear system $H^0(E,\O_E(2e))$, and thus   $\O_E(4e)=\epsilon^*(\O_{\PP^1}(2))$.

If the intersection $\psi(R')\cap R=e+D$ is stable under the sign involution, the same holds for $D$,
and we see that this  geometrically reduced divisor  is the pullback of a divisor defined by an inhomogeneous equation of the form 
$x^2+\lambda x+\mu=0$ with $\lambda\neq 0$.
Note that the vectors in $H^0(E,\O_E(2e))$ that vanish at the origin are precisely the scalars $s\in k$,
and changing the surjection $\shE\ra\O_E(3e)$ by such a vector   changes the constant term $\mu$ 
in the equation  to $\mu'=\mu+s^2$. Over the prime field $k=\FF_2$, this means that for the two sections $R,R'$
with $R\cap A=3e=R'\cap A$, the resulting schemes $D,D'\subset E$ are defined, possibly after swapping them, by
\begin{equation}
\label{eq:equations residual divisor}
\begin{gathered}
D:\, x^2+x=0,\quad y^2+y=(a_4+1)x+a_6, \\
D':\, x^2+x+1=0,\quad y^2+y=a_4x+(a_6+1).
\end{gathered}
\end{equation}
Note that the above discussion applies, regardless of which of the rational points of the para-elliptic curve $E$ is chosen as the neutral element for the group law.

We now exploit that for the elliptic curves over $\FF_2$ one may explicitly compute their automorphism group and that for $E=E_5$, the unique elliptic curve with $5$ rational points, 
the automorphism group is isomorphic to $C_5\rtimes \Aut(C_5)$ and in particular the sign involution does not generate a 2-Sylow group. See Section~\ref{subsec:elliptic curves F2}.

\begin{proposition}
\mylabel{prop:second section}
Suppose $E:y^2+y=x^3+x$ is the elliptic curve with five rational points over  
$k=\FF_2$. Then the canonical map $\Aut(X)\ra E(k)\rtimes\Aut(E)$ is bijective, 
and there  is a unique section $R\subset X$ such that the ruling
gives an identification  $\psi(R')\cap R=E(k)$. Moreover, the section is stable with respect to $\Aut(X)$. 
\end{proposition}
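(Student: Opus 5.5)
The proof has three parts, all built on the earlier results of this section. First I show that $\Aut(X)\ra E(k)\rtimes\Aut(E)$ is injective, using Proposition~\ref{prop:relative automorphism group}. Then I show it is surjective, using that $X=\Sym^2_{E/k}$ is functorial in $E$. Finally I produce $R$ from the two candidates of Proposition~\ref{prop:only two extensions}, selecting between them with the explicit equations \eqref{eq:equations residual divisor} and the count $|E(k)|=5$.

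\emph{Bijectivity.} For surjectivity, translations $E(k)$ and automorphisms $\Aut(E)$ act on $\Sym^2_{E/k}$ by functoriality. Translation by $a$ induces translation by $2a$ on the base under the addition map. Since $E(k)\simeq C_5$ has odd order, $a\mapsto 2a$ is bijective, so every element of $E(k)\rtimes\Aut(E)$ is hit. For injectivity, an element of the kernel lies in $\Aut(X/E)$. It extends to $X'$, commutes with Frobenius and Verschiebung base change, and therefore lies in $\Aut_{X'/k,E'}(k)\subset\GG_a(k)$ by Proposition~\ref{prop:relative automorphism group}. Such an element also preserves the section $A$ attached to $e$ via Proposition~\ref{prop:sections and rational points}. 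Pulled back to $X''=E\times\PP^1$, it must therefore fix the preimage of $R'$ and the preimage of $A$, which are distinct curves. An element $x\mapsto x+c$ of $\GG_a$ fixing a second point forces $c=0$; equivalently, a nonzero $c$ would move $A$. Hence the kernel is trivial. I expect injectivity to need this care, because $\GG_a(\FF_2)=\{0,1\}$ is nontrivial and the extra fixed curve must genuinely be used.

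\emph{Existence of $R$.} Take $e$ as origin and let $A$ be the section with $\O_A(A)=\O_E(e)$. By Proposition~\ref{prop:only two extensions} there are exactly two sections $R_1,R_2$ with $R_i\cap A=3e$, and by \eqref{eq:equations residual divisor} their residual divisors are $D$ and $D'$. For $E:y^2+y=x^3+x$ we have $a_4=a_6=1$. The divisor $D$ is then given by $x\in\{0,1\}$ together with $y^2+y=0$, which yields the four nonzero rational points $(0,0),(0,1),(1,0),(1,1)$. The divisor $D'$ instead has no rational points. So exactly one section, call it $R$, satisfies $\psi(R')\cap R=e+D=E(k)$ as a reduced scheme, by Proposition~\ref{prop:intersection distinguished section}.

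\emph{Uniqueness and stability.} Let $S$ be any section with $\psi(R')\cap S=E(k)$. Uniqueness should follow by intersection numbers: $S\cap\psi(R')$ has degree $5$, which fixes the class of $S$. Then $S\cap A$ lies over $e$, and I would show $S\cap A=3e$ by combining the degree count in \eqref{eq:two invertible quotients} with Proposition~\ref{prop:sections and rational points}. The two-candidate computation above then forces $S=R$. This identification of the class of $S$ is the step I expect to be the main obstacle, and I would try the degree count first.

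For stability, $\Aut(X)$ lifts to $X'$ and preserves $R'$ by Corollary~\ref{cor:distinguished curve stable}, hence preserves $\psi(R')$. It also permutes the rational points $E(k)$, since it acts on $E$ through $E(k)\rtimes\Aut(E)$. So $\sigma(R)$ is again a section meeting $\psi(R')$ exactly in $E(k)$, and uniqueness gives $\sigma(R)=R$.
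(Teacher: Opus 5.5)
Your bijectivity argument is sound. For injectivity it takes a different and cleaner route than the paper. You observe that $\Aut(X/E)$ fixes the section $A$ by the equivariant bijection of Proposition~\ref{prop:sections and rational points}, and this kills the $\GG_a$-element of Proposition~\ref{prop:relative automorphism group} without any reference to $R$. The paper only gets triviality of $\Aut(X/E)$ after proving that $R$ is unique and hence $\Aut(X)$-stable. The stability step matches the paper.

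The gaps are in the existence and uniqueness of $R$. For existence, the equations \eqref{eq:equations residual divisor} were derived under the hypothesis that $\psi(R')\cap R$ is stable under the sign involution $\sigma_e$. You apply them to the two candidates $R_1,R_2$ without checking this. A priori $\sigma_e$, which fixes $A$, might interchange $R_1$ and $R_2$, and then nothing forces $D_i$ to be a pullback from $E/\{\pm1\}=\PP^1$. The missing step is the paper's:
\begin{itemize}
\item the stabilizer of $e$ in $E(k)\rtimes\Aut(E)$ is cyclic of order four, with $\sigma_e$ the square of a generator;
\item this stabilizer fixes $A$, hence permutes the two-element set $\{R_1,R_2\}$, so $\sigma_e$ fixes each $R_i$;
\item the whole group preserves $\psi(R')$ by \eqref{eq:all automorphisms}, so each $D_i$ is $\sigma_e$-stable.
\end{itemize}
Also, this curve has $a_4=1$ and $a_6=0$, not $a_6=1$. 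With $a_6=1$ the equations would give $y^2+y=1$ and no rational points; your four points are correct for $a_6=0$.

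For uniqueness, you have placed the obstacle in the wrong spot. The class of $S$ is immediate: with $f$ a fiber, $\psi(R')\equiv -K_X\equiv 2A-f$, so $(S\cdot\psi(R'))=5$ forces $S\equiv A+2f$ and $(S\cdot A)=3$. The real content is your unsupported sentence that $S\cap A$ lies over $e$. Proposition~\ref{prop:sections and rational points} cannot provide it, since $S^2=5$. You need three facts:
\begin{itemize}
\item $A$ meets $\psi(R')$ only in the point above $e$; this comes from the dashed arrow $\O_{E'}(e')\to\O_{E'}(2e')$ in \eqref{eq:three invertible quotients}.
\item $\psi(R')\to E$ is bijective on points, and $S$ contains the point of $\psi(R')$ above every rational point. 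Hence $S\cap A$ contains the point above $e$ and no point above any other rational point.
\item $E$ has no closed points of degree two, since $E(\FF_4)=E(\FF_2)$; the $\FF_4$-point $x^2+x+1=0$ of $\PP^1$ has a point of degree four above it.
\end{itemize}
Only then is the degree-three scheme $S\cap A$ forced to be $3e$. That puts $S$ among $R_1,R_2$ and lets your residual-divisor computation conclude.
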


\proof
We start with the existence of the section.
Write $a_1,\ldots,a_5\in E$ for the rational points, and $A_1,\ldots,A_5\subset X$ for the corresponding sections with self-intersection $n=1$.
By Proposition \ref{prop:only two extensions},  there are exactly two sections $R_i\neq R'_i$ satisfying $R_i\cap A_i=3a_i=R'_i\cap A_i$.
In turn, the group $\Aut(X)$ acts via permutations on the sets 
$$
\{A_1,\ldots, A_5\}\quadand \{R_1,R'_1,\ldots, R_5,R'_5\}.
$$
Set $\Gamma=E(k)\rtimes\Aut(E)$, seen as a subgroup of $\Aut(X)$, and write $\Gamma_i$ for the stabilizer   of $a_i\in E$.
Note   that   this is the automorphism group of $E$ regarded as elliptic curve with origin $a_i$,
and we write $\sigma_i\in \Gamma_i$ for the ensuing sign involution.
Since $a_i$ and hence also $A_i$ are stabilized by $\Gamma_i$, the group $\Gamma_i$ acts via permutations on the set $\{R_i,R'_i\}$.
Since the group is cyclic of order four,  both $R_i$ and $R'_i$ must be $\sigma_i$-stable.
From \eqref{eq:all automorphisms} we infer that the 
whole group $\Gamma$ stabilizes the image   of the distinguished section $R'\subset X'$. Writing
$$
\psi(R')\cap R_i = a_i+D_i\quadand \psi(R')\cap R'_i = a_i+D'_i.
$$
we see that the divisors $D_i, D'_i\subset E$ are $\sigma_i$-stable. In turn, the descriptions
\eqref{eq:equations residual divisor} apply, hence the above intersections have coordinate rings $(\FF_2)^5 $ and $\FF_2\times\FF_{16}$,
respectively.
Summing up, we have constructed sections $R_i$ such that the ruling gives identifications $\psi(R')\cap R_i=E(k)$.

We next verify uniqueness.  Suppose we have a section $R$ with $\psi(R')\cap R=E(k)$.
Fix an index $1\leq j\leq 5$,   regard $E$ as elliptic curve with origin $e=a_j$,
write   $X=\PP(\shE)$ for the non-split extension $0\ra\O_E\ra\shE\ra\O_E(e)\ra 0$,
and let $\shN=\shE/\shN_0$ be the invertible quotient corresponding to $R$. Then $\shN=\O_E(Z)$ for some effective divisor $Z$.
The pullback $\psi^*\shE$ now comes with three invertible quotients. This is depicted in  
\begin{equation} 
\begin{tikzcd}[row sep=small]
0\ar[r]		& \shN_0^{\otimes 2}\ar[rd]		& 		& \O_{E}(2e)\ar[r]		& 0 \\		
0		& \O_E(e)\ar[l]					& \psi^*\shE\ar[ru]\ar[rd]\ar[l]	& 	& \O_E(e)\ar[ll]\ar[dl,dashed]\ar[ul,dashed]	& 0\ar[l]\\
0\ar[r]		& \O_{E}\ar[ur]\ar[rr,dashed]	&  		& \O_E(2Z)\ar[r]		& 0
\end{tikzcd}
\end{equation}
where the pairwise  intersections of the sections are the zero schemes of the dashed arrows.
By assumption, the dashed arrow $\O_E(e) \ra \O_E(2Z)$ vanishes precisely along $a_1+\ldots+a_5$.
Then $2Z$ is linearly equivalent to $2a_j+ \sum_{i\ne j} a_i$. The latter is linearly equivalent to $6e$,
as one sees by looking at the quotient   $E\ra E/\{\pm 1\}=\PP^1$.

The composite map $\O_E(e)\ra\psi^*\shE\ra\O_E(2e)$ vanishes precisely at  $e$, and it follows that the curves
$R,A,\psi(R')$ have a common intersection over   $e\in E$. In turn,
the composite map $\O_E\ra\shE\ra\O_E(Z)$ vanishes at $e=a_j$,   but not at the other rational points.
Since $\PP^1$ contains exactly one point with residue field isomorphic to $\FF_4$,
the curve $E$ contains no such point, the map vanishes precisely at $3e$,
and therefore  $R=R_j$. Since the index $j$ is arbitrary, we conclude  $R_1=\ldots=R_5$, and our section $R$ coincides with
this.

It remains to verify the statements on the automorphism group. By uniqueness,  the section $R$ must be stable
with respect to  $\Aut(X)$, and is therefore fixed by $\Aut(X/E)$.
According to Proposition \ref{prop:relative automorphism group}, 
the latter acts freely on the open set $X\smallsetminus\psi(R')$, hence $\Aut(X/E)$ must be  trivial.
In turn, the canonical surjection $\Aut(X)\ra E(k)\rtimes\Aut(E)$ is bijective.
\qed
 
\medskip
Setting   $E :y^2+y=x^3+x$ and    $X=\Sym^2_{E/k}$ over the prime field $k=\FF_2$, we finally arrive at the  commutative diagram
\begin{equation}
\label{eq:diagram ruled surfaces}
\begin{tikzcd} 
X\ar[r,"F_{X/E}"]\ar[d]\ar[rr,bend left=40, "F"]	& X'\ar[r,"\psi"]\ar[d]    		& X\ar[d]\\
E\ar[r,"\id"']           				& E'\ar[r,"\psi"']\ar[u,bend left=40]	& E\ar[u,bend right=40],                             
\end{tikzcd}
\end{equation} 
where $X'$ is the base-change along   Frobenius    $E'=E\stackrel{\psi}{\ra} E$, 
the resulting projection is also denoted by  $\psi:X'\ra X$, and
$F_{X/E}:X\ra X'$ is the relative Frobenius map. The section $R'$  on the left 
stems from the inclusion $E'=E^{(p)}\subset\Sym^2_{E/k}$, while the section   on the right is given by the
unique $R\subset X$ with $\psi(R')\cap R=E(k)$, as established in Proposition \ref{prop:second section}. The group 
$$
C_5\rtimes\Aut(C_5) =E(k)\rtimes\Aut(E)=\Aut(X)
$$
acts on all terms, and all maps are equivariant.
This diagram with the ruled surfaces $X$ and $X'$ indeed has the same form as the original diagram with the  Deligne--Lusztig surfaces 
$\bX(s_1s_2)$ and $\bX(s_2s_1)$, and we  now can formulate the main result of this section:

\begin{theorem} 
\mylabel{thm:c2twist}
Over $k=\FF_2$, the diagram \eqref{eq:diagram   dl surfaces} of Deligne--Lusztig surfaces  
is isomorphic to the diagram \eqref{eq:diagram ruled surfaces}
of ruled surfaces.
In particular, one has 
\[
\bX(s_1s_2)\simeq\PP(\shE)\quadand \bX(s_2s_1)\simeq\PP(\shF),
\]
for the non-split extensions 
\[
0\ra\O_E\ra\shE\ra\O_E(e)\ra 0\quadand 0\ra \O_E(e)\ra \shF\ra \O_E(e)\ra 0.
\]
\end{theorem}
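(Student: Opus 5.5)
The plan is to identify the two Deligne--Lusztig surfaces one at a time, starting with the base curve. Then I would build the isomorphism of diagrams from the intrinsic characterizations already established, namely Theorem~\ref{thm:characterization symmetric product}, Proposition~\ref{prop:distinguished curve} and Proposition~\ref{prop:second section}.

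\emph{Step 1 (the base curve).} By Proposition~\ref{prop:DL curves} and the genus table, $\bX(s_1)\simeq\bX(s_2)$ has genus $q_0(2q_0^2-1)=1$. Since $\varphi^2=F$, the $q^4+1=5$ geometric points of $X(\id)$ are fixed by $F$, hence $\FF_2$-rational. They lie on each $\bX(s_i)$, so $|\bX(s_i)(\FF_2)|\geq 5$. The Hasse bound then forces equality, and the list in Section~\ref{subsec:elliptic curves F2} identifies the curve as $E=E_5\colon y^2+y=x^3+x$.

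\emph{Step 2 (the surface $\bX(s_1s_2)$).} By Corollary~\ref{cor:structure of ruled surface}, $Y=\bX(s_1s_2)$ is ruled over $\bX(s_2)=E$, and $E$ is its Albanese image. The table with $q_0=1$ gives coefficients $0$ and $-1$, so $K_Y=-D_2=-\bX(s_1)$. The boundary curves meet in $X(\id)$, and Proposition~\ref{prop:stratification and building} makes this intersection transversal, so $(\bX(s_1)\cdot\bX(s_2))=5$, which is odd. Theorem~\ref{thm:characterization symmetric product} therefore applies with $D=\bX(s_1)$ and $D'=\bX(s_2)$. Since $E$ has rational points, it yields $Y\simeq\Sym^2_{E/k}=\PP(\shE)$ with $\shE$ the non-split extension of $\O_E(e)$ by $\O_E$. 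Under this isomorphism, $\bX(s_2)$ is a section, and $\bX(s_1)$ is the double section, which is the image of the diagonal $E^{(2)}$. The latter identification follows because $\omega_X=\O_X(-E^{(p)})$ and $K_Y=-\bX(s_1)$ both determine the double section as the unique anticanonical curve.

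\emph{Step 3 (the surface $\bX(s_2s_1)$ and the arrows).} The cyclic shifts of Proposition~\ref{prop:finite universal homeomorphisms} give universal homeomorphisms $\bX(s_2)\to\bX(s_1)\to\bX(s_2)$ whose composite is $F$, which has degree $2$. Hence one of the two curve maps is an isomorphism and the other is purely inseparable of degree $2$. After composing with the identification $\bX(s_1)\simeq E$ of Proposition~\ref{prop:DL curves}, I would show these maps are $\id$ and the Frobenius/Verschiebung $\psi$ respectively; on the supersingular curve $E$ the two agree up to an automorphism. Next I consider the morphism $\bX(s_2s_1)\to\bX(s_1s_2)\times_{\bX(s_2)}\bX(s_1)=X'$ given by the cyclic shift $\varphi$ and the ruling. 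Both sides are $\PP^1$-bundles over $E$, and the map is a universal homeomorphism. I would then argue it has degree one by degree bookkeeping: the composite $\bX(s_1s_2)\to\bX(s_1s_2)$ is $F$, of degree $4$, and it factors as a relative Frobenius of degree $2$ followed by this base change. A birational finite morphism onto the normal surface $X'$ is an isomorphism, so this identifies $\bX(s_2s_1)=X'=\PP(\psi^*\shE)=\PP(\shF)$, using \eqref{eq:first  pullback}.

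\emph{Step 4 (matching the sections).} The section $\bX(s_1)\subset\bX(s_2s_1)$ satisfies $K=-2\bX(s_1)$ by the second table row. Proposition~\ref{prop:distinguished curve} says $R'$ is the unique such curve, so $\bX(s_1)=R'$. The section $\bX(s_2)\subset\bX(s_1s_2)$ is $G^\varphi$-stable and meets the image of $\bX(s_1)=\psi(R')$ exactly in $X(\id)=E(k)$. Proposition~\ref{prop:second section} then identifies it with $R$. Equivariance of all maps follows from the uniqueness statements together with $\Aut(X)=E(k)\rtimes\Aut(E)$.

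I expect Step~3 to be the main obstacle. Pinning down which curve map is the isomorphism, and proving that the comparison map to the fiber product is birational rather than an additional Frobenius twist, requires a careful degree count of $\varphi$ on the fibres of the rulings.
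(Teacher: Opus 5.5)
Your route is the paper's: pin down the curve as $E_5$, apply Theorem~\ref{thm:characterization symmetric product} to $\bX(s_1s_2)$ with $D=\bX(s_1)$ and $D'=\bX(s_2)$, and recognise $\bX(s_2s_1)$ as $\bX(s_1s_2)\times_{\bX(s_2)}\bX(s_1)$ through a finite birational map onto a regular surface.

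The gap is in Step~3, where you flag it yourself. You never determine which of the two curve maps $\varphi$ has degree $2$, nor that $\varphi\colon\bX(s_1s_2)\to\bX(s_2s_1)$ has degree $2$. The bookkeeping you propose cannot decide this, because the relation $\varphi^2=F$ is symmetric in the two options. This holds for the curves, and also, once the curve degrees $1$ and $2$ are fixed, for the fibre degrees $d_1d_2=2$ of the two surface maps. In the wrong option, $\varphi\colon\bX(s_1s_2)\to\bX(s_2s_1)$ would be an isomorphism and your comparison map would be purely inseparable of degree $2$. Your phrase ``factors as a relative Frobenius of degree~2'' assumes the answer.

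The paper supplies the missing input in Proposition~\ref{prop:degrees inseparable maps}. A matrix $S\in\Sp_4(k[\epsilon])$ with $\varphi(S)=1$ gives a tangent vector to $\bX(s_1)$ at $B$ killed by $\varphi$, so $\bX(s_1)\to\bX(s_2)$ has degree $2$. The surface map $\bX(s_1s_2)\to\bX(s_2s_1)$ restricts on $\bX(s_1)$ to this map, so it is not an isomorphism. Hence both surface maps have degree $2$ and the comparison map is birational.

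You could also get this from your own Step~2. The ruling $(g_1B,g_2B)\mapsto g_2B$ restricted to $D_2=\{(gB,\varphi(g)B)\}\cong\bX(s_1)$ is exactly $\varphi$. Moreover $D_2$ is a double section, since $-K\cdot F=2$ (as in Theorem~\ref{thm:characterization symmetric product}(ii)). Then the restriction argument above applies.

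Step~4 goes beyond the paper, whose proof stops once $\bX(s_1s_2)\simeq\Sym^2_{E/k}$ and $\bX(s_2s_1)$ is identified with the Frobenius base change. It also contains an error. For $w=s_2s_1$ the boundary divisors are $D_1=\bX(s_1)$ and $D_2=\bX(s_2)$, so the table gives $K_{\bX(s_2s_1)}\equiv-2\bX(s_2)$, not $-2\bX(s_1)$. Since $\bX(s_1)\cdot\bX(s_2)=5$, adjunction gives $\bX(s_1)^2=10$ inside $\bX(s_2s_1)$, so $\bX(s_1)$ cannot be $R'$, which has self-intersection $0$.

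The distinguished section is $\bX(s_2)\subset\bX(s_2s_1)$; this is also how the paper later applies Theorem~\ref{thm:characterization ruled via symmetries}, with $A=\bX(s_2)$. The map $\varphi$ sends it onto the anticanonical curve $\bX(s_1)\subset\bX(s_1s_2)$, which is $\psi(R')$. By contrast, the Deligne--Lusztig section $\bX(s_1)\subset\bX(s_2s_1)$ maps onto $\bX(s_2)\subset\bX(s_1s_2)$ and corresponds to $\psi^{-1}(R)$.

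With this correction, your use of Proposition~\ref{prop:second section} to identify $\bX(s_2)\subset\bX(s_1s_2)$ with $R$ is fine. You also no longer need the unproved claim that $\Sym^2_{E/k}$ has a unique anticanonical curve. Finally, transversality of $\bX(s_1)\cap\bX(s_2)$ comes from the scheme-theoretic intersection being the \'etale scheme $(G/B)^\varphi$, or from the normal crossings property of the compactification. It does not come from Proposition~\ref{prop:stratification and building}.
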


The sheaves $\shE$ and $\shF$ are precisely the indecomposable sheaves of rank
$r=2$, with respective determinant  $\O_E(e)$ and $\O_E(2e)$. Also note that the
two   surfaces are not isomorphic: Otherwise $\shE$ would be isomorphic to
$\shE'=\sigma^*(\shF)\otimes\shN$ for some $\sigma\in\Aut(E)$ and
$\shN\in\Pic(E)$. In contrast to the former, the Frobenius pullback of the
latter becomes decomposable.

The proof requires some preparation, and will be given at the end of  Section
\ref{subsec:characterization b2twist}. We also will  see that  $G^\varphi(k)$ is
a group of order twenty, and identify it with $C_5\rtimes\Aut(C_5)$ via
symplectic matrices.

\subsection{Matrix interpretation of the \texorpdfstring{$p$}{p}-isogeny}
\mylabel{subsec:matrix interpretation}

In this section we take a little detour  and use symplectic matrices to   
make the $p$-isogeny $\varphi:G\ra G$ in the Deligne--Lusztig datum \eqref{eq:dld c2twist} explicit.
This seems  to be of independent interest, and also clarifies the connection to Suzuki groups. 
As before, we set $p=2$, and we fix $n \ge 0$ and write $s = 2n+1$, $q=\sqrt{p^s} = \sqrt{2^s}$.

Write $J$ for the    block matrix $\left(\begin{smallmatrix}0&R\\-R&0\end{smallmatrix}\right)$ 
of size $4\times 4$, for the reverse diagonal matrix
$R= \left(\begin{smallmatrix}0&1\\1&0\end{smallmatrix}\right)$, and recall that the  symplectic group 
$\Sp_{4}\subset\GL_{4}$ for the alternating pairing defined by $J$ comprises the matrices $S$ satisfying 
the equivalent conditions
\begin{equation}
\label{eq:defining conditions symplectic}
{}^tSJS=J \quadand   J\cdot ({}^tS^{-1})\cdot J^{-1} = S.
\end{equation} 
Note that the latter exhibits the symplectic group as a scheme of fixed points under an involution. Also note that
our choice of $R= \left(\begin{smallmatrix}0&1\\1&0\end{smallmatrix}\right)$ follows the Bourbaki convention
\cite{LIE 7-8}, Chapter VIII, \S 13, No.\ 3).
The Lie algebra
$$
\liesp_4(k)=\{T  \mid {}^tTJ+JT=0\}   \subset\gl_4(k).
$$
is 10-dimensional, and comprises the block matrices $T=\left(\begin{smallmatrix}A&B\\C&D\end{smallmatrix}\right)$
with blocks
\begin{equation}
\label{eq:blocks in symplectic lie algebra}
A=\begin{pmatrix}
a&b\\
c&d
\end{pmatrix},\quad
B=\begin{pmatrix}
u&v\\
w&u
\end{pmatrix},\quad
C=\begin{pmatrix}
x&y\\
z&x
\end{pmatrix},\quad
D=\begin{pmatrix}
-d&-b\\
-c&-a
\end{pmatrix}.
\end{equation} 
The standard Borel group $B$ is given by the symplectic upper triangular matrices,
and the standard maximal torus $T$ is formed by the   symplectic diagonal  matrices,
which have the form $S(\lambda_1,\lambda_2) = {\rm diag}(\lambda_1, \lambda_2, \lambda^{-1}_{2}, \lambda^{-1}_{1})$.
Consider the characters $\epsilon_i:T\ra \GG$  given by $S(\lambda_1,\lambda_2)\mapsto\lambda_i$,
and the resulting real vector space $V=\RR\epsilon_1\oplus \RR\epsilon_2$.  As explained in \cite{LIE 7-8}, Chapter VIII \S 13, No.~3,
the vectors $\alpha_1=\epsilon_1-\epsilon_2$ and $\alpha_2=2\epsilon_2$
generate the root system for $\liesp_4(k)$, and coincide with the root system introduced in Section~\ref{subsec:formulation c2twist}.
Moreover, the two copies of $\GG_a$ inside $\Sp_4$ stemming from the matrices 
$$
\begin{pmatrix}
1	& \mu_{1}	&	&\\
	& 1		&  	& \\
	&		& 1	& -\mu_{1}  \\
	&		&	& 1\\
\end{pmatrix}
\quad\quadand\quad
\begin{pmatrix}
1	& 	&		&\\
	& 1	& \mu_{2}	&\\
	&	& 1		&\\
	&	&  		&1
\end{pmatrix}
$$
are the root subgroups for the simple roots $\alpha_{1}$ and $\alpha_{2}$, and  yield a pinning.

So far everything is valid   for the Chevalley group over the ring of integers. To exploit the special features of $p=2$,  
consider  the mapping that sends a $4\times 4$-matrix $S=(\sigma_{ij})$ to the $4\times 4$-matrix
\begin{equation}
\label{eq:explicit isogeny}
\begin{pmatrix}
\sigma_{11}\sigma_{22}-\sigma_{12}\sigma_{21}	& \sigma_{11}\sigma_{23}-\sigma_{13}\sigma_{21}	& \sigma_{12}\sigma_{24}-\sigma_{14}\sigma_{22}	& \sigma_{13}\sigma_{24}-\sigma_{14}\sigma_{23}\\
\sigma_{11}\sigma_{32}-\sigma_{12}\sigma_{31}	& \sigma_{11}\sigma_{33}-\sigma_{13}\sigma_{31}	& \sigma_{12}\sigma_{34}-\sigma_{14}\sigma_{32}	& \sigma_{13}\sigma_{34}-\sigma_{14}\sigma_{33}\\
\sigma_{21}\sigma_{42}-\sigma_{22}\sigma_{41}	& \sigma_{21}\sigma_{43}-\sigma_{23}\sigma_{41}	& \sigma_{22}\sigma_{44}-\sigma_{24}\sigma_{42}	& \sigma_{23}\sigma_{44}-\sigma_{24}\sigma_{43}\\
\sigma_{31}\sigma_{42}-\sigma_{32}\sigma_{41}	& \sigma_{31}\sigma_{43}-\sigma_{33}\sigma_{41}	& \sigma_{32}\sigma_{44}-\sigma_{34}\sigma_{42}	& \sigma_{33}\sigma_{44}-\sigma_{34}\sigma_{43}\\
\end{pmatrix},
\end{equation} 
which comprises sixteen of the thirty-six 2-minors of $S$.

\begin{proposition}
\mylabel{prop:properties isogeny}
Over the ground field $k=\FF_2$, the above map defines an isogeny $\varphi:\Sp_4\ra\Sp_4$ of group schemes
satisfying $\varphi^2=F$. The kernel is the height-one group scheme with 5-dimensional Lie algebra
comprising the block matrices $T=\left(\begin{smallmatrix}A&B\\C&D\end{smallmatrix}\right)$
whose blocks \eqref{eq:blocks in symplectic lie algebra} satisfy
$$
a=d\quadand w=v=0\quadand y=z=0.
$$
It respects the pinning, and induces $\varphi_*(\alpha_1^\vee)=2\alpha_2^\vee$ and $\varphi_*(\alpha_2^\vee)=\alpha_1^\vee$.
Moreover, for each non-zero perfect $k$-algebra $R$ the induced  map $\Sp_4(R)\ra\Sp_4(R)$ is an  automorphism  that is not inner.
\end{proposition}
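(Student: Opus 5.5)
The plan is to interpret the sixteen minors in \eqref{eq:explicit isogeny} as a piece of the action of $S$ on $\Lambda^2 V$, and to exploit that in characteristic two the symplectic form gives a distinguished structure there. Write $V=k^4$ with basis $e_1,\ldots,e_4$. Then $\Lambda^2V$ carries the quadratic Pfaffian form $Q$ (the Klein quadric), and the symplectic form $\omega$ defines a vector $\omega\in\Lambda^2V$. In characteristic two, $\omega$ is singular for the polar form of $Q$ restricted to $\omega^\perp$, because the bilinear form on $\Lambda^2V$ is alternating. So $W=\omega^\perp/k\omega$ is four-dimensional and acquires a non-degenerate alternating form, and $\Sp(V)$ acts on it. With $J$ as chosen, the basis vectors $e_1\wedge e_2$, $e_1\wedge e_3$, $e_2\wedge e_4$ and $e_3\wedge e_4$ lie in $\omega^\perp$ and map to a basis of $W$, because $\omega=e_1\wedge e_4+e_2\wedge e_3$ up to sign. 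The matrix of $S$ acting on $W$ in this basis is exactly \eqref{eq:explicit isogeny}. The rows and columns use only these index pairs, and reducing modulo $\omega$ discards the $(1,4)$ and $(2,3)$ contributions. This gives a homomorphism $\varphi\colon\Sp_4\ra\Sp(W)\cong\Sp_4$ of group schemes over $\FF_2$ by construction. I would check directly that the induced form on $W$ is given by $J$ in the chosen basis, so that the target is literally $\Sp_4$.

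\textbf{Torus and pinning.} Apply the map to the torus element $S(\lambda_1,\lambda_2)$. It becomes the diagonal matrix with entries $\lambda_1\lambda_2$, $\lambda_1\lambda_2^{-1}$, $\lambda_2\lambda_1^{-1}$ and $\lambda_1^{-1}\lambda_2^{-1}$, which equals $S(\lambda_1\lambda_2,\lambda_1\lambda_2^{-1})$. On cocharacters, $\alpha_1^\vee=\epsilon_1-\epsilon_2$ gives $t\mapsto S(t,t^{-1})$, which maps to $S(1,t^2)$, i.e.\ to $2\alpha_2^\vee$. Similarly $\alpha_2^\vee$ gives $S(1,t)$, which maps to $S(t,t^{-1})$, i.e.\ to $\alpha_1^\vee$. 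Next, substitute the two root-group matrices: the $\alpha_1$ root group should land on the $\alpha_2$ root group with parameter $\mu_1^2$, and the $\alpha_2$ root group on the $\alpha_1$ root group with parameter $\mu_2$. These are short computations, and they show that $\varphi$ is a morphism of pinned groups with $q(\alpha_1)=1$ and $q(\alpha_2)=2$, possibly after a sign that vanishes in characteristic two. Then $\varphi^2$ acts on the pinned root datum as $2\cdot\id$. By the Isogeny Theorem~\ref{thm:isogeny theorem}, the faithfulness part gives $\varphi^2=F$. This also shows that $\varphi$ is an isogeny, since $\varphi_*\otimes\QQ$ is bijective.

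\textbf{Kernel.} Since $\varphi^2=F$, the kernel of $\varphi$ sits inside $\ker F$, so it has height one and is determined by its Lie algebra $\ker(d\varphi)$. Differentiate \eqref{eq:explicit isogeny} at the identity with $S=1+\varepsilon T$. Each entry becomes a sum of two diagonal or off-diagonal entries of $T$; for instance the $(1,1)$-entry gives $a+d$. Imposing the shape \eqref{eq:blocks in symplectic lie algebra} and setting the result to zero should yield $a=d$, $v=w=0$ and $y=z=0$. I expect some care with signs, but they disappear in characteristic two. The kernel is then five-dimensional, consistent with $\deg\varphi=2^4$ being the square root of $\deg F=2^{8}$.

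\textbf{Automorphism of $\Sp_4(R)$.} For perfect $R$, Frobenius is bijective on $\Sp_4(R)$, so $\varphi^2=F$ forces $\varphi$ to be bijective on $R$-points. If $\varphi$ were inner, say conjugation by $g$, then it would preserve root lengths under its action on the torus, or more simply it would act trivially on the Weyl group modulo conjugation. But $\varphi$ swaps the long and short simple roots, and no inner automorphism can do that. Equivalently, the induced map on cocharacters of a maximal torus would be a Weyl group element, whereas $\varphi_*$ is not of finite order.

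\textbf{Main obstacle.} I expect the hardest step to be the first one: verifying cleanly that \eqref{eq:explicit isogeny} really is the action on $W$ and lands in $\Sp_4$ with respect to $J$, with all index and sign conventions matching.
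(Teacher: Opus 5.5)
The gap is in your last step, the claim that $\varphi$ is not inner; the rest of your plan is sound. Your quotient $W=\omega^\perp/k\omega$ with basis $\bar e_{12},\bar e_{13},\bar e_{24},\bar e_{34}$ is the paper's intrinsic Pl\"ucker explanation. Your torus, root-group and dual-number kernel computations match the paper's. Deriving $\varphi^2=F$ from faithfulness in Theorem~\ref{thm:isogeny theorem} is valid, and so is getting bijectivity on $\Sp_4(R)$ from bijectivity of $F$; both are tidier than the paper's brute force and torsor arguments.

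\textbf{The gap.} The statement is about the abstract group $\Sp_4(R)$: there is no $g\in\Sp_4(R)$ with $\varphi(h)=ghg^{-1}$ for all $h\in\Sp_4(R)$. Your argument via root lengths and cocharacters only shows that $\varphi$ is not conjugation by $g$ as a morphism of group schemes. That is not the claim, and at the scheme level it is trivial anyway, since $\varphi$ has a non-trivial infinitesimal kernel. Two morphisms that agree on $R$-points need not be equal when $\Sp_4(R)$ is not Zariski-dense. For finite $R$ your argument therefore has no content. The key case is $R=\FF_2$: there $T(\FF_2)$ is trivial, and the assertion amounts to $\varphi$ inducing the exceptional outer automorphism of $\Sp_4(\FF_2)\cong S_6$.

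\textbf{How to close it.} Use an invariant of conjugation inside $\Sp_4(R)$. Write $E_{ij}$ for matrix units. Your own root-group computation gives that $\varphi$ sends $1+E_{12}+E_{34}$ to $1+E_{23}$. Over every residue field $R/\maxid$, the matrix $E_{12}+E_{34}$ has rank two while $E_{23}$ has rank one. Conjugation by an element of $\Sp_4(R)$ preserves these ranks after reduction, so $\varphi$ cannot be inner. This is the paper's Jordan normal form argument, and it works uniformly for every non-zero perfect $R$.

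\textbf{Two smaller slips.}
\begin{enumerate}
\item With the paper's conventions, $\varphi_*(\alpha_1^\vee)=2\alpha_2^\vee$ means $q(\alpha_1)=2$ and $q(\alpha_2)=1$, as your own formula sending the $\alpha_1$ root group to the $\alpha_2$ root group with parameter $\mu_1^2$ shows. You wrote them the other way round.
\item Since $\dim\Sp_4=10$, the Frobenius kernel has order $2^{10}$ and $\ker\varphi$, with its $5$-dimensional Lie algebra, has order $2^5=32$. Your consistency check with $2^8$ and $2^4$ is off.
\end{enumerate}
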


\proof
A  brute force computation reveals that on the symplectic group, the map $\varphi$ respects multiplication,
factors over $\Sp_4$,  and satisfies $\varphi^2=F$. 
The latter ensures that $\Kernel(\varphi)$ is annihilated by the Frobenius. 
Applying $\varphi$ to symplectic matrices of the form $S=E_{4} + \epsilon\cdot T$ gives the statement on the Lie algebra.

Of course,  there is an intrinsic explanation in terms of algebraic geometry. This seems to be well-known, but we had
difficulties to find conclusive references
(compare \cite{Hua 1948}, \cite{Tits 1962}, \cite{Duncan 1968}, \cite{Todd 1970}, \cite{Dieudonne 1971}).
One may proceed as follows:

Let $(E,\Psi)$ be a four-dimensional symplectic vector space, 
with dual space $E^*$. Using the Grothendieck convention, we 
write $\Grass^2(E^*)$ for the Grassmann variety of 2-dimensional quotients $E^*\ra k^2$.
Sending such quotients to  the induced one-dimensional quotient $\Lambda^2E^*\ra\Lambda^2k^2$ defines the Pl\"ucker embedding
$$
\Grass^2(E^*)\subset\Grass^1(\Lambda^2E^*)=\PP(\Lambda^2E^*).
$$
This is an effective divisor of degree two, hence defined by some $Q\in\Sym^2(\Lambda^2E^*)$, which  is unique up to invertible scalars. 
Consider the vector subspace
$$
\Kernel(\Psi)_0=\{x  \mid \text{$\Psi(x)=0$ and $Q(x,x)=0$}\} 
$$ 
inside $\Lambda^2E$, on which $Q$ becomes a non-degenerate symplectic form.
The standard representation of the group scheme $\Sp_{(E,\Psi)/k}$ on $E$ fixes the form $\Psi$ and stabilizes the subspace $kQ$.
Since this group scheme has no non-trivial characters, $Q$ is actually fixed.
In turn, we get an induced symplectic representation of $\Sp_{(E,\Psi)/k}$  on  $\Kernel(\Psi)_0$.  
To compute it, we choose a   symplectic basis $e_i\in E$, $1\leq i\leq 4$, here in the sense that  the Gram matrix takes the form $(\Psi(e_i,e_j))=J$.
Write  $e_i^*\in E^*$ for the dual basis.
The 2-vectors   $e^*_{ij}=e^*_i\wedge e^*_j$, $1\leq i<j\leq 4$, with lexicographic ordering form a basis for $\Lambda^2E^*$, and we have 
$$
\Psi=e_{14}^*+e_{23}^* \quadand Q=e^*_{12}e^*_{34} -  e^*_{13}e^*_{24} + e^*_{14}e^*_{23}.
$$
Then $\Kernel(\Psi)_0$ is generated by the 2-vectors $e_{12},e_{13}, e_{24},e_{34}$, which actually  form  a symplectic basis. 
The sixteen 2-minors in \eqref{eq:explicit isogeny} simply record the effect of an element $S=(\sigma_{ij})$ in $\Sp_4(K)=\Sp(E,J)$ on these basis vectors. 
This proves   that the map $\varphi$ respects the group law and factors over the symplectic group.

Suppose now that the ring $R\neq 0$ is perfect, and in particular reduced. For the height-one group scheme $H=\Kernel(\varphi)$,
it follows that the  group $H(R)$ is trivial, and that every $H$-torsor over $\Spec(R)$ admits a section.
Consequently, the map $\varphi:\Sp_4(R)\ra\Sp_4(R)$  is bijective. By definition, it effects 
$$ 
\begin{pmatrix}
1	& \lambda	&	&\\
	& 1	&	&\\
	&	& 1	&\lambda\\
	&	&  	&1
\end{pmatrix}
\mapsto
\begin{pmatrix}
1	& 	&	&\\
	& 1	& \lambda^2 	& \\
	&	& 1	&  \\
	&	&	& 1\\
\end{pmatrix}
\quadand
\begin{pmatrix}
1	& 	&	&\\
	& 1	& \mu 	&\\
	&	& 1	&  \\
	&	&	& 1\\
\end{pmatrix}
\mapsto 
\begin{pmatrix}
1	& \mu	&	&\\
	& 1		&	&\\
	&		& 1	&\mu\\
	&		&  	&1
\end{pmatrix}.
$$
For $\lambda=\mu=1$ these matrices have different Jordan normal forms over each residue field $K=R/\maxid$, hence the automorphism is not inner.
This description also reveals that $\varphi$ respects the pinning, with the stated effect on the coroots.
\qed

\medskip
Consequently, our explicit description 
\eqref{eq:explicit isogeny} defines a purely inseparable  isogeny $\varphi:\Sp_4\ra \Sp_4$  of degree $p^5=32$,
where the induced map on the underlying topological space is a homeomorphism of order two.
By looking at the effect on the pinning, we see that it 
coincides with the abstract description  \eqref{eq:matrix giving isogeny c2twist} from the Isogeny Theorem in the case $n=0$. Similarly, for general $n$, $\varphi^n$ is the isogeny corresponding to~\eqref{eq:matrix giving isogeny c2twist}. 

Let $M$ be the set of all subsets 
$\{v_1,\ldots,v_5\}\subset(\FF_2)^4$ with ${}^tv_i J v_j=1$ for $i\neq j$.
Then $|M|=6$ and the induced permutation representation $\Sp_4(\FF_2)\ra S_M$  is bijective,
as explained in \cite{Grove 2002}, Proposition 3.13.
This gives, up to inner automorphisms, an   identification $\Sp_4(\FF_2)=S_6$. The outer automorphism group  has order two,
a result due to H\"older (\cite{Hoelder 1895}, \S 8, compare also \cite{Janusz; Rotman 1982}), so our $\varphi$
yields the generator.

Recall that the \emph{Suzuki groups} \cite{Suzuki 1960} were originally defined for $q=\sqrt{2^s}$, $s$ odd, as   subgroup $\Sz(q^2)\subset \Sp_4(\FF_q)$
generated by certain explicit matrices, with   
$$
|\Sz(q^2)|=q^4(q^2-1)(q^4+1).
$$
Following ideas of Ree   \cite{Ree 1961}, this was reinterpreted as the fixed group with respect to a certain involution on 
$\Sp_4(\FF_{q^2})$ by Ono  (\cite{Ono 1962}, \cite{Ono 1963}), compare also  \cite{Dieudonne 1971}, Chapter IV, \S 3). From our perspective, it is the group of
$\FF_q$-valued points of the group scheme $(\Sp_4)^{\varphi^s}$, which is already defined over the prime field $k=\FF_2$.

The Suzuki groups $\Sz(2^s)$ are simple, except for $s=1$. In fact, the group $\Sz(2)$  has order twenty, and
by the Sylow Theorems  the 5-Sylow group must be normal. Consider the following two $\varphi$-fixed symplectic matrices
\begin{equation}
\label{generators suzuki group}
A=
\begin{pmatrix}
1&0&1&1\\
1&0&1&0\\
0&1&0&0\\
1&0&0&0
\end{pmatrix}
\quadand
S=\begin{pmatrix}
1	& 1	& 1	& 1\\
	& 1	& 1	& 0\\
	&	& 1	& 1\\
	&	& 	& 1
\end{pmatrix}.
\end{equation} 
The former has order five, the latter has order four,  and $SAS^{-1}=A^2$, hence
\begin{equation}
\label{eq:identification suzuki group}
\operatorname{Sz}(2)=\langle A,S\rangle =  C_5\rtimes\Aut(C_5).
\end{equation} 
Note that $A$ is not an upper triangular matrix, and thus does not belong to the standard Borel group $B\subset G$.
It follows that the cosets  $A^iB$, $0\leq i\leq 4$ are pairwise distinct  elements in the flag variety.
We shall see below that the inclusion 
\begin{equation}
\label{eq:c2twist zero stratum}
\{A^0B, \ldots, A^4B\}\subset (G/B)^\varphi=X(e)
\end{equation} 
is actually an equality.

\subsection{Effect on isotropic flags}
\mylabel{subsec:effect isotropic}

\newcommand\longmapsfrom{\mathrel{\reflectbox{\ensuremath{\longmapsto}}}}
We continue to work with the symplectic group $G=\Sp_4$ over the ground field $k=\FF_2$.
Recall that the   standard Borel group $B$ consists of the symplectic matrices 
that are  upper triangular.
The elements $(\sigma_{ij})B$ of the flag variety
$G/B$ can be viewed as  \emph{isotropic  flags} $(L\subset U)$, with  defining condition   $U=U^\perp$. Here the line
$L$ is generated by the first column,  and the plane $U$ is generated by the first two columns in $S=(\sigma_{ij})$.
Throughout, we write
$$
\varphi(L\subset U) = (L'\subset U')
$$
for the effect of the isogeny $\varphi:G\ra G$ on isotropic flags.

\medskip
The description of the Lie algebras in \eqref{eq:explicit isogeny} shows that $\dim(G/B)=4$.
For each    integral closed subscheme $Z\subset G/B$  with image $Z'=\varphi(Z)$ and dimension $d\geq 0$, 
the relative Frobenius of $G/B$ restricted to $Z$ factors as $Z\stackrel{\varphi}{\ra} Z' \stackrel{\varphi}{\ra} Z$,
giving 
\begin{equation}
\label{eq:factorization of degrees}
p^d= \deg(Z/Z')\cdot \deg(Z'/Z).
\end{equation} 
For the relevant Deligne--Lusztig curves and surfaces, we compute:

\begin{proposition}
\mylabel{prop:degrees inseparable maps}
Both $\bX(s_1)$ and $\bX(s_2)$ are isomorphic to the elliptic curve  with five rational points,
and $X(e)$ is their common scheme of rational points. Furthermore, the 
 morphism  $\varphi:G/B\ra G/B$ induces universal homeomorphisms
\begin{gather*}
\bX(s_1)\stackrel{2:1}{\lra} \bX(s_2) \quadand \bX(s_2)\stackrel{1:1}{\lra} \bX(s_1),\\
\bX(s_1s_2)\stackrel{2:1}{\lra} \bX(s_2s_1) \quadand \bX(s_2s_1)\stackrel{2:1}{\lra} \bX(s_1s_2) 
\end{gather*}
with indicated  degrees.
\end{proposition}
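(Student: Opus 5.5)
We first identify the curves and then read off all the degrees from \eqref{eq:factorization of degrees}. By the table of genera in Section~\ref{subsec:tables}, for ${}^2C_2$ with $q_0=1$ the curves $\bX(s_1)$ and $\bX(s_2)$ have genus $2q_0^3-q_0=1$. By Proposition~\ref{prop:DL curves} they are isomorphic. By Proposition~\ref{prop:comparison compactifications} the closures in $G/B$ agree with the smooth compactifications, and by Lemma~\ref{lem:properties deligne-lusztig variety} we have $\bX(s_i)=X(s_i)\cup X(e)$. The table of dimension zero gives $|X(e)(\FF_2^\alg)|=q^4+1=5$. Proposition~\ref{prop:structure Gphi} together with $\varphi^2=F$ shows that $G^\varphi$ is constant, so $X(e)=G^\varphi/B^\varphi$ consists of five $\FF_2$-rational points; this also upgrades \eqref{eq:c2twist zero stratum} to an equality.

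Next we show that these are all the rational points of each curve. A rational point $x=gB$ of $\bX(s_1)$ satisfies $\inv(x,\varphi(x))\le s_1$. Since $\varphi$ is a bijection on $\FF_2$-points, applying $\varphi$ gives $\inv(\varphi(x),x)\le\varphi(s_1)=s_2$, using $F(x)=x$. On the other hand, $\inv(\varphi x,x)=\inv(x,\varphi x)^{-1}\le s_1$, so the relative position lies in $\{e,s_1\}\cap\{e,s_2\}=\{e\}$. Hence $x\in X(e)$, and $\bX(s_1)(\FF_2)=X(e)$; the same argument works for $s_2$. A genus-one curve over $\FF_2$ with a rational point is an elliptic curve, and with five rational points it must be $E_5$ by Section~\ref{subsec:elliptic curves F2}.

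For the degrees, Proposition~\ref{prop:finite universal homeomorphisms} shows that the restrictions of $\varphi$ are cyclic shifts, hence universal homeomorphisms. Concretely, $\bX(s_1)\to\bX(s_2)$ and back, and $\bX(s_1,s_2)\to\bX(s_2,s_1)$ via $(g_1,g_2)\mapsto(g_2,\varphi g_1)$, which composes with the projection to $g_1$ to give $\varphi$. By \eqref{eq:factorization of degrees}, for the curves the two degrees multiply to $2$, and for the surfaces they multiply to $4$.

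For the curves, the decisive input is \emph{which} map is inseparable. A degree-one universal homeomorphism between smooth curves is an isomorphism, so exactly one direction has degree $2$. To decide which, we compute tangent maps: $d\varphi$ kills the Lie algebra kernel described in Proposition~\ref{prop:properties isogeny}. The tangent line of $\bX(s_i)$ at a point of $X(e)$ is the image of the root space $\lieg_{-\alpha_i}$ in $\Lie(G)/\Lie(B)$. Since $\varphi_*(\alpha_1^\vee)=2\alpha_2^\vee$, the map $d\varphi$ kills the $\alpha_1$-directions and is nonzero on the $\alpha_2$-directions. Equivalently, $\varphi^*(\alpha_2)=2\alpha_1$ while $\varphi^*(\alpha_1)=\alpha_2$, so the $\alpha_1$-root-group direction is multiplied by a square and the $\alpha_2$ direction is not, matching the matrices displayed at the end of the proof of Proposition~\ref{prop:properties isogeny}. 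This gives $\bX(s_1)\to\bX(s_2)$ of degree $2$ and $\bX(s_2)\to\bX(s_1)$ of degree $1$, an isomorphism.

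For the surfaces, we use the rulings in diagram \eqref{eq:diagram dl surfaces} (Corollary~\ref{cor:structure of ruled surface}). The map $\bX(s_1s_2)\to\bX(s_2s_1)$ lies over $\bX(s_2)\to\bX(s_1)$, which has degree $1$. Its fibres are $\PP^1$ mapping to $\PP^1$, so the whole degree comes from the fibre direction. The fibre direction of $\bX(s_1,s_2)\to\bX(s_2)$ is the $g_1$-direction, of type $\alpha_1$, and $\varphi$ is inseparable on it; hence this degree is $\ge2$. Since the product of the two surface degrees is $4$, and the other map lies over the degree-$2$ map $\bX(s_1)\to\bX(s_2)$, its degree is also $\ge2$. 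Both degrees are therefore exactly $2$.

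The main obstacle is making the tangent computation rigorous, that is, identifying the kernel of $d\varphi$ on $T(G/B)$ along the relevant curves with the $\alpha_1$-type directions. We would do this at the rational points using the explicit kernel of $\varphi$ from Proposition~\ref{prop:properties isogeny}, and then use $G^\varphi$-equivariance together with the fact that the degree is constant on the connected curve.
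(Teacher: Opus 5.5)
Your proof is correct up to two slips noted at the end, and it differs from the paper's in two places.

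\emph{Identifying the curves.} The paper uses $K$-triviality and the classification over $\FF_2$ to bound the number of rational points by five. It then produces five distinct points $A^iB\in X(e)$ from the explicit Suzuki matrix $A$ in \eqref{generators suzuki group}. You instead take $|X(e)|=q^4+1=5$ from Lemma~\ref{lem:rational Bruhat decomposition}. You then show directly that every $F$-fixed point $x$ of $\bX(s_i)$ lies in $X(e)$, since $\inv(x,\varphi x)\in\{e,s_1\}\cap\{e,s_2\}$. This needs neither the upper bound nor the matrix $A$. It also works verbatim for every $n$, giving $\bX(s_i)(\FF_{2^{2n+1}})=X(e)(\FF_{2^{2n+1}})$.

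\emph{Curve degrees.} These are obtained as in the paper. The rigorous form of your tangent computation is exactly the paper's dual-number point. Put $S=\psi_{-\alpha_1}(\epsilon)$; then $\varphi(S)=1$ and $S^{-1}\varphi(S)\in U_{-\alpha_1}\subset\overline{Bs_1B}$. So $SB$ is a nonzero tangent vector to $\bX(s_1)$ at $B$ that $\varphi$ kills, and one such point suffices; no equivariance argument is needed.

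\emph{Surface degrees.} The paper uses the curve $\bX(s_1)$, which lies in both surfaces. A degree-one map would be an isomorphism of normal surfaces, hence would restrict to an isomorphism $\bX(s_1)\to\bX(s_2)$, a contradiction. You instead factor the degree through the rulings of \eqref{eq:diagram dl surfaces} as base degree times fibre degree. This is slightly longer but shows more: $\varphi$ is the Frobenius on the fibres of $\bX(s_1s_2)\to\bX(s_2)$ and an isomorphism on the fibres of $\bX(s_2s_1)\to\bX(s_1)$.

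\emph{Two slips to fix.}

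First, the map $(g_1,g_2)\mapsto(g_2,\varphi g_1)$ does not land in $\bX(s_2,s_1)$. One only has $\inv(\varphi g_1,\varphi g_2)\le\varphi(s_1)=s_2$, not $\le s_1$. Moreover, Proposition~\ref{prop:finite universal homeomorphisms} does not apply, because $s_2\varphi(s_1)=e$ is not a reduced word of length two. The map you need is the componentwise $(g_1,g_2)\mapsto(\varphi g_1,\varphi g_2)$, i.e.\ $\varphi$ restricted to $\overline{X(s_1s_2)}\subset G/B$. It is a universal homeomorphism onto $\overline{X(s_2s_1)}$ because $\varphi$ is one on $G/B$ and sends relative position $w$ to $\varphi(w)$, which is how the paper argues. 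Your ruling argument already uses this correct map, so nothing downstream is affected.

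Second, the tangent line of $\bX(s_2)$ at $B$ is not $\lieg_{-\alpha_2}$. Since $d\varphi$ maps $\lieg_{-\alpha_2}$ onto $\lieg_{-\alpha_1}$, the condition $d\varphi(X)-X\in\Lie(\overline{Bs_2B})$ forces a nonzero $\lieg_{-\alpha_1}$-component. This is harmless, since only the computation on $\bX(s_1)$ is used.
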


\proof
The curve $\bX(s_1)$ and $\bX(s_2)$ are $K$-trivial, hence are para-elliptic. 
By the classification of elliptic curves over $k=\FF_2$, there at most five rational points. 
On the other hand,  \eqref{eq:c2twist zero stratum} gives inclusions
$\{A^0B, \ldots, A^4B\}\subset X(e)\subset \bX(s_i)$, and the first assertion follows.

If two cosets $S_1B,S_2B$ in the flag variety $G/B$ are in relative position $w\in W$,  their images under the isogeny $\varphi$
are in relative position $\varphi(w)$. From $\varphi(s_1)=s_2$ and $\varphi(s_2)=s_1$ we get
the statements on the $\varphi$-images of our Deligne--Lusztig varieties.
To compute the relative $\varphi$-degrees, consider the symplectic matrix
$$
S=\begin{pmatrix}
1	&  	&  	&  \\
\epsilon	& 1	&  	&  \\
	&	& 1	&  \\
	&	& \epsilon	& 1
\end{pmatrix}\in\Sp_4(k[\epsilon])
$$
over the ring of dual numbers $R=k[\epsilon]$, whose image  $\varphi(S)$ is the identity matrix.
For the resulting isotropic flags $(L'\subset U')=\varphi(L\subset U)$, we see $L'\neq L$ and $U'=U$.
In view of Lemma~\ref{lem:position symplectic flags}, both cosets $SB\neq B$ belong to $\bX(s_1)$ and have the same image in $\bX(s_2)$.
Using \eqref{eq:factorization of degrees}, we infer the statements  about the    $\varphi$-degrees of the Deligne--Lusztig curves.
In light of the inclusions of $\bX(s_1)$ into both $\bX(s_1s_2)$ and $\bX(s_2s_1)$, the statements about the $\varphi$-degrees
of the surfaces follow as well. 

Set $X=\bX(s_2s_1)$ and $R=\bX(s_2)$. We observed in Section~\ref{subsec:tables} that $K_X\equiv -R$. The Adjunction Formula gives $K_R=0$, hence 
$R$ is a para-elliptic curve.  By the classification of para-elliptic curves, it contains at most five rational points.
From $\varphi^2=F$ we infer that every point in the finite smooth scheme $\bX(e)$ is rational.  
Consider the matrix $A$ defined in  \eqref{generators suzuki group}, which belongs to the Suzuki group 
$G^\varphi(k)=\Sz(2)=C_5\rtimes\Aut(C_5)$ and generates the $5$-Sylow group. 
This matrix is lower-triangular, whereas the standard Borel group comprises only upper triangular matrices.
So the cosets $A^iB$ show that $\bX(e)$ contains at least five rational points.  
From the inclusion   $\bX(e)\subset\bX(s_2)$ we see that both $\bX(e)$ and $\bX(s_2)$ have exactly five rational points.
We also see that $C_5$ acts faithfully on $\bX(e)$, and conclude with Lemma \ref{lem:holomorph cyclic group} below that
the same holds for $C_5\rtimes\Aut(C_5)$.
\qed

\medskip
In the above proof we have used a group-theoretical observation:

\begin{lemma}
\mylabel{lem:holomorph cyclic group}
Let $N$ be a cyclic group of prime order $\ell>0$. Then the non-trivial normal subgroups in the holomorph $H=\operatorname{Hol}(N):= N \rtimes \Aut(N)$
are precisely the $H'=N\rtimes A'$, for some (normal) subgroup $A'$ inside $A=\Aut(N)$.
\end{lemma}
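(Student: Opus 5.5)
The plan is to show two inclusions: every subgroup of the form $N\rtimes A'$ is normal, and every non-trivial normal subgroup $K\subset H$ has this form. Here $A'\subset A$ is arbitrary; $A=\Aut(N)\simeq(\ZZ/\ell\ZZ)^\times$ is cyclic, hence abelian, so all its subgroups are normal. Throughout, write elements of $H$ as pairs $(n,a)$ with multiplication $(n,a)(m,b)=(n\cdot a(m),ab)$, and use the projection $H\ra A$, whose kernel is $N$.

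\emph{Step 1.} The subgroups $N\rtimes A'$ are exactly the preimages of subgroups $A'\subset A$ under $H\ra A$. Since $A$ is abelian, each such preimage is normal. This settles the easy inclusion.

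\emph{Step 2.} Let $K\subset H$ be a non-trivial normal subgroup. The key claim is that $N\subset K$. Because $N$ has prime order, $K\cap N$ is either trivial or all of $N$.

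\emph{Step 3.} Suppose $K\cap N=1$. We derive a contradiction by showing $K=1$. First, $K$ and $N$ are both normal with trivial intersection, so they commute elementwise. Hence $K$ lies in the centralizer $C_H(N)$. A pair $(n,a)$ centralizes $N$ if and only if $a$ acts trivially on $N$, that is, $a=\id$. Therefore $C_H(N)=N$, which gives $K\subset N$. Combined with $K\cap N=1$, this forces $K=1$, a contradiction. (The case $\ell=2$ is harmless: then $A$ is trivial, $H=N$, and the statement is immediate.)

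\emph{Step 4.} Since $N\subset K$, the subgroup $K$ is the full preimage of its image $A'\subset A$, so $K=N\rtimes A'$.

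The only step with real content is the centralizer computation $C_H(N)=N$ in Step 3. It uses that $\Aut(N)$ acts faithfully on $N$, which holds by definition, and I do not expect any genuine obstacle there.
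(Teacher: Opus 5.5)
Your proof is correct, but it reaches the key point $N\subset K$ by a different route than the paper. The paper splits into the cases $H'\subset N$ (then $H'=N$ because $N$ has prime order) and $H'\not\subset N$. In the second case it picks $(a,u)\in H'$ with $u\neq 1$ and conjugates by translations to get $(x,1)(a,u)(-x,1)=(x+a-ux,u)\in H'$. Since $1-u$ is a unit in $\ZZ/\ell\ZZ$, the whole coset $N\times\{u\}$ lies in $H'$, and hence $N\subset H'$. You instead split on $K\cap N\in\{1,N\}$ and rule out $K\cap N=1$ using $[K,N]\subset K\cap N$ together with $C_H(N)=N$.

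Your argument is coordinate-free. It uses only that $N$ is an abelian minimal normal subgroup on which the complement acts faithfully, which makes $N$ self-centralizing. It therefore applies verbatim to, say, $\FF_p^n\rtimes\GL_n(\FF_p)$ with $n\geq 2$. There the paper's step fails, because $1-u$ need not be invertible when $u$ has eigenvalue $1$. In short, you use faithfulness of the action where the paper uses fixed-point-freeness. Your centralizer computation does silently use that $N$ is abelian, which is of course harmless here.

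The paper's computation is more hands-on and yields slightly more: the conjugacy class of any $(a,u)$ with $u\neq 1$ is exactly the coset $N\times\{u\}$. Your Step 4 makes explicit the passage from $N\subset K$ to $K=N\rtimes A'$, which the paper leaves as ``the assertion follows.'' Your separate treatment of $\ell=2$ is unnecessary, since Step 3 works there as well.
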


\proof
Being the kernels for the composite maps $H\ra   A/A'$, the subgroups $H'=N\rtimes A'$ are normal.
Conversely, suppose we have a non-trivial normal subgroup $H'\subset H$.
If $H'$ is contained in $N$ it must be normal in $N$. Since the latter is simple we obtain $H'=N\rtimes A'$
for the trivial subgroup $A'=\{\id_A\}$.
Now write  $N=\ZZ/\ell\ZZ$ and $\Aut(N)=(\ZZ/\ell\ZZ)^\times$
and suppose that there is some $(a,u)\in H'$ with $u\neq 1$.
The conjugate  
$$
(x,1)\ast (a,u)\ast (-x,1) = (x+a,u)\ast (-x,1)= (x+a-ux,u)
$$
also belongs to $H'$. The element $1-u\in\ZZ/\ell\ZZ$ is non-zero, hence a unit,
so the elements $x+a-ux$ for varying $x$ exhaust the whole group $N$.
In other words, $N\times\{u\}\subset H'$, and the assertion follows.
\qed

\subsection{Proof of the structure result}
\mylabel{subsec:proof main b2twist}

The goal of this section is to prove that the diagrams~\eqref{eq:diagram dl surfaces} obtained from Deligne--Lusztig theory and~\eqref{eq:diagram ruled surfaces} obtained by the explicit construction above, coincide. In particular, this pins down the description of the two Deligne--Lusztig surfaces as ruled surfaces.

\medskip
\emph{Proof of Theorem \ref{thm:c2twist}.}
Recall that we work over the prime field  $k=\FF_p$ in characteristic $p=2$, and that our Deligne--Lusztig varieties sit in 
a commutative diagram
$$
\begin{CD}
\bX(s_2s_1) 	@>\varphi>>	\bX(s_1s_2)\\
@VVV				@VVV\\
\bX(s_1)	@>>\varphi>	\bX(s_2).
\end{CD}
$$
The vertical arrows are rulings, by Corollary~\ref{cor:structure of ruled surface}, and the horizontal maps are universal homeomorphisms of degree $p$,
according to Proposition \ref{prop:degrees inseparable maps}. 
Since $k$ is perfect, the function field $K$ of $\bX(s_1)$ has only one subextension over which it is purely inseparable of degree $p$,
namely $k\subset K^p\subset K$. Hence  the lower horizontal arrow can be interpreted as the 
Frobenius map for $\bX(s_2)$, and these  curves have genus $g=1$; see Section~\ref{subsec:tables}.
The induced map $\bX(s_2s_1)\ra \bX(s_1s_2)\times_{\bX(s_2)}\bX(s_1)$ of ruled surfaces is  birational and finite,
hence an  isomorphism, because both schemes are regular. We have $\varphi^2=F$,
and $\varphi:\bX(s_2)\ra\bX(s_1)$ is an isomorphism by Proposition \ref{prop:degrees inseparable maps}. It  follows that $\varphi:\bX(s_1s_2)\ra \bX(s_2s_1)$
is the relative Frobenius map. 

To see that the diagrams \eqref{eq:diagram dl surfaces} and \eqref{eq:diagram ruled surfaces} coincide, it thus suffices to verify that $\bX(s_1s_2)$
is isomorphic to $X=\Sym^2_{E/k}$ for the elliptic curve $E:y^2+y=x^3+x$ with five rational points.
To this end we may apply Theorem~\ref{thm:characterization symmetric product} with $D = \bX(s_1)$ and $D' = \bX(s_2)$. It follows from the table in Section~\ref{subsec:tables} that $K_X \equiv -D$, and furthermore $(D'\cdot D) = 5$ is odd.
\qed

\subsection{Characterization via symmetries}
\mylabel{subsec:characterization b2twist}

In the final section of the chapter on the Suzuki case ${}^2 C_2$, we prove that similarly to the other Deligne--Lusztig surfaces, the surface $\bX(s_2 s_1)$ admits a characterization in terms of a certain divisor configuration.

Let $E$ be an elliptic curve over the prime field $k=\FF_p$ of characteristic $p=2$,
and $f:Y\ra E$ be a ruled surface. Since this is the Albanese map, the ruling induces a homomorphism
$\Aut_{Y/k}\ra E \rtimes\Aut_{E/k}$.
 
\begin{theorem}
\mylabel{thm:characterization ruled via symmetries}
Suppose  $Y$ admits a faithful action of the group  $\Gamma=C_5\rtimes\Aut(C_5)$, 
and that there are  $\Gamma$-stable sections $A,B\subset Y$ such that 
$$
K_Y\equiv -2A\quadand (A\cdot B)=5.
$$
Then the following holds:
\begin{enumerate}
\item 
The base is the elliptic curve $E:y^2+y=x^3+x$ with five rational points.
\item 
We have  $Y=\PP(\shF)$ for the non-split extension  $0\ra\O_E\ra\shF\ra\O_E\ra 0$.
\item
The ruling identifies $A\cap B$ with the set of rational points $e_1,\ldots,e_5\in E$.
\item
The   section $A$ corresponds to the surjection $\shF\ra \O_E$, whereas $B$
stems from a surjective extension $\shF\ra\O_E(e_1+\ldots + e_5)$ of  the standard inclusion 
$\O_E\ra \O_E(e_1+\ldots + e_5)$. 
\item
The canonical map $\Gamma\ra E(k)\rtimes \Aut(E)$ is bijective, and $\omega_Y=\O_Y(-2A)$.
\end{enumerate}
\end{theorem}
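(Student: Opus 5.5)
The plan is to normalize the ruled surface so that $A$ becomes a quotient of a degree-zero extension of $\O_E$ by itself. Next, use the finite group $\Gamma$ to pin down $E$ and the intersection $A\cap B$. Finally, exclude the decomposable cases by a translation-invariance argument.

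First, write $Y=\PP(\shF)$ with $A=\PP(\shL)$ for an invertible quotient $\shF\ra\shL$ with kernel $\shN$. By the Adjunction Formula, $0=\deg(\omega_A)=(K_Y+A)\cdot A=-A^2$, so $A^2=0$. The formula $A^2=\deg(\shL)-\deg(\shN)$ from Section~\ref{subsec:ruled and albanese} then gives $\deg\shN=\deg\shL$. After tensoring we may assume $\shL=\O_E$ and $\deg\shN=0$. If $\shN\not\simeq\O_E$, then $\Ext^1(\O_E,\shN)=H^1(\shN)=0$ and $\shF=\shN\oplus\O_E$. If $\shN=\O_E$, then $\shF$ is either $\O_E^{\oplus 2}$ or the non-split (Atiyah) extension. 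In every case $\det\shF$ has degree zero. The relative canonical bundle formula $\omega_Y=\O_{\PP(\shF)}(-2)\otimes\pi^*(\det\shF\otimes\omega_E)$ then gives $\omega_Y=\O_Y(-2A)$ once $\det\shF=\O_E$. Thus the second half of (v) follows at the end, once (ii) is established.

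Second, identify $E$. Consider the kernel $K$ of $\Gamma\ra E(k)\rtimes\Aut(E)$. By Lemma~\ref{lem:holomorph cyclic group}, if $K$ is non-trivial it contains the normal subgroup $C_5$. Elements of $K$ are relative automorphisms over $E$ stabilizing the section $A$. The group of such automorphisms is contained in a group scheme built from $\GG_a$'s (cf.\ Proposition~\ref{prop:relative automorphism group}) and, in the decomposable cases, $\GG_m$. Its $\FF_2$-valued points form a $2$-group, since $\GG_m(\FF_2)$ is trivial. Hence $K=1$ and $20$ divides $|E(k)|\cdot|\Aut(E)|$. By the list in Section~\ref{subsec:elliptic curves F2} only $E=E_5$ qualifies, and comparing orders gives the bijectivity in (v) and assertion (i).

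For (iii), note that $A\cap B$ is a $\Gamma$-stable effective divisor of degree $5$ on $E$. Translations by $E(k)$ act freely on geometric points, so $A\cap B$ is a single reduced orbit $x+E(k)$. Stability under the sign involution around a rational point gives $2x\in E(k)$. Since $E$ is supersingular, multiplication by two is purely inseparable, so $x$ is rational and $A\cap B=E(k)$.

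Third, the main obstacle is (ii), namely excluding $\shF=\shN\oplus\O_E$. For $\shF=\O_E^{\oplus2}$ we have $Y=E\times\PP^1$ with $\Aut(Y)=(E\rtimes\Aut E)\times\PGL_2$, and $B$ is the graph of a degree-five map $E\ra\PP^1$ equivariant for a homomorphism $\Gamma\ra\PGL_2(\FF_2)=S_3$. Any such homomorphism kills $C_5$, so the map is invariant under translation by $E(k)$ and factors through the degree-five isogeny $E\ra E/E(k)$. This forces a degree-one map from an elliptic curve to $\PP^1$, a contradiction. For $\shN\not\simeq\O_E$ I would first argue that $A$ and $A'=\PP(\shN)$ are the only sections with self-intersection zero, so $\Gamma$ permutes them. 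Then I would run the same descent argument on the pair of components $(s_{\shN},s_{\O})$ of the quotient $\shF\ra\shM$ defining $B$, where $\deg\shM=5$. Translation invariance under $C_5$ should again force descent through $E\ra E/E(k)$ with an impossible degree. I expect this decomposable twisted case to require the most care.

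Once the extension is shown to be non-split, (iv) is read off. $A$ is the quotient $\shF\ra\O_E$. $B$ is a quotient $\shF\ra\shM$ whose restriction to $\O_E\subset\shF$ vanishes exactly on $E(k)$ by (iii), hence $\shM=\O_E(e_1+\ldots+e_5)$ and the restriction is the standard inclusion. Finally, $\det\shF=\O_E$ yields $\omega_Y=\O_Y(-2A)$, completing (v).
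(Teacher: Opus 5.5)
Most of your argument is sound, but the case $\shF\simeq\shN\oplus\O_E$ with $\shN\in\Pic^0(E)$ non-trivial is a genuine gap. The mechanism you propose for it, $C_5$-translation invariance plus descent, cannot work.

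Because $\Aut(\shN)=\FF_2^\times$ is trivial, $\shN$ is canonically $E(k)$-linearized. It therefore descends along the quotient $\pi\colon E\to E'=E/E(k)$ to some $\shN'\not\simeq\O_{E'}$. The $C_5$-invariant rational section $s_{\shN}/s_{\O}$ of $\shN^{-1}$ then descends to a rational section of $(\shN')^{-1}$ with divisor $p'-q'$, for rational points $p'\neq q'$ of $E'$. Nothing about this is contradictory.

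In fact the configuration exists. Take $\shN=\pi^*\O_{E'}(q'-p')$; it is non-trivial because the kernel of $\pi^*$ on $\Pic^0$ is $\mu_5$ and $\mu_5(\FF_2)=1$. Let $B$ be given by the pulled-back sections with zero loci $\pi^{-1}(p')$ and $\pi^{-1}(q')$. Then $\PP(\shN\oplus\O_E)$ carries a faithful $C_5$-action by translations, with stable sections $A$, $A'=\PP(\shN)$ and $B$, and it satisfies $K\equiv -2A$ and $(A\cdot B)=5$. So translation invariance alone never produces an impossible degree. The degree-one contradiction appears only when $\shN$ is trivial, that is, in the case $\O_E^{\oplus 2}$, which you handled correctly.

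What you are missing is the $\Aut(E)$-part of $\Gamma$. The quickest repair, which is what the paper uses, goes through the normal bundle. The normal bundle $\O_A(A)\simeq\shN^{-1}$ of the $\Gamma$-stable section $A$ is $\Gamma$-linearized. Hence its class in $\Pic^0(E)(k)\simeq E(k)\simeq C_5$ is fixed by $\Aut(E)\simeq C_4$, which acts freely on the non-zero elements. So $\shN\simeq\O_E$ from the start, and the twisted decomposable case never arises.

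Alternatively, you can stay within your own framework. Your observation that $A$ and $A'$ are the only sections of self-intersection zero is correct, so $A'$ is $\Gamma$-stable. Then $f(A'\cap B)$, the zero scheme of $s_{\O}$, is a $\Gamma$-stable effective divisor of degree five. Your argument for (iii), which uses the sign involution, forces $f(A'\cap B)=E(k)=f(A\cap B)$. This is impossible, since $A$ and $A'$ are disjoint sections.

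With this repaired, your proof goes through. It differs from the paper's in three places:
\begin{enumerate}
\item You kill the kernel of $\Gamma\to E(k)\rtimes\Aut(E)$ by a case-by-case $2$-group argument; the paper instead finds a primitive fifth root of unity in the Borel subgroup of $\PGL_2$ over the function field.
\item You exclude $\O_E^{\oplus 2}$ via the graph of a degree-five map; the paper uses the $C_5$-eigenspaces of $H^0(E,\shF)\subset H^0(E,\O_E(e_1+\ldots+e_5))$.
\item You derive $\omega_Y=\O_Y(-2A)$ from $\det\shF=\O_E$ and the relative canonical bundle formula, which is more direct than the paper's comparison with the distinguished section of $\Sym^2_{E/k}$.
\end{enumerate}
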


\proof
We start with (i) and the first part of (v).
Seeking a contradiction,   assume that $C_5\subset\Gamma$ fixes the origin $e\in E$.
According to \cite{Deligne 1975}, Proposition 5.9,
the  order of $\Aut(E)$ divides $n=24$, so the $C_5$-action on $E$ and thus also on  $A,B$ are trivial. 
Write $F=k(E)$ for the function field,  and  choose an identification $f^{-1}(\eta)=\PP^1_F$ so that
$A$ corresponds to $(0:1)$. The inclusion of $C_5$ into $\PGL_2(F)$ then factors over the standard Borel group 
$ F\rtimes F^\times$.  Using $p=2$, we see
that the projection $C_5\ra F^\times$ remains injective, giving
a primitive $5$-th root of unity $\zeta\in F$. We have $\zeta\in \Gamma(E,\O_E)=k$  because $E$ is normal,
in contradiction to $k=\FF_2$.
So $C_5$ does not fix the rational point $e\in E$, and its orbits contributes five rational points.
The classification of elliptic curves over $k=\FF_2$ yields (i).
We also see that the canonical map $\Gamma\ra E(k)\rtimes\Aut(E)$ is injective on $C_5$,
hence injective by  Lemma \ref{lem:holomorph cyclic group}. Since both groups have the same order,
the map is  bijective, which establishes the first part of (v).
 
We next check (ii). The image $Z=f(R\cap R')$ is a finite $\Gamma$-stable scheme of degree five.
If   $Z$ contains one rational point, it contains all of them, and  $\{e_1,\ldots,e_5\}\subset Z$ is an equality.
Seeking a contradiction, we now assume that $Z(k)$ is empty. Write $Z=m_1z_1+\ldots+m_rz_r$, such that 
$5=\sum_{i=1}^r m_i\cdot[\kappa(z_i):k]$. Up to order, the only possibilities are
$$
r=1,\  [\kappa(z_1):k]=5\quadand r=2,   [\kappa(z_1):k]=3,\ [\kappa(z_2):k]=2,
$$
with multiplicities $m_i=1$. In the second case, the asymmetry of degrees ensures that both $z_1,z_2\in E$ are $\Gamma$-stable.
In all cases, we thus have some   $\Gamma\ra\Gal(\kappa(z_i)/k)$. These have a non-trivial kernel, but  must be injective on $C_5$,
because the translations act freely, in contradiction to Lemma \ref{lem:holomorph cyclic group}.
This settles (ii).

We next verify (iv) and  (ii). Using \cite{Hartshorne 1977}, Chapter III, Corollary 12.9 one sees that the direct images
\begin{equation}
\label{eq:direct images}
\shF=f_*(\O_Y(A))\quadand \shN=f_*(\O_A(A)) \quadand \shL=f_*(\O_{B}(A))
\end{equation} 
are locally free. The sheaf $\shF$ has  rank two and we obtain an identification $Y=\PP(\shF)$.
The others have rank one, with $A=\PP(\shN)$ and $B=\PP(\shL)$.
We have 
$$
8\chi(\O_E) = K_Y^2=4A^2 = 4 \deg(\shN),
$$
by \cite{Hartshorne 1977}, Chapter V, Corollary 2.11, and conclude $\deg(\shN)=0$.
The other invertible sheaf has  $\deg(\shL)=(A\cdot B)=5$.
The  coherent sheaves $\O_Y(A)$ and $\O_A(A)$ and $\O_A(B)$ carry   $\Gamma$-linearizations,
hence the same holds for the direct images  in \eqref{eq:direct images}.
In particular, the classes of $\shN$ and $\shL$ in the Picard group are fixed by $\Aut(E)$. Using that the bijections
$$
E(k)\lra \Pic^0(E)  \quadand \Pic^0(E)\lra \Pic^5(E) 
$$
given by $b\mapsto \O_E(b-e)$ and $\shI\mapsto \shI(e_1+\ldots+e_5)$ are   equivariant with respect to $\Aut(E)$,
we infer that $\shN=\O_E$ and $\shL=\O_E(e_1+\ldots+e_5)$.
The sheaf $\shF$ thus sits in two extensions
\begin{equation}
\label{eq:two short exact sequences}
\begin{tikzcd}[row sep=tiny]
0\ar[r]		& \O_E 	\ar[rd] 		& 		& \shL\ar[r] 	& 0 \\		
		&					& \shF\ar[ru]\ar[rd]\\
0\ar[r]		& \shL^{\otimes-1}\ar[ur]	&  		& \O_E\ar[r]		& 0,
\end{tikzcd}
\end{equation}
the first  coming from  $0\ra \O_Y\ra\O_Y(A)\ra\O_A(A)\ra 0$, the second stemming from $0\ra \O_Y(A-B)\ra\O_Y(A)\ra\O_B(A)\ra 0$.

View the $5$-dimensional vector space   $V=H^0(E,\shL)$ as a   $\Gamma$-representation, and fix a generator $\sigma\in C_5 $.
For the ensuing automorphism $\sigma:V\ra V$, we now determine the rational normal form and the corresponding
similarity invariants, or equivalently the invariant factors (compare \cite{A 4-7}, Chapter VII, \S5).
Testing with the root-less polynomials of degree at most two, one easily sees 
that $T^5-1=(T-1)(T^4+T^3+\ldots + 1)$ is the prime decomposition  over $k=\FF_2$.
If follows that the similarity invariants  are either $(T-1)^5$ or $T^5-1$. 
Using that $\shL$ is very ample one sees 
that $\sigma$ is not the identity on $V$. Thus the similarity invariant must be  $T^5-1$,
hence  the eigenspace $V'\subset V$ for the eigenvalue $\lambda=1$ is one-dimensional.

Next regard the vector space $U=H^0(E,\shF)$ as $\Gamma$-representation.
From \eqref{eq:two short exact sequences} we see $\dim(U)\leq 2$, with equality if and only if the extension splits.
Arguing with rational normal forms and similarity invariants as in the preceding paragraph, we see that
$C_5$ acts trivially on $U$.
On the other hand,   \eqref{eq:two short exact sequences} gives  an injective map  $U\ra V$, hence the eigenspace $U'\subset U$ for the eigenvalue
$\lambda=1$ is at most one-dimensional. Thus $\dim(U)=1$, and both extensions in  \eqref{eq:two short exact sequences} are non-split.
Moreover, the inclusion $k\subset H^0(E,\O_E(e_1+\ldots+e_5))$ induced by the composite map  $\O_E\subset\shF\ra\shL$ is
the eigenspace for the eigenvalue $\lambda=1$, hence the   map is indeed the canonical inclusion.
This establishes (ii) and  (iv). 

It remains to verify the assertion on $\omega_Y$. As discussed in Section \ref{subsec:formulation c2twist}, the ruled surface
can also be seen as the  Frobenius  base-change of $\Sym^2_{E/k}$, and we write $A'\subset X$ for
the curve  corresponding to  $E^{(p)}\subset \Sym^2_{E/k}$. Then  $-2A'=K_{X'}\equiv -2A$,
and thus $A'\cdot A=0$. So if $A'\neq A$ the two sections must be disjoint, hence the extension $\shF$ splits, contradiction.
\qed

\medskip
Applying the theorem to $Y = \bX(s_2 s_1)$ with its divisors $A = \bX(s_2)$ and $B= \bX(s_1)$ reproves the description of $Y$ given in Theorem~\ref{thm:c2twist}.

\section{The Ree case \texorpdfstring{${^2}G_2$}{2G2}}
\mylabel{sec:case g2twist}

\newcommand{\Ree}{\operatorname{Ree}}
The goal of this section  is to understand for  ${}^2 G_2$  both  geometry and arithmetic of the  Deligne--Lusztig surfaces  
over the prime field with negative canonical divisor.

\subsection{Formulation of structure result}
\mylabel{subsec:formulation g2twist}

We start by reviewing the root system: Let $\epsilon_1,\epsilon_2,\epsilon_3\in\RR^3$ be the standard orthogonal basis, and set
$$
\alpha_1= \epsilon_1-\epsilon_2,\quad\alpha_2= -2\epsilon_1+\epsilon_2+\epsilon_3\quadand 
\alpha_1^\vee= \alpha_1,\quad \alpha_2^\vee=\frac{1}{3}\alpha_2,
$$
as in the Bourbaki tables \cite{LIE 4-6}.
Let $\Phi,\Phi^\vee$ be the resulting root system in the coordinate-zero hyperplane $V\subset\RR^3$,
and write $\Delta=\{\alpha_1,\alpha_2\}$ for the system of simple roots. The Weyl group takes the form 
$W=\langle s_1,s_2\mid s_1^2, s_2^2, (s_1s_2)^6 \rangle =D_6$, where
$s_1,s_2\in\GL(V)$   are the reflections corresponding  to the simple roots $\alpha_1,\alpha_2$.
Let $X_*\subset V$ be the lattice generated by the simple coroots $\alpha_1^\vee, \alpha_2^\vee$.
The  dual lattice   $X^*\subset V$ is generated by the simple roots  $\alpha_1,\alpha_2$.
 
Fix some integer $n\geq 0$ and set 
$$
p=3\quadand r=2\quadand s=2n+1\quadand q=\sqrt{p^s}.
$$
With respect to the  standard basis $\epsilon_1,\epsilon_2,\epsilon_3$, the symmetric matrix
$$
p^n\begin{pmatrix}
0 &2 &1\\
2 &1 &0\\
1 &0 &2\\
\end{pmatrix}\in\GL_3(\RR)
$$
stabilizes both lattices $X^*\subset X_*$ inside $\RR^3$. The induced maps $\varphi^*$ and $\varphi_*$ satisfy
$$
\varphi^*(\alpha_1)=p^n\alpha_2,\quad \varphi^*(\alpha_2)=p^{n+1}\alpha_1\quadand
\varphi_*(\alpha_1^\vee)=p^{n+1}\alpha^\vee_2,\;\varphi_*(\alpha^\vee_2)=p^n\alpha^\vee_1,
$$
and thus define a $p$-isogeny of the root datum $(X^*,\Phi,X_*,\Phi^\vee)$, obviously satisfying   $(\varphi^*)^r=p^s$.
Summing up, we have a  Deligne--Lusztig datum
\begin{equation}
\label{dld 2G2}
\DLD=(X^*,\Phi,X_*,\Phi^\vee,\Delta,p,\varphi_*).
\end{equation} 

The root datum $(X^*,\Phi,X_*,\Phi^\vee)$ corresponds to a split reductive group $(G,T)$ over the prime field $k=\FF_3$,
with Borel group $B$ given by the simple system $\Delta$. Choosing a pinning for $G$, we obtain an isogeny $\varphi:G\ra G$ that induces $\varphi^*$
and satisfies $\varphi^r=F^s$. The datum \eqref{dld 2G2} defines for every $w\in W$ a Deligne--Lusztig variety $X(w)$, coming with an action of the group scheme $G^\varphi$,
and smooth compactifications $\bX(w)$ for each simple expression $w=s_{i_1}\ldots s_{i_d}$.

The \emph{Ree groups}   ${}^2G_2(3^s)$ can be seen as the $\FF_{3^s}$-valued points of the group scheme $G^\varphi$, compare \cite{Ree 1961}.
We use the notation 
$$
\Ree(3^s)= {}^2G_2(3^s) = G^\varphi(\FF_{3^s}).
$$
With $q=p^{s/r} = 3^{s/2}$, the order takes the form $|\Ree(3^s)|=(q^2-1)q^6(q^6+1)$.
Alternative descriptions are due to Tits \cite{Tits 1995} and Wilson \cite{Wilson 2010a}, \cite{Wilson 2010b}.

The Deligne--Lusztig surface $\bX(w)$ has negative canonical divisor if and only if $w=s_2s_1$ and $\varphi^2=F$, in other words $n=0$; see Section~\ref{subsec:tables}.
We can now unravel its  geometry and arithmetic.
From Deligne--Lusztig theory we know the following: The Deligne--Lusztig curves $\overline{X}(s_1)$ and $\overline{X}(s_2)$ are smooth projective curves of genus $15$ over $\FF_3$. They are naturally contained in $X=\bar{X}(s_2s_1)$.

Furthermore by Corollary~\ref{cor:structure of ruled surface} the surface $X$ has the structure of a ruled surface over $C:=\overline{X}(s_1)$, and $\overline{X}(s_1) \subset X$ is a section. In particular the Albanese map of $X$ factors through the morphism $X\to C$.

The main result of this section is the following theorem which explicitly determines the structure of $X$ as a ruled surface over the Deligne--Lusztig curve.

\begin{theorem}
\mylabel{thm:structure g2twist}
The ruled surface $X = \bar{X}(s_2s_1)$ over $C = \bar{X}(s_1)$ is isomorphic to
$X=\PP(\shE)$ for the canonical extension  $0\ra\O_C\ra\shE\ra\omega^{\otimes-1}_C\ra 0$.
\end{theorem}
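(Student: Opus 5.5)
The ruling $f\colon X=\bX(s_2,s_1)\ra C=\bX(s_1)$ comes from Corollary~\ref{cor:structure of ruled surface}, and I will describe it through two sections. The first is the section $A=\bX(s_1)\subset X$ provided by Proposition~\ref{prop:bundles with sections}. The second is the other boundary divisor $B=\bX(s_2)\subset X$. I will show that these two sections play the roles of $\O_C$ and $\omega_C^{\otimes-1}$ in the canonical extension, and that the extension does not split.

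First I compute self-intersections via adjunction and the canonical bundle formula. From the table in Section~\ref{subsec:tables} with $n=0$, $q_0=1$, we get $mK_X\equiv \lambda_1D_1+\lambda_2D_2$ with coefficients $0$ and $-2$. Here $D_1=\bX(s_1)=A$ and $D_2=\bX(s_2)=B$, so $K_X\equiv -2B$. We have $(A\cdot B)=|X(e)(k^\alg)|=q^6+1=28$. Both curves have genus $15$, so $\deg\omega=28$.

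Adjunction on $B$ gives $28=(K_X+B)\cdot B=-B^2$, hence $B^2=-28=2-2g$. Adjunction on $A$ gives $28=(-2B+A)\cdot A=-56+A^2$, hence $A^2=84$.

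Write $X=\PP(\shE)$ with the normalization given by $B$, namely $0\ra\shN\ra\shE\ra\shL\ra0$ with $B=\PP(\shL)$. Then $\deg\shL-\deg\shN=B^2=-28$. Using $K_X\equiv-2B$ and the formula $K_{\PP(\shE)}\equiv -2\xi+f^*(K_C+\det\shE)$, I will identify $\shL\otimes\shN^{-1}$ with $\omega_C^{\otimes-1}$ up to numerical equivalence. To upgrade this to isomorphism I will use the $G^\varphi(k)$-equivariance: $B$ is $\Aut(X)$-stable, being the only curve in the anticanonical class $\tfrac12|{-K_X}|$ with negative square. The Ree group $\Ree(3)=\PGL_2(\FF_8)\rtimes\Aut(\FF_8)$ acts on $C$ with large image. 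I then argue that its fixed classes in $\Pic^0(C)$ are trivial, or alternatively use $K_X=-2B$ as an honest linear equivalence via Hansen's formula before multiplying by $m$, provided $m$ can be taken prime to the relevant torsion. After twisting, this gives $\shN=\O_C$ and $\shL=\omega_C^{\otimes -1}$.

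It remains to show non-splitting. If the extension split, there would be a second section $B'$ disjoint from $B$ with $B'^2=+28$, corresponding to the quotient $\O_C$. The section $A$ has $A^2=84$ and $(A\cdot B)=28$. The class of any section is $B+aF$ for some degree $a$, and $(A\cdot B)=28$ together with $B^2=-28$ forces $A\equiv B+56F$, consistent with $A^2=-28+112=84$. So $A$ alone does not rule out splitting. Instead I will use the arithmetic structure: $\Ext^1(\omega_C^{-1},\O_C)=H^1(\omega_C)$ is one-dimensional. A split extension would make the contraction of $B$ possible, and the ruled surface would then admit $\PP^1\times$-type symmetries, giving a nontrivial $\GG_m\subset\Aut_{X/k,C}$. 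I will rule this out using the inseparable map $\varphi\colon X\ra\bX(s_1s_2)$ and its compatibility with the pulled-back extension. This parallels Proposition~\ref{prop:distinguished curve}: the Frobenius pullback of the extension class, through the Hasse--Witt map on $H^1(\O_C)$, must be compatible with the $\PP^1$-bundle $\bX(s_1,s_2)\ra\bX(s_2)$. A more direct route is to show $h^0(\shE)=1$ from the count of sections with $(A\cdot B)$ transversal at the $28$ points of $X(e)$, since a split $\shE$ would carry a pencil of sections.

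I expect the non-splitting step, and to a lesser extent the upgrade from numerical to linear equivalence, to be the main obstacle. For the former I would first try the Frobenius/Hasse--Witt argument.
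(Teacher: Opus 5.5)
\textbf{There is a genuine gap at the non-splitting step.} This step is the real content of the theorem. Everything before it only shows $X\cong\PP(\shE')$ for \emph{some} extension of $\omega_C^{\otimes-1}$ by $\O_C$. None of your three suggestions closes the gap.

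\begin{itemize}
\item The $h^0$ route fails outright. Since $h^0(\omega_C^{\otimes-1})=0$, both $\O_C\oplus\omega_C^{\otimes-1}$ and the non-split extension have $h^0=1$, so a split $\shE$ carries no pencil.
\item The $\GG_m$ remark is not an argument. You never say what would forbid $\GG_m\subset\Aut_{X/k,C}$.
\item The Frobenius/Hasse--Witt idea is not spelled out.
\end{itemize}

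More fundamentally, the data you collected cannot decide the question, because they are all realized in the split case. Take $\shE=\O_C\oplus\omega_C^{\otimes-1}$ and the surjection $(s,t)\colon\shE\to\omega_C$. Here $s$ is the section vanishing exactly at the $28$ rational points (Proposition~\ref{prop:canonical bundle ree}), and $t\in H^0(\omega_C^{\otimes 2})$ is nonzero at those points. This defines a section $A'\equiv B+56F$, with $F$ a fiber, such that $A'\cap B$ is those $28$ points transversally. So numerical classes, $K_X\equiv -2B$, and the intersection pattern $A\cap B=X(e)$ are all compatible with splitting, and genuinely new input is needed.

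\textbf{The missing input is the $\operatorname{Ree}(3)$-symmetry $H$,} which the paper uses twice.

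First, the sheaves $f_*\O_X(B)$, $f_*\O_B(B)$ and $f_*\O_A(B)$ are $H$-linearized. Every $H$-linearized invertible sheaf on $C$ is some $\O_C(mc_\infty)$, because $C\smallsetminus\{c_\infty\}\to\AA^1$ is an \'etale Artin--Schreier tower and $\Pic(\AA^1)=0$. This gives $\O_B(B)\cong\omega_C^{\otimes-1}$ and $f_*\O_A(B)\cong\omega_C$. The phrase ``large image'' does not by itself give this.

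Second, non-splitting is proved by a representation-theoretic argument:
\begin{enumerate}
\item The composite $s\colon\O_C\to\shE\to\omega_C$ is $H$-invariant, since there are no nontrivial characters $H\to\FF_3^\times$. It vanishes exactly on the $28$ rational points, which form one $H$-orbit.
\item The sequence $0\to\omega_C\xrightarrow{s}\omega_C^{\otimes2}\to\bigoplus_i\kappa(c_i)\to0$ yields $H^0(\omega_C^{\otimes2})\to k[H/H_z]\to H^1(\omega_C)=k\to0$, where the last map is the augmentation.
\item The kernel $U$ of the augmentation has no nonzero $H$-invariants. The invariants of the transitive permutation module are spanned by the all-ones vector, whose augmentation is $28\equiv1\neq0$ in $\FF_3$.
\item Suppose $\shE$ split. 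The component $t$ of the surjection $\shE\to\omega_C$ on the summand $\omega_C^{\otimes-1}$ gives an element of $H^0(\omega_C^{\otimes2})$. Its image in $U$ is nonzero, since surjectivity forces $t(c_i)\neq0$. It is also $H$-invariant, since changing the splitting alters $t$ by $s\cdot H^0(\omega_C)$, which dies in $U$. This contradicts step~3.
\end{enumerate}

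This step, resting on $28\not\equiv0 \pmod 3$ and transitivity on $X(e)$, is what your proposal lacks.
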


The proof requires some preparation, and will be given in Section~\ref{subsec:conclusion ree}.
The symmetries afforded by the Ree group and the sections given by the Deligne--Lusztig curves will allow us to apply Theorem~\ref{thm:characterization ruled ree} below, which characterizes surfaces over $\FF_3$ with an action by the Ree group, ruled over a curve of genus $15$, and equipped with a certain configuration of divisors.

\subsection{Ree curves}
\mylabel{subsec:ree curves}

\newcommand{\wild}{\text{\rm wild}}
We start by taking a closer look at the Ree curve $C = \bar{X}(s_1)$.
As before, let $p=3$, $n\geq 0$ a natural number and set  $q=p^{\frac{2n+1}{2}}$ and $q_0=p^n$. According to the table in Section~\ref{subsec:tables} the curve $C$ has genus
\[
    h^1(\O_C) = \frac{3}{2}q_0(q^2-1)(q^2+q_0+1).
\]
We will use the following unique characterization of this curve that we use in the form given in~\cite{Hirschfeld; Korchmaros; Torres 2008}, Theorem~12.31; see also~\cite{Pedersen 1992}, \cite{Hansen; Pedersen 1993}.

\begin{proposition}[\cite{Hirschfeld; Korchmaros; Torres 2008}, Theorem~12.31]\mylabel{prop:characterization ree curve}
Let $p=3$, $n \ge 0$, $q = p^{\frac{2n+1}{2}}$ and $q_0=p^n$. Let $C$ be a curve over $\FF_{q^2}$ of genus $g = \frac 32 q_0 (q^2-1) (q^2+q_0+1)$ whose automorphism group $\Aut_{C/\FF_{q^2}}(\FF_{q^2})$ contains the Ree group $\Ree(q^2)$.
    Then $C$ is isomorphic to the Ree curve.
\end{proposition}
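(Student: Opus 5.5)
The plan is to reconstruct the function field of $C$ from the structure of the Ree group $\Ree(q^2)$ as a group with a split $BN$-pair of rank one, and then match it with the explicit model of the Ree curve. Throughout set $G=\Ree(q^2)\subset\Aut(C\otimes\FF_{q^2}^\alg)$. Recall that $G$ acts doubly transitively on a set $\Omega$ of $q^6+1$ points. The stabilizer of a point in $\Omega$ is a Borel subgroup $B_G=S\rtimes T_G$, where $S$ is a Sylow $3$-subgroup of order $q^6$ and $T_G$ is cyclic of order $q^2-1$.

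\textbf{Step 1: a short orbit of rational points.} First I would show that $G$ has an orbit on $C(\FF_q^\alg)$ on which it acts as on $\Omega$. Apply the Riemann--Hurwitz formula to the quotient $C\to C/G$ together with the genus value $g=\frac32 q_0(q^2-1)(q^2+q_0+1)$. Since $|G|=(q^2-1)q^6(q^6+1)$ is large compared with $g$ (the Hurwitz-type bound), the quotient must be rational and there must be few short orbits. Because $3\mid|G|$ and the characteristic is $3$, some orbit must carry wild ramification. The point stabilizers that are $3$-groups fixing a point must sit inside a Sylow normalizer, so the wildly ramified orbit has stabilizer $B_G$ and length $q^6+1$. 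Rationality of these points over $\FF_{q^2}$ then follows from Galois-stability of the orbit together with the count $|C(\FF_{q^2})|\le q^2+1+2gq$ read in reverse. Concretely, I would enumerate the possible short-orbit configurations compatible with Riemann--Hurwitz and exclude all but one.

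\textbf{Step 2: the ramification filtration and the rational quotient.} Fix $P$ in this orbit, so that $S$ acts with $P$ as its only fixed point. The main task is to determine the lower ramification groups $S=S_1\supseteq S_2\supseteq\cdots$ at $P$. The subgroup series of $S$ (center of order $q^2$, derived subgroup of order $q^4$) gives the candidates. The jumps can be pinned down by combining three inputs: the Hasse--Arf theorem for the abelian subquotients, the Riemann--Hurwitz formula for $C\to C/S$ (which must have genus $0$, as Step 1 forces), and the requirement that the total different yields exactly the given genus. I expect this to be the principal obstacle, because several filtrations are a priori compatible with the group structure, and only the precise genus value singles out the correct one. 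The first thing I would try is the ansatz that the jumps occur at $1$, $q_0q^2+1$ and $2q_0q^2+1$, computing the resulting genus and verifying that it matches. For uniqueness I would compare the contributions of the three layers.

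\textbf{Step 3: recovering the explicit equations.} Once $C/S\simeq\PP^1$ is known with a unique totally ramified point, I would climb the tower
\[
C/S\leftarrow C/Z(S)S' \leftarrow C/Z(S)\leftarrow C
\]
by elementary abelian $3$-extensions. Each layer is an Artin--Schreier extension of degree $q^2$, ramified only above the image of $P$. Its conductor, known from Step 2, bounds the pole order of the Artin--Schreier right-hand side. Using the action of $T_G$, which acts on these functions by a known character, normalizes the equations to
\[
y^{q^2}-y=x^{q_0}(x^{q^2}-x)\quadand z^{q^2}-z=x^{q_0}(y^{q^2}-y).
\]
The conductor bound and the $T_G$-equivariance leave only this choice up to coordinate change. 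These are the equations of the Ree curve, and for $n=0$ they reduce to the model recalled in the introduction, which gives the claimed isomorphism.
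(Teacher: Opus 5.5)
The paper does not prove this statement; it is quoted from \cite{Hirschfeld; Korchmaros; Torres 2008}, Theorem~12.31, which goes back to \cite{Hansen; Pedersen 1993}. Your outline is the strategy of that source: a single short orbit, then the ramification filtration of a Sylow $3$-subgroup $S$ at its fixed point $P$, then an Artin--Schreier tower. However, the concrete data your Steps~2 and~3 rest on are wrong.

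\textbf{The ramification jumps.} Since $C/S$ is rational and only $P$ ramifies, Riemann--Hurwitz forces $\sum_{i\ge0}(|S_i|-1)=2g-2+2q^6$. The correct lower jumps are $1$, $3q_0+1$ and $q^2+3q_0+1$. Your ansatz $1$, $q_0q^2+1$, $2q_0q^2+1$ agrees with these only for $n=0$. Already for $n=1$ (so $q^2=27$, $q_0=3$, $g=3627$) your filtration gives $\sum_{i\ge0}(|S_i|-1)=100438$, whereas Riemann--Hurwitz requires $46618$. The numbers $q_0q^2$ and $2q_0q^2$ are the gaps between the pole orders $q^4$, $q^4+q_0q^2$, $q^4+2q_0q^2$ of $x,y,z$ at $P$, not ramification data. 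Moreover, the genus alone does not single out the filtration. For $n=0$, the filtration $S_0=S_1=S_2=S$, $S_3=S_4=Z(S)$, $S_5=1$ has the same different $82$. It must be excluded by other means, e.g.\ $[S_i,S_j]\subseteq S_{i+j+1}$, which would force $S$ to be abelian.

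\textbf{The subgroup series.} Your description of $S$ fails for $n=0$, which is the only case the paper uses. There $S\subset\operatorname{Ree}(3)\cong\PGL_2(\FF_8)\rtimes\Aut(\FF_8)$ is $C_9\rtimes C_3$ of exponent $9$, with $S'=Z(S)$ of order $3$, not $q^4=9$. So your tower $C/S\leftarrow C/Z(S)S'\leftarrow C/Z(S)\leftarrow C$ collapses. The correct middle group is $\Omega_1(S)$, the elementary abelian subgroup of order $q^4$ consisting of the elements of order dividing $3$; in the Ree model its fixed field is $k(x)$. It coincides with $S'$ only for $n\ge1$.

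\textbf{The normalization in Step 3.} This step carries the actual content of the theorem, and you only assert it. For $n=0$ the torus $T_G$ has order $q^2-1=2$, which is too small. For a layer $y^3-y=ax^4+bx^2+cx$ over $k(x)$, the torus can at most force $c=0$. The relation $b=-a$, i.e.\ the right-hand side $a\,x(x^3-x)$, comes only from requiring that the translations $x\mapsto x+r$ lift to $C/Z(S)$. Over $\FF_3$ you must also exclude the twists $y^3-y=x(x^3-x)+\gamma$ with $\gamma\neq0$, and this is where rationality of the orbit is needed.

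\textbf{Smaller points in Step 1.} Your rationality argument does not work, because the Hasse--Weil bound only bounds from above. Instead use that Frobenius commutes with $G$: it sends $P\in\Omega$ to a point of $\Omega$ fixed by $G_P$, and by double transitivity that point is $P$. Likewise, $3\mid|G|$ alone does not produce wild ramification, since automorphisms of order $3$ can act freely in characteristic $3$. The wildly ramified orbit, and the fact that its stabilizer is the Borel subgroup, must come out of the Riemann--Hurwitz enumeration.
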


Furthermore, the following properties of $C$ are known.

\begin{proposition}[\cite{Hirschfeld; Korchmaros; Torres 2008} Theorem~12.29]\mylabel{prop:properties ree curve}
    The Ree curve $C$ has the following properties.
    \begin{enumerate}
        \item The automorphism group of $C$ is the Ree group $\Ree(3^{2n+1})$.
        \item The curve $C\otimes_{\FF_p} \FF_{q^2}$ is the regular proper model of the curve given by the affine equations
            \[
                y^{q^2}-y = x^{q_0}(x^{q^2}-x)\quadand z^{q^2}-z = x^{q_0}(y^{q^2}-y)
            \]
            over the field $\FF_{q^2}$.
    \end{enumerate}
\end{proposition}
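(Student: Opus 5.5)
This is a cited result (\cite{Hirschfeld; Korchmaros; Torres 2008}, Theorem~12.29), so the plan is to assemble it from the characterization in Proposition~\ref{prop:characterization ree curve} rather than to reprove the literature in full.

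\emph{Step 1: the model in (ii).} Let $C'$ be the regular proper model over $\FF_{q^2}$ of the affine curve
\[
y^{q^2}-y=x^{q_0}(x^{q^2}-x),\qquad z^{q^2}-z=x^{q_0}(y^{q^2}-y).
\]
I compute its genus via the tower $C'\to\PP^1_y$-cover$\to\PP^1_x$. Each step is an Artin--Schreier extension of degree $q^2$, ramified only above $x=\infty$. Riemann--Hurwitz with the wild different exponents, obtained from the upper ramification jumps of the Artin--Schreier extensions, should give $g=\tfrac32 q_0(q^2-1)(q^2+q_0+1)$.

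\emph{Step 2: the Ree symmetries.} I exhibit $\Ree(q^2)$ inside $\Aut(C')$. The affine substitutions $x\mapsto x+a$, together with compatible translations of $y,z$ and the scalings $x\mapsto\lambda x$, generate a Borel-type subgroup of order $q^6(q^2-1)$. The remaining generator is an involution swapping $\infty$ with a rational point, written down following Pedersen. Proposition~\ref{prop:characterization ree curve} then identifies $C'$ with the Ree curve, which proves (ii) once the Deligne--Lusztig curve $\bX(s_1)\otimes\FF_{q^2}$ is also shown to carry the $\Ree(q^2)$-action. That action comes from $G^\varphi$ acting on $\bX(s_1)$, and it is faithful by Proposition~\ref{prop:kernel of action}, since the center is trivial for $G_2$.

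\emph{Step 3: the full automorphism group (i).} The key inequality is $|\Aut(C)|\ge|\Ree(q^2)|$, which is large compared to $g$. I would show that $\Aut(C)$ fixes the set of $q^6+1$ rational points and acts $2$-transitively on it. Then I would invoke the classification of $2$-transitive groups (Kantor--O'Nan--Seitz) to conclude $\Aut(C)=\Ree(q^2)$. Equivalently, I would apply Henn's classification of curves with $|\Aut|\ge 8g^3$ style bounds, adapted as in Hirschfeld--Korchmáros--Torres.

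\emph{Main obstacle.} I expect Step 3 to be the hardest part: ruling out strictly larger automorphism groups. It is the step where I would rely on the cited monograph rather than reprove the group theory.
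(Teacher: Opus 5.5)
The paper gives no argument for this statement. It is quoted from \cite{Hirschfeld; Korchmaros; Torres 2008}, Theorem~12.29, so your text has to stand on its own as a reconstruction, and it only partly does.

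\emph{Part (ii).} The real content is that the Deligne--Lusztig curve $\bX(s_1)$ becomes, over $\FF_{q^2}$, the curve $C'$ defined by the two equations. In the cited source the Ree curve of Proposition~\ref{prop:characterization ree curve} is already the curve given by these equations. So your Steps~1--2 (genus of $C'$, Ree symmetries of $C'$, applying the characterization to $C'$) add nothing.

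What is needed is to apply that proposition to $\bX(s_1)\otimes\FF_{q^2}$. You check the group hypothesis correctly:
\begin{itemize}
\item $G^\varphi(\FF_{q^2})=\operatorname{Ree}(q^2)$ acts by $\FF_{q^2}$-automorphisms;
\item the action is faithful by Proposition~\ref{prop:kernel of action}, since $G_2$ has trivial center;
\item the curve is geometrically connected because $\Supp_\varphi(s_1)=\bbS$.
\end{itemize}
But you never verify the other hypothesis, the genus. It comes from the table in Section~\ref{subsec:tables}, via Hansen's canonical bundle formula. Using $q^2=3q_0^2$ one checks $\tfrac12(27q_0^5+9q_0^4-3q_0^2-3q_0)=\tfrac32q_0(q^2-1)(q^2+q_0+1)$. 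With this input (ii) follows, and this is the standard way the Deligne--Lusztig curve is identified.

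\emph{Part (i): there is a genuine gap.}

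Your fallback via Henn is unavailable. Henn's classification concerns $|\Aut|\geq 8g^3$, and the Ree curve never reaches this: $|\operatorname{Ree}(q^2)|$ grows like $q^{14}$ while $8g^3$ grows like $3\sqrt3\,q^{15}$. Already for $n=0$ one has $1512<27000$, and the Ree curve is not on Henn's list.

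In your main route, the decisive claim is that every automorphism of $C\otimes\FF_p^\alg$ preserves $C(\FF_{q^2})$. You assert this without argument, and it is not formal, since a geometric automorphism is not a priori defined over $\FF_{q^2}$. The known orbit-structure results for automorphism groups with $|G|>24g^2$, which would produce a $2$-transitive short orbit, cover $n\geq 1$. They do not cover $n=0$, where $1512<24\cdot 15^2=5400$, and $n=0$ is the only case the paper uses later.

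Even granting $2$-transitivity, the classification of $2$-transitive groups does not finish the job by itself:
\begin{itemize}
\item For $n=0$, $\PGL_2(\FF_8)\rtimes\Aut(\FF_8)$ embeds in $\Sp_6(\FF_2)$, whose $2$-transitive action on $28$ points restricts to the Ree action.
\item That overgroup, like the alternating and symmetric groups, must be excluded by a further bound, such as Stichtenoth's $|\Aut|<16g^4$ for non-Hermitian curves; note $|\Sp_6(\FF_2)|=1451520>16\cdot 15^4$.
\end{itemize}
So in your proposal (i) still rests entirely on the citation, exactly as in the paper.

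Note also that for $n\geq 1$, (i) must be read over $\FF_{q^2}$ or $\FF_p^\alg$, since $\operatorname{Ree}(3^{2n+1})$ does not act over $\FF_3$.
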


Consider the affine equations given in the proposition
over the prime field $k=\FF_p$. (The following discussion is valid for any prime $p > 0$.) They define a finite   field extension $k(x)\subset k(x,y,z)$, which is separable of degree $q^4$,
and thus corresponds to a  regular proper curve $C$ coming with a branched covering $h:C\ra\PP^1$. The latter is   \'etale  over
the open set $\AA^1=\Spec k[x]$. Using that the projective line becomes  simply-connected over $k^\alg$, one sees
that $h:C\ra\PP^1$ is totally ramified over $\infty\in\PP^1$. The morphism is actually an iterated
Artin--Schreier covering, and one infers 
$$
|C(\FF_{q^2})| = \deg(C/\PP^1)\cdot |\AA^1(\FF_{q^2})| +1  = q^6+1.
$$
Moreover, with $u\in \FF_{q^2}^\times $ and $r,s,t\in \FF_{q^2}$  the coordinate changes
\begin{align*}
x & = ux'+r,\\
y & = u^{q_0+1}y' + ur^{q_0}x' + s \\
z & = u^{2q_0 +1}z' - u^{q_0+1}r^{q_0}y + u^{q_0}r^{2q_0}x + t
\end{align*}
define a  group  of order $(q^2-1)q^6$ in $\Aut_{C/\FF_{q^2}}(\FF_{q^2})$. Note the analogy to  
elliptic curves (see the coordinate change formulas in~\cite{Tate 1975} and~\cite{Deligne 1975}).

We now return to the case $p=3$, where these equations describe the \emph{Ree curve} $\bar{X}(s_1)$, and restrict to the simplest case $n=0$, in other words $p=3$, $q = \sqrt{3}$ and $q_0=1$. The number of rational points is $|C(\FF_p)|=28$,
the genus is $h^1(\O_C)=15$,   and  the automorphism group scheme  is constant,  given by the semidirect product
$$
\Aut_{C/k}(k)= \Ree(3) \cong \PGL_2(\FF_8)\rtimes\Aut(\FF_8) 
$$
of order $2^3\cdot 3^3\cdot 7$.  Note that both factors are simple, and the term on the right is cyclic of order three.
The following canonical bundle formula will play a certain role:

\begin{proposition}
\mylabel{prop:canonical bundle ree}
For $n=0$, the  dualizing sheaf of the Ree curve $C$  over $k=\FF_3$ has the property  $\omega_C=\O_C(c_1+\ldots+c_{28})$, where
$c_1,\ldots,c_{28} \in C$ are the rational points of $C$.
\end{proposition}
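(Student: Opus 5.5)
The plan is to compute the divisor of a suitable differential on $C$ via the explicit model over $\FF_3$:
$$
y^3-y=x(x^3-x)\quadand z^3-z=x(y^3-y),
$$
and then to interpret its support through the cover $h\colon C\ra\PP^1$. Since $h$ is étale over $\AA^1=\Spec k[x]$, the differential $dx$ has neither zeros nor poles on $h^{-1}(\AA^1)$. Its divisor is therefore $m\cdot c_\infty$, where $c_\infty$ is the unique point over $\infty$, totally ramified of index $9$. From $2g-2=28$ we get $\deg(dx)=28$, so $\omega_C\simeq\O_C(28c_\infty)$. This is consistent with Riemann--Hurwitz: $28=9\cdot(-2)+\deg(\text{different})$, so the different has degree $46$ and $\operatorname{div}(dx)=-18c_\infty+46c_\infty$. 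The claim is thus equivalent to the linear equivalence
$$
28c_\infty\sim c_1+\ldots+c_{28}.
$$

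To prove this, I will exhibit a rational function on $C$ whose zeros are exactly the $27$ affine rational points $c_i\neq c_\infty$, each simple, with a pole of order $27$ at $c_\infty$. The affine rational points are those with $x\in\FF_3$, hence lie over the three points $x^3=x$. Over each of these, the equations $y^3-y=0$ and $z^3-z=0$ have all solutions in $\FF_3$, so the fiber splits completely into $9$ rational points. So I take $f=x^3-x$. Because $h$ is étale over $\AA^1$, $f$ vanishes simply at exactly these $27$ points. Its pole order at $c_\infty$ is $9\cdot 3=27$. Hence
$$
\operatorname{div}(f)=c_1+\ldots+c_{27}-27c_\infty,
$$
which gives $\sum_{i=1}^{28}c_i\sim 28c_\infty\sim K_C$. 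Moreover $f\,dx$, or $dx/f$ depending on the normalisation, realises this as an actual equality of divisors. Since everything is defined over the prime field, this yields an isomorphism $\omega_C\simeq\O_C(c_1+\ldots+c_{28})$ over $k=\FF_3$.

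The main obstacle is justifying that the explicit model from Proposition~\ref{prop:properties ree curve} is already the Ree curve $\bX(s_1)$ over $\FF_3$, and not merely after base change to $\FF_{q^2}=\FF_3$. Here $q^2=3$, so this base change is trivial, which removes the issue. One also has to check that the point at infinity is indeed a single rational point and that $|C(\FF_3)|=28$, which the paper has already recorded. A second delicate point is the exact pole order, that is, the different exponent at $c_\infty$. This I avoid entirely by the degree argument above. Alternatively, one can note that $\sum c_i$ is $\Ree(3)$-invariant and that the canonical class is invariant, but the explicit function $f$ is more direct.
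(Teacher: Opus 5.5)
Your proposal is correct and takes essentially the same route as the paper. In both arguments, $dx$ trivializes $\omega_C$ on $h^{-1}(\AA^1)$, which gives $K_C=28c_\infty$. Then the pullback of the section $x^3-x$ of $\O_{\PP^1}(3)$ (the paper's $s$) cuts out exactly the $27$ affine rational points, so $28c_\infty\sim c_1+\ldots+c_{28}$.

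One small correction to your closing hedge: the differential with divisor $c_1+\ldots+c_{28}$ is $(x^3-x)\,dx$. The alternative $dx/(x^3-x)$ has divisor $55c_\infty-(c_1+\ldots+c_{27})$.
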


\proof
The  dualizing sheaf $\omega_C$ is trivial over $h^{-1}(\AA^1)=C\smallsetminus\{c_\infty\}$, 
where $c_\infty\in C$ is the point over $\infty\in\PP^1$. In turn, we have
the canonical bundle formula $K_C=(2g-2)\cdot c_\infty$, valid for any curve of genus $g=h^1(\O_C)$.
In our specific situation, this gives
$$
\omega_C=\O_C(28\cdot c_\infty) = f^*(\O_{\PP^1}(3) )\otimes\O_C(c_\infty).
$$
Let $s\in\Gamma(\PP^1,\O_{\PP^1}(3))$ be a global section that vanishes at the three rational points $\neq\infty$.
Then $f^*(s)$ vanishes exactly at the $27$ rational points on $C$ that differ from $ c_\infty$,
and the desired formula $\omega_C=\O_C(c_1+\ldots+c_{28})$ follows.
\qed

\medskip
In other words, the sum of rational points $\sum_{i=1}^{28} c_i$ corresponds to a distinguished non-zero  section $s\in H^0(C,\omega_C)$,
which is unique up to sign.

\subsection{Characterization via symmetries and curves}
\mylabel{subsec:characterization g2twist}

Our next goal is to give a characterization of the Ree Deligne--Lusztig surface $\bar{X}(s_2s_1)$ in terms of the action of $H$ and a configuration of divisors. To this end, we start out over an arbitrary ground field $k$ of characteristic $p\geq 0$.
Let $f\colon X\ra C$ be a ruled surface over a curve $C$ of genus $g=15$,
and suppose that the  finite group $H=\Ree(3)=\PGL_2(\FF_8)\rtimes\Aut(\FF_8)$ acts on the surface. 
The curve
$C$ is the schematic image of the Albanese map $X\ra \Alb_{B/k}$. The latter is functorial,
thus $C$ inherits a unique $H$-action for which $f\colon X\ra C$ is equivariant  (\cite{Laurent; Schroeer 2024}, Corollary 10.3).

\begin{proposition}
\mylabel{prop:ree group on ruled}
Suppose the action of the subgroup $\PGL_2(\FF_8)$ on the surface $X$ is non-trivial.
The canonical map $H\ra\Aut(C)$ is injective, the scheme of $H$-fixed points on $C$ is empty,
and we have $p\in\{2,3,7\}$.
\end{proposition}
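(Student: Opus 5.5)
We first show that the kernel $K$ of $H\ra\Aut(C)$ is trivial. Since $\PGL_2(\FF_8)$ is simple and $H=\PGL_2(\FF_8)\rtimes C_3$, the normal subgroup $K$ is either trivial, or contains $\PGL_2(\FF_8)$, or is a normal subgroup of order three. The last case cannot occur: $K$ would then commute with the normal subgroup $\PGL_2(\FF_8)$, but $\Aut(\FF_8)$ acts on $\PGL_2(\FF_8)$ by outer automorphisms, so no order-three subgroup centralizes $\PGL_2(\FF_8)$. It therefore remains to exclude the case $\PGL_2(\FF_8)\subset K$.

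In that case $\PGL_2(\FF_8)$ acts fibrewise, so it embeds into the automorphism group of the generic fibre $X_\eta$. After passing to an algebraic closure of the function field $F=k(C)$, this fibre becomes $\PP^1$. The action is faithful on $X$, so we get an injection $\PGL_2(\FF_8)\hookrightarrow\PGL_2(F^\alg)$.

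By Dickson's classification of finite subgroups of $\PGL_2$ over an algebraically closed field, a copy of $\PGL_2(\FF_8)=\SL_2(\FF_8)$ can only occur in characteristic two, as $\PGL_2(\FF_{2^m})$ with $8\mid 2^m$. In characteristic $\neq 2$ the finite subgroups are cyclic, dihedral, $A_4,S_4,A_5$, or $\PSL_2/\PGL_2$ of fields of that characteristic, and none has order $504$ with this structure. Even granting characteristic two, we exclude $\PGL_2(\FF_8)\subset K$ as follows. Such a subgroup has non-trivial $2$-torsion and contains $\FF_8^\times\cong C_7$, so $F$ would contain $\FF_8$. This is impossible if $k$ has no such roots of unity, which holds for $k=\FF_3$, the case that matters. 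The cleanest route, however, is a Hurwitz-type count on $C$ rather than on $X$. I expect this step to be the main obstacle, and I would try to prove faithfulness of $H$ on $C$ directly via Dickson and the structure of $F$.

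Next we show that $C^H$ is empty. Suppose $c\in C$ is $H$-fixed. Then $H$ acts on the fibre $f^{-1}(c)$, a form of $\PP^1$ over $\kappa(c)$. It also acts faithfully on the completed local ring $\widehat{\O}_{C,c}$, hence on the cotangent space and on the higher jets.

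The stabilizer of a point on a curve has the structure $P\rtimes C_m$, with $P$ a $p$-group and $C_m$ cyclic; in characteristic zero the stabilizer is cyclic. The group $H$ is not of this form, because $\PGL_2(\FF_8)$ is non-abelian simple. Hence no point is fixed.

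Finally we determine $p$. Because $H$ acts faithfully on $C$ and has no fixed points, we apply the Hurwitz bound $|\Aut(C)|\le 84(g-1)=1176$ whenever $p=0$ or $p\nmid|H|$. Since $|H|=1512>1176$, the characteristic must divide $|H|=2^3\cdot3^3\cdot7$, which gives $p\in\{2,3,7\}$.
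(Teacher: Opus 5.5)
The gap is in the step you flag yourself: ruling out that $\PGL_2(\FF_8)$ lies in the kernel of $H\to\Aut(C)$ when $p=2$. Your normal-subgroup analysis, the fixed-point argument via the inertia structure $P\rtimes C_m$, and the Hurwitz count are fine and essentially match the paper.

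Dickson's list settles $p\neq 2$. Your root-of-unity remark settles $p=2$ with $\FF_8\not\subset k$, once it is justified: an element of order seven in $\Aut_F(X_\eta)$ forces $\zeta+\zeta^{-1}\in F$, hence $\FF_8\subset F\cap\bar k=k$. The appeal to $k=\FF_3$ is irrelevant there, since $\FF_3$ does not have characteristic two.

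For $p=2$ and $\FF_8\subset k$ you give no argument. No argument can work that looks only at the subgroup $\PGL_2(\FF_8)$ and the generic fibre: on $X=C\times\PP^1_k$ the group $\PGL_2(\FF_8)\subset\PGL_2(k)$ acts faithfully on $X$ and trivially on $C$. Since $p=2$ is one of the characteristics the conclusion allows, injectivity of $H\to\Aut(C)$ remains unproved in that case. So does the emptiness of $C^H$, which depends on it.

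The paper does not use Dickson at all. It uses the two distinct $H$-stable sections $R\neq R'$ from the setting of Theorem~\ref{thm:characterization ruled ree}; note these are not among the proposition's stated hypotheses. Their generic points are two $F$-rational points of $X_\eta\simeq\PP^1_F$ fixed by a fibrewise-acting $\PGL_2(\FF_8)$. The group would then embed into the abelian stabilizer $\GG_m(F)=F^\times$, a contradiction in every characteristic.

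If you want a section-free proof, you must use the elements of order three in $H$. The kernel would be $\PGL_2(\FF_8)$ or $H$. It cannot be $H$: no subgroup of $\PGL_2(\bar F)$ in characteristic two is isomorphic to $H$, by Dickson and $|H|=1512$. If the kernel is $\PGL_2(\FF_8)$, an element $\tau\in H$ of order three acts on $X_\eta$ semilinearly over a $k$-automorphism of $F$. At the same time it induces the outer Frobenius automorphism of $\PGL_2(\FF_8)$. Now consider the conjugation-invariant $\mathrm{trd}^2/\mathrm{nrd}$ on elements of order seven. Its values lie in $\FF_8\smallsetminus\FF_2\subset k$, so conjugation by $\tau$ fixes them. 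But every embedding $\PGL_2(\FF_8)\subset\PGL_2(\bar F)$ is, up to conjugacy, a Frobenius twist of the standard one, and for any such embedding the Frobenius automorphism squares these values. This is a contradiction.
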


\proof
Seeking a contradiction, we assume that the action of the simple group $\PGL_2(\FF_8)$ on $C$ is trivial.
We then get an induced action on the generic fiber $f^{-1}(\eta)=\PP^1_F$, over the function field $F=\FF_3(T)$.
This action is faithful, and fixes both $R_\eta$ and $R'_\eta$. This yields an inclusion of $\PGL_2(\FF_8)$
into the commutative group $\GG_m(F)= F^\times$, contradiction. Thus the action of $\PGL_2(\FF_8)$ on $C$ is non-trivial.
Observe that both factors of $H=\PGL_2(\FF_8)\rtimes\Aut(\FF_8)$ are simple groups. Thus by Lemmas~\ref{lem:simple groups} and~\ref{lem:ree not product} below, the $H$-action on $C$ must be  faithful, giving an  inclusion $H\subset\Aut(C)$.

The assertion on the characteristic follows, because $1512 = |H|$ exceeds the Hurwitz bound $84\cdot (g-1)= 1176$.
By Lemma \ref{lem:fixed scheme empty} below, the 
scheme of fixed points for the  $\PGL_2(\FF_8)$-action on  $C$ is empty.
Consequently, the  same holds for the $H$-action.
\qed

\medskip
\begin{theorem}
\mylabel{thm:characterization ruled ree}
Let $f\colon X\to C$ be a ruled surface over $k=\FF_3$, where $C$ has genus $15$, and where $H=\PGL_2(\FF_8)\rtimes\Aut(\FF_8)$ acts on $X$.
Suppose that the action of $\PGL_2(\FF_8)$ on the surface $X$ is non-trivial
and that there are   $H$-stable sections $R,R'\subset X$ with
$$
K_X\equiv -2R\quadand (R\cdot R')=28.
$$
Then the following holds:
\begin{enumerate}
\item 
The base $C$ is the Ree curve defined by the affine equations
$y^3-y = x(x^3-x)$ and $z^3-z = x(y^3-y)$, and the inclusion $H\subset\Aut(C)$ is an equality.
\item 
We have  $X=\PP(\shE)$ for the canonical extension $0\ra\O_C\ra\shE\ra\omega^{\otimes-1}_C\ra 0$.
\item
The  section $R$ corresponds to the given surjection $\shE\ra \omega^{\otimes-1}_C$, whereas the second section $R'$
is given by a surjective extension $\shE\ra\omega_C$ of  a distinguished section $s:\O_C\ra \omega_C$
that vanishes at the rational points $c_1,\ldots,c_{28}\in C$.
\end{enumerate}
\end{theorem}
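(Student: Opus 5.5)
Our plan is to mirror the proof of Theorem~\ref{thm:characterization ruled via symmetries} in the Suzuki case, with the Ree curve taking the role of $E_5$.

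\emph{Step 1: the base.} By Proposition~\ref{prop:ree group on ruled}, $H$ acts faithfully on $C$ and has no fixed points there. Over $k=\FF_3$ the group $H\cong\Ree(3)$ thus sits inside $\Aut(C)$ for a genus $15$ curve. After base change to $\FF_{q^2}=\FF_3$ (here $q^2=3$, $q_0=1$), Proposition~\ref{prop:characterization ree curve} identifies $C$ with the Ree curve. Proposition~\ref{prop:properties ree curve}(i) then gives $H=\Aut(C)$, which is (i). We also record that $C$ has exactly $28$ rational points, forming one $H$-orbit.

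\emph{Step 2: degrees of the invertible sheaves.} As in the Suzuki case we set $\shE=f_*\O_X(R)$, $\shN=f_*\O_R(R)$ and $\shL=f_*\O_{R'}(R)$. This gives $X=\PP(\shE)$, together with two exact sequences
\[
0\to\O_C\to\shE\to\shN\to0 \quadand 0\to\shL^{\otimes-1}\to\shE\to\O_C\to0,
\]
all carrying $H$-linearizations.
\begin{itemize}
\item From $K_X\equiv-2R$ we get $K_X^2=8(1-g)=4R^2$, hence $R^2=2-2g=-28$.
\item We have $\deg\shN=R^2=-28$ and $\deg\shL=(R\cdot R')=28$.
\item Restricting $K_X\equiv -2R$ to $R$ and applying adjunction yields $\O_R(R)\cong\omega_R^{\otimes-1}$ numerically. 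To obtain an actual isomorphism, we would compare $\omega_{X/C}|_R$ with $\O_R(-2R)$ directly, using the identity $\omega_X=f^*\omega_C\otimes\O_X(-2R)\otimes f^*\shN^{-1}$ valid for a section.
\item The cleaner route is $H$-invariance. The classes in $\Pic^{\pm28}(C)$ are $H$-fixed, and the $28$ rational points lie on one orbit, so their sum is $H$-invariant. Proposition~\ref{prop:canonical bundle ree} shows this sum is linearly equivalent to $K_C$. It then suffices to show that the $H$-fixed part of $\Pic^0(C)(k)$ is trivial, so that $\shN\cong\omega_C^{\otimes-1}$ and $\shL\cong\omega_C$.
\end{itemize}
We expect this last claim to be the main obstacle. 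Our first attempt would be to show that $\Pic^0(C)(\FF_3)^H$ injects into a group that $H$ must kill, using the absence of fixed points on $C$ together with an orbit/norm argument. If that fails, we would fall back on the intersection identity $R\cdot(R'-R)$ combined with $\omega_X$ to force the isomorphism directly.

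\emph{Step 3: non-splitting and the section $R'$.} The composite $\O_C\to\shE\to\shL=\omega_C$ is an $H$-invariant element $s\in H^0(C,\omega_C)$. Its zero scheme is $f(R\cap R')$, which has degree $28$ and is $H$-stable.
\begin{itemize}
\item We must show that $s$ is the distinguished section vanishing at $c_1+\dots+c_{28}$. Since $H$ has no fixed points, an $H$-stable divisor of degree $28$ decomposes into orbits. We would check that the only orbit of length at most $28$ is the orbit of rational points, using the orbit lengths of $\Ree(3)$ acting on $C$, where point stabilizers have order dividing $|H|/28=54$.
\item If $s=0$, then $\O_C\subset\shL^{\otimes-1}$, which is impossible for degree reasons. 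So $s$ is the distinguished section, and the second sequence describes $R'$ as stated in (iii).
\item For non-splitting, suppose the first sequence split. Then $X$ would carry a section $R''$ disjoint from $R$ with $R''^2=28$. Applying adjunction to $K_X\equiv-2R$ gives $\deg\omega_{R''}=-2R\cdot R''+R''^2=28=2g-2$. This is consistent, so it gives no contradiction by itself.
\item To rule out splitting we therefore argue with $H$-representations on $H^0(C,\shE)$, exactly as in the Suzuki case. The invariant-line argument on $H^0(\omega_C)$ shows that $h^0(\shE)$ equals $1$ rather than $2$.
\end{itemize}
Since $\Ext^1(\omega_C^{\otimes-1},\O_C)$ is one-dimensional, the non-split extension is the canonical extension, which gives (ii).
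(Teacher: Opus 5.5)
Your Step~1 agrees with the paper. Steps~2 and~3, however, contain two genuine gaps.

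\textbf{First gap: identifying $\shN$ and $\shL$.} Everything in (ii) and (iii) rests on $\shN\simeq\omega_C^{\otimes-1}$ and $\shL\simeq\omega_C$ as invertible sheaves, not merely in degree $\mp 28$. You leave this open, and neither of your fallbacks can close it.
\begin{enumerate}
\item Intersection numbers such as $R\cdot(R'-R)$ do not distinguish points of $\Pic^{28}(C)$. Adjunction along $R$ is tautological, since the correct formula is $\omega_X=\O_X(-2R)\otimes f^*(\omega_C\otimes\shN)$ (with $f^*\shN$, not $f^*\shN^{\otimes-1}$).
\item The norm $\sum_{h\in H}h$ only shows that $\Pic^0(C)(k)^H$ is annihilated by $|H|=2^3\cdot3^3\cdot7$, which proves nothing by itself. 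The absence of $H$-fixed points on $C$ does not enter.
\end{enumerate}
The missing idea is the subgroup $N\simeq(\ZZ/3\ZZ)^2\subset H$ of translations $y\mapsto y+s$, $z\mapsto z+t$. Its quotient map $C\to C/N=\PP^1$ is the iterated Artin--Schreier covering, \'etale over $\AA^1$ and totally ramified over $\infty$. The paper restricts the $H$-linearized sheaves $\shN,\shL$ to $C\smallsetminus\{c_\infty\}$, where $N$ acts freely, and descends them to $\AA^1$. Since $\Pic(\AA^1)=0$, each is of the form $\O_C(mc_\infty)$, and $\omega_C=\O_C(28c_\infty)$ then gives $\shN=\omega_C^{\otimes-1}$ and $\shL=\omega_C$. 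Your own reduction to invariant classes can also be completed with the same $N$. Indeed $\sum_{h\in N}h=\pi^*\pi_*$ kills $\Pic^0(C)$ because $\Pic^0(\PP^1)=0$, so $\Pic^0(C)^N$ is $9$-torsion. It vanishes because Deuring--Shafarevich applied to $C\to C/N$ shows that $C$ has $3$-rank zero.

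\textbf{Second gap: non-splitting.} Since $H^0(C,\omega_C^{\otimes-1})=0$, one has $h^0(\shE)=1$ for the split and the non-split extension alike. So the dimension count from the Suzuki case, where the quotient had degree zero, says nothing here. Your second sequence is also misstated: it is $0\to\shN\otimes\shL^{\otimes-1}\to\shE\to\shL\to0$, with kernel $\omega_C^{\otimes-2}$. The paper uses exactly the quotient $\psi\colon\shE\to\shL=\omega_C$, as follows.
\begin{enumerate}
\item After tensoring with $\omega_C$, the map $\psi$ carries $0\to\omega_C\to\shE\otimes\omega_C\to\O_C\to0$ to $0\to\omega_C\xrightarrow{s}\omega_C^{\otimes2}\to\bigoplus_{i=1}^{28}\kappa(c_i)\to0$.
\item Hence the extension class, namely the image of $1\in H^0(C,\O_C)$ in $H^1(C,\omega_C)=k$, factors through the permutation module $\bigoplus_i\kappa(c_i)\simeq k[H/H_z]$.
\item There the image of $1$ is $H$-invariant, and all its entries are nonzero by surjectivity of $\psi$ at the $c_i$. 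So it is a nonzero multiple of $(1,\ldots,1)$.
\item The map $\bigoplus_i\kappa(c_i)\to H^1(C,\omega_C)$ is surjective because $H^1(C,\omega_C^{\otimes2})=0$. Being $H$-equivariant onto the trivial module, it is a nonzero multiple of the augmentation.
\item Since $28\not\equiv0\pmod 3$, the class is nonzero.
\end{enumerate}

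\textbf{Smaller points.} Your identification of the zero scheme of $s$ is only sketched. You must control $\sum[H:H_z]\,[\kappa(z):k]\,m_z=28$ including residue degrees and multiplicities, as in Lemma~\ref{lem:only rational points}. Also, an orbit of length at most $28$ requires $|H_z|\geq 54$, not $|H_z|$ dividing $54$. Finally, you need $s$ to be genuinely $H$-invariant rather than semi-invariant. This holds because $H$ has no nontrivial homomorphism to $k^\times=\{\pm1\}$.
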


\proof
As noted above, the group $H$ acts on $C$. By Proposition~\ref{prop:characterization ree curve} we infer that $C$ is the Ree curve for $n=0$ over $\FF_3$. The further claims in~(i) then follow from Proposition~\ref{prop:properties ree curve}.

To see  (ii) and (iii) we consider the coherent sheaves  
\begin{equation}
\label{direct images rees}
\shE=f_*(\O_X(R))\quadand \shN=f_*(\O_R(R)) \quadand \shL=f_*(\O_{R'}(R)) 
\end{equation} 
on $C$. By cohomology and base change (e.g., \cite{Hartshorne 1977}, Chapter III, Corollary 12.9) they  are locally free, the former of rank two,
the latter of rank one. 
Moreover, we have $X=\PP(\shE)$ and $R=\PP(\shN)$ and $\PP(\shL)$, corresponding to two short exact sequences
\begin{equation}
\label{two extensions}
0\ra \O_C\xrightarrow{\varphi}\shE\ra\shN\ra 0\quadand 0\ra \shL^{\otimes-1}\otimes\shN\ra\shE\xrightarrow{\psi}\shL\ra0.
\end{equation} 
We have $\deg(\shL)=(R\cdot R')=28$ and $\deg(\shN)=-28$, the latter from 
$8\chi(\O_C) = K_X^2=4R^2 = 4 \deg(\shN)$. 

The sheaves in \eqref{direct images rees}  carry    canonical $H$-linearizations.
Now recall that the Ree curve $C$ comes with an iterated Artin--Schreier covering
$C\ra\PP^1$ of degree $q^2=9$, which is \'etale over the open set $\AA^1$,
and totally ramified over $\infty\in\PP^1$. Using that $\Pic(\AA^1)=0$, we infer that every $H$-linearized invertible
sheaf on $C$  is a multiple of $\O_C(c_\infty)$, where $c_\infty\in C$ is the preimage of $\infty\in\PP^1$. This yields
$$
\shN=\omega_C^{\otimes-1}\quadand \shL=\omega_C\quadand \shL^{\otimes-1}\otimes\shN=\omega_C^{\otimes -2}.
$$
We already observed above that $R\cap R'$ corresponds to the sum of rational points $c_1,\ldots,c_{28}\in C$.
In turn, the composition $s=\psi\circ\varphi$ formed with the maps from \eqref{two extensions} yields
a distinguished section $s\colon \O_C\ra\omega_C$, which establishes (iii).

It remains to verify that the first extension in \eqref{two extensions}  is non-split. To this end we first examine
the distinguished section $s\in H^0(B,\omega_C)$. The resulting line $ks$ is $H$-stable, and thus defines
a character 
$\chi:\PGL_2(\FF_8)\rtimes\Aut(\FF_8)=H\ra k^\times=\{\pm 1\}$.
It clearly vanishes on the left, and thus also on the right factor of the semidirect product. 
Thus the section $s\in H^0(C,\omega_C)$ is $H$-fixed. Viewing it as homomorphism
$s:\O_C\ra\omega_C$ between $H$-linearized sheaves, we see that the graph $\Gamma_s\in H^0(C,\O_C\oplus\omega_C)$ is $H$-fixed,
hence the homomorphism is $H$-equivariant.

We next exploit the short exact sequence 
$0\ra\omega_C\xrightarrow{s}\omega_C^{\otimes 2}\ra \bigoplus_{i=1}^{28} \kappa(z_i)\ra0$, which yields an exact sequence 
$$
0\lra H^0(B,\omega_C)\lra H^0(B,\omega_C^{\otimes 2})\lra \bigoplus_{i=1}^{28} k \lra H^1(C,\omega_C)\lra 0
$$
of $H$-representations. The term on the right is the trivial representation, because all homomorphisms $H\ra \{\pm 1\}=k^\times $ are trivial.
The direct sum is a permutation representation given by $k[H/H_z]$, where $H_z$ is the stabilizer of a chosen rational point $z\in C$.
So  the map on the right can be seen as   $(\lambda_{\sigma H_z})\mapsto \sum \lambda_{\sigma H_z}$.
Its kernel $U\subset k[H/H_z]$ is a subrepresentation which does not contain a copy of the trivial representation  $k$.
Seeking a contradiction, we   assume that the extension $0\ra\O_C\ra\shE\ra\shN\ra 0$ splits.
Then $\shE\otimes\omega_C=(\O_C\oplus\shN)\otimes\omega_C=\omega_C\oplus\O_C$, and the surjective mapping 
$$
\psi:\omega_C\oplus\O_C=\shE\otimes\omega_C\lra \shL\otimes\omega_C=\omega_C^{\otimes 2}
$$ 
yields 
an inclusion of the trivial representation $H^0(B,\O_C)=k$ into our subrepresentation $U\subset k[H/H_z]$, contradiction.
\qed

\medskip
The above arguments used the following auxiliary statements, which we establish now.
Let $C$ be a regular proper curve with $h^0(\O_C)=1$ over an arbitrary ground field $k$  of characteristic $p>0$,
endowed with a faithful action of a finite group $G$.
For each closed point $z\in Z$, the \emph{stabilizer subgroup} 
$G_z=\{\sigma\in G\mid \sigma\cdot x=x\}$  acts on the residue field,  and the exact sequence
$$
1\lra I_z\lra G_z\lra \Aut(\kappa(z))
$$
defines the \emph{inertia subgroup} $I_z$. The latter comes with a $\kappa(z)$-linear 
representation on the cotangent space $\maxid_z/\maxid_z^2$, and the exact sequence
$$
0\lra I_z^\wild\lra I_z\lra \GL(\maxid_z/\maxid_z^2)=\kappa(z)^\times
$$
defines the \emph{wild part of the inertia}. Note that the latter is a $p$-group, with cyclic quotient $I_z/I_z^\wild$,
as discussed in \cite{Serre 1979}, Chapter IV, \S 2.
 
\begin{lemma}
\mylabel{lem:fixed scheme empty}
In the above situation, suppose that  $G$ is a non-abelian simple group. Then the scheme of fixed points $C^G$ is empty. 
\end{lemma}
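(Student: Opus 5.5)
Suppose the fixed scheme $C^G$ contains a closed point $z$; we aim for a contradiction with $G$ being non-abelian simple. The point $z$ is fixed by all of $G$, so the stabilizer is $G_z=G$, and we get the filtration
$$
I_z^\wild\subset I_z\subset G_z=G,
$$
in which each term is normal in the next. In fact each is normal in $G$: $I_z$ is the kernel of $G\ra\Aut(\kappa(z))$, and $I_z^\wild$ is the kernel of $I_z\ra\kappa(z)^\times$; the second is conjugation-invariant because $G$ acts semilinearly on $\maxid_z/\maxid_z^2$, so conjugating an element of $I_z$ does not change whether it acts trivially there.

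Since $G$ is simple, $I_z$ is either trivial or all of $G$. First take $I_z=\{1\}$. Then $G$ embeds into $\Aut(\kappa(z))$, the automorphism group of a finite extension of $k$. I would use that $\kappa(z)^{G}\subset\kappa(z)$ is Galois with group $G$, and argue that for the finite ground field relevant here the Galois group is cyclic, hence abelian. This contradicts non-abelian simplicity. For an arbitrary ground field this step is unclear, so I would either restrict to the finite-field situation used in the applications or find an extra argument. I expect this case to be the main obstacle.

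Now take $I_z=G$. The quotient $I_z/I_z^\wild$ is cyclic, so $I_z^\wild$ is nontrivial, and being normal in the simple group $G$ it equals $G$. Thus $G$ is a $p$-group. A nontrivial $p$-group has nontrivial center, so a simple $p$-group is cyclic of order $p$ and hence abelian, again a contradiction. Therefore no closed point is $G$-fixed, and since $C^G$ is a closed subscheme of finite type, it is empty.
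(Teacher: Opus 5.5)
You have correctly located the weak spot, and it is a genuine gap: the case $I_z=\{1\}$ cannot be closed over an arbitrary ground field, and it never needs to be. The missing idea is that $C^G$ is the \emph{scheme} of fixed points. A point $z$ lies in the closed subscheme $C^G\subset C$ if and only if the canonical morphism $\Spec\kappa(z)\ra C$ factors through $C^G$, that is, if and only if this morphism is $G$-invariant. This means that every $\sigma\in G$ fixes $z$ \emph{and} induces the identity on $\kappa(z)$. So $z\in C^G$ gives $G=G_z=I_z$ at once, and only your second case remains. Your treatment of that case is exactly the paper's argument: the quotient $I_z/I_z^{\mathrm{wild}}$ is cyclic, hence $I_z^{\mathrm{wild}}=G$, so $G$ is a simple $p$-group and therefore $C_p$.

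From $z\in C^G$ you extract only $G_z=G$, which amounts to proving the set-theoretic statement. That statement is false in the stated generality, so no extra argument could rescue your first case. Here is a counterexample. The group $G=\PGL_2(\FF_4)\cong A_5$ acts faithfully on $\PP^1_{\FF_4}$, with quotient $\PP^1$ given by some $f\in\FF_4(x)$ of degree $60$. Over $k=\FF_4(t)$, the fibre of $f\colon\PP^1_k\ra\PP^1_k$ over the rational point $t$ is a single closed point $z$ with $\kappa(z)\cong\FF_4(x)$. The group $G$ acts faithfully on this residue field, so $G_z=G$ but $I_z=\{1\}$. Restricting to finite fields, where $\Aut(\kappa(z))$ is cyclic, would save your version. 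However, the lemma is formulated for an arbitrary ground field, so the scheme-theoretic description of $C^G$ is the ingredient you need.
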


\proof
Suppose there is a  point $z\in C^G$, in other words  $G=G_z=I_z$. Since $G$ is non-abelian and simple,
the cotangent representation $G\ra \GL(\maxid_z/\maxid_z^2)$ is trivial, hence $G$ is a $p$-group,
and therefore isomorphic to the cyclic group $C_p$, contradiction.
\qed

\begin{lemma}
\mylabel{lem:only rational points}
In the above situation, suppose $k=\FF_3$ and $G=\PGL_2(\FF_8)$. 
If there is a   $G$-stable effective divisor  $Z\subset C$ of degree $\deg(Z)=28$, then $Z$ is reduced, every point $z\in \Supp(Z)$ is rational and the $G$-action
on $Z$ is transitive.
\end{lemma}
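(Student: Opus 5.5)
Write $Z=\sum_i m_iz_i$ with distinct closed points $z_i$, so that $28=\sum_i m_i[\kappa(z_i):k]$. The plan is to control the $G$-orbits on closed points of $C$ through their stabilizers, using the subgroup structure of $G=\PGL_2(\FF_8)$, a simple group of order $504=2^3\cdot3^2\cdot7$. Since $Z$ is $G$-stable, its support is a union of $G$-orbits, and the multiplicity $m_i$ is constant along each orbit. An orbit of a closed point $z$ has size $[G:G_z]$ and contributes $[G:G_z]\cdot m\cdot[\kappa(z):k]$ to the degree. So we need to find all ways of writing $28$ as a sum of such contributions.

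The first step is to list the possible indices $[G:G_z]\le 28$. By Lemma~\ref{lem:fixed scheme empty}, $G_z\ne G$. The proper subgroups of $\PGL_2(\FF_8)$ with index at most $28$ are as follows. The Borel subgroup $C_2^3\rtimes C_7$ has index $9$. The dihedral group $D_{9}$ of order $18$ has index $28$. The dihedral group $D_7$ of order $14$ has index $36$, which is too large. Any subgroup of order at least $18$ is one of these, or a subgroup of the Borel of order $28$ or $56$. A subgroup of order $28$ inside $C_2^3\rtimes C_7$ does not exist, since $C_7$ acts irreducibly on $C_2^3$. The subgroups of order $56$ are only the Borels themselves. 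This is the standard Dickson list, which I would quote. So the orbits have size $9$ or $28$.

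The second step rules out the size-$9$ orbits. If $G_z$ is a Borel subgroup, then the point $z$ has a stabilizer of order $56$. I want to show this stabilizer would have to lie in the inertia group. Indeed, $G_z\to\Aut(\kappa(z))$ has cyclic image, since $\kappa(z)$ is a finite field, so the kernel $I_z$ contains the commutator subgroup, which is $C_2^3\rtimes C_7$ itself: this group is perfect modulo nothing, and its abelianization is $C_7$. Then $I_z/I_z^\wild$ is cyclic of order prime to $3$, and $I_z^\wild$ is a $3$-group, hence trivial. So $I_z$ would be cyclic. But $I_z$ contains $C_2^3$, which is not cyclic; contradiction.

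Size-$28$ orbits come from $D_9$-stabilizers. There $I_z$ is cyclic and contains the commutator subgroup $C_9$, and its wild part is the $3$-Sylow. This is still possible a priori, so orbits of size $28$ remain, each contributing $28\cdot m\cdot[\kappa(z):k]$. Since the total degree is exactly $28$, we get a single orbit with $m=1$ and $\kappa(z)=k$. Hence $Z$ is reduced, all its points are rational, and $G$ acts transitively.

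The main obstacle is the group-theoretic step: making the subgroup-index bound airtight, and correctly using the inertia structure (wild part a $3$-group, tame quotient cyclic) to exclude the Borel stabilizers. If the commutator argument is shaky, I would instead argue directly: the subgroup $C_2^3\subset I_z$ would have to act faithfully on the cotangent line, through a cyclic group, since its wild part is trivial. That is impossible for a non-cyclic group.
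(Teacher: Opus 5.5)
Your proof is correct, but it differs from the paper's in the key elimination step.

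\textbf{Common setup.} Both arguments split $Z$ into $G$-orbits and use $[G:G_z]\cdot m\cdot[\kappa(z):k]\le 28$.

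\textbf{Your route.} You use the sharp subgroup structure of $\PGL_2(\FF_8)$: its maximal subgroups are $C_2^3\rtimes C_7$, $D_9$ and $D_7$. Apart from $G$ itself, which Lemma~\ref{lem:fixed scheme empty} excludes, only the Borel subgroups (index $9$) and the $D_9$'s (index $28$) can occur as stabilizers. You then remove the Borel case structurally. The inertia group contains $[B,B]=C_2^3$, because the image of $B$ in $\Aut(\kappa(z))$ is cyclic. It must also be cyclic, because its wild part is a $3$-subgroup of a group of order $56$ and hence trivial.

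\textbf{The paper's route.} The paper works with a coarser list of subgroup types, which includes candidates such as $D_{21}$, $D_{63}$ and $C_{63}$ that do not actually occur in $\PGL_2(\FF_8)$. It eliminates every stabilizer other than $D_9$ by one arithmetic argument. Each such candidate has order divisible by $7$. Since the wild inertia is a $3$-group, $7$ must divide the order of the image in $\Aut(\kappa(z))$ or in $\kappa(z)^\times$. This forces $[\kappa(z):k]\ge 6$, which contradicts the degree bound. The one leftover case, of index $4$, is handled through its abelianization and gives contributions $24m_i$.

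\textbf{Comparison.} The paper's argument does not need the exact subgroup list, only that every candidate of index at most $28$ other than $D_9$ has order divisible by $7$. Yours needs the sharp list, but it is shorter. It also shows more: no closed point of $C$ at all can have a Borel stabilizer, whatever its degree.

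\textbf{Two slips to correct.}
\begin{itemize}
\item The commutator subgroup of $C_2^3\rtimes C_7$ is $C_2^3$, not the whole group, as your own remark that the abelianization is $C_7$ shows. Containment of $C_2^3$ is all you actually use.
\item Your claim that $I_z$ is cyclic for a $D_9$-stabilizer is unjustified: the inertia could be all of $D_9$, with wild part $C_9$. Nothing in your argument depends on this claim.
\end{itemize}
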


\proof
Let $z_1,\ldots,z_r\in \Supp(Z)$ be representatives for the $G$-orbits. Write $k_i=\kappa(z_i)$ for the residue fields and
$G_i\subset G$ for the   stabilizer subgroups, such that
\begin{equation}
\label{orbit degrees}
28=\deg(Z) =\sum_{i=1}^r [G:G_i] \cdot [k_i:k] \cdot m_i,
\end{equation} 
where $m_i$ is the multiplicity of $z_i$ in $Z$.
Our task is to verify $r=1$ and $[k_i:k]=m_i=1$ for all $i$. We do this by checking that 
 the only possibility for a stabilizer subgroup    is to have index  $[G:G_i]=28$. 
To start with, we list the isomorphism classes occurring among  the subgroups $H\subset G$.  
According to \cite{Faber 2023}, this are
$$
\PGL_2(\FF_{2^i})\quadand \FF_8\rtimes \FF_8^\times\quadand \FF_2^{\oplus j}\quadand D_n\quadand C_n,
$$
with $i\mid 3$ and $0\leq j\leq 3$ and $n\mid 3^2\cdot 7$. Here $C_n$ denotes the cyclic group of order $n$,
and $D_n=C_n\rtimes\{\pm 1\}$ the dihedral group of order $2n$. By \eqref{orbit degrees}, only those with $[G:H]\leq 28$ may
occur as stabilizer groups, tabulated as follows:
$$
\begin{array}{lllllll}
\toprule
H	& \PGL_2(\FF_8)	& \FF_8\rtimes\FF_8^\times 	& D_9		& D_{21}		& D_{63}		& C_{63}	\\
\toprule
|H|	& 2^3\cdot 3^2\cdot 7	& 2^3\cdot 7		& 2\cdot 3^2	& 2\cdot 3\cdot 7	& 2\cdot 3^2\cdot 7	& 2\cdot 7	\\
\midrule
{[G:H]}	& 1		& 9			& 28		& 12		& 4		& 8\\
\bottomrule
\end{array}
$$
Seeking a contradiction, we assume that  $[G:H]\neq 28$, in other words $H\neq D_9$, for all $z\in Z$.
Fix some $z\in Z$, and write $H=G_z$ for the stabilizer subgroup,   $I\subset H$ for the inertia subgroup, and $\FF_{3^\nu}=\kappa(z)$
for the residue field. Then $\Aut(\FF_{3^\nu})$ and  $\FF_{3^\nu}^\times$ are cyclic, 
of respective order $\nu$ and $3^\nu-1$.   
By the above table,  $d=7$ divides the order of $H$. 
Since the wild part of the inertia is a $p$-group, we see that $d=7$   divides the order of the image of $\phi:H\ra\Aut(\FF_{3^\nu})$
or the image of $\psi:I\ra \FF_{3^\nu}^\times$. 
The former gives $7\mid \nu$, the latter means $7\mid 3^\nu-1$, and in both cases we have $[\kappa(z):k]=\nu\geq 6$. 
In light of   \eqref{orbit degrees} the only possibilities are  $H=\PGL_2(\FF_8)$ or   $H=D_{63}$. 
In the former case, the group is simple, and it follows that both $\phi$ and $\psi$ are trivial, contradiction.
Thus $H=D_{63}$, or equivalently $[G:H]=4$. The abelianization of $H$ has order two, and we infer that $I=C_{63}$ and $\nu\geq 6$ is even.
In light of \eqref{orbit degrees} we actually have  $\nu=6$. 
Summing up,  in the right-hand side of \eqref{orbit degrees}, for every $z=z_i$ the  summand takes the form $[G:G_i]\cdot [k_i:k]\cdot m_i =24 m_i$,
contradiction.
\qed

\medskip
We also have   used a  purely   group-theoretic fact:

\begin{lemma}
\mylabel{lem:simple groups}
Let $H=H_1\rtimes H_2$ be a semidirect product of simple groups. Assume that $H$ is not isomorphic to the direct product $H_1\times H_2$.
Then $H_1$ is the only normal subgroup in $H$ different from $\{e\}$ and $H$.
\end{lemma}

\proof
Let $N\subset H$ be a normal subgroup. Then   $N_1=H_1\cap N$ is a normal subgroup in $H$,
and $N_2=N/N_1$ is a normal subgroup in $H_2$. If both of them are trivial, or both of them are non-trivial,
we have $N=\{e\}$ or $N=H$.  We have $N=H_1$ if $N_2$ is is trivial and $N_1$ is not. It remains
to exclude the case that $N_1$ is trivial and $N_2$ is not. Then $N $ is a complement for $H_1\subset H$, so $H\cong H_1\rtimes N$. Using that $N$ is normal, we see that this decomposition is actually a direct product, i.e., $H = H_1\times N \cong H_1\times H_2$.
This is a contradiction to our assumption.
\qed

\begin{lemma}\mylabel{lem:ree not product}
    The Ree group ${\rm Ree}(3)$ is not isomorphic to the product $\PGL_2(\FF_8)\times \Aut(\FF_8)$.
\end{lemma}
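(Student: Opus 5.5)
We have to show that $\Ree(3)=\PGL_2(\FF_8)\rtimes\Aut(\FF_8)$ is not isomorphic to the direct product $P\times C_3$, where $P=\PGL_2(\FF_8)$. The plan is to compare a group-theoretic invariant of the two groups; the center is the most convenient.

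In $P\times C_3$ the center is $Z(P)\times C_3=C_3$, because $P$ is non-abelian simple. So it suffices to show that $\Ree(3)$ has trivial center.

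To see this, write $H=P\rtimes\langle\phi\rangle$, where $\phi$ is the Frobenius $a\mapsto a^2$ acting entrywise on matrices. Let $z=(g,\phi^i)$ be central. Then $z$ commutes with every element of $P$. Equivalently, conjugation by $g$ composed with $\phi^i$ is the identity on $P$, so $\phi^i$ is an inner automorphism of $P$.

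The key step is that the field automorphism $\phi$ induces an outer automorphism of $\PGL_2(\FF_8)$. We use the classical fact that $\operatorname{Out}(\PGL_2(\FF_8))\cong\Gal(\FF_8/\FF_2)\cong C_3$, generated by $\phi$. Alternatively, we can check it directly, as in the proof of Proposition~\ref{prop:properties isogeny}. Inner automorphisms preserve traces up to the scaling ambiguity in $\PGL_2$. Since $\FF_8^\times$ has odd order, every scalar is a square, and we may normalize representatives to determinant $1$; this identifies $P=\SL_2(\FF_8)$. Inner automorphisms then preserve traces exactly. Now take an element with trace $a\notin\FF_2$, for instance $\left(\begin{smallmatrix}a&1\\1&0\end{smallmatrix}\right)$ after scaling. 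Applying $\phi$ or $\phi^2$ changes its trace to $a^2$ or $a^4$, which differs from $a$, so these automorphisms are not inner.

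Hence $i=0$, so $z=g$ lies in $Z(P)=1$. Therefore $Z(\Ree(3))$ is trivial, and $\Ree(3)\not\cong P\times C_3$.

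As a cross-check, the paper's identification $\Ree(3)\cong\PGL_2(\FF_8)\rtimes\Aut(\FF_8)$ is the standard isomorphism ${}^2G_2(3)\cong\mathrm{P\Gamma L}_2(8)=\operatorname{Aut}(\PGL_2(\FF_8))$, which is centerless. The only real obstacle is to justify that $\phi$ is not inner. The trace argument handles this cleanly once the $\SL_2$ normalization is made.
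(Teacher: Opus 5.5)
Your proof is correct, but it distinguishes the two groups by a different invariant than the paper does.

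\textbf{The paper's route.} The paper compares Sylow $3$-subgroups. In $\PGL_2(\FF_8)\times\Aut(\FF_8)$ the Sylow $3$-subgroup is $C_9\times C_3$, hence abelian. In $\operatorname{Ree}(3)$, the field automorphism cannot centralize a cyclic subgroup of order $9$ of $\PGL_2(\FF_8)$. Its centralizer is $\PGL_2(\FF_2)$, which has no element of order $9$. So a Sylow $3$-subgroup containing $\Aut(\FF_8)$ is non-abelian. This argument uses two inputs:
\begin{enumerate}
\item the subgroup list from Faber, already invoked in Lemma~\ref{lem:only rational points}, which shows that the Sylow $3$-subgroups of $\PGL_2(\FF_8)$ are cyclic of order $9$;
\item the fixed points of Frobenius on $\PGL_2(\FF_8)$.
\end{enumerate}

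\textbf{Your route.} You compare centers, which reduces everything to one fact: $\phi$ and $\phi^2$ are not inner. Your trace computation proves this without any appeal to the subgroup classification. You first identify $\PGL_2(\FF_8)=\SL_2(\FF_8)$, using that squaring is bijective on $\FF_8^\times$. Then you observe that $\phi$ sends trace $a$ to $a^2\neq a$ for $a\notin\FF_2$, while conjugation preserves traces.

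\textbf{Comparison.} Your argument also shows that $\operatorname{Ree}(3)$ is centerless and embeds into $\Aut(\PGL_2(\FF_8))$, consistent with $\operatorname{Ree}(3)\cong\mathrm{P\Gamma L}_2(8)$. The paper's argument instead gives information about the $3$-local structure and reuses machinery it already has in place. Both proofs assume the semidirect product is formed with the natural entrywise action of $\Aut(\FF_8)$, which is the intended meaning.

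A minor point: $\left(\begin{smallmatrix}a&1\\1&0\end{smallmatrix}\right)$ already has determinant $1$ in characteristic $2$, so no scaling is needed.
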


\proof
According to the list in the proof of Lemma~\ref{lem:only rational points}, $\PGL_2(\FF_8)$ has a cyclic subgroup of order $9$, and such subgroups are $3$-Sylow subgroups. But $\PGL_2(\FF_2)$ does not have an element of order $9$, so $\Aut(\FF_8)$ acts non-trivially on any of these. Therefore the $3$-Sylow subgroups of ${\rm Ree}(3)$ are non-commutative, and the lemma follows.
\qed

\subsection{Conclusion}
\mylabel{subsec:conclusion ree}

We finish this section by proving the description of the Deligne--Lusztig surface $\bar{X}(s_2s_1)$ as a specific ruled surface.

\emph{Proof for Theorem \ref{thm:structure g2twist}.}
The Deligne--Lusztig curves $\bar{X}(s_1)$ and $\bar{X}(s_2)$ are smooth projective curves of genus $15$ over $\FF_3$. They are naturally contained in $X=\bar{X}(s_2s_1)$, and we write $D_i = \bar{X}(s_i)$ when considering them as divisors on $X$. This is the same numbering as in the table in Section~\ref{subsec:tables}, and we see that $K_X = -2 D_2$ is negative.

Recall that the Deligne--Lusztig reduction method (see Corollary~\ref{cor:structure of ruled surface}) gives us (since $s_2 w \Frob(s_2) = \id$ has length $0=\ell(w)-2$) a morphism $f\colon X\to \bar{X}(s_1)$ that is a $\PP^1$-bundle which restricts to the identity morphism on $\bar{X}(s_1)$, i.e., $\bar{X}(s_1)$ is a section. So $X$ is a ruled surface over a curve of genus $15$.

Furthermore, we know that
\[
    (D_1D_2) = \# X(\id) = 28,
\]
see Lemma~\ref{lem:rational Bruhat decomposition}.

Since $K_X = -2 D_2$ and $K_X$ has degree $-2$ when restricted to the fibers, it follows that $\bar{X}(s_2)$ likewise is a section of $f$.

The Ree group $H$ acts faithfully on $X$ (Proposition~\ref{prop:kernel of action}) and compatibly on the Deligne--Lusztig curves. 
Thus for $R = \bar{X}(s_2)$ and $R'=\bar{X}(s_1)$ the assumptions of the Theorem~\ref{thm:characterization ruled ree} are satisfied.
\qed

\section{Appendix: Further tables}
\mylabel{sec:appendix}

We record here the coefficients of the expression of the canonical divisor as in Proposition~\ref{prop:canonical divisor DL} for those connected Deligne--Lusztig surfaces where for all $q$ at least one of the coefficients is strictly positive. (We list the cases up to isomorphism, e.g., for ${}^2 A_3$ the surfaces $\bX(s_1 s_2)$ and $\bX(s_3 s_2)$ are isomorphic because of the symmetry of the Dynkin diagram, and below we list only the data for the first one.)

In the Suzuki--Ree case ${}^2G_2$ and ${}^2 F_4$ we have $p=3$ and $p=2$, respectively; we then have $q = p^{\frac{2n+1}{2}}$ for some $n \ge 0$ and we write $q_{0} = p^n$.

{
\renewcommand{\arraystretch}{1.5}
\begin{longtable}{p{1.3cm}|C{1.5cm}C{1.5cm}|C{1.5cm}p{3.2cm}p{2.5cm}}
\toprule
\makecell{Type} 	& $r$, $s$	& $q = p^{\frac{s}{r}}$	& $w$ 	& \multicolumn{2}{l}{Coefficients in $K_X$}\\
\toprule
$C_2$	
        & $1$, $s$ & $p^s$ & $s_1s_2$			& $\dfrac{2 q^{2} - 2 q - 2}{q^{2} + 1}$, & $\dfrac{q - 3}{q^{2} + 1}$\\
\midrule
$G_2$	
        & $1$, $s$ & $p^s$ & $s_1s_2$			&  $\dfrac{3 q^{2} - 2 q - 2}{q^{2} - q + 1}$, & $\dfrac{2 q - 3}{q^{2} - q + 1}$\\[.4cm]
        & & & $s_2s_1$			& 		$\dfrac{q^{2} - 2}{q^{2} - q + 1}$, & $\dfrac{4 q - 5}{q^{2} - q + 1}$\\
\midrule
${}^2G_2$
        & $2$, $2n+1$ & $p^{\frac{2n+1}{2}}$ & $s_1s_2$	& $\dfrac{9q_{0}^{2} - 9 q_{0} + 2}{3q_{0}^{2} - 1}$, & $\dfrac{-5 q_{0} +3}{3q_{0}^{2} - 1}$\\
\midrule
${}^2A_3$	&
$2$, $2s_0$ & $p^{s_0}$ & $s_{1} s_{2}$ & $\dfrac{q^{2} - 2}{q^{2} - q + 1}$, & $\dfrac{q - 3}{q^{3} + 1}$\\[.4cm]
            & & & $s_{2} s_{1}$ & $ \dfrac{q^{3} - q^{2} - 2}{q^{3} + 1}$, & $\dfrac{2 q - 3}{q^{2} - q + 1}$\\[.4cm]
            & & & $s_{2} s_{3}$ & $\dfrac{q^{3} - q^{2} - 2}{q^{3} + 1}$, & $\dfrac{2 q - 3}{q^{2} - q + 1}$\\
\midrule
${}^2A_4$	& $2$, $2s_0$ & $p^{s_{0}}$	& 
$s_{1} s_{2}$ & $\frac{q^{4} - 2 q^{2} + 2 q - 2}{q^{4} - q^{3} + q^{2} - q + 1}$, & $ \frac{q^{3} + q - 3}{q^{4} - q^{3} + q^{2} - q + 1}$\\[.4cm]
              & & & $s_{2} s_{1}$ & $ \frac{q^{4} + q^{3} - 2 q^{2} + q - 2}{q^{4} - q^{3} + q^{2} - q + 1}$, & $ \frac{2 q - 3}{q^{4} - q^{3} + q^{2} - q + 1}$\\[.4cm]
              & & & $s_{1} s_{3}$ & $ \frac{q^{3} - q^{2} + q - 2}{q^{4} - q^{3} + q^{2} - q + 1}$, & $ \frac{2 q^{3} - q^{2} - 2}{q^{4} - q^{3} + q^{2} - q + 1}$\\[.4cm]
\midrule
${}^3D_4$ & 
$3$, $3s_0$ & $p^{s_{0}}$ & $s_1s_2$	& $\dfrac{q^{4} + q^{3} + q^{2} - 2 q - 2}{q^{4} - q^{2} + 1},\quad \dfrac{q^{2} + q - 3}{q^{4} - q^{2} + 1}$\\[.4cm]
           & & & $s_2s_1$	& $\dfrac{q^{4} - q^{3} + q^{2} - 2}{q^{4} - q^{2} + 1},\quad \dfrac{2 q^{3} + q^{2} - q - 3}{q^{4} - q^{2} + 1}$\\
\midrule
${}^2F_4$	& $2$, $2n+1$ & $p^{\frac{2n+1}{2}}$ &
$s_{1} s_{2}$ & $\frac{4q_{0}^{4} - 4q_{0}^{2} + 4q_{0} - 2}{4q_{0}^{4} - 4q_{0}^{3} + 2 q_{0}^{2} - 2 q_{0} + 1}$, & $\frac{4q_{0}^{3} + 2 q_{0} - 3}{4q_{0}^{4} - 4q_{0}^{3} + 2 q_{0}^{2} - 2 q_{0} + 1}$  \\[.4cm]
              & & & $s_{1} s_{3}$ & $ \frac{2 q_{0}^{3} - 2 q_{0}^{2} + 3 q_{0} - 2}{4q_{0}^{4} - 4q_{0}^{3} + 2 q_{0}^{2} - 2 q_{0} + 1}$, & $\frac{8 q_{0}^{3} - 2 q_{0}^{2} - 2}{4q_{0}^{4} - 4q_{0}^{3} + 2 q_{0}^{2} - 2 q_{0} + 1}$  \\[.4cm]
              & & & $s_{2} s_{1}$ & $ \frac{4q_{0}^{4} + 4q_{0}^{3} - 4q_{0}^{2} + 2 q_{0} - 2}{4q_{0}^{4} - 4q_{0}^{3} + 2 q_{0}^{2} - 2 q_{0} + 1}$, & $\frac{4q_{0} - 3}{4q_{0}^{4} - 4q_{0}^{3} + 2 q_{0}^{2} - 2 q_{0} + 1}$  \\[.4cm]
              & & & $s_{2} s_{4}$ & $ \frac{6 q_{0}^{3} - 2 q_{0}^{2} + q_{0} - 2}{4q_{0}^{4} - 4q_{0}^{3} + 2 q_{0}^{2} - 2 q_{0} + 1}$, & $\frac{4q_{0}^{3} - 2 q_{0}^{2} + 2 q_{0} - 2}{4q_{0}^{4} - 4q_{0}^{3} + 2 q_{0}^{2} - 2 q_{0} + 1}$  \\[.4cm]
              & & & $s_{3} s_{4}$ & $ \frac{4q_{0}^{4} + 4q_{0}^{3} - 4q_{0}^{2} + 2 q_{0} - 2}{4q_{0}^{4} - 4q_{0}^{3} + 2 q_{0}^{2} - 2 q_{0} + 1}$, & $\frac{4q_{0} - 3}{4q_{0}^{4} - 4q_{0}^{3} + 2 q_{0}^{2} - 2 q_{0} + 1}$  \\[.4cm]
              & & & $s_{4} s_{3}$ &  $\frac{4q_{0}^{4} - 4q_{0}^{2} + 4q_{0} - 2}{4q_{0}^{4} - 4q_{0}^{3} + 2 q_{0}^{2} - 2 q_{0} + 1}$, & $\frac{4q_{0}^{3} + 2 q_{0} - 3}{4q_{0}^{4} - 4q_{0}^{3} + 2 q_{0}^{2} - 2 q_{0} + 1}$  \\
\bottomrule
\end{longtable}
}



\begin{thebibliography}{ccccc}


\bibitem{Arbarello et al 1985}
E.\ Arbarello, M.\ Cornalba, P.\ Griffiths, J.\ Harris:
Geometry of algebraic curves. I. 
Springer, New York, 1985.

\bibitem{Artin 1974}
M.\ Artin:
Supersingular $K3$ surfaces.
Ann.\ Sci.\ \'Ecole Norm.\ Sup.\  7 (1974), 543--567. 

\bibitem{Atiyah 1957}
M.\ Atiyah:
Vector bundles over an elliptic curve.  
Proc.\ London Math.\ Soc.\   7  (1957), 414--452. 

\bibitem{Bjoerner; Brenti 2005}
A.\ Bj\"orner, F.\ Brenti: Combinatorics of Coxeter groups.
Springer, New York, 2005.

\bibitem{Bombieri; Mumford 1977}
E.\ Bombieri, D.\ Mumford:
Enriques' classification of surfaces in char.\ $p$,  II.
In: 
W.\ Baily, T.\ Shioda (eds.), Complex analysis and algebraic geometry, pp.\ 23--42.
Cambridge University Press, London, 1977.

\bibitem{Bonnafe; Dat; Rouquier 2017}
C.\ Bonnaf\'e, J.-F. Dat, R.\  Rouquier: 
Derived categories and Deligne-Lusztig varieties II.  
Ann.\ of Math.\   185 (2017),   609--670. 

\bibitem{Bonnafe; Rouquier 2003}
C.\ Bonnaf\'e, R.\  Rouquier:
Cat\'egories d\'eriv\'ees et vari\'et\'es de Deligne--Lusztig. 
Publ.\ Math.\ Inst.\ Hautes \'Etudes Sci.\   97 (2003), 1--59. 

\bibitem{Bonnafe; Rouquier 2006}
C.\ Bonnafé, R.\  Rouquier:
On the irreducibility of Deligne--Lusztig varieties.  
C.\ R.\ Math.\ Acad.\ Sci.\ Paris 343 (2006),  37--39. 

\bibitem{Bott; Samelson 1958}
R.~Bott, H. Samelson:
Applications of the theory of Morse to symmetric spaces.
Amer.~J.~Math.~80 (1958), 964--1029.

\bibitem{A 4-7}
N.\ Bourbaki:
Algebra II. Chapters 4--7.
Springer, Berlin, 1990.

\bibitem{LIE 4-6}
N.\ Bourbaki:
Groupes et alg\`ebres de Lie. 
Chapitres 4, 5 et 6. 
Masson, Paris, 1981.

\bibitem{LIE 7-8}
N.\ Bourbaki:
Groupes et alg\`ebres de Lie. 
Chapitres 7 et 8. 
Springer, Berlin, 2006.

\bibitem{Brosnan; Hong; Lee 2025}
P.\  Brosnan, J.\  Hong, D.\ Lee:
Geometry of regular semisimple Lusztig varieties.
Preprint, \arXiv{arXiv:2504.15868}.

\bibitem{Carter 1972}
R.\ Carter:
Simple groups of Lie type. 
Wileys, London--New York--Sydney, 1972.


\bibitem{Conrad; Gabber; Prasad 2015}
B.~Conrad, O.~Gabber, G.~Prasad,
Pseudo-reductive Groups.
2nd ed., Cambridge Univ.~Press, 2015.

\bibitem{Cossec; Dolgachev 1989}
F.\ Cossec, I.\ Dolgachev:
Enriques surfaces I.
Birkh\"auser, Boston, MA, 1989.

\bibitem{Coxeter 1958}
H.\ Coxeter:
The chords of the non-ruled quadric in $\operatorname{PG}(3,3)$.
Canadian J.\ Math.\ 10 (1958), 484--488. 

\bibitem{Deligne 1975}
P.\ Deligne:
Courbes elliptiques: formulaire d'apr\`es J.\ Tate. 
In: 
B.\ Birch, W.\ Kuyk (eds.),
Modular functions of one variable IV,  pp.\ 53--73. 
Springer, Berlin, 1975.

\bibitem{Deligne; Lusztig 1976}
P.\ Deligne, G.\ Lusztig:
Representations of reductive groups over finite fields. 
Ann.\ of Math.\ 103 (1976),  103--161.

\bibitem{Demazure 1974}
M.~Demazure:
Désingularisation des variétés de Schubert.
Ann.~Sci.~École Norm.~Sup.~(4), 7 (1974), 53--88.
 

\bibitem{SGA3-3}
M.~Demazure, A.~Grothendieck, \emph{Schémas en groupes (SGA 3) III}, Éd.~recomposée et annotée, SMF (2011).

\bibitem{Dieudonne 1971}
J.\ Dieudonn\'e:
La géométrie des groupes classiques.  
Springer, Berlin,  1971.

\bibitem{Digne; Michel 1991}
F.\ Digne, J.\  Michel:
Representations of finite groups of Lie type. 
Cambridge University Press, Cambridge, 1991.


\bibitem{Dolgachev; Kondo 2003}
I.\ Dolgachev, S.\ Kondo:
A supersingular K3 surface in characteristic 2 and the Leech lattice.
Int.\ Math.\ Res.\ Not.\ 2003,  1--23. 

\bibitem{Duncan 1968}
A.\ Duncan:
An automorphism of the symplectic group $\Sp_4(2n)$.
Proc.\ Cambridge Philos.\ Soc.\ 64 (1968), 5--9. 


\bibitem{Ekedahl 2003}
T.\ Ekedahl:
On non-liftable Calabi--Yau threefolds.
Preprint, \arXiv{math.AG/0306435}.

\bibitem{Everitt 2014}
B.\ Everitt:
A (very short) introduction to buildings. 
Expo.\ Math.\ 32 (2014),  221--247. 

\bibitem{Faber 2023}
X.\ Faber:
Finite $p$-irregular subgroups of $\PGL_2(k)$.  
Matematica 2 (2023),   479--522. 

\bibitem{Fanelli; Schroeer 2020}
A.\ Fanelli, S.\ Schr\"oer:
Del Pezzo surfaces and Mori fiber spaces in positive characteristic.
Trans.\ Amer.\ Math.\ Soc.\  373 (2020),  1775--1843.


\bibitem{Genestier 1996} 
A.\ Genestier:
Espaces sym\'etriques de Drinfeld.  
Ast\'erisque 234 (1996).


%
 

\bibitem{Gouthier; Tossici 2024}
B.\ Gouthier, D.\  Tossici:
Unexpected subgroup schemes of $\PGL_{2,k}$ in characteristic 2.
Preprint, \arXiv{arXiv:2403.09469}.

\bibitem{Grove 2002}
L.\ Grove:
Classical groups and geometric algebra.
American Mathematical Society, Providence, RI, 2002.

\bibitem{Haastert 1986}
B.\ Haastert:
Die Quasiaffinit\"at der Deligne-Lusztig-Variet\"aten. 
J.\ Algebra 102 (1986),  186--193. 

\bibitem{Hansen 1992}
J.\ Hansen:
Deligne--Lusztig varieties and group codes. 
In:
H.\ Stichtenoth, M.\ Tsfasman (eds.), Coding theory and algebraic geometry, pp.\ 63--81.
Springer, Berlin, 1992. 

\bibitem{Hansen; Pedersen 1993}
J.~P.\ Hansen, J.\ Pedersen:
Automorphism groups of Ree type, Deligne-Lusztig curves and function fields.
J.\ Reine Angew.\ Math.\ 440 (1993), 99--109.

\bibitem{Hansen 1999}
S.\ Hansen:
Canonical bundles of Deligne--Lusztig varieties.
Manuscripta Math.\ 98 (1999),  363--375.

\bibitem{Hartshorne 1977}
R.\ Hartshorne:
Algebraic geometry.
Springer, Berlin,  1977.

\bibitem{Henn 1978}
H.-W.\ Henn:
Funktionenkörper mit grosser Automorphismengruppe.  
J.\ Reine Angew.\ Math. 302 (1978), 96--115. 

\bibitem{Hilario; Schroeer 2023}
C.\ Hilario, S.\ Schr\"oer:
Generalizations of quasielliptic curves. 
\'Epijournal Geom.\ Alg\'ebrique 7 (2024), Article 23, 31 pp.

\bibitem{Hirschfeld; Korchmaros; Torres 2008}
J.~Hirschfeld, G.~Korchmáros, F.~Torres:
Algebraic Curves over a Finite Field.
Princeton Series in Appl.~Math. Princeton University Press (2008).

\bibitem{Hoelder 1895}
O.\ H\"older:
Bildung zusammengesetzter Gruppen.  
Math.\ Ann.\ 46 (1895), 321--422. 

\bibitem{Hua 1948}
L.\-K.\ Hua:
On the automorphisms of the symplectic group over any field.
Ann.\ of Math.\  49 (1948), 739--759. 


\bibitem{Husemoeller 1987}
D.\ Husem\"oller: 
Elliptic curves. 
Springer, New York, 1987.

\bibitem{Huybrechts 2016}
D.~Huybrechts:
Lectures on K3 Surfaces.
Cambridge Univ.~Press, 2016.

\bibitem{Ito 1994}
H.\ Ito:
The Mordell--Weil groups of unirational quasi-elliptic surfaces in characteristic $2$.  
Tohoku Math.\ J.\   46  (1994),   221--251.

\bibitem{Jantzen 2003}
J.~C.~Jantzen:
Representations of Algebraic Groups.
2nd ed., AMS, 2003.

\bibitem{Janusz; Rotman 1982}
G.\ Janusz, J.\ Rotman:
Outer automorphisms of S6.
Amer.\ Math.\ Monthly 89 (1982),  407--410. 

\bibitem{Kollar 1995}
J.\ Koll\'ar:
Rational curves on algebraic varieties.
Springer, Berlin, 1995.

\bibitem{Kondo 2020}
S.~Kond\={o}:
K3 surfaces.
Europ.~Math.~Soc., 2020.

\bibitem{Kondo; Schroeer 2021}
S.\ Kond\={o}, S.\ Schr\"oer:
Kummer surfaces associated with group schemes.
Manuscripta Math.\ 166 (2021), 323--342. 

\bibitem{Lang 1956}
S.\ Lang:
Algebraic groups over finite fields.
Amer.\ J.\ Math. 78 (1956), 555–563. 

\bibitem{Langer 2019}
A.~Langer:
Birational geometry of compactifications of Drinfeld half-spaces over a finite field.  
Adv.\ Math.\ 345 (2019), 861--908. 

\bibitem{Laurent; Schroeer 2024}
B.\ Laurent, S.\ Schr\"oer:
Para-abelian varieties and  Albanese maps.
Bull.\ Braz.\ Math.\ Soc.\ 55 (2024), 1--39.

\bibitem{Lipman 1969}
J.\ Lipman:
Rational singularities, with applications to algebraic surfaces and unique factorization.  
Inst.\ Hautes \'Etudes Sci.\ Publ.\ Math.\  36 (1969), 195--279.

\bibitem{Lusztig 1976}
G.\ Lusztig:
Coxeter orbits and eigenspaces of Frobenius.
Invent.\ Math.\ 38 (1976), 101--159.

\bibitem{Lusztig 1977}
G.\ Lusztig:
Irreducible representations of finite classical groups.
Invent.\ Math.\ 43 (1977), 125--175. 

\bibitem{Lusztig 1984}
G.\ Lusztig:
Characters of reductive groups over a finite field.
Princeton University Press, Princeton, NJ, 1984.

\bibitem{Malle; Testerman 2011}
G.~Malle, D.~Testerman:
Linear Algebraic Groups and Finite Groups of Lie Type.
Cambridge Univ.~Press, Cambridge, 2011.

\bibitem{Milne 2017}
J.\ Milne:
Algebraic groups.
Cambridge University Press, Cambridge, 2017.

\bibitem{Miyanishi 1977}
N.\ Miyanishi:
Unirational quasi-elliptic surfaces. 
Japan.\ J.\ Math.\ 3 (1977),   395--416.

\bibitem{Nikulin 1975}
V.\ Nikulin:
On Kummer surfaces.
Math.\ USSR  Izv.\ 9 (1975),  261--275.

\bibitem{Ono 1962}
T.\ Ono:
An Identification of Suzuki Groups With Groups of Generalized Lie Type.
Annals of Math.\ 75, (1962), 251--259.

\bibitem{Ono 1963}
T.\ Ono:
Correction to "An identification of Suzuki groups with groups of generalized Lie type''.
Ann.\ of Math.\  77 (1963), 413. 

\bibitem{Pedersen 1992}
J.\ Pedersen:
A function field related to the Ree group. 
In:
H.\ Stichtenoth and M.\ Tsfasman (eds.), Coding theory and algebraic geometry, pp.\ 122--131.
Springer, Berlin, 1992. 

\bibitem{Ree 1961}
R.\ Ree:
A family of simple groups associated with the simple Lie algebra of type $(G_2)$.
Amer.\ J.\ Math.\ 83  (1961), 432--462.

\bibitem{Rodier 2000}
F.~Rodier:
Nombre de points des surfaces de Deligne et Lusztig.
J.~Alg.~\textbf{227}, 706--766 (2000).

\bibitem{Schoof 1987}
R.~Schoof:
Nonsingular plane cubic curves over finite fields.
J.\ Combin.\ Theory Ser.\ A 46 (1987),  183--211. 


\bibitem{Schroeer 2023}
S.\ Schr\"oer:
There is no Enriques surface over the integers.
Ann.\ of Math.\ 197 (2023), 1--63.

\bibitem{Serre 1973}
J.-P.~Serre:
A course in arithmetic. 
Springer, New York-Heidelberg, 1973.

\bibitem{Serre 1979}
J.-P.\ Serre:
Local fields.
Springer, Berlin, 1979.


\bibitem{Shioda; Inose 1977}
T.\ Shioda, H.\ Inose:
On singular K3 surfaces. 
In:
Complex analysis and algebraic geometry, pp.\ 119--136.
Iwanami Shoten, Tokyo, 1977. 


\bibitem{Steinberg 1968}
R.~Steinberg:
Endomorphisms of linear algebraic groups.
American Mathematical Society, Providence, RI, 1968. 

\bibitem{Steinberg 1999}
R.~Steinberg:
The isomorphism and isogeny theorems for reductive algebraic groups. 
J.\ Algebra 216 (1999),  366--383.

\bibitem{Stichtenoth 1973}
H.\ Stichtenoth:
\"Uber die Automorphismengruppe eines algebraischen Funktionenk\"orpers von Primzahlcharakteristik. I.
Arch.\ Math.\  24 (1973), 527--544. 

\bibitem{Stichtenoth 1973b}
H.\ Stichtenoth:
\"Uber die Automorphismengruppe eines algebraischen Funktionenk\"orpers von Primzahlcharakteristik II.
Arch.\ Math.\ 24 (1973), 615--631.

\bibitem{Suzuki 1960}
M.\ Suzuki:
A new type of simple groups of finite order.
Proc.\ Nat.\ Acad.\ Sci.\ U.S.A. 46 (1960), 868--870. 

\bibitem{Tate 1975}
J.\ Tate:
Algorithm for determining the type of a singular fiber in an elliptic pencil.  
In: 
B.\ Birch, W.\ Kuyk (eds.),
Modular functions of one variable IV,  pp.\ 33--52. 
Springer, Berlin, 1975.

\bibitem{Tits 1962}
J.\ Tits:
Ovoïdes et groupes de Suzuki. 
Arch.\ Math.\ 13 (1962), 187--198. 

\bibitem{Tits 1995}
J.\ Tits:
Les groupes simples de Suzuki et de Ree.
S\'eminaire Bourbaki, Vol.\ 6, Exp.\ No.\ 210, 65–82. 
Soc.\ Math.\ France, Paris, 1995. 

\bibitem{Todd 1970}
J.\ Todd:
As it might have been.
Bull.\ London Math.\ Soc.\ 2 (1970),   1--4. 

\bibitem{Togashi; Uehara 2022} 
T.\ Togashi, H.\ Uehara:
Elliptic ruled surfaces over arbitrary characteristic fields
Preprint, \arXiv{arXiv:2212.00304}.

\bibitem{Tutte 1958}
W.\ Tutte:
The chords of the non-ruled quadric in $\operatorname{PG}(3,3)$.
Canadian J.\ Math.\ 10 (1958), 481--483. 

\bibitem{Wang 2024}
Y.\ Wang:
Cohomology and geometry of Deligne–Lusztig varieties for $\GL_n$.
Math.\ Z.\ 306, 69 (2024), 43 pp.

\bibitem{Wilson 2010a}
R.\ Wilson:
A new construction of the Ree groups of type ${}^2G_2$.  
Proc.\ Edinb.\ Math.\ Soc.\   53 (2010),   531--542. 

\bibitem{Wilson 2010b}
R.\ Wilson:
Another new approach to the small Ree groups.  
Arch.\ Math.\ (Basel) 94 (2010),  501--510. 

\end{thebibliography}
\end{document}